\newenvironment{itquote}
{\begin{quote}\itshape}
{\end{quote}}
\def\AA{\mathbb{A}}
\def\CC{\mathbb{C}}
\def\FF{\mathbb{F}}
\def\QQ{\mathbb{Q}}
\def\RR{\mathbb{R}}
\def\TT{\mathbb{T}}
\def\ZZ{\mathbb{Z}}
\newcommand{\onto}{\twoheadrightarrow}
\newcommand{\lra}{\longrightarrow}
\newcommand{\mat}[4]{\left( \begin{array}{cc} {#1} & {#2} \\ {#3} & {#4}
\end{array} \right)}
\newcommand{\smat}[4]{{\mbox{\scriptsize $\mat{{#1}}{{#2}}{{#3}}{{#4}}$}}}
\newlength{\ownl}
\newcommand{\Ann}{{\operatorname{Ann}\,}}
\newcommand{\Aut}{{\operatorname{Aut}\,}}
\newcommand{\Coind}{{\operatorname{Coind}\,}}
\newcommand{\coker}{{\operatorname{coker}\,}}
\newcommand{\cyc}{{\operatorname{cyc}\,}}
\newcommand{\Fil}{{\operatorname{Fil}\,}}
\newcommand{\Frob}{{\operatorname{Frob}}}
\newcommand{\Gal}{{\operatorname{Gal}\,}}
\newcommand{\gr}{{\operatorname{gr}\,}}
\newcommand{\Hom}{{\operatorname{Hom}\,}}
\newcommand{\Ind}{{\operatorname{Ind}\,}}
\newcommand{\JL}{{\operatorname{JL}\,}}
\newcommand{\rec}{{\operatorname{rec}}}
\newcommand{\Res}{{\operatorname{Res}}}
\newcommand{\WD}{{\operatorname{WD}}}
\newcommand{\Spec}{{\operatorname{Spec}\,}}
\newcommand{\tr}{{\operatorname{tr}\,}}
\newcommand{\GL}{\operatorname{GL}}
\newcommand{\SL}{\operatorname{SL}}
\newcommand{\SO}{\operatorname{SO}}
\newcommand{\ab}{{\operatorname{ab}}}
\newcommand{\A}{{\mathbb{A}}}
\newcommand{\C}{{\mathbb{C}}}
\newcommand{\F}{{\mathbb{F}}}
\newcommand{\Q}{{\mathbb{Q}}}
\newcommand{\R}{{\mathbb{R}}}
\newcommand{\U}{{\mathbb{U}}}
\newcommand{\Z}{{\mathbb{Z}}}
\newcommand{\CA}{{\mathcal{A}}}
\newcommand{\CE}{{\mathcal{E}}}
\newcommand{\CF}{{\mathcal{F}}}
\newcommand{\CG}{{\mathcal{G}}}
\newcommand{\CH}{{\mathcal{H}}}
\newcommand{\CI}{{\mathcal{I}}}
\newcommand{\CL}{{\mathcal{L}}}
\newcommand{\CM}{{\mathcal{M}}}
\newcommand{\CN}{{\mathcal{N}}}
\newcommand{\CO}{{\mathcal{O}}}
\newcommand{\CS}{{\mathcal{S}}}
\newcommand{\CT}{{\mathcal{T}}}
\newcommand{\CV}{{\mathcal{V}}}
\newcommand{\CY}{{\mathcal{Y}}}
\newcommand{\CZ}{{\mathcal{Z}}}
\newcommand{\gM}{{\mathfrak{M}}}
\newcommand{\gN}{{\mathfrak{N}}}
\newcommand{\gP}{{\mathfrak{P}}}
\newcommand{\gS}{{\mathfrak{S}}}
\newcommand{\gd}{{\mathfrak{d}}}
\newcommand{\gm}{{\mathfrak{m}}}
\newcommand{\gn}{{\mathfrak{n}}}
\newcommand{\gp}{{\mathfrak{p}}}
\newcommand{\gq}{{\mathfrak{q}}}
 \newcommand{\barrho   }{{\overline{\rho}}}
 \newcommand{\rhobar   }{{\overline{\rho}}}
\newcommand{\Qbar}{{\overline{\Q}}}
 \newcommand{\St}{\operatorname{St}}
\newcommand{\Qpbar}{\overline{\Q}_p}
\newcommand{\Fpbar}{\overline{\F}_p}
\newcommand{\Zpbar}{\overline{\Z}_p}
\newcommand{\Sym}{\operatorname{Sym}}
\newcommand{\sub}{\operatorname{sub}}
\def\smallmat#1#2#3#4{\bigl(\begin{smallmatrix}{#1}&{#2}\\{#3}&{#4}\end{smallmatrix}\bigr)}
\newcommand{\f}{\mathbf{f}}
\newcommand{\Nm}{\mathrm{Nm}}
\newcommand{\uhp}{\mathfrak{H}}
\newcommand{\crys}{\mathrm{crys}}
\newcommand{\ord}{{\operatorname{ord}}}
\newcommand{\tot}{{\operatorname{tot}}}
\newcommand{\im}{{\operatorname{im}}}
\theoremstyle{plain}
\newtheorem{theorem}{Theorem}[subsection]
\newtheorem{proposition}[theorem]{Proposition}
\newtheorem{corollary}[theorem]{Corollary}
\newtheorem{lemma}[theorem]{Lemma}
\newtheorem{conjecture}[theorem]{Conjecture}
\newtheorem{ithm}{Theorem}
\newtheorem{iconj}[ithm]{Conjecture}
\theoremstyle{definition}
\newtheorem{remark}[theorem]{Remark}
\newtheorem{definition}[theorem]{Def\/inition}
\newcommand{\epf}{{\ifhmode\unskip\nobreak\hfil\penalty50 \hskip1em
\else\nobreak\fi \nobreak\mbox{}\hfil\mbox{$\square$} \parfillskip=0pt
\finalhyphendemerits=0 \par\vskip5pt}}
\begin{document}

\title[On Galois representations associated to Hilbert modular forms]{On Galois representations associated to mod $p$ Hilbert modular forms}
\author{Fred Diamond}
\email{fred.diamond@kcl.ac.uk}
\address{Department of Mathematics,
King's College London, WC2R 2LS, UK}
\author{Shu Sasaki}
\email{s.sasaki.03@cantab.net}
\address{School of Mathematical Sciences, Queen Mary University of London, E1 4NS, UK}


\begin{abstract}
We consider mod $p$ Hilbert modular forms for a totally real field $F$, viewed as sections of automorphic line bundles on Hilbert modular varieties in prime characteristic $p$.  For a Hecke eigenform of arbitrary weight, we prove the existence of an associated two-dimensional representation of the absolute Galois group of $F$.  Furthermore, for any such irreducible Galois representation, we formulate a conjecture predicting the set of weights of eigenforms from which it arises.  This generalizes Edixhoven's variant of the weight part of Serre's Conjecture (in the case $F = \QQ$), and removes the restriction that $p$ be unramified in $F$ from prior work in this direction.  We also establish one direction of a conjectural relation with the algebraic analogue of the weight part of Serre's Conjecture in this context.  Finally, we prove results towards our conjecture  in the case of partial weight one for real quadratic fields $F$ in which $p$ is ramified.
\end{abstract}



\maketitle



\section{Introduction} 
\subsection{Background and motivation}
In \cite{serre}, Serre formulated a remarkable conjecture, later remarkably proved by Khare and Wintenberger~\cite{KW1,KW2}, predicting the modularity of certain two-dimensional Galois representations in prime characteristic $p$. 
More precisely, the conjecture (now theorem) asserts that every odd, continuous, irreducible representation of the form $\rho: \Gal(\Qbar/\QQ) \to \GL_2(\Fpbar)$ arises as the reduction of a $p$-adic Galois representation associated to a Hecke eigenform $f \in S_k(\Gamma_1(N))$, where $N = N_\rho$ is the (prime-to-$p$) Artin conductor of $\rho$ and $k = k_\rho$ is described explicitly in terms of the restriction of $\rho$ to a decomposition group at $p$.

In his original formulation, Serre restricts his attention to modular forms in characteristic zero and of weight $k \ge 2$, but remarks \cite[pp.197--8]{serre}:
\begin{itquote}
\hspace*{0.2cm} Au lieu de d\'efinir les formes paraboliques \`a coefficients dans $\F_p$ par réduction \`a partir de la caract\'eristique $0$, comme nous l'avons fait, nous aurions pu utiliser la d\'efinition de Katz [23],\footnote{\cite{katz} in the references herein.} qui conduit à un espace a priori plus grand\ldots \; Il serait \'egalement int\'eressant d'\'etudier de ce point de vue le cas $k = 1$, que nous avons exclu jusqu'ici; peut-\^etre la d\'efinition de Katz donne-t-elle alors beaucoup plus de repr\'esentations $\rho_f$?
\end{itquote}

The referenced definition of Katz interprets the space of modular forms in characteristic $p$ more directly as sections of line bundles on the reduction mod $p$ of the modular curve $X_1(N)$.
The question above was addressed by Edixhoven in his paper~\cite{edixhoven} proving the weight part of Serre's Conjecture (for odd $p$), i.e., that if $\rho$ arises from a Hecke eigenform of some weight $k \ge 2$ and level $N$ prime to $p$, then in fact it arises from one of weight $k_\rho$ and level $N$ (in the classical sense above or equivalently that of Katz).  
To answer Serre's question, Edixhoven furthermore proves that such a $\rho$ is modular of weight one and level $N$ in Katz's sense if and only if $\rho$ is unramified at $p$.  (In general, only the forward implication holds using the classical sense of modularity in the original formulation.)

In his paper, Serre also goes on to ask \cite[3.4(2)]{serre}: 
\begin{itquote}
\hspace*{0.2cm} Peut-on reformuler ces conjectures dans le cadre d'une th\'eorie des repr\'esentations (mod $p$) des groupes ad\'eliques? Autrement dit, existe-t-il une ``philosophie de Langlands modulo p'', comme le demandent Ash et Stevens dans [2]\footnote{\cite{AS} in the references herein}? Si oui, cela permettrait peut-\^etre:\\
\hspace*{0.2cm} de donner une d\'efinition plus naturelle du poids $k$ attach\'e \`a $\rho$;\\
\hspace*{0.2cm} de remplacer $\GL_2$ par $\GL_N$, ou m\^eme par un groupe r\'eductif;\\
\hspace*{0.2cm}
de remplacer $\QQ$ par d'autres corps globaux.
\end{itquote}

These questions have received a great deal of attention, thanks in part to their significance in the context of the $p$-adic Langlands Programme.  Firstly, it turns out that the most natural characterizations of the weight $k_\rho$ are provided by $p$-adic Hodge theory; furthermore, this weight can be viewed as a manifestation of a finer invariant associated to $\rho$ by the Breuil--M\'ezard Conjecture~\cite{BM} (see \cite{hanneke_short}, for example). The question of replacing $\GL_2$ by $\GL_N$ was first considered by Ash and Sinnott~\cite{AS}, and that of replacing $\QQ$ by a number field by the first author with Buzzard and Jarvis~\cite{BDJ}.  Many people have continued work in these directions, formulating (and proving results towards) Serre weight and Breuil--M\'ezard Conjectures in the context of Galois representations associated to cohomological automorphic forms.  See especially~\cite{GHS} for a formulation in significant generality and a discussion of the history of the problem, and \cite{LeLeHung} and the references therein for more recent results.

There has, however, been relatively little consideration in more general settings of the variant addressed by Edixhoven.
In view of this, we initiated a study in \cite{DS1} of this problem in the context of Galois representations associated to Hilbert modular forms, where $F$ is a totally real number field.
In particular, we formulated a conjecture that predicts the set of weights of all mod $p$ Hilbert modular eigenforms giving rise to a fixed $\rho:\Gal(\overline{F}/F) \to \GL_2(\Fpbar)$, under the hypothesis that $p$ is unramified in $F$.  While it is still the case that the set of weights is conjecturally determined (via $p$-adic Hodge theory) by the local behavior of $\rho$ at primes over $p$, several new features emerge.  To begin with, the interpretation of modularity in terms of sections of automorphic bundles on Shimura varieties allows for the consideration of non-cohomological weights, which are necessarily excluded by the framework of \cite{BDJ} and its generalizations discussed above. In the setting of classical modular forms considered by Edixhoven (i.e., the case $F = \QQ$), this extension ultimately pertains only to weight one modular forms and Galois representations unramified at $p$, but the analogous extension in the Hilbert modular setting involves richer structure and behavior on both the automorphic and Galois sides of the relation.  Another new feature revealed in \cite{DS1} is the determinative role played by a certain cone of weights, which we call the {\em minimal cone}.  Input from the geometry of Shimura varieties also enables generalizations of weight entailment phenomena that were provided in the classical setting by Hasse invariants and $\Theta$-operators.

The primary motivation for this paper is to remove the restriction from the conjectures and results in \cite{DS1} that $p$ be unramified in $F$.  Thus for an arbitrary prime $p$ and totally real field $F$, we consider a continuous, irreducible, totally odd\footnote{By {\em totally odd,} we mean that $\det(\rho(c)) = - 1$ for every complex conjugation $c$.} representation
$$ \rho: \Gal(\overline{F}/F) \to \GL_2(\Fpbar) $$
and formulate conjectures predicting the weights of 
mod $p$ Hilbert modular eigenforms of level prime to $p$ 
giving rise to $\rho$ in terms of its local behavior at primes over $p$.  Since $p$ may be ramified in $F$, we use the Pappas--Rapoport model for the Hilbert modular variety to define Hilbert modular forms in characteristic $p$. In order for the conjectures to be meaningful, we must first prove the existence of Galois representations associated to such eigenforms, thus removing parity hypotheses on the weight from prior results of Reduzzi and Xiao~\cite{RX}. To that end, we adopt a different approach from the one used in~\cite{DS1} (for $p$ unramified in $F$); in this paper we make more systematic use of congruences involving forms that are neither $L$-algebraic nor $C$-algebraic, along with the Jacquet--Langlands correspondence.  This perspective also enables us to prove one direction of a conjectural relationship between the sets of weights giving rise to $\rho$ in the sense of this paper and that of \cite{BDJ}.  (This is new even when $p$ is unramified in $F$.)  

The geometric Serre weight conjecture formulated in this paper is qualitatively similar to the one from \cite{DS1} that it generalizes, but it clarifies how features such as minimal cone and weight-shifting phenomena manifest when $p$ is ramified in~$F$.  In particular, our formulation in terms of line bundles on the Pappas--Rapoport model captures aspects of the numerology, including that of weight entailment phenomena, implicit in Gee's generalization (\cite{Gtypes}) of the Buzzard--Diamond--Jarvis Conjecture to the case where $p$ is ramified in $F$.
Furthermore, the ramified setting more readily exhibits how geometric Serre weights provide a finer invariant of local Galois representations than their algebraic analogue.  For example, if $p$ is highly totally ramified in $F$, then there is a class of representations $\rho$ as above whose set of algebraic Serre weights consists of {\em all} those with the same central character (see \cite{GS}), but whose (conjectural) geometric Serre weights are distinguished by a hierarchy of possible minimal weights.

One direction of our conjecture is an assertion about the local behavior at primes over $p$ of all Galois representations $\rho$ arising from mod $p$ Hilbert modular forms of a given weight.  It can thus be interpreted as a local-global compatibility statement in the context of Langlands correspondences associating Galois representations to automorphic forms in characteristic $p$.  The other direction asserts that {\em all} continuous, irreducible, totally odd $\rho$ satisfying those local conditions indeed arise from forms of that weight.  One can envision the two directions as providing a starting point to using the approach of Calegari--Geraghty~\cite{CG} to propagate such an equivalence to suitable deformations of $\rho$.

As in \cite{DS1}, we provide evidence for our conjecture by proving strong results describing which Galois representations (a priori modular of {\em some} weight) arise from forms of partial weight one for real quadratic $F$, except that we now we assume $p$ is ramified in $F$.  Our overall approach to this is similar to that of \cite{DS1}, but we must now make extensive use of the $\Theta$-operators constructed and studied in the ramified case in \cite{theta}.

We remark that the results from \cite{DS1} in the quadratic unramified case have since been strengthened by Yang (see~\cite[Cor.~4.31]{SY}) and partly generalized to higher degree by Wiersema~\cite{Hanneke_GM, hanneke_new}.  Further results in the unramified case are expected from forthcoming work of Kansal, Levin and Savitt (on local-global compatibility) and Wiersema (on geometric modularity in prescribed weight).

\subsection{Outline of the paper}
We now describe the contents of the paper in more detail.
 
Sections 2--4 mainly recall definitions and prior results we will use on Hilbert modular forms, and more generally automorphic forms for unit groups of quaternion algebras over a totally real field $F$.  Section 2 reviews the classical theory over $\CC$, including the Jacquet--Langlands correspondence.  In Section~3, we recall aspects of the theory with integral coefficients, or more precisely over the ring of integers of a finite extension of $\QQ_p$.  

In Section~4, we focus on our main objects of study: the spaces $M_{\vec{k},\vec{m}}(U,E)$ of Hilbert modular forms of weight $(\vec{k},\vec{m}) \in \ZZ^\Sigma \times \ZZ^\Sigma$ and level $U$ with coefficients in a finite field $E$ of characteristic $p$, where $\Sigma$ is the set of embeddings $F \to \Qpbar$.  The component $\vec{m}$ plays a secondary role related to twisting by characters, so for the purpose of the introduction, we assume it is trivial and suppress it from the notation.
The level $U$ is an open compact subgroup of $\GL_2(\AA_{F,\f})$, which we generally assume is prime to $p$ in the sense that it contains a maximal compact subgroup of $\GL_2(F_p)$ (where $F_p = F\otimes \QQ_p$).

The spaces $M_{\vec{k}}(U,E)$ are equipped with commuting endomorphisms ({\em Hecke operators}) $T_v$ and $S_v$ for all but finitely primes $v$ of $F$.  Furthermore these are compatible with weight-shifting maps defined by application of partial $\Theta$-operators and multiplication by partial Hasse invariants.
The partial Hasse invariants are elements $H_\sigma \in M_{\vec{h}_\sigma}(U,E)$ for certain weights $\vec{h}_\sigma$ indexed by $\sigma \in \Sigma$, while the partial $\Theta$-operators are certain differential operators (which, strictly speaking, also shift the component $\vec{m}$ we just suppressed).

Motivated by the conjectures being formulated for this paper, the first author and Kassaei proved in~\cite{DK} that every 
Hilbert modular form arises by multiplication by partial Hasse invariants from one whose weight lies in a certain cone $\Xi_{\min} \subset\ZZ_{\ge 0}^\Sigma$, which we call the {\em minimal cone}.  Using an idea of Deo, Dimitrov and Wiese~\cite{DDW}, we prove a refinement in this paper (Theorem~\ref{thm:positivity}) showing that $\Xi_{\min}$ can be replaced by $\Xi_{\min}^+ \cup \{\vec{0}\}$, where $\Xi_{\min}^+ = \Xi_{\min} \cap \ZZ^{\Sigma}_{>0}$.

In Section~5, we prove the existence of Galois representations associated to Hilbert modular eigenforms (Theorem~\ref{thm:galois}):
\begin{ithm} \label{ithm:galois} Suppose that $f \in M_{\vec{k}}(U,E)$ is an eigenform for $T_v$ (resp.~$S_v$) with eigenvalue $a_v$ (resp.~$d_v$) for all $v \not\in Q$, where $Q$ is a finite set of primes containing all those dividing the level or $p$.  Then there exists a continuous representation
$$\rho_f : \Gal(\overline{F}/F) \to \GL_2(E)$$
such that if $v\not\in Q$, then $\rho_f$ is unramified at $v$ and the characteristic polynomial of $\rho_f(\Frob_v)$ (where $\Frob_v$ is a Frobenius element at $v$) is
$$X^2 - a_v X + \Nm_{F/\QQ}(v)d_v.$$
\end{ithm}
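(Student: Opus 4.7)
The plan is to reduce, via multiplication by partial Hasse invariants and a Jacquet--Langlands transfer, to the construction of Galois representations attached to characteristic zero automorphic forms on a definite quaternion algebra, and then to extract a two-dimensional Galois representation by a twisting trick even when the weight is not paritious. First, apply Theorem~\ref{thm:positivity} to factor $f = H \cdot g$, where $H$ is a product of partial Hasse invariants $H_\sigma$ and $g \in M_{\vec{k}',\vec{m}}(U,E)$ has weight $\vec{k}' \in \Xi_{\min}^+ \cup \{\vec{0}\}$. Because multiplication by $H_\sigma$ commutes with the Hecke operators $T_v$ and $S_v$ for $v \notin Q$ on the Pappas--Rapoport model, $g$ is an eigenform with the same eigenvalues $(a_v,d_v)$. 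The case $\vec{k}' = \vec{0}$ reduces to an Eisenstein-type computation on the zero-dimensional boundary component of the Shimura variety, so I may assume $\vec{k}' \in \Xi_{\min}^+$.

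Next, pick a totally definite quaternion algebra $D/F$ split at every prime in $Q$; such $D$ exists when $[F:\QQ] \geq 2$, while the case $F = \QQ$ is classical. By a mod $p$ Jacquet--Langlands correspondence, realized through characteristic zero congruences, $g$ corresponds to a non-zero Hecke eigenform $g^D$ on $D^\times$ of matching weight and level with the same eigenvalues at $v \notin Q$, living in a finite-dimensional $E$-vector space. A Deligne--Serre style argument applied to the integral Hecke algebra acting faithfully on an $\calO$-lattice in the characteristic zero quaternionic forms of weight $(\vec{k}',\vec{m})$ then lifts $(a_v,d_v)$ to a system $(\tilde a_v, \tilde d_v)$ carried by a characteristic zero cuspidal eigenform $\tilde g^D$. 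Transferring back by the classical Jacquet--Langlands correspondence yields a cuspidal automorphic representation $\pi$ of $\GL_2(\AA_F)$; because $(\vec{k}',\vec{m})$ need not be paritious, $\pi$ may be neither $L$- nor $C$-algebraic.

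To attach a Galois representation to $\pi$, fix an algebraic Hecke character $\chi$ of $\AA_F^\times / F^\times$ whose infinity type makes $\pi \otimes \chi$ paritious, which is possible because $F$ is totally real. The paritious twist falls within the scope of the constructions of Carayol, Taylor, Blasius--Rogawski and Jarvis, producing a two-dimensional $p$-adic Galois representation $\rho_{\pi \otimes \chi}$ with the expected characteristic polynomials at $\Frob_v$ for $v \notin Q$. Untwisting by the Galois character corresponding to $\chi$ gives $\rho_\pi$; reducing modulo the maximal ideal of $\calO$ and applying Chebotarev density together with the congruences $\tilde a_v \equiv a_v$, $\tilde d_v \equiv d_v$ yields the desired $\rho_f$, with the characteristic polynomial formula of the statement.

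The main obstacle is the handling of non-paritious (and possibly non-cohomological) weights: direct constructions via coherent or \'etale cohomology of the Pappas--Rapoport model in the style of \cite{RX} are unavailable, so one must work systematically with twists by (possibly non-algebraic) Hecke characters and keep the Hecke eigenvalues compatible through the Jacquet--Langlands transfer, the Deligne--Serre lift, and the reduction modulo $p$. Controlling this web of twists and congruences --- in particular showing that the lifted $\pi$ can always be paritised by a Hecke character, and that the resulting Galois representation is independent of the auxiliary choices --- is exactly the \emph{systematic use of congruences involving forms that are neither $L$-algebraic nor $C$-algebraic, along with the Jacquet--Langlands correspondence} announced in the introduction.
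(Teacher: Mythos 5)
Your proposal has a fundamental gap at the paritisation step, and this gap is precisely the difficulty the theorem is designed to overcome. You assert that one can ``fix an algebraic Hecke character $\chi$ of $\AA_F^\times / F^\times$ whose infinity type makes $\pi \otimes \chi$ paritious, which is possible because $F$ is totally real.'' This is false. For a totally real field $F$, the density of $\CO_F^\times$ in $\prod_\sigma \RR^\times$ (up to sign) forces any algebraic Hecke character of $\AA_F^\times/F^\times$ to have infinity type $\prod_\sigma \sigma^{n}$ for a \emph{single} integer $n$ (up to finite order). Twisting $\pi$, whose archimedean components are $D'_{k_\sigma}$, by such a character shifts all the $k_\sigma$ by the same even integer and therefore cannot repair a weight $\vec{k}$ in which the $k_\sigma$ have mixed parities. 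The non-paritious case is thus genuinely inaccessible to the twist-to-algebraic strategy, and since that case is exactly the one where the cited constructions of Carayol, Taylor, Blasius--Rogawski and Jarvis do not directly apply, your argument does not produce the Galois representation where it is most needed.

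The paper's proof uses a different mechanism to become paritious. After lifting the mod $p$ Hilbert eigenform to characteristic zero (via ampleness and cohomological vanishing), applying Jacquet--Langlands to a quaternion algebra $B$ split at all finite places, and reducing back mod $p$ to obtain $\varphi \in M^B_{\vec{k},\vec{m}}(U_B,E)$, the crucial step is the Hecke-equivariant map
$$M^B_{\vec{k},\vec{m}}(U_B,E) \hookrightarrow M^B_{\vec{2}}(U'_B,E)\otimes_E D_{\vec{k},\vec{m},E}$$
(or its $H^1$ analogue with controlled kernel when $B$ is indefinite), where $U'_B$ has level at the primes over $p$. This converts the problem to weight $\vec{2}$, which \emph{is} paritious, at the cost of introducing bad reduction at $p$ -- and on the quaternion side the level-at-$p$ spaces are still easy to lift to characteristic zero. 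One then applies Jacquet--Langlands again to reach a cuspidal $\pi \in \mathcal{C}_{\vec{2}}$, to which Theorem~\ref{thm:galois0} applies. This ``congruence to forms with level involving primes over $p$'' is exactly the tool your proposal is missing.

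Two further, smaller issues. First, your reduction via Theorem~\ref{thm:positivity} to $\vec{k}' \in \Xi_{\min}^+ \cup \{\vec{0}\}$ is not how the paper proceeds and does not address the paritisation problem; the paper instead multiplies by Hasse invariants to reach a weight where the automorphic bundle has vanishing $H^1$. Second, the boundary (Eisenstein) contribution is present for every weight, not just $\vec{k}'=\vec{0}$: the paper splits $M_{\vec{k},\vec{m}}$ into its cuspidal subspace and a boundary quotient $C_{\vec{k},\vec{m}}$ and handles the latter by an explicit representation-theoretic analysis of an induced module, independent of the weight.
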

This was proved in~\cite{RX} under the assumption that all components $k_\sigma$ of $\vec{k}$ have the same parity.  The idea is to use congruences to forms of higher weight and cohomological vanishing (provided by ampleness results) for automorphic bundles in order to lift $f$ to a characteristic zero eigenform for which one already has associated Galois representations. A difficulty that arises without the parity hypothesis is that such lifts will not have level prime to $p$, so one also needs cohomological vanishing results for certain Hilbert modular varieties with bad reduction.  Under the assumption that $p$ is unramified in $F$, we removed the parity hypothesis in \cite{DS1} using this approach, with the cohomological vanishing provided by \cite{DKS}.
These vanishing results were generalized to the ramified case in
\cite{FD:KS}, so that one can prove the above theorem by combining the methods of \cite{RX} and \cite{DS1}.  

Here however, we present a different proof, inspired by remarks of David Loeffler and George Boxer.  The idea is to lift (a twist of) $f$ to a (non-algebraic) characteristic zero eigenform of level prime to $p$.  We lack a Galois representation associated to the lift, but we can nonetheless apply the Jacquet--Langlands correspondence to transfer our problem to the setting of automorphic forms on quaternion algebras.  This allows us to construct the desired congruences to forms with primes over $p$ in the level, without having to appeal to the more delicate analysis of Hilbert modular varieties with bad reduction.

In \S\ref{ssec:ord}, we combine the approach with techniques from Hida theory in order to prove that if (a twist of) $f$ is ordinary at a prime $\gp$ over $p$, then the restriction of $\rho_f$ to $\Gal(\overline{F}_{\gp}/F_{\gp})$ is reducible, with subrepresentation and quotient determined by the (invertible) $T_{\gp}$-eigenvalue (and central character).

In Section~6, we formulate our conjecture predicting which Galois representations $\rho$ are {\em geometrically modular} of weight $\vec{k}$, in the sense that they arise from some eigenform $f$ of weight $\vec{k}$ as above.
\begin{iconj} \label{iconj} Suppose that $\vec{k} \in \Xi_{\min}^+$ and $\rho: \Gal(\overline{F}/F) \to \GL_2(\Fpbar)$ is continuous, irreducible and totally odd. Then $\rho$ is geometrically modular of weight $\vec{k}$ if and only if $\chi_\cyc \otimes \rho|_{\Gal(\overline{F}_\gp/F_\gp)}$ has a crystalline lift with labelled Hodge--Tate weights 
$(0,k_\sigma - 1)_{\sigma \in \Sigma_{\gp}}$ for each prime $\gp$ dividing $p$,  where $\chi_{\cyc}$ is the (mod $p$) cyclotomic character and $\Sigma_\gp$ is the set of embeddings $\sigma \in \Sigma$ factoring through $F_\gp$.
\end{iconj}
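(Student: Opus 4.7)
The conjecture comprises two implications, and I would approach them in sequence, starting with the easier forward direction. For the forward direction (geometric modularity implies existence of crystalline lifts), the plan is to reduce to a statement about $p$-adic Galois representations attached to characteristic-zero Hilbert modular forms. Starting from an eigenform $f \in M_{\vec{k}}(U,E)$ giving rise to $\rho$, I would apply the method of Section~5 underlying Theorem~\ref{ithm:galois}: twist $f$ by an appropriate character to bring it into a regime where lifting is controllable, lift (the twist of) $f$ to a characteristic-zero eigenform of level prime to $p$---possibly non-algebraic---and transfer via Jacquet--Langlands to automorphic forms on a compact inner form where the Hecke modules are more tractable. The crucial input is the positivity assumption $\vec{k} \in \Xi_{\min}^+$, which via ampleness of the corresponding automorphic bundle on the Pappas--Rapoport model yields the cohomological vanishing needed for such a lift to exist. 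The resulting $p$-adic representation is de Rham at each prime $\gp$ with labelled Hodge--Tate weights matching $\vec{k}$; crystallinity follows from the lift having level prime to $p$, and the $\chi_\cyc$ twist in the conjecture accounts for the standard shift between automorphic and motivic normalizations.

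For the backward direction, given $\rho$ satisfying the local crystalline-lift conditions, the goal is to construct a weight-$\vec{k}$ eigenform producing $\rho$. I would proceed in three stages. First, invoke potential modularity (with solvable base change and descent) together with Theorem~\ref{ithm:galois} to ensure that $\rho$ is geometrically modular of \emph{some} weight $\vec{k}'$. Second, apply the weight-shifting toolkit---partial Hasse invariants $H_\sigma$ and the partial $\Theta$-operators from \cite{theta}---together with Theorem~\ref{thm:positivity} to move $\vec{k}'$ into $\Xi_{\min}^+ \cup \{\vec{0}\}$ and, ideally, all the way to $\vec{k}$. Third, and most crucially, one must verify that the weights reachable by such moves are exactly those cut out by the crystalline-lift conditions on $\rho|_{G_{F_\gp}}$; the substantive content is the production of a companion form whenever the predicted splitting of $\rho|_{G_{F_\gp}}$ occurs.

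The main obstacle is this third step, and specifically the partial weight one situations where some $k_\sigma = 1$. In that regime the automorphic bundle ceases to be ample, characteristic-zero lifting may genuinely fail, and the existence of a companion form becomes a fine reducibility property of $\rho|_{G_{F_\gp}}$ of the kind Edixhoven characterized when $F = \QQ$. I would attempt to leverage the $\gp$-ordinary analysis of \S\ref{ssec:ord}, in combination with the ramified $\Theta$-operators from \cite{theta}, to produce companion forms at least in the real-quadratic ramified setting addressed later in the paper, and expect that a fully general proof requires substantially new input comparable to what is needed for the algebraic weight part of Serre's conjecture in the ramified setting---which is why the statement is offered as a conjecture rather than a theorem.
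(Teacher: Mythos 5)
This statement is Conjecture~A of the paper, and the paper does not prove it (nor claims to): it is the geometric Serre weight conjecture, with only partial results established in Sections~6--7. You correctly acknowledge this at the end of your write-up, so your text is really a strategy sketch rather than a proof; I will assess it on those terms.

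The most serious issue is in your treatment of the ``forward'' direction (geometric modularity $\Rightarrow$ crystalline lift), which you describe as the easier one. The characteristic-zero lift produced by the argument of Section~5 does \emph{not} have weight $\vec{k}$. What actually happens is: (i) $f$ is first multiplied by a high power of partial Hasse invariants to make the weight sufficiently large (so that the ampleness of $\CA^{\min}_{\vec\ell+\vec\delta,\vec{0},E}$ from Lemma~\ref{lem:ample} gives the needed $H^1$-vanishing), and \emph{only then} lifted to characteristic zero; (ii) the resulting characteristic-zero form is non-algebraic unless the $k_\sigma$ all have the same parity, so it carries no associated Galois representation of its own; and (iii) the actual construction of $\rho_f$ goes through a Jacquet--Langlands transfer down to parallel weight $\vec{2}$ with level $U'_B$ that \emph{does} involve primes above $p$. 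None of these steps yields a crystalline lift of $\rho|_{G_{F_\gp}}$ with labelled Hodge--Tate weights $(0,k_\sigma-1)$: the lift in (i) has larger weights, and the lift in (iii) has weight $\vec{2}$ and is only potentially crystalline. The claim that ``crystallinity follows from the lift having level prime to $p$'' simply does not hold for the lift the argument produces. Indeed, even the local statement that geometric modularity of weight $\vec{k}$ constrains $\rho|_{G_{F_\gp}}$ is precisely the difficult ``only if'' direction of Conjecture~\ref{conj:geomweights2}, and where the paper does prove such results (Theorems~\ref{thm:3toPcrys} and~\ref{thm:2crys}) it does so by (a) pushing to regular weights via $\Theta$-operators, (b) assuming Conjecture~\ref{conj:defalggeom}(2) to pass to algebraic modularity, and (c) invoking Gee--Liu--Savitt's integral $p$-adic Hodge theory via Theorem~\ref{thm:elimination} --- an entirely different mechanism from reading off Hodge--Tate weights of a characteristic-zero lift in weight $\vec{k}$.

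Your sketch of the ``backward'' direction is consistent in outline with the paper's philosophy: weak modularity plus weight shifting plus a companion-form/crystalline-lift equivalence. You are right that the partial weight one phenomena (and, more generally, weights on the boundary of $\Xi^+_{\min}$) are where the real difficulty lies, and that Section~7's combination of ordinary-form analysis and ramified $\Theta$-operators is the relevant toolkit. But be aware that even in the regular-weight regime your stage three is nontrivial: passing from ``algebraically modular'' to ``geometrically modular'' in a fixed weight $\vec{k} \in \ZZ^\Sigma_{\ge 2}$ is Theorem~\ref{thm:defalggeom}(2), and it uses Jacquet--Langlands and a lifting argument, not just Hasse-invariant shifts; and the converse (geometric $\Rightarrow$ algebraic, needed for your elimination) is currently known only conditionally or in special cases. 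So ``ideally, all the way to $\vec{k}$'' glosses over the entire Breuil--M\'ezard / GLS-type input on the Galois side together with the Taylor--Wiles-type hypotheses needed to apply it.

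In short: your outline tracks the paper's motivations, but the forward-direction argument as written would not go through, and the backward direction elides the hardest steps. Since this is an open conjecture, no one expects a complete proof, but the forward direction should not be presented as something that follows from the Section~5 lifting argument.
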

See \S\ref{subsec:cryslift} for notions from $p$-adic Hodge theory, including the definition of labelled weights, and
Conjecture~\ref{conj:geomweights} for a more refined version of the conjecture covering all weights $\vec{k}$.

In \S\ref{ssec:galoistwist}, we discuss what happens as we vary 
$\vec{m}$ (the weight component we assumed here to be trivial), and in particular its interaction with twisting and with partial $\Theta$-operators.

In \S\ref{ssec:algmod}, we discuss the relation with the notion of being modular of weight $\vec{k}$ in the sense of \cite{BDJ} and its generalizations, which we call {\em algebraic modularity}. In particular, we conjecture that the notions are equivalent for $\vec{k} \in \ZZ_{\ge 2}^\Sigma \cap \Xi_{\min}$.  Furthermore, using ingredients of the proof of Theorem~\ref{ithm:galois}, we prove one direction of this equivalence.  (For partial results in the other direction, under the assumption that $p$ is unramified in $F$, see \cite{SY}.)
More precisely, we prove the following (see Theorem~\ref{thm:defalggeom}), which allows one to transfer algebraic modularity results of Gee and others (\cite{GLS, GK, JN} into ones towards Conjecture~\ref{iconj}.
\begin{ithm} \label{ithm:alggeommod} If $\rho: \Gal(\overline{F}/F) \to \GL_2(\Fpbar)$ is irreducible and algebraically modular of weight $\vec{k} \in \ZZ_{\ge 2}^\Sigma$, then $\rho$ is geometrically modular of weight $\vec{k}$.
\end{ithm}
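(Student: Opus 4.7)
The plan is to exploit that when $\vec{k} \in \ZZ_{\ge 2}^\Sigma$ the relevant Serre weights lift to characteristic zero as algebraic representations, so algebraic modularity yields a classical cohomological eigenform whose Galois representation reduces to $\rho$. We then transfer this eigenform via Jacquet--Langlands to a classical Hilbert modular form, and show that its mod $p$ reduction on the Pappas--Rapoport model is a geometric eigenform of weight $\vec{k}$. This effectively runs the proof of Theorem~\ref{thm:galois} in reverse, with the Jacquet--Langlands correspondence again serving as the main bridge.

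More precisely, algebraic modularity provides a quaternion algebra $B/F$, an open compact $U \subset B^\times(\AA_{F,\f})$ hyperspecial at primes over $p$, and a Serre weight $\sigma = \sigma_{\vec{k},\vec{m}}$ (for a suitable choice of $\vec{m}$ compatible with $\det\rho$) such that $\rho$ arises from a Hecke eigensystem occurring in the space of $\sigma$-valued mod $p$ automorphic forms on $B$ (or in cohomology with coefficients in the associated local system). Since $k_\sigma \geq 2$ for every $\sigma$, the Serre weight $\sigma$ is the reduction of an $\CO$-lattice inside an algebraic representation $V_{\vec{k},\vec{m}}$ of $\prod_{v \mid p}\GL_2(\CO_{F_v})$. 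A standard Deligne--Serre lifting argument -- applied to the torsion-free part of the relevant cohomology, or directly to the space of algebraic modular forms on $B$ in the definite case -- lifts the mod $p$ eigensystem to a characteristic zero eigensystem. The outcome is an automorphic representation $\pi$ of $B^\times(\AA_F)$, unramified at primes above $p$, whose infinite component has weight $(\vec{k},\vec{m})$, and whose associated $p$-adic Galois representation reduces to $\rho$.

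Applying Jacquet--Langlands (Section~2), we transfer $\pi$ to an automorphic representation $\pi'$ of $\GL_2(\AA_F)$ of the same weight and (prime-to-$p$) level, carrying the same Galois representation. A corresponding classical Hilbert modular eigenform $f_0$ is a section of the relevant automorphic line bundle over a finite extension $K/\QQ_p$. Rescaling so that $f_0$ is $p$-integral but not divisible by a uniformizer of $K$, the reduction $\bar f_0 \in M_{\vec{k},\vec{m}}(U,E)$ on the Pappas--Rapoport model is a non-zero eigenform for the Hecke operators away from the level and $p$. By Theorem~\ref{thm:galois}, its associated mod $p$ Galois representation agrees with $\rho$ at all but finitely many Frobenii; Chebotarev together with the irreducibility of $\rho$ then forces the two to be isomorphic, yielding geometric modularity of $\rho$ in weight $\vec{k}$.

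The main obstacle is the lifting step: one must guarantee that the characteristic zero lift $\pi$ has the prescribed local behavior at primes above $p$ -- unramified with Satake parameters compatible with $\rho|_{\Gal(\overline{F}_\gp/F_\gp)}$ -- and that its reduction recovers $\rho$ and not merely some quotient. This is exactly where the hypothesis $\vec{k} \in \ZZ_{\ge 2}^\Sigma$ becomes indispensable, since it places $\sigma_{\vec{k},\vec{m}}$ within the reach of classical algebraic automorphic methods (and in particular allows Deligne--Serre to apply). A secondary technical point -- ensuring that $\bar f_0$ is non-zero as a section in the Pappas--Rapoport formalism and carries weight $(\vec{k},\vec{m})$ on the nose rather than in some partial-Hasse-invariant shift -- is handled by the $q$-expansion principle in conjunction with the chosen integral normalization.
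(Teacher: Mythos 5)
Your proposal follows essentially the same route as the paper's proof of Theorem~\ref{thm:defalggeom}(2): lift the mod $p$ eigensystem on the quaternionic side to characteristic zero, transfer it to $\GL_2$ via Jacquet--Langlands, and reduce the resulting Hilbert modular eigenform mod $p$. That said, there are two places where you are glossing over real work, and one misstatement.

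The main gap is in the lifting step. You write that a "standard Deligne--Serre lifting argument -- applied to the torsion-free part of the relevant cohomology" produces a characteristic zero eigensystem. But passing to the torsion-free quotient of $H^1(Y_U^B,\mathcal{D}_{\vec{k},\CO})$ does not by itself guarantee that the eigensystem you started with still occurs there: the original class lives in $H^1(Y_U^B,\mathcal{D}_{\vec{k},E})$, and it could in principle lie in the image of the connecting map from $H^2(Y_U^B,\mathcal{D}_{\vec{k},\CO})[\varpi]$ rather than in the image of $H^1(Y_U^B,\mathcal{D}_{\vec{k},\CO})$. The paper closes this gap by showing that $H^0(Y_U^B,\mathcal{D}_{\vec{k},E})_{\gm}$ and $H^2(Y_U^B,\mathcal{D}_{\vec{k},E})_{\gm}$ both vanish after localizing at the maximal ideal $\gm$ of the Hecke algebra attached to $\rho$ (by a strong approximation/Chebotarev argument established in the proof of Lemma~\ref{lem:JHfactors}); this uses the \emph{irreducibility} of $\rho$ in an essential way, and it is here -- not only in the final step where you invoke Chebotarev to identify $\rho$ with $\rho_{\bar f_0}$ -- that the hypothesis does its work. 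Without this localized vanishing, the Deligne--Serre argument simply does not apply.

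A secondary point: when the components of $\vec{k}$ have mixed parities, there is no $\GL_2(\CO_{F,p})$-equivariant smooth integral model of $D_{\vec{k},\vec{m}}$ in the sense needed to form the locally constant sheaves defining quaternionic cohomology over $\CO$. The paper circumvents this by working with $D_{\vec{k},\CO}$ (carrying an action of $\CO_{B,(p)}^\times$ rather than $\CO_{B,p}^\times$) and twisting back and forth by the characters $e_\xi$ of \S\ref{ssec:twist}; your sketch suppresses this twist entirely, and the assertion that the Serre weight is "the reduction of an $\CO$-lattice inside an algebraic representation $V_{\vec{k},\vec{m}}$ of $\prod_{v|p}\GL_2(\CO_{F_v})$" is not how the integral structure is set up. Relatedly, your statement that the characteristic zero lift $\pi$ "carries an associated $p$-adic Galois representation reducing to $\rho$" is simply false for non-paritious $\vec{k}$ (this is precisely the obstruction that motivated the approach of the paper), though it does no real damage here because your final conclusion goes through the mod $p$ representation of $\bar f_0$ via Theorem~\ref{thm:galois} rather than through any purported representation attached to the lift.
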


We also present a conjectural converse (Conjecture~\ref{conj:ord}) to our result on Galois representations associated to ordinary forms, i.e., that if $\rho$ is as in Conjecture~\ref{iconj}, then it arises from a form $f$ of weight $\vec{k}$ which is ordinary at all primes $\gp$ dividing $p$ such that $\rho|_{\Gal(\overline{F}_\gp/F_\gp)}$ has the requisite shape.  We also explain how results in this direction follow from analogous ones in the context of algebraic modularity.

In Section~7, we prove results towards Conjecture~\ref{iconj} in the case of partial weight one, where $F$ is a real quadratic field in which $p$ ramifies.  Note that such weights are outside the setting where results on algebraic modularity apply, so we use the approach developed in \cite{DS1} and \cite{HWPhD}.  The idea is to relate the properties of crystalline liftability and geometric modularity in these irregular weights to those properties in regular weights, so that one can apply the results of Gee and others on algebraic Serre weight conjectures.

Before specializing to the case of real quadratic $F$, we generalize the results we need from \cite{DS1} on stabilized eigenforms to the case where $p$ is ramified in $F$.  We also generalize results of \cite{GLS} and \cite{hanneke_new} on reductions of crystalline representations to the case of irregular weight. In \S\ref{subsec:hodge}, we specialize the $p$-adic Hodge theory results to the ramified quadratic case in order to relate crystalline liftability in irregular and regular weights.
Finally in the last section, we use partial Hasse invariants and 
$\Theta$-operators to obtain analogous relations for geometric modularity.  Putting everything together gives the following result:
\begin{ithm} \label{ithm:quadratic} Let $F$ be a real quadratic field in which $p$ is ramified, $\gp$ the prime of $F$ dividing $p$, and $\vec{k} = (1,w) \in \ZZ^\Sigma$ for some 
$w \in [2,p]$.  Suppose that $\rho$ is irreducible, geometrically modular of some weight, and satisfies a Taylor--Wiles hypothesis.  If $\rho|_{\Gal(\overline{F}_\gp/F_\gp)}$ has a crystalline lift of weight $\vec{k}$, and is reducible if $w = 2$, then $\rho$ is geometrically modular of weight $\vec{k}$.
\end{ithm}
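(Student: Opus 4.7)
The plan is to reduce the theorem, by weight-shifting on both the Galois and the automorphic sides, to the case of regular weight, where the results of Gee and his collaborators on algebraic Serre weight conjectures apply; these are then transferred to geometric modularity via Theorem~\ref{ithm:alggeommod}. This reduction is necessary because partial weight one lies outside the cohomological range in which algebraic modularity is defined. The argument naturally splits into three stages: a $p$-adic Hodge theory step converting the given irregular-weight crystalline lift into regular-weight crystalline lifts; an application of algebraic Serre weight results in those regular weights to produce algebraic modularity; and a descent back to weight $(1,w)$ on the automorphic side using partial Hasse invariants and partial $\Theta$-operators.

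First I would exploit the hypothesis that $\rho|_{G_{F_\gp}}$ admits a crystalline lift of weight $(1,w)$. Using the ramified quadratic specialization in \S\ref{subsec:hodge} of the $p$-adic Hodge theory developed earlier in the paper, together with the extension to irregular weight of the results of \cite{GLS} and \cite{hanneke_new} on reductions of crystalline representations that is carried out in Section~7, this irregular lift is translated, possibly after twist by a power of $\chi_{\cyc}$, into crystalline lifts of $\rho|_{G_{F_\gp}}$ in a specific collection of regular weights $\vec{k}' \in \ZZ_{\ge 2}^{\Sigma}$. In the borderline case $w=2$, where the target $(1,2)$ lies on the edge of the minimal cone $\Xi_{\min}^+$, this translation is obstructed in general, and the auxiliary reducibility hypothesis on $\rho|_{G_{F_\gp}}$ is exactly what removes the obstruction.

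Next, the hypothesis that $\rho$ is geometrically modular of some weight supplies, via Theorem~\ref{ithm:galois}, a Galois-theoretic eigensystem to which algebraic Serre weight results can be applied. Combining the Taylor--Wiles hypothesis with the work of Gee and his collaborators (\cite{GLS, GK, JN}), we conclude that $\rho$ is algebraically modular of weight $\vec{k}'$. Since $\vec{k}' \in \ZZ_{\ge 2}^{\Sigma}$, Theorem~\ref{ithm:alggeommod} upgrades this to geometric modularity of weight $\vec{k}'$, yielding a mod $p$ Hilbert modular eigenform $g$ of weight $\vec{k}'$ whose associated Galois representation is isomorphic to $\rho$.

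Finally I would descend in weight from $\vec{k}'$ to $(1,w)$ using the ramified partial $\Theta$-operators of \cite{theta} and multiplication by partial Hasse invariants; both operations preserve the prime-to-$p$ Hecke eigensystem and hence the associated Galois representation. The technical heart of the argument, and the main obstacle, is the combinatorial compatibility between the two sides: one must verify that the list of regular weights $\vec{k}'$ output by the Hodge-theoretic step is precisely the set of weights from which $(1,w)$ is reachable by combining $\Theta$-operators and partial Hasse invariants, and that the eigensystem persists along the chosen descent (in particular does not vanish upon application of $\Theta$). This is where the ramified quadratic setting exhibits genuinely new behaviour beyond the unramified case treated in \cite{DS1}, and where the ramified $\Theta$-operators and the refined positivity statement of Theorem~\ref{thm:positivity} play a decisive role.
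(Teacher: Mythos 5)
Your three-stage outline — Hodge-theoretic translation to regular weight, application of the algebraic Serre weight results plus Theorem~\ref{ithm:alggeommod}, then a weight-shifting argument back to $(1,w)$ — matches the paper's architecture for Theorems~\ref{thm:3toPmod} and~\ref{thm:2mod}, and you correctly flag both the $w=2$ subtlety and the role of Theorem~\ref{thm:positivity}. However, there is a genuine gap in the third stage. You describe the final step as a \emph{descent} from a regular weight $\vec{k}'$ to $(1,w)$ ``using the ramified partial $\Theta$-operators\,\ldots\,and multiplication by partial Hasse invariants,'' but both of those operations \emph{raise} the weight (adding $\vec{h}_\sigma$ or $\vec{h}_\sigma+2\vec{e}_\sigma$), so they cannot be applied to a form of weight $\vec{k}'$ to reach the lower weight $(1,w)$. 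There is no direct descent.

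What the paper actually does is produce eigenforms of \emph{two} regular weights — $(2,w+1)$ with a shifted $\vec{m}'$ and $(2,w-1)$ with the original $\vec{m}$ (the Hodge-theoretic input in Proposition~\ref{prop:repshape} is tailored to yield exactly these) — and then pushes both \emph{up} into the common weight $(3,w)$: multiplying the first by the partial Hasse invariant $H_2$, applying $\Theta$ to the second. The crucial mechanism you are missing is the theory of \emph{strongly stabilized} eigenforms (Definitions~\ref{def:stabilized}--\ref{def:strong} and Lemma~\ref{lem:strong}): after passing to stabilized eigenforms, uniqueness forces $H_2 f' = \Theta f''$. This gives a divisibility $H_2 \mid \Theta f''$, and only \emph{then} does a cancellation theorem for $\Theta$ (Theorem~\ref{thm:thetadiv}, i.e.\ \cite[Thm.~A]{theta}, using $p\nmid w-1$) allow one to write $f'' = H_2 f$ with $f$ of weight $(1,w)$. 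Your proposal does not supply the two-weight comparison, the normalized/stabilized eigenform machinery that makes the uniqueness argument work, or the divisibility criterion from \cite{theta} that ultimately lets you ``divide'' by $H_2$ — and in the $w=2$ case there is a further $p=3$ edge case requiring an ordinary-modularity argument and $V_\gp$ that is not anticipated in your outline.
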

The Taylor--Wiles hypothesis (on the image of $\rho$; see the statements of  Theorems~\ref{thm:3toPmod} and~\ref{thm:2mod} for details) is needed in order to apply results on algebraic Serre weight conjectures.  We also prove a converse (see Theorems~\ref{thm:3toPcrys} and~\ref{thm:2crys}), assuming our conjecture that geometric implies algebraic modularity for weights $\vec{k} \in \ZZ_{\ge 2}^\Sigma \cap \Xi_{\min}$.  On the other hand, we are able to use ad hoc arguments to remove the Taylor--Wiles hypothesis from our results in this direction.

Finally, we remark that the additional reducibility hypothesis on the local Galois representation in the case $w=2$ is needed in order obtain the relation between crystalline liftability, as well as geometric modularity, in irregular and regular weights.

\subsection{Notation and conventions}
We fix a totally real field $F \neq \QQ$ throughout the paper, and let $d = [F:\QQ]$ denote its degree.  We denote its ring of integers $\CO_F$ and let $\gd \subset \CO_F$ denote its different.

For any field $L$, we write $\overline{L}$ for its algebraic closure, and we view $\Qbar$ as a subfield of $\CC$.
We let $\Sigma$ denote the set of embeddings $F \hookrightarrow \Qbar$, which we often identify with the set $\Sigma_\infty$ of embeddings $F \hookrightarrow \RR$.

We write $F_+^\times$ for the subgroup of totally positive elements of $F^\times$.  More generally, we use the subscript ``+'' to denote the set of totally positive elements of any subset of $F_\infty = F \otimes \RR = \prod_{\sigma \in \Sigma_\infty} \RR$. We extend this usage to matrices via the determinant, so for example $\GL_2(F)_+$ denotes the set of elements of $\GL_2(F)$ with totally positive determinant.

For any finite place $v$ of $F$, we write $\CO_{F,v}$ (resp.~$F_v$) for the completion of $\CO_F$ (resp.~$F$) at $v$.  We let $\A$ denote the ring of adeles of $\QQ$ and $\A_F$ the adeles of $F$, so $\A_F = F\otimes \AA = F_\infty \times \A_{F,\f}$ where $\A_{F,\f} = F\otimes \widehat{\ZZ} \subset \prod_v F_v$, the last product being over all finite places $v$ of $F$.

We fix a rational prime $p$ and an embedding $\Qbar \to \Qpbar$, and let $S_p$ be the set of primes of $\CO_F$ containing $p$. We write $\CO_{F,(p)}$ for the localization $\CO_F\otimes \ZZ_{(p)}$ of $\CO_F$ at the prime ideal $(p) = p\ZZ$ of $\ZZ$.  We let $\AA_{F,\f}^{(p)}$ denote the prime-to-$p$ finite adeles of $F$; i.e., the kernel of the natural projection $\AA_{F,\f} \to F_p:= \prod_{\gp \in S_p} F_\gp$.

For each $\gp \in S_p$, write $\FF_\gp$ for the
residue field $\CO_F/\gp$. Denote the maximal unramified subextension of $F_\gp$ by $F_{\gp,0}$, and identify it with the field of fractions of the ring of Witt vectors $W(\FF_\gp)$. Let $f_\gp$ denote the residue degree $[F_{\gp,0}:\Q_p] = [\FF_\gp:\FF_p]$, and $e_\gp$ the ramification index $[F_\gp:F_{\gp,0}]$.
We choose a uniformizer $\varpi_\gp \in \CO_{F,(p),+}$ 
such that $\varpi_\gp\CO_{F,(p)} = \gp\CO_{F,(p)}$. On the other hand, if $v\not\in S_p$, then $\varpi_v$ will simply denote a uniformizer in $\CO_{F,v}$.

We let $\Sigma_{\gp}$ denote the set of embeddings
$F_{\gp} \to \Qpbar$, and let $\Sigma_{\gp,0}$ denote 
the set of embeddings $F_{\gp,0} \to \Qpbar$
(or equivalently $W(\FF_\gp) \to W(\Fpbar)$, or
$\FF_\gp \to \Fpbar$).
We fix a choice of embedding $\tau_{\gp,0}  \in \Sigma_{\gp,0}$,
and for $i \in \Z/f_\gp\Z$, we let $\tau_{\gp,i} = \phi^i\circ\tau_{\gp,0}$ where $\phi$
is the Frobenius automorphism of $\Fpbar$ (sending $x$ to $x^p$).   We also fix an ordering 
$\sigma_{\gp,i,1},\sigma_{\gp,i,2},\ldots,\sigma_{\gp,i,e_\gp}$ of the embeddings 
$\sigma \in \Sigma_\gp$ restricting to $\tau_{\gp,i}$.
Our choice of embedding $\Qbar \to \Qpbar$ allows us to identify $\Sigma$ with $\coprod_{\gp \in S_p} \Sigma_{\gp}$, so that
$$\Sigma = \{\,\sigma_{\gp,i,j}\,|\,\gp \in S_p, i \in \ZZ/f_\gp\ZZ, 1 \le j \le e_\gp\,\}.$$
We also let $\Sigma_0 = \coprod_{\gp \in S_p} \Sigma_{\gp,0}
= \{\,\tau_{\gp,i}\,|\,\gp \in S_p, i \in \ZZ/f_\gp\ZZ\,\} $,
and define a permutation $\varphi$ of $\Sigma$ by 
$\varphi(\sigma_{\gp,i,j}) = \sigma_{\gp,i,j+1}$ if $j \neq e_\gp$ and
$\varphi(\sigma_{\gp,i,e_\gp}) = \sigma_{\gp,i+1,1}$.

Let $K$ be a finite extension of $\Q_p$ (in $\Qpbar$), which we assume is sufficiently large to contain the images
of all $\sigma \in \Sigma$; let $\CO$ denote
its ring of integers, $\varpi$ a uniformizer, and $E$ its residue field.  

We write $\vec{k}$ for the element of $\ZZ^{\Sigma}$ (or $\ZZ^{\Sigma_\infty}$ or $\ZZ^{\Sigma_\gp}$) with $\sigma$-component $k_\sigma$.  We often abbreviate $k_{\sigma_{\gp,i,j}}$ by $k_{\gp,i,j}$. For $\vec{k} \in \ZZ^{\Sigma}$, we write $\vec{k}_{\gp}$ for its image in $\ZZ^{\Sigma_{\gp}}$ under the canonical projection (induced by the inclusion $\Sigma_\gp\hookrightarrow \Sigma$).
For $\sigma \in \Sigma$, we let $\vec{e}_\sigma$ denote the standard basis vector associated to $\sigma$ (so $e_{\sigma,\sigma'} = 1$ or $0$ according to whether or not $\sigma = \sigma'$).  A special role\footnote{As weights of partial Hasse invariants; see \S\ref{ssec:Hasse}} will be played by certain elements $\vec{h}_\sigma \in \ZZ^\Sigma$, defined as follows: Let $\vec{h}_\sigma = \nu_\sigma \vec{e}_{\varphi^{-1}\sigma} - \vec{e}_\sigma$, where $\nu_{\gp,i,j} = p$ or $1$ according to whether or not $j=1$.

For a non-zero ideal $\gn$ of $\CO_F$, we write $U(\gn)$ and $U_1(\gn)$ for the following standard open compact subgroups\footnote{By contrast, $U_0(\gn)$ (for $\gn$ prime) will be defined below relative to an open compact subgroup $U$, and only used in the context of quaternion algebras; see below.} of $\GL_2(\AA_{F,\f})$:  $U(\gn)$ denotes the kernel of the
projection $\GL_2(\widehat{\CO}_F)\to \GL_2(\CO_F/\gn)$ and
$$U_1(\gn) = \left\{\left.\,\smat{a}{b}{c}{d} \in \GL_2(\widehat{\CO}_F) \,\right|\,
 c,d-1 \in \gn\widehat{\CO}_F\,\right\}.$$
When we consider more general open compact subgroups $U$ of $\GL_2(\AA_{F,\f})$, they are often assumed to be of the form $\prod_v U_v$ with each $U_v \subset \GL_2(\CO_{F,v})$, in which case we write $U = U_pU^p$, where $U_p = \prod\limits_{\gp \in S_p} U_\gp \subset \GL_2(\CO_{F,p})$ and $U^p = \prod\limits_{v \not\in S_p} U_v \subset \GL_2(\AA_{F,\f}^{(p)})$.

For a (non-split) quaternion algebra $B$ over $F$, we let $\Sigma^B$ denote the set of places of $F$ at which $B$ is ramified, and let $\Sigma^B_\infty = \Sigma^B \cap \Sigma_\infty$. We choose $F_\sigma$-algebra isomorphisms $B_\sigma \cong M_2(\RR)$ for each $\sigma \in \Sigma_\infty - \Sigma^B$, and $B_\sigma\otimes_{\RR}\CC \cong M_2(\CC)$ for each $\sigma \in \Sigma^B_\infty$.  We also fix a maximal order $\CO_B$ in $B$ and $\CO_{F,v}$-algebra isomorphisms $\CO_{B,v} \cong M_2(\CO_{F,v})$ for each finite $v \not\in \Sigma^B$, and write $\CO_{B,(p)}$ for $\CO_B\otimes \ZZ_{(p)}$. We let $B_{\AA} = B\otimes \AA = B_\infty \times B_{\f}$, where $B_\infty = B\otimes \RR = \prod_{\sigma\in \Sigma_\infty} B_\sigma$ and $B_{\f} = B\otimes\widehat{\ZZ} = B\otimes_F \A_{F,\f} \subset \prod B_v$, and write $B_{\f}^{(p)}$ for $B\otimes_F \A^{(p)}_{F,\f} \subset \prod_{v\not\in S_p} B_v$.  For $\gamma \in B$, we write $\gamma_\infty$ (resp.~$\gamma_\f$, $\gamma_\f^{(p)}$) for its image in $B_\infty$ (resp.~$B_\f$, $B_{\f}^{(p)}$).  We write 
$\det$ (resp.~$\tr$) for the reduced norm (resp.~trace) map 
$B \to F$, as well as its extension to maps $B_v \to F_v$, $B_{\infty} \to F_\infty$, etc., and again use the subscript $+$ for subsets of $B$ (or $B_\infty$) to denote those elements with totally positive image under $\det$.  We again generally only consider open compact subgroups $U$ of $B_{\f}^\times$ of the form $\prod_v U_v$ with $U_v \subset \CO_{B,v}^\times$ for all $v$, and write $U = U_pU^p$ with $U_p \subset \CO_{B,p}^\times = \prod_{\gp\in S_p} \CO_{B,\gp}^\times$ and $U^p \subset \prod_{v\not\in S_p} \CO_{B,v}^\times \subset (B_{\f}^{(p)})^\times$. For $v \not\in \Sigma^B$ such that $U_v = \CO_{B,v}^\times$, we let $U_0(v) = U \cap hUh^{-1}$, where $h$ is any element of $\CO_{B,v}$ such that $\det(h)$ is a uniformizer in $\CO_{F,v}$ (so $h \in B_v^\times \subset B_{\f}^\times$, and $U_0(v)$ only depends up to conjugacy in $\CO_{B,v}^\times$ on the choice of $h$).

For any field $L$, we let $G_L$ denote its absolute Galois group.
If $L$ has characteristic different from $p$, then we let $\chi_{\cyc}:G_L \to \Z_p^\times$ denote the $p$-adic cyclotomic character (or the mod $p$ cyclotomic character $G_L \to \F_p^\times$ if clear from the context).
We use $\phi$ (as above) to denote the Frobenius automorphism in $G_{\F_p}$, as well as its lifts to automorphisms of $W(\Fpbar)$ and its field of fractions.  

If $L = F_v$ (or a finite extension thereof), then we write $I_L$ for the inertia subgroup of $G_L$.  We let $\Frob_v$ denote the {\em geometric} Frobenius element of $G_{F_v}/I_{F_v} = G_{\CO_F/v}$. (In particular, if 
$\gp \in S_p$, then $\Frob_\gp = \phi^{-f_{\gp}}$.)  We often view $G_{F_v}$ as a subgroup of $G_F$ via the inclusion $G_{F_v} \hookrightarrow G_F$, determined up to conjugacy in $G_F$ by a choice of $F$-embedding $\overline{F}\hookrightarrow \overline{F}_v$.  We let $W_{F_v}$ denote the Weil group of $F_v$, i.e., the preimage of $\langle \Frob_v \rangle$ in $G_{F_v}$, and normalize the Artin isomorphism $W_{F_v}^{\ab} \cong F_v^\times$ of local class field theory so that lifts of $\Frob_v$ correspond to uniformizers in $F_v^\times$.  In particular if $v\not\in S_p$, then the restriction of $\chi_{\cyc}$ corresponds to $|\cdot|_v$ (where $\varpi_v$ has absolute value $\Nm_{F/\Q}(v)^{-1}$).
We normalize the local Langlands correspondence for $\GL_2$ so that the principal series representation 
$I(\psi_1|\cdot|^{1/2},\psi_2|\cdot|^{1/2})$ of $\GL_2(F_v)$
(for $\psi_2\neq \psi_1|\cdot|^{\pm1}$) corresponds to the representation $\psi_1 \oplus \psi_2$ of $W_{F_v}$ (conflating continuous characters of $W_{F_v}$ and $F_v^\times$).

For $\sigma \in \Sigma_\gp$, we let $\omega_\sigma:I_{F_\gp} \to \Fpbar^\times$ denote the fundamental character associated to $\Sigma$, defined as the composite
$I_{F_\gp} \onto \CO_{F,\gp}^\times \onto \F_{\gp}^\times \stackrel{\sigma}{\hookrightarrow} \Fpbar^\times$
(where the first map is induced by the Artin isomorphism).  Note in particular that $\omega_\sigma$ depends only on the restriction of $\sigma$ to $F_{\gp,0}$.

For $\alpha \in F^\times$ and $\vec{k} \in \ZZ^\Sigma$, we let $\alpha^{\vec{k}}$ denote $\prod_\sigma \sigma(\alpha)^{k_\sigma} \in \Qbar^\times$. We use the same notation similarly in various related settings; for example, if $\alpha \in \CO_{F,(p)}^\times$, then we may view $\alpha^{\vec{k}}$ as an element of $\CO^\times$, or more generally in $R^\times$ for an $\CO$-algebra $R$ that should be clear from the context. We sometimes further extend this notation to allow exponents in $\frac{1}{2}\ZZ^\Sigma$ for totally positive $\alpha$.

Note that if $\vec{\ell} \in \frac{1}{2}\ZZ^\Sigma$ and $R$ is a an $\CO$-algebra such that $p^nR = 0$, then there exist continuous characters $\xi:(\AA_{F,\f}^{(p)})^\times \to R^\times$ such that $\xi(\alpha) = \alpha^{\vec{\ell}}$ for all $\alpha \in \CO_{F,(p),+}^\times$.  We make use of such characters, especially for $R = E$, in various constructions and arguments. We often use $e_\xi$ to denote a basis for a rank one $R$-module on which $\GL_2(\AA_{F,\f}^{(p)})$ or $(B_{\f}^{(p)})^\times$ acts via $\xi\circ\det$. Note that if $\vec{\ell} \in \ZZ^\Sigma$, then $\xi$ extends uniquely to a continuous character of $\AA_F^\times/F^\times F_{\infty,+}^\times$, which we also denote by $\xi$.  Furthermore, we have $\xi(u) = \prod_{\sigma\in \Sigma_\gp} \sigma(u)^{-\ell_\sigma}$ for all $\gp \in S_p$ and $u \in \CO_{F,\gp}^\times$. In particular, if $R=E$, then the restriction to $I_{F_{\gp}}$ of the character of $G_F$ corresponding to $\xi$ via class field theory is $\prod_{\sigma\in \Sigma_\gp} \omega_\sigma^{-\ell_\sigma}$.

For notation related to $p$-adic Hodge theory, see \S\ref{subsec:cryslift}.  We mention here only our convention that $\chi_{\cyc}:G_{\Q_p} \to \QQ_p^\times$ has Hodge--Tate weight $1$. 
\begin{remark} \label{rmk:conventions} We caution that our conventions in this paper with regard to Hecke actions differ by a twist from those in the prequel~\cite{DS1}. More precisely, the definition of the action of $g \in \GL_2(\A_{F,\f}^{(p)})$ on spaces of Hilbert modular forms in \cite[\S4.2]{DS1} (for $p$ unramified in $F$) involves a factor of $||\det(g)||$ which is not included here, so for example, our Hecke operator $T_v$ is $\Nm_{F/\Q}(v)$ times the one denoted $T_v$ in \cite{DS1}.  The reason for this is that the normalizations in \cite{DS1} were chosen for consistency with the more familiar setting of classical modular forms, but the ones in this paper were chosen for consistency with a more general framework of automorphic forms, including Hilbert modular forms over $\CC$ of non-paritious weight, of which we make repeated use.
In the same vein, our representation $D_{\vec{k},\vec{m},\Fpbar}$ of $\GL_2(\CO_F/p)$ is the twist by the character $\prod_\sigma (\sigma\circ\det^{-1})$ of the one denoted $V_{\vec{k},\vec{m}}$ in \cite{DS1}.
\end{remark}

\section{Hilbert modular forms over $\CC$} \label{sec:HMFC}

\subsection{Hilbert modular varieties over $\CC$} \label{ssec:HMVC}
For an open compact subgroup $U$ of $\GL_2(\A_{F,\f})$,
the Hilbert modular variety of level $U$ over $\CC$ is defined as the quotient
$$Y_U = \GL_2(F) \backslash \GL_2(\A_F) / UU_\infty =
\GL_2(F)_+ \backslash (\uhp^{\Sigma_\infty} \times \GL_2(\A_{F,\f})/U),
$$
where $U_\infty = \prod_{\sigma\in \Sigma_\infty} \R^\times\SO_2(\RR)
 \subset \GL_2(F_\infty)$, $\uhp$ is the complex upper half-plane, and $\GL_2(F)_+ \subset \GL_2(\RR)_+^{\Sigma_\infty}$ 
acts componentwise on $\uhp^{\Sigma_\infty}$ via linear fractional transformations.  Note that we have a natural right action of 
$\GL_2(\A_{F,\f})$ by right-multiplication on the inverse system of the $Y_U$ (for varying $U$).  More precisely, if 
$U$ and $U'$ are open compact subgroups of $\GL_2(\A_{F,\f})$,
and $h \in \GL_2(\A_{F,\f})$ is such that $U' \subset hUh^{-1}$,
then we have the map $\rho_h: Y_{U'} \to Y_U$ induced by
$(\tau, gU') \mapsto (\tau, ghU)$, and these satisfy
$\rho_h\circ\rho_{h'} = \rho_{h'h}$ 
(for $U'' \subset h'U'h'^{-1}$).
Furthermore the determinant map induces a bijection between the set of connected components of $Y_U$ and the set
$$\A_F^\times/F^\times F_{\infty,+}^\times \det(U),$$
compatibly with the obvious action of $\GL_2(\A_{F,\f})$.

If $U$ is sufficiently small, then $Y_U$ inherits the
structure of a $d$-dimensional complex manifold from $\uhp^{\Sigma_\infty}$.
Furthermore $Y_U$ also naturally has the structure of a quasi-projective variety over $\QQ$, but we shall be primarily interested in the special fibre of an integral model over a (sufficiently large) finite extension of $\QQ_p$, and only in the case when the level $U$ is prime to $p$ (see \S\ref{ssec:PRmodel}, below). To facilitate statements relating the complex and $p$-adic settings, we shall often view $Y_U$ as a variety over $\Qbar$.

The Hilbert modular variety $Y_U$ admits a minimal compactification $Y_U^{\min}$ (see for example \cite{compact}).
The complement of the open immersion
$i:Y_U \hookrightarrow Y_U^{\min}$  is the finite set of
{\em cusps} $C_U$ of $Y_U$, in canonical bijection with
$$B(F)_+\backslash \GL_2(\A_{F,\f})/U$$
where $B(F)_+$ denotes the subgroup of upper-triangular
matrices in $\GL_2(F)_+$.  We let $j:C_U \hookrightarrow Y_U^{\min}$ denote the closed immersion complementing $i$.

The description of the completion of
$Y_U^{\min}$ at each cusp is given for example in \cite[\S7.2]{theta}.  In particular if $U = U(\gn)$ for
some non-zero ideal $\gn \subset \CO_F$, then the completion 
$\CO_{Y_U^{\min},x}^{\wedge} = (i_* \CO_{Y_U})_x^{\wedge}$
at each cusp $x$ of $Y_U^{\min}$ is isomorphic to
$$\left\{\left. \sum_{m\in \gn^{-1}M_{x,+}\cup \{0\}} r_m q^m\,\right|\,
  \mbox{$r_m \in \CC$, $r_{m} = r_{\nu m}$ for all $\nu
 \in V_{\gn,+}$, $m \in \gn^{-1}M_{x,+} \cup \{0\}$} \right\},$$
where $M_x$ is a certain oriented invertible $\CO_F$-module and
$V_{\gn} = \ker(\CO_F^\times\to (\CO_F/\gn)^\times)$.
Similarly if $U = U_1(\gn),$
then we have the same description of each $\CO_{Y_U^{\min},x}^{\wedge}$, but with $\gn^{-1}M_x$ replaced by $M_x$
and $V_{\gn}$ replaced by $\CO_F^\times$.

\subsection{Hilbert modular forms over $\CC$} \label{ssec:HMFC}
Suppose now that $\vec{k} = (k_\sigma)_{\sigma \in \Sigma_\infty}
\in \ZZ^{\Sigma_\infty}$.  For $g = \smallmat{a}{b}{c}{d}
\in \GL_2(\RR)_+$, $\tau\in \uhp$,
let $j(g,\tau) = \det(g)^{-1/2}(c\tau + d) \in \CC$.

If $U$ is sufficiently small,
then we have the complex line bundle $\CL_{\vec{k}} = \CL_{\vec{k},U}$ on 
$Y_U$ defined by (holomorphic sections of)
$$\GL_2(F)_+ \backslash (\uhp^{\Sigma_\infty}  \times \CC
 \times \GL_2(\A_{F,\f})/U),$$
the action of $\gamma \in \GL_2^+(F)$
being given by the formula
$$\gamma\cdot(\tau,z,g) = 
(\gamma_\infty(\tau),j(\gamma_\infty,\tau)^{\vec{k}}, \gamma_\f g)$$
where $j(\gamma_\infty,\tau)^{\vec{k}} = \prod_{\sigma} 
j(\sigma(\gamma),\tau_\sigma)^{k_\sigma}$ for $\tau = (\tau_\sigma)_\sigma \in \uhp^{\Sigma_\infty}$.

A section $f \in M_{\vec{k}}(U,\CC) := H^0(Y_U,\CL_{\vec{k}})$ is called a {\em Hilbert modular form} of {\em weight} $\vec{k}$ and level $U$.  The natural action of
$\GL_2(\A_{F,\f})$ on the inverse system of $Y_U$ gives rise to
a left action of $\GL_2(\A_{F,\f})$ on
$$\CM_{\vec{k}} = \varinjlim_{U} M_{\vec{k}}(U,\C).$$
More precisely for $h \in \GL_2(\A_{F,\f})$ such that 
$U' \subset hUh^{-1}$, we have the isomorphism
$\rho_h^*\CL_{\vec{k},U} \stackrel{\sim}{\to} \CL_{\vec{k},U'}$
induced by $(\tau, gU',z) \mapsto (\tau, ghU,z)$, and
hence (injective) maps $M_{\vec{k}}(U,\C) \to M_{\vec{k}}(U',\C)$
which yield the action.  Furthermore we may identify 
$M_{\vec{k}}(U,\C)$ with $\CM_{\vec{k}}^U$.

By the Koecher Principle, the direct image 
$\CL_{\vec{k}}^{\min}:= i_*\CL_{\vec{k}}$ is a coherent sheaf
on $Y_U^{\min}$, and in fact a line bundle if $\vec{k}$ is parallel (i.e., if $k_\sigma$ is independent of $\sigma$).  The completion of $\CL_{\vec{k}}^{\min}$ at each
cusp of $Y_U^{\min}$ is as described in \cite[\S7.2]{theta}.
In particular if $x$ is a cusp of $Y_{U(\gn)}^{\min}$,
then $(\CL_{\vec{k}}^{\min})^{\wedge}_x$ is isomorphic\footnote{The isomorphism depends on a choice of basis for a certain one-dimensional complex vector space depending on the cusp representative and the weight $\vec{k}$; see \cite{theta} for a more canonical description.} to
$$\left\{\left. \sum_{m\in \gn^{-1}M_{x,+}\cup \{0\}} r_m q^m\,\right|\,
  \mbox{$r_m \in \CC$, $r_{m} = 
  \nu^{\vec{k}/2}r_{\nu m}$ for all $\nu
 \in V_{\gn,+}$, $m \in \gn^{-1}M_{x,+} \cup \{0\}$} \right\}$$
as an $\CO_{Y_{U(\gn)}^{\min},x}^{\wedge}$-module
(where $\nu^{\vec{k}/2} = \prod_\sigma \sigma(\nu)^{k_\sigma/2}$).  Again the description
with $U(\gn)$ replaced by $U_1(\gn)$ is the same, but with 
$\gn^{-1}M_x$ replaced by $M_x$ and $V_{\gn}$ replaced by $\CO_F^\times$.  Note in particular that $r_0=0$ unless $\vec{k}$ is parallel.
We recall also the $q$-expansion Principle, which states that if
$C$ is any set of cusps of $Y_U^{\min}$ containing at least one cusp on each connected component, then the natural map
$$M_{\vec{k}}(U,\C) \longrightarrow \bigoplus_{x\in C}
(\CL_{\vec{k}}^{\min})^{\wedge}_x$$
is injective.

We call $f \in M_{\vec{k}}(U,\CC)$ a {\em cusp form} if the
coefficient $r_0$ of its $q$-expansion at each cusp vanishes.
Note that this holds automatically if $\vec{k}$ is not parallel,
so letting $S_{\vec{k}}(U,\CC)$ denote the space of cusp forms of weight $\vec{k}$ and level $U$, we have
$$S_{\vec{k}}(U,\CC) = H^0(Y_U^{\min},\CL_{\vec{k}}^{\sub})
 \subset H^0(Y_U^{\min},\CL_{\vec{k}}^{\min}) = M_{\vec{k}}(U,\CC),$$
where $\CL_{\vec{k}}^{\sub} = \ker(\CL_{\vec{k}}^{\min} \to j_*j^* \CL_{\vec{k}}^{\min}$) if $\vec{k}$ is parallel, and
$\CL_{\vec{k}}^{\sub}=\CL_{\vec{k}}^{\min}$ otherwise.

The action of $\GL_2(\A_{F,\f})$
on $\CM_{\vec{k}}$ restricts to one on 
$$\CS_{\vec{k}} = \varinjlim_{U} S_{\vec{k}}(U,\C),$$
and $\CS_{\vec{k}} = 0$ unless $k_\sigma \ge 1$ for all $\sigma \in \Sigma$, in which case it may be described in terms of 
cuspidal automorphic representations of weight $\vec{k}$ of 
$\GL_2(\A_F)$. More precisely, for an integer $k$ greater than
(or equal to) $1$, let $D'_{k}$ be the irreducible unitary (limit of)
discrete series representation of $\GL_2(\R)$ of weight $k$.
We then have
$$ \CS_{\vec{k}} \cong \bigoplus_{\pi \in \mathcal{C}_{\vec{k}}} 
\pi_{\f},$$ where $\mathcal{C}_{\vec{k}}$ is the set of cuspidal
automorphic representations $\pi = \pi_\infty \otimes \pi_{\f} = \otimes'_v \pi_v$ of  $\GL_2(\AA_F)$  such that $\pi_\sigma \cong D'_{k_\sigma}$ for all $\sigma\in \Sigma_\infty$.

\subsection{The Jacquet--Langlands Correspondence} \label{ssec:JL}
We shall also make use of automorphic representations
of $B_{\AA}^\times$, where $B$ is a (non-split)
quaternion algebra over $F$.  Recall that $\Sigma^B$ denotes
the set of places of $F$ at which $B$ is ramified,
and $\Sigma^B_\infty = \Sigma^B \cap \Sigma_\infty$.
Recall also that for each (finite or infinite) place $v \not\in \Sigma^B$, we have chosen an isomorphism
$B_v \cong M_2(F_v)$ of $F_v$-algebras, as well as
an isomorphism
$B_\sigma \otimes_{\RR} \CC \cong M_2(\CC)$ of $\CC$-algebras for 
for each
$\sigma \in \Sigma^B_\infty$.  For $k \ge 2$, let $D_k$ denote the representation of $B_\sigma^\times$ defined by $\det^{(2-k)/2}\otimes \Sym^{k-2}\CC^2$.  (Note that $\det(B_\sigma^\times) \subset \R_+^\times$ if $\sigma \in \Sigma^B_\infty$, so this is well-defined.)

Recall that if $v \in \Sigma^B$, then the local Jacquet-Langlands correspondence (see \cite[\S8]{GJ}) defines a bijection $\JL_v$ between the sets of (isomorphism classes of) smooth irreducible representations of $B_v^\times$ and discrete series representations of 
$\GL_2(F_v)$. In particular, if $k \ge 2$
then $\JL(D_k) = D_k'$.

Suppose now that $\vec{k} \in \ZZ^{\Sigma_\infty}$ is such that
$k_\sigma \ge 1 $ for all $\sigma \in \Sigma_\infty$ and
$k_\sigma \ge 2 $ for all $\sigma \in \Sigma_\infty^B$.
We let $\mathcal{C}_{\vec{k}}^B$ denote the set of all infinite-dimensional automorphic representations $\pi = \otimes'_v \pi_v$ of $B_\AA^\times = (B\otimes\AA)^\times$ such
that $\pi_\sigma \cong D_{k_\sigma}$ if $\sigma \in \Sigma_\infty^B$ and $\pi_\sigma \cong D'_{k_\sigma}$ if $\sigma \not\in \Sigma_\infty^B$.

The (global) Jacquet--Langlands correspondence can then be stated as follows:
\begin{theorem} \label{thm:JL} There is an injection $\JL_{\vec{k}}^B:\mathcal{C}^B_{\vec{k}} \hookrightarrow \mathcal{C}_{\vec{k}}$ such that if 
$\Pi = \otimes'_v \Pi_v \in \mathcal{C}^B_{\vec{k}}$, then
$\JL_{\vec{k}}^B(\Pi) = \pi = \otimes'_v \pi_v$ is characterized by
\begin{itemize}
\item $\pi_v \cong \Pi_v$ for all $v\not\in \Sigma^B$;
\item $\pi_v \cong \JL_v(\Pi_v)$ for all $v \in \Sigma^B$.
\end{itemize}
Furthermore the image of $\JL_{\vec{k}}^B$ is the set of all 
$\pi = \otimes'_v \pi_v$ such that $\pi_v$ is a discrete
representation of $\GL_2(F_v)$ for all $v \in \Sigma^B$.
\end{theorem}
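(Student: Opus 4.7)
The plan is to deduce this from the classical global Jacquet--Langlands correspondence in the form established in \cite{JL}: there is a bijection between the set of (isomorphism classes of) infinite-dimensional automorphic representations $\Pi = \otimes'_v\Pi_v$ of $B_\AA^\times$ and the set of cuspidal automorphic representations $\pi = \otimes'_v\pi_v$ of $\GL_2(\AA_F)$ whose local component $\pi_v$ is a discrete series representation of $\GL_2(F_v)$ for every $v \in \Sigma^B$, characterized by $\pi_v \cong \Pi_v$ at $v \not\in \Sigma^B$ (using our fixed identifications $B_v \cong M_2(F_v)$) and $\pi_v \cong \JL_v(\Pi_v)$ at $v \in \Sigma^B$. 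Once this is taken as input, the task is simply to cut down both sides by the archimedean conditions defining $\mathcal{C}^B_{\vec{k}}$ and $\mathcal{C}_{\vec{k}}$, and verify that they correspond under the bijection.

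The key step is the archimedean matching. At $\sigma \in \Sigma_\infty \setminus \Sigma^B_\infty$ the local compatibility $\pi_\sigma \cong \Pi_\sigma$ identifies the condition $\Pi_\sigma \cong D'_{k_\sigma}$ cutting out $\mathcal{C}^B_{\vec{k}}$ with the condition $\pi_\sigma \cong D'_{k_\sigma}$ cutting out $\mathcal{C}_{\vec{k}}$. At $\sigma \in \Sigma^B_\infty$ we need the standard archimedean local Jacquet--Langlands statement that $\JL_\sigma(D_{k_\sigma}) \cong D'_{k_\sigma}$, where $D_{k_\sigma} = \det^{(2-k_\sigma)/2}\otimes \Sym^{k_\sigma-2}\CC^2$ as a representation of $B_\sigma^\times$ via the fixed embedding $B_\sigma \otimes_\RR \CC \cong M_2(\CC)$. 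This is a classical computation (\cite[\S8]{GJ}) comparing central characters and infinitesimal characters, and is built into the usual formulation of local JL at the archimedean places. With this matching in place, one defines $\JL^B_{\vec{k}}$ as the restriction of the classical bijection to $\mathcal{C}^B_{\vec{k}}$; injectivity and the characterizing local compatibilities are then immediate.

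The image description follows by observing that membership of $\pi \in \mathcal{C}_{\vec{k}}$ in the image is equivalent to $\pi_v$ being discrete series at every $v \in \Sigma^B$; at the archimedean places $\sigma \in \Sigma^B_\infty$ this is automatic from $\pi_\sigma \cong D'_{k_\sigma}$ and $k_\sigma \ge 2$, so the condition reduces to the one stated at finite places in $\Sigma^B$.

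The one point where some care is warranted, and which I would flag as the main obstacle to a purely mechanical deduction, is ensuring that the one-dimensional automorphic representations of $B_\AA^\times$ (those factoring through the reduced norm as $\psi\circ\det$ for a Hecke character $\psi$) do not contribute. These are excluded from $\mathcal{C}^B_{\vec{k}}$ directly by the weight hypotheses: at $\sigma \in \Sigma^B_\infty$ the archimedean component $\psi_\sigma\circ\det$ is one-dimensional, hence not isomorphic to $D_{k_\sigma}$ for any $k_\sigma \ge 2$, while at $\sigma \not\in \Sigma^B_\infty$ it is a character of $\GL_2(\RR)$, not isomorphic to any $D'_{k_\sigma}$ with $k_\sigma \ge 1$. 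So no separate analysis of the one-dimensional case is needed, and the statement follows from the classical correspondence applied to the set $\mathcal{C}^B_{\vec{k}}$ of infinite-dimensional representations with prescribed archimedean components.
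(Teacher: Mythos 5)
The paper gives no proof of this theorem: it simply states it as the global Jacquet--Langlands correspondence, having already recorded (in the sentence just before) the archimedean local ingredient $\JL(D_k) = D'_k$ for $k \ge 2$ with a citation to \cite[\S8]{GJ}. Your proposal correctly fills in the deduction from the classical global JL bijection, and the two pieces you isolate — the archimedean local matching at $\sigma \in \Sigma^B_\infty$, and the fact that the stated membership condition for the image reduces to the finite places in $\Sigma^B$ because $D'_{k_\sigma}$ is automatically discrete series for $k_\sigma \ge 2$ — are exactly the points one needs to check. Your final paragraph on one-dimensional representations is correct but redundant: the paper defines $\mathcal{C}^B_{\vec{k}}$ to consist of \emph{infinite-dimensional} automorphic representations from the outset, so those are excluded by fiat before the weight conditions even enter; the observation that the weight hypotheses would independently exclude them is a harmless double-check rather than a necessary step. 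Overall this is a complete and correct account of why the formulation follows from the classical correspondence, taking the same (here, implicit) route as the paper.
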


\subsection{Automorphic forms on totally definite quaternion algebras over $\CC$} \label{ssec:defQC}
We have the following explicit description of $\mathcal{C}_{\vec{k}}^B$ in the case that $B$ is a totally definite quaternion algebra, i.e.,
$\Sigma_{\infty}^B = \Sigma_\infty$.  For an open compact
subgroup $U$ of $B_{\f}^\times := (B\otimes \A_\f)^\times$, let
$$M^B_{\vec{k}}(U,\CC) = 
  \{\,f:B_{\AA}^\times \to D_{\vec{k}}\,|\, \mbox{$f(\gamma x u)
  = u_\infty^{-1} f(x)$ for all $\gamma\in B^\times$,
$x \in B_{\AA}^\times$,
$u \in B_\infty^\times U$} \,\},$$
where $D_{\vec{k}} = \otimes_{\sigma} D_{k_\sigma}$
(and we assume $k_\sigma \ge 2$ for all $\sigma$).
Note that restriction identifies $M^B_{\vec{k}}(U,\CC)$
with 
$$\{\,f:B_{\f}^\times \to D_{\vec{k}}\,|\, \mbox{$f(\gamma_{\f} x u)
  = \gamma_\infty f(x)$ for all $\gamma\in B^\times$, $x\in B_{\f}^\times$, 
$u \in U$}\,\}.$$
Furthermore if $U$ is sufficiently small, then $B^\times \cap U$ is contained in $\CO_{F,+}^\times$, and hence acts trivially on $D_{\vec{k}}$.  We may then identify $M^B_{\vec{k}}(U,\CC)$
with $H^0(Y_U^B, \mathcal{D}_{\vec{k}})$, where $Y_U^B$ is the finite
set 
$B^\times \backslash B_{\f}^\times / U \cong
B^\times \backslash B_{\AA}^\times / B_\infty^\times U$,
and $\mathcal{D}_{\vec{k}}$ is the sheaf on $Y_U^B$ defined by
$B^\times \backslash (D_{\vec{k}} \times B_{\f}^\times / U)$.

Just as for Hilbert modular forms, we have a natural action
of $B_{\f}^\times$ on the direct limit
$$\CM^B_{\vec{k}} := \varinjlim_{U} M^B_{\vec{k}}(U,\C).$$
If $k_\sigma = 2$ for all $\sigma$ (so $D_{\vec{k}}$ is the
trivial representation), then we define 
$I^B_{\vec{k}}(U,\C) \subset M_{\vec{k}}^B(U,\C)$
to be the subspace of functions $B_{\AA}^\times \to \CC$
factoring through the map
$$B_{\AA}^\times \stackrel{\det}{\longrightarrow}
  \AA_F^\times \longrightarrow 
 F^\times \backslash \A_F^\times / F_{\infty,+}^\times \det(U)$$
(where we use $\det$ to denote the reduced norm);
otherwise let $I_{\vec{k}}^B(U,\C) = 0$.
Letting $\CI_{\vec{k}}^B = \varinjlim_{U} I_{\vec{k}}^B(U,\C)$
and $\CS_{\vec{k}}^B = \varinjlim_{U} S_{\vec{k}}^B(U,\C)$,
where $S_{\vec{k}}^B(U,\C) = M_{\vec{k}}^B(U,\C)/I_{\vec{k}}^B(U,\C)$, we have
$$\CS_{\vec{k}}^B = \CM^B_{\vec{k}}/\CI^B_{\vec{k}} \cong \bigoplus_{\Pi \in \mathcal{C}_{\vec{k}}^B} \Pi_{\f}$$
as representations of $B_{\f}^\times$.  Furthermore we may identify $M_{\vec{k}}^B(U,\C)$ (resp.~$S_{\vec{k}}^B(U,\C))$ with 
$(\CM_{\vec{k}}^B)^U$ (resp.~$(\CS_{\vec{k}}^B)^U$).

\subsection{Automorphic forms on Shimura curves over $\CC$} \label{ssec:indefQC}

Suppose now that $|\Sigma^B_\infty| = d-1$, and let $\sigma_0$ be the unique element of $\Sigma_\infty - \Sigma_\infty^B$.
Let $Y_U^B$ denote the Shimura curve
$$B_+^\times \backslash ((\uhp \times B_{\f}^\times)/U),$$
where $B_+^\times$ acts on $\uhp$ via its embedding in $B_{\sigma_0}^\times$ (for which we fixed an isomorphism with
$\GL_2(\RR)$).  Thus $Y_U^B$ is a compact Riemann surface, and
if $U$ is sufficiently small, then the stabilizer of 
any point in $(\uhp \times B_{\f}^\times)/U$ under the action
of $B_+^\times$ is contained in $\CO_{F,+}^\times$.

We view $D_{\vec{k}} = \otimes_{\sigma} D_{k_\sigma}$ as
a representation of $B_{\infty,+}^\times$, and hence $B_+^\times$,
where $D_{k_{\sigma_0}}=\det^{(2-k_{\sigma_0})/2}\otimes \Sym^{k_{\sigma_0}-2}\CC^2$ with its action of
$B_{\sigma_0,+}^\times \cong \GL_2(\RR)_+$.
For sufficiently small $U$, we have a locally constant
sheaf on $Y_U^B$ defined by (sections of)
$$B^\times_+ \backslash ((\uhp  \times B_{\f}^\times \times D_{\vec{k}})/U),$$
which we denote by $\CV^B_{\vec{k},U}$.

Now consider the cohomology groups
$M_{\vec{k}}^B(U,\C):=H^1(Y_U^B,\CV^B_{\vec{k},U})$.
As in the case of Hilbert modular varieties and forms, for
$h \in B_{\f}^\times$ and $U$, $U'$ such that $U' \subset
hUh^{-1}$, we have morphisms $\rho_h:Y_{U'} \to Y_U$
and $\rho_h^* \CV_{\vec{k},U}^B \to \CV_{\vec{k},U'}^B$,
yielding maps $M^B_{\vec{k}}(U,\C)\to M^B_{\vec{k}}(U',\C)$ 
giving rise to an action of $B_{\f}^\times$ on
$$\CM_{\vec{k}}^B := \varinjlim_{U} M^B_{\vec{k}}(U,\C).$$
We then have
$$\CM_{\vec{k}}^B  \cong \bigoplus_{\Pi\in \mathcal{C}_{\vec{k}}^B}
\left(\Pi_{\f} \otimes \CC^2 \right)$$
as representations of $B_{\f}^\times$
(as in \cite[(4)]{DT}, for example).
Again we have the identification $(\CM_{\vec{k}}^B)^U =
M^B_{\vec{k}}(U,\C) $.

\section{Hilbert modular forms over $\CO$-algebras} \label{sec:HMFO}


\subsection{Hilbert modular varieties over $\CO$} \label{ssec:PRmodel} 
Let $\A_{F,\f}^{(p)} = F \otimes \widehat{\ZZ}^{(p)}$, where 
$\widehat{\ZZ}^{(p)} = \prod_{\ell \neq p}\ZZ_\ell$  
denotes the prime-to-$p$ completion of $\ZZ$, so 
$\A_{F,\f} = \A_{F,\f}^{(p)} \times \prod_{\gp \in S_p} F_{\gp}$.
Let $U$ be a sufficiently small open compact subgroup of 
$\GL_2(\A_{F,\f})$ containing $\GL_2(\CO_{F,p})$, so that
$U = U_pU^p$ where $U_p = \GL_2(\CO_{F,p})$ and $U^p 
\subset \GL_2(\A_{F,\f}^{(p)})$.

Let $\widetilde{\CY}_U$ be the $\CO$-scheme representing
isomorphism classes of data $(A,\iota,\lambda,\eta,\CF^\bullet)$
as in \cite[\S2.2]{FD:KS}, i.e., abelian schemes with $\CO_F$-action and suitable quasi-polarization, level $U^p$-structure
and Pappas-Rapoport filtrations.  Recall that this is a smooth
quasi-projective scheme of relative dimension $d$ over $\CO$
equipped with an action of $\CO_{F,(p),+}^\times$, and the quotient $\CY_U = \CO_{F,(p),+}^\times \backslash \widetilde{\CY}_U$ is a smooth model for the Hilbert modular variety $Y_U$,
i.e., we have an identification of $\CY_{U,\Qpbar}$ 
with $Y_{U,\Qpbar}$ as complex manifolds.
For $U$, $U'$ as above and 
$h\in\GL_2(\A_{F,\f}^{(p)})$ such that $U'\subset hUh^{-1}$,
we have a morphism $\widetilde{\rho}_h:\widetilde{\CY}_{U'} \to \widetilde{\CY}_U$, and these morphisms define an action of
$\GL_2(\A_{F,\f}^{(p)})$ on the inverse system of the $\widetilde{\CY}_U$ (for varying $U$ as above).  Furthermore the
morphism $\widetilde{\rho}_h$ is compatible with the action of $\CO_{F,(p),+}^\times$, inducing a morphism $\rho_h:\CY_{U'} \to \CY_U$ which is compatible (in the obvious sense) with the one previously so denoted,

We will also make use of the minimal compactification $\CY_U^{\min}$ of $\CY_U$, as defined in \cite[\S7.2]{theta}.  The reduced complement of $\CY_U$ is a finite $\CO$-scheme $\CZ_U$, and we assume $\CO$ is sufficiently large that its components (as well as those of $\CY_U^{\min}$) are geometrically connected.  Thus $\CZ_U$ is a disjoint union of copies of $\Spec(\CO)$, which we also call cusps, corresponding to the cusps of $Y_U^{\min}$. 
The description of the completion of 
$\CY_U^{\min}$ at each such cusp is the same as for $Y_U^{\min}$, but with $\CC$ replaced by $\CO$.

We note also that the connected components of $\CY_U$ (or equivalently $\CY_U^{\min}$) correspond to those of $Y_U$.

\subsection{Hilbert modular forms over $\CO$-algebras}\label{ssec:HMFp}
For $\vec{k},\vec{m} \in \ZZ^\Sigma$, we have the automorphic line bundle
$$\widetilde{\CA}_{\vec{k},\vec{m}} = \bigotimes_{\sigma\in\Sigma}
  \CL_\sigma^{k_\sigma+m_\sigma} \otimes \CM_\sigma^{m_\theta}$$
on $\widetilde{\CY}_U$, where 
$\CL_{\sigma_{\gp,i,j}}$ is the subquotient 
$\CF_{\tau_{\gp,i}}^{(j)}/\CF_{\tau_{\gp,i}}^{(j-1)}$
of the Pappas-Rapoport filtration on the $\tau_{\gp,i}$-component
of the Hodge bundle of the universal abelian scheme over 
$\widetilde{\CY}_U$, and similarly $\CM_\sigma$ is a
certain invertible subquotient of its relative de Rham cohomology (see \cite[\S2.3]{FD:KS}).
The line bundles $\widetilde{\CA}_{\vec{k},\vec{m}}$ are equipped with a natural action of $\CO_{F,(p),+}^\times$ over its action on
$\widetilde{\CY}_U$, i.e., isomorphisms
$$\alpha_\mu:  \CA_{\vec{k},\vec{m}} 
\stackrel{\sim}{\longrightarrow} \psi_\mu^*\CA_{\vec{k},\vec{m}}$$
for $\mu \in \CO_{F,(p),+}^\times$ satisfying $\alpha_{\mu\mu'} = \psi_{\mu'}^*(\alpha_\mu)\circ \alpha_{\mu'}$ for $\mu,\mu'\in \CO_{F,(p),+}^+$, where $\psi_\mu$ and $\psi_{\mu'}$ are the underlying automorphisms of $\widetilde{\CY}_U$.

If $\vec{k}+2\vec{m}$ is parallel (in the sense that
$k_\sigma+2m_\sigma$ is independent of $\sigma$), then this action defines descent data to a line bundle $\CA_{\vec{k},\vec{m}}$ on $\CY_U$.
Furthermore for any $\CO$-algebra $R$ in which the image of
$\mu^{\vec{k}+2\vec{m}} := 
\prod_{\sigma} \sigma(\mu)^{k_\sigma + 2m_\sigma}$
is trivial for all $\mu \in U \cap \CO_F^\times$, we similarly
obtain a line bundle $\CA_{\vec{k},\vec{m},R}$ on $\CY_{U,R}$.
Note that if $p^N R = 0 $ for some $N \ge 1$, then this condition holds for all sufficiently small $U$.
We then define the space of {\em Hilbert modular forms} over $R$ of {\em weight $(\vec{k},\vec{m})$} and {\em level $U$} to be the $R$-module $$M_{\vec{k},\vec{m}}(U,R) : =H^0(\CY_{U,R},\CA_{\vec{k},\vec{m},R}).$$

For the purpose of arguments involving lifting to characteristic zero and applying the Jacquet-Langlands correspondence, we shall also need integral structures on spaces of Hilbert modular forms for which the $k_\sigma$ differ in parity.
To that end, suppose that $K$ is sufficiently large to contain $\sigma(\mu)^{1/2} \in \Qbar^\times \subset \Qpbar^\times$ for all $\mu \in \CO_{F,(p),+}^\times$, $\sigma\in \Sigma$.  We may then define another action of $\CO_{F,(p),+}^\times$ on $\widetilde{\CA}_{\vec{k},\vec{0}}$ by setting $\alpha'_\mu = \mu^{-\vec{k}/2}\alpha_\mu$, yielding well-defined descent data and hence a line bundle on $\CY_U$ which we denote by $\CL_{\vec{k},\CO}$.  This is compatible with our previous notation in the sense that the line bundle $\CL_{\vec{k},\Qpbar}$ over ${\CY}_{U,\Qpbar}$ may be identified with the base-change of the line bundle $\CL_{\vec{k}}$ on $Y_U$ (defined over $\Qbar$).  For any $\CO$-algebra $R$, we define
$$M_{\vec{k}}(U,R) : =H^0(\CY_{U,R},\CL_{\vec{k},R}).$$
The relation between the $M_{\vec{k},\vec{m}}(U,R)$ and
$M_{\vec{k}}(U,R)$ will be described below.

\subsection{$q$-expansions} \label{ssec:qexp}
Again letting $i:\CY_{U,R} \to \CY_{U,R}^{\min}$ denote the open immersion of $\CY_U$ in its minimal compactification, the direct images $\CA_{\vec{k},\vec{m},R}^{\min} = i_*\CA_{\vec{k},\vec{m},R}$ and $\CL_{\vec{k},R}^{\min} = i_*\CL_{\vec{k},R}$ 
are coherent sheaves on $\CY_{U,R}^{\min}$.  Furthermore their completion at cusps have a similar description to that of $\CL_{\vec{k}}^{\min}$, but with $\CC$ replaced by $\CO$ (and $\vec{k}/2$ replaced by $-\vec{m}$). Note however that formation of $\CA_{\vec{k},\vec{m},R}^{\min}$ (or $\CL_{\vec{k},R}^{\min}$) does not commute with base-change.  For example, $\CA_{\vec{k},\vec{m},\CO}^{\min}$
is a line bundle if and only if $\vec{k}$ (or equivalently $\vec{m}$) is parallel, but $\CA_{\vec{k},\vec{m},E}^{\min}$
is a line bundle for all $\vec{k}$, $\vec{m}$ and sufficiently small $U$.

The $q$-expansion Principle holds in this setting as well, i.e., if $C$ contains at least one cusp on each connected component of $\CY_{U}$, then the natural maps
$$M_{\vec{k,\vec{m}}}(U,R) \longrightarrow \bigoplus_{x\in C}
(\CA_{\vec{k},\vec{m},R}^{\min})^{\wedge}_x\quad\mbox{and}\quad
M_{\vec{k}}(U,R) \longrightarrow \bigoplus_{x\in C}
(\CL_{\vec{k},R}^{\min})^{\wedge}_x$$
are injective.

We shall often assume $U = U_1(\gn)$ or $U(\gn)$ for some $\gn$ prime to $p$, and take $C$ to be the set of cusps at $\infty$, in the sense of \cite[\S6.4]{compact}.  Such cusps have the form
$B(F)_+ \smat{t}{0}{0}{1}U$ for $t \in (\AA_{F,\f}^{(p)})^\times$ (or equivalently $B(\CO_{F,(p)})_+ \smat{t}{0}{0}{1}U^p$)
for $t \in (\AA_{F,\f}^{(p)})^\times$ 
and are in bijection with the set of connected components of $Y_U^{\min}$.  If $x \in C$ is the cusp associated to $t$, then the oriented invertible $\CO_F$-module $M_x$ is isomorphic to 
$t^{-1}\gd^{-1}\widehat{\CO}_F \cap F$, where $\gd$ is the different of $F$.  We then write\footnote{Here as well the coefficient is viewed as an element of $R$ after choosing a basis for a certain invertible $R$-module; see~\cite[\S7.2]{theta}} 
$r_m^t(f)$ for the coefficient of $q^m$ in the $q$-expansion of $f$ at the cusp corresponding to $t$ (with $r_m^t$ understood to be $0$ if, for example, $U = U_1(\gn)$ and $m \not\in M_{x,+} \cup \{0\}$).   The dependence on the choice of $t$ is given by the formula
\begin{equation} \label{eqn:qexpinv} 
r_m^{\alpha t u}(f) = \alpha^{\vec{m}} r_{\alpha m}^t(f)
\end{equation}
for $\alpha \in \CO_{F,(p),+}^\times$, 
$u \in (\widehat{\CO}^{(p)}_F)^\times$ if $U = U_1(\gn)$, 
(and further that $u \equiv 1\bmod \gn$ if $U = U(\gn)$), where $\alpha t u$ is understood as an element of $(\AA_{F,\f}^{(p)})^\times$ (see \cite[(6.1)]{compact}).

We define the spaces of cusp forms $S_{\vec{k},\vec{m}}(U,R)$ and $S_{\vec{k}}(U,R)$ as in the complex setting, via vanishing of the constant term of the $q$-expansion at all cusps.  We can again identify $S_{\vec{k},\vec{m}}(U,R)$ with the $R$-module of sections of a coherent subsheaf $\CA_{\vec{k},\vec{m},R}^{\sub}$ of $\CA_{\vec{k},\vec{m},R}^{\min}$; in this generality the kernel of a (possibly trivial) morphism to $j_*\CO_{\CZ_{U,R}}$ (writing $j$ for the closed immersion $\CZ_{U,R} \hookrightarrow \CY_{U,R}^{\min}$).  Similarly we have 
$S_{\vec{k}}(U,R) = H^0(\CY_{U,R},\CL_{\vec{k},R}^{\sub})$ where $\CL_{\vec{k},R}^{\sub}=\ker(\CL_{\vec{k},R}^{\min} \to j_*\CO_{\CZ_{U,R}})$.  Note that, unlike $\CA_{\vec{k},\vec{m},R}^{\min}$ and $\CL_{\vec{k},R}^{\min}$, formation of $\CA_{\vec{k},\vec{m},R}^{\sub}$ and $\CL_{\vec{k},R}^{\sub}$ does commute with base-change.

\subsection{Hecke operators} \label{ssec:Hecke}
Suppose now that $U$ and $U'$ are sufficiently small open compact subgroups containing 
$\GL_2(\CO_{F,p})$, and that $h \in \GL_2(\A_{F,\f}^{(p)})$ is
such that $U' \subset hUh^{-1}$.  We again have a canonical isomorphism
$\rho_h^*\CA_{\vec{k},\vec{m},R} \stackrel{\sim}{\longrightarrow}
\CA'_{\vec{k},\vec{m},R}$
of line bundles on $\CY_{U',R}$ (where $\CA'_{\vec{k},\vec{m},R}$ is defined as above with $U$ replaced by $U'$), giving rise to an $R$-linear morphism
$$M_{\vec{k},\vec{m}}(U,R) \longrightarrow
  M_{\vec{k},\vec{m}}(U',R).$$
Furthermore (assuming $\vec{k}+2\vec{m}$ is parallel or that
$p^N R = 0$ for some $N > 0$), these morphisms define an action of 
$\GL_2(\A_{F,\f}^{(p)})$ on
$$\CM_{\vec{k},\vec{m}}(R) := \varinjlim_U M_{\vec{k},\vec{m}}(U,R),$$
restricting to one on $\CS_{\vec{k},\vec{m}}(R) := \varinjlim_U S_{\vec{k},\vec{m}}(U,R)$.
If $U$ is such that $\mu^{\vec{k}+2\vec{m}}$ is trivial in $R$
for all $\mu \in U \cap \CO_F^\times$, then we may identify $M_{\vec{k},\vec{m}}(U,R)$ with $(\CM_{\vec{k},\vec{m}}(R))^{U^p}$; more generally for any $U$ containing $\GL_2(\CO_{F,p})$, we define $M_{\vec{k},\vec{m}}(U,R)$ to be $(\CM_{\vec{k},\vec{m}}(R))^{U^p}$, and similarly for $S_{\vec{k},\vec{m}}(U,R)$.  The space
$$M_{\tot}(U,R) := \bigoplus_{\vec{k},\vec{m}\in \ZZ^\Sigma} 
 M_{\vec{k},\vec{m}}(U,R)$$
then forms an $R$-algebra in which 
$S_{\tot}(U,R) := \bigoplus S_{\vec{k},\vec{m}}(U,R)$ is an ideal.

We may similarly define an action of $\GL_2(\A_{F,\f}^{(p)})$
on $\CM_{\vec{k}}(R):= \varinjlim_U M_{\vec{k}}(U,R)$ for any $\vec{k}\in \Z^{\Sigma}$ and $\CO$-algebra $R$.  As above, this restricts to an action on $\CS_{\vec{k}}(R):= \varinjlim_U S_{\vec{k}}(U,R)$, and we recover the previously defined
$M_{\vec{k}}(U,R)$ (and $S_{\vec{k}}(U,R)$) by taking $U^p$-invariants.

For any open compact subgroups $U$, $U'$ of 
$\GL_2(\A_{F,\f})$ containing $\GL_2(\CO_{F,p})$, and 
$h \in \GL_2(\A_{F,\f}^{(p)})$, we define the 
$R$-linear double coset 
operator 
$$[U'hU]: M_{\vec{k},\vec{m}}(U,R) = (\CM_{\vec{k},\vec{m}}(R))^U \longrightarrow 
(\CM_{\vec{k},\vec{m}}(R))^{U'} = M_{\vec{k},\vec{m}}(U',R)$$
as $\sum_{i=1}^n h_i$, where $U'hU = 
\coprod_{i=1}^n h_i U$, and the action of the $h_i$ on
$\CM_{\vec{k},\vec{m}}(R)$ is the one defined above.
(For $U$, $U'$ sufficiently small, this can be written
as a suitably normalized composite of pull-back and trace maps, 
as in \cite[\S4.3]{DS1}, but see Remark~\ref{rmk:conventions}
for a discussion of the difference between the conventions
here and in \cite{DS1}.)
Note that the operator $[U'hU]$ restricts to a map $S_{\vec{k},\vec{m}}(U,R) \longrightarrow 
S_{\vec{k},\vec{m}}(U',R)$.

In particular if $v$ is a prime not dividing $p$ and
$U$ contains $\GL_2(\CO_{F,v})$, then we have the usual Hecke operators
$$T_v = U\smat{\varpi_v}{0}{0}{1}U \quad\mbox{and}\quad
S_v = U\smat{\varpi_v}{0}{0}{\varpi_v}U$$
as endomorphisms of $M_{\vec{k},\vec{m}}(U,R)$ and $M_{\vec{k}}(U,R)$
(where $\varpi_v \in \CO_{F,v}$ is any uniformizer at $v$).
If $U = U_1(\gn)$ or $U(\gn)$, the effect of $T_v$ on Fourier expansions at cusps at $\infty$ is given by \cite[Prop~6.5.1]{compact}, which translated to our conventions becomes the
formula
$$r_m^t(T_vf) = \Nm_{F/\Q}(v) r_m^{\varpi_v t}(f)
  + r_m^{\varpi_v^{-1}t}(S_vf).$$
In the case that
$$ U = U_1(\gn) := 
 \left\{\,\left.\smat{a}{b}{c}{d} \in \GL_2(\widehat{\CO}_F)\,\right|\, 
  \mbox{$c_v \in \gn\CO_{F,v}$ for all $v$}\,\right\}$$
for some ideal $\gn$ of $\CO_F$ prime to $p$,
we also define $T_v$ as above for $v|\gn$.
The effect on Fourier expansions at cusps at $\infty$ is then
given by the formula 
$$r_m^t(T_vf) = \Nm_{F/\Q}(v) r_m^{\varpi_v t}(f).$$

Furthermore if $\gp \in S_p$ and
\begin{equation} \label{eqn:Tpinequality} \sum_{\sigma \in \Sigma_{\gp}} \min(m_\sigma + 1, m_\sigma + k_\sigma) \ge 0,\end{equation}
then we have the endomorphism $T_{\gp}$ of 
$M_{\vec{k},\vec{m}}(U,R)$ defined as in \cite[\S5.4]{FD:KS}
for arbitrary $U$ containing $\GL_2(\CO_{F,p})$.
More precisely, assuming  (\ref{eqn:Tpinequality})
instead of \cite[(54)]{FD:KS}, we may replace
$\psi^*$ by $p^{f_\gp}\psi^*$ in the definition of 
the morphism \cite[(55)]{FD:KS} in order
to conform to the conventions of this paper.
Similarly if $\sum_{\sigma \in \Sigma_{\gp}} (k_\sigma + 2m_\sigma) \ge 0$, then the operator $S_{\gp}$ is given 
by $\varpi_{\gp}^{\vec{k}+2\vec{m}} S_{\varpi_{\gp}}$,
where $p^{-f_{\gp}}\varpi_{\gp}^{\vec{1}-\vec{k}-2\vec{m}}$ 
is replaced by $\varpi_{\gp}^{-\vec{k}-2\vec{m}}$
in the definition of $S_{\varpi_{\gp}}$ in \cite[\S6.7]{compact}.
Translating the formula in \cite[Prop.~6.8.1]{compact}
for the effect of $T_{\gp}$ on $q$-expansions then gives
$$r_m^t(T_{\gp}f)  = \epsilon\varpi_{\gp}^{\vec{m}+ \vec{1}}
 r_{\varpi_{\gp}m}^{x^{-1}t}(f) + 
\varpi_{\gp}^{\vec{k}+\vec{m}}
 r_{\varpi_{\gp}^{-1}m}^{xt}(S_{\varpi_\gp}f),$$
where $\epsilon = p^{f_\gp}\Nm(\varpi_{\gp})^{-1}\in \ZZ_{(p)}^\times$ and $x = \varpi_\gp^{(p)}$.  In particular $T_\gp$ restricts to an endomorphism of $S_{\vec{k},\vec{m}}(U,R)$.

We also have the usual Hecke operator $T_{\gp}$ on 
$M_{\vec{k}}(U,K)$ (assuming also now that 
$\sigma(\varpi_\gp)^{1/2} \in K$ for all $\sigma\in \Sigma$),
whose effect on $q$-expansions is given by the same formula as above, but with $\vec{m}$ replaced by $-\vec{k}/2$
(with $S_{\varpi_\gp}$ being the usual $S_\gp$).  Note in
particular that it does not preserve integrality of $q$-expansions, but that $T_{\varpi_\gp} := 
p^{-f_\gp}\varpi_\gp^{\vec{k}/2}T_\gp$ does, 
provided $\sum_{\sigma\in \Sigma_\gp} (k_\sigma-1) \ge 0$.
Therefore it follows from the $q$-expansion Principle that
(under this assumption on $\vec{k}$) $T_{\varpi_\gp}$ restricts
to an endomorphism of $M_{\vec{k}}(U,\CO)$ (and 
of course $S_{\vec{k}}(U,\CO)$) satisfying
$$r_m^t(T_{\varpi_\gp}f)  = 
 r_{\varpi_{\gp}m}^{x^{-1}t}(f) + 
 p^{-f_\gp}
\varpi_{\gp}^{\vec{k}}
 r_{\varpi_{\gp}^{-1}m}^{xt}(S_{\gp}f).$$

Finally we recall that all the operators defined above commute
whenever defined.
See in particular \cite[Cor.~6.8.2]{compact} for the case of
$T_v$ and $T_{v'}$ when $v$ and $v'$ are in $S_p$.

\subsection{Twisting} \label{ssec:twist}
We now explain how the spaces $M_{\vec{k},\vec{m}}(U,R)$ for varying $\vec{m}$ (and fixed $\vec{k}$) are related. 

Recall from \cite[Prop.~3.2.2]{theta} that the action of 
$\GL_2(\A_{F,\f}^{(p)})$ on $\CM_{\vec{0},\vec{\ell}}(R)$ factors
through the determinant, and as a representation of 
$(\A_{F,\f}^{(p)})^\times$ is isomorphic to the smooth
co-induction of the character $\CO_{F,(p),+}^\times \to R^\times$
defined by $\alpha \mapsto \alpha^{\vec{\ell}}$.

It follows (as in \cite[\S4.6]{DS1}) that if 
$\xi: (\A_{F,\f}^{(p)})^\times \to R^\times$
is a continuous character such that\footnote{Note that such characters always exist with $R = E$ sufficiently large.}
$\xi(\alpha) = \alpha^{\vec{\ell}}$ for all 
$\alpha \in \CO^\times_{F,(p),+}$, then the corresponding eigenspace
$$\{\,f \in \CM_{\vec{0},\vec{\ell}}(R)\,|\,
\mbox{$gf = \xi(\det(g))f$ for all $g \in \GL_2(\A_{F,\f}^{(p)})$}\,\}$$
is free of rank one over $R$.  Multiplication by a basis
element thus defines a $\GL_2(\A_{F,\f}^{(p)})$-equivariant isomorphism
$\CM_{\vec{k},\vec{m}}(R)(\xi\circ\det) 
  \stackrel{\sim}{\longrightarrow} \CM_{\vec{k},\vec{\ell}+\vec{m}}(R)$.
In particular if $\xi$ is
trivial on $\det(U^p)$, then we obtain an isomorphism
\begin{equation} \label{eqn:twist}
e_\xi: M_{\vec{k},\vec{m}}(U,R)
  \stackrel{\sim}{\longrightarrow} M_{\vec{k},\vec{\ell}+\vec{m}}(U,R)\end{equation}
such that $T_v(e_\xi f) = e_\xi \xi(\varpi_v)T_v(f)$ and $S_v(e_\xi f) = e_\xi\xi(\varpi_v)^2S_v(f)$ for all $v\nmid p$ such that $\GL_2(\CO_{F,v}) \subset U$.  This holds for example
if $U = U(\gn)$ and $\xi$ is trivial on $V_{\gn,+}$, in which
case we may choose $e_\xi$ so that its effect on $q$-expansions is given by
the formula
$$ r_m^t(e_\xi f) = \xi(t) r_m^t(f).$$

We may similarly relate the spaces $M_{\vec{k},\vec{m}}(U,R)$ and 
$M_{\vec{k}}(U,R)$. (Recall the implicit assumption that either $\vec{k}+2\vec{m}$ is parallel or that $p^NR = 0$ for some $N > 0$ in order to define $M_{\vec{k},\vec{m}}(U,R)$.) For $U$ sufficiently small that $\mu^{\vec{m}+\vec{k}/2}$ is trivial in $R$, we may write $\CA_{\vec{k},\vec{m},R} = \CL_{\vec{k},R}\otimes \CN$ for a line bundle $\CN = \CN_U$ on $Y_{U,R}$, and define in the usual way an action of $\GL_2(\A_{F,\f}^{(p)})$ on 
$\varinjlim_U H^0(\CY_{U,R},\CN_U)$. The same argument as the proof of \cite[Prop.~3.2.2]{theta} then yields the same description of the representation as an inflation of a smooth 
co-induction, but with $\vec{\ell}$ replaced by 
$\vec{m} + \vec{k}/2$.

Suppose now that
$\xi: (\A_{F,\f}^{(p)})^\times \to R^\times$
is a continuous character
such that
$$\xi(\alpha) = \alpha^{\vec{m}+\vec{k}/2}.$$
for all 
$\alpha \in \CO_{F,(p),+}^\times$.
(Again such characters necessarily exist with $R= E$, possibly after enlarging $E$, but
note that they do not extend to continuous characters of
$\A_F^\times/F^\times F_{\infty,+}^\times$ unless
$\vec{k}\in 2\ZZ^\Sigma$.) 
As above we can twist by a basis $e_\xi$
for the corresponding eigenspace to obtain
a $\GL_2(\A_{F,\f}^{(p)})$-equivariant isomorphism
$\CM_{\vec{k}}(R)(\xi\circ\det) 
  \stackrel{\sim}{\longrightarrow} \CM_{\vec{k},\vec{m}}(R)$.
It follows that for sufficiently small $U$ containing 
$\GL_2(\CO_{F,p})$, we have an isomorphism
\begin{equation} \label{eqn:twist2}
e_\xi: M_{\vec{k}}(U,R)
  \stackrel{\sim}{\longrightarrow} M_{\vec{k},\vec{m}}(U,R)\end{equation}
with the same behavior as above with respect to Hecke operators and $q$-expansions.

\subsection{Totally definite quaternionic forms over $\CO$-algebras}\label{ssec:defQO}
Let $B$ be a totally definite quaternion algebra
unramified at all $\gp \in S_p$.  Recall that
$\CO_B$ denotes a maximal order in $B$, and we have fixed isomorphisms $\CO_{B,v} \cong M_2(\CO_{F,v})$ for all $v\not\in \Sigma^B$.

Let $T$ be an $\CO$-module equipped with a smooth action of an open subgroup $U_p$ of $\CO_{B,p}^\times \cong \GL_2(\CO_{F,p})$.
For $U$ an open compact subgroup of $B_{\AA}^\times$ such that
$u_p \in U_p$ for all $u \in U$, we define
$$M^B(U,T) = 
  \{\,f:B_{\AA}^\times \to T \,|\, \mbox{$f(\gamma x u)
  = u_p^{-1} f(x)$ for all $\gamma\in B^\times$,
$x \in B_{\AA}^\times$,
$u \in B_\infty^\times U$} \,\}.$$
In particular, if $U = U_pU^p$ for some open compact subgroup $U^p$ of $(B_{\f}^{(p)})^\times$, then we may identify 
$M^B(U,T)$ with
$$\{\,f:(B_{\f}^{(p)})^\times \to T \,|\, \mbox{$f(\gamma_{\f}^{(p)} x u)
  = \gamma \cdot f(x)$ for all $\gamma\in B^\times \cap U_p$,
$x \in (B_{\f}^{(p)})^\times$,
$u \in U^p$} \,\}.$$
Taking the limit over such $U$, we have a natural action of 
$(B_{\f}^{(p)})^\times$ on
$$\CM^B(U_p,T) := \varinjlim_U M^B(U,T)$$
identifying $M^B(U,T)$ with $\CM^B(U_p,T)^{U^p}$.  In particular, we have a commuting family of Hecke operators $T_v$ and $S_v$ on $M^B(U,T)$ for all
$v \not\in \Sigma^B \cup S_p$ such that $U$ contains 
$\CO_{B,v}^\times$.  

Note that taking $T = \CO$ with trivial action of $U_p = \CO_{B,p}^\times$, we have a natural action of $B_{\f}^\times$ on $\CM^B(\CO) := \varinjlim_U M^B(U,\CO)$ (where the limit is now over all open compact $U$), from which we recover
$$M^B(U,T) = (\CM^B(\CO)\otimes_{\CO} T)^U
  \quad\mbox{and}\quad 
\CM^B(U_p,T) = (\CM^B(\CO)\otimes_{\CO}T)^{U_p}$$
for arbitrary $U_p$, $T$ and $U$ as above.

Suppose that $\vec{k},\vec{m} \in \ZZ^{\Sigma}$ with $k_\sigma \ge 2$ for all $\sigma \in \Sigma$, and let $R$ be an $\CO$-algebra such that $p^NR = 0$ for some $N > 0$.  Consider the $R$-module
$$D_{\vec{k},\vec{m},R} := \bigotimes_{\sigma\in \Sigma}
\det{}^{m_\sigma+1}\otimes_R \Sym^{k_\sigma - 2} R^2 $$
equipped with the action of
$$\CO_{B,p}^\times \cong \GL_2(\CO_{F,p}) \cong \prod_{\gp \in S_p} \GL_2(\CO_{F,\gp})$$
provided by the identification of $\Sigma$ with the set of embeddings $F \hookrightarrow \Qpbar$.  Thus the action on the factor of the tensor product indexed by $\sigma \in \Sigma_{\gp}$ is defined by the corresponding homomorphism
$\GL_2(\CO_{F,\gp}) \to \GL_2(\CO)$.  We then let
$$M^B_{\vec{k},\vec{m}}(U,R) = M^B(U, D_{\vec{k},\vec{m},R}).$$
Taking the limit over open compact $U$ containing $U_p = \CO_{B,p}^\times$, we have a natural action of $(B_{\f}^{(p)})^\times$ on
$$\CM_{\vec{k},\vec{m}}^B(R) := \varinjlim_U M^B_{\vec{k},\vec{m}}(U,R)$$
identifying $M^B_{\vec{k},\vec{m}}(U,R)$ with $\CM^B_{\vec{k},\vec{m}}(R)^{U^p}$.  
Suppose, as in \S\ref{ssec:twist}, that $\vec{\ell} \in \Z^{\Sigma}$ and $\xi:(\A_{F,\f}^{(p)})^\times \to R^\times$ is a character such that $\xi(\alpha) = \alpha^{\vec{\ell}}$ for all $\alpha \in \CO_{F,(p),+}^\times$.  Multiplication by the element 
$e_\xi = \xi\circ\det \in \CM^B_{\vec{0},\vec{\ell}}(R)$ then defines a
$(B_{\f}^{(p)})^\times$-equivariant
isomorphism $\CM^B_{\vec{k},\vec{m}}(R)(\xi\circ\det) \cong
\CM^B_{\vec{k},\vec{\ell}+\vec{m}}(R)$, and hence isomorphisms
as in (\ref{eqn:twist}) for sufficiently small $U$.

Suppose now that $U = U_{\gp}U^{\gp}$, where $U_{\gp}$ corresponds to either
$\GL_2(\CO_{F,\gp})$ or
$$U_1(\gp)_{\gp} = \left\{\left.\,\smat{a}{b}{c}{d}\in\GL_2(\CO_{F,\gp})\,\right|\, c-1,d\in \gp\CO_{F,\gp}\,\right\}$$
(and $U^{\gp} \subset \GL_2(\AA_{\f}^{(\gp)})$).
In addition to the operators $T_v$ and $S_v$ as above, we may also define $T_{\gp}$, provided\footnote{In fact we shall only make use of $T_{\gp}$ 
in the case that $\vec{m} = -\vec{1}$.}
$\sum_{\sigma\in \Sigma_{\gp}} (m_\sigma+1) \ge 0$
(or $p$ is invertible in $R$).
Indeed in this case the action of $\CO_{B,\gp}^\times$ on
$D'_{\vec{k},\vec{m},R}$ extends to that of the multiplicative monoid $\CO_{B,\gp}$, and we let
$(T_{\gp}f)(x) = \sum h_i f(x h_i)$,
where the sum is over $h_i$ such that $U_{\gp} hU_{\gp} = \coprod_i h_i U_{\gp}$ and $h$ corresponds to $\smat{\varpi_{\gp}}{0}{0}{1}$ under the chosen isomorphism 
$\CO_{B,{\gp}} = M_2(\CO_{F,\gp})$.  It is straighforward to check that this gives a well-defined endomorphism $T_{\gp}$ of $M^B_{\vec{k},\vec{m}}(U,R)$ commuting with the operators $T_v$ and $S_v$
for $v$ as above (or of $\CM^B_{\vec{k},\vec{m}}(R)$ commuting with the action of $(B_{\f}^{(p)})^\times$).  Note that we may similarly define an operator
$S_{\gp}$ if $U_{\gp} = \CO_{B,\gp}^\times$ and $\sum_{\sigma\in \Sigma_{\gp}} (k_\sigma+ 2m_\sigma) \ge 0$ 
(or $p$ is invertible in $R$).

\begin{remark}\label{rmk:TpH0} In anticipation of consideration of the case of indefinite quaternion algebras $B$, we remark that 
$M^B_{\vec{k},\vec{m}}(U,R)$ may be identified with $H^0(Y_U^B,\mathcal{D}_{\vec{k},\vec{m},R})$, where $\mathcal{D}_{\vec{k},\vec{m},R}$ is the sheaf on the (finite discrete) set $Y_U^B = B^\times\backslash B_{\f}^\times/U$ associated to $D_{\vec{k},\vec{m},R}$.  We can then interpret the Hecke operators defined above in terms of pull-back and trace morphisms relative to 
the action of $B_{\f}^\times$ on the inverse system of sets $Y_U^B$. In particular, if $U = U^{\gp}U_{\gp}$ with $U_{\gp} = \CO_{B,\gp}^\times$ and $U^{\gp}$ sufficiently small, we can identify $T_{\gp}$ can with the composite
$$H^0(Y_U^B, \mathcal{D}_{\vec{k},\vec{m},R})
 \to H^0(Y_{U'}^B, \rho_h^*\mathcal{D}_{\vec{k},\vec{m},R})
 \to H^0(Y_{U'}^B, \rho_1^*\mathcal{D}_{\vec{k},\vec{m},R})
\to H^0(Y_{U}^B,\mathcal{D}_{\vec{k},\vec{m},R}),$$
where $U' = U \cap hUh^{-1}$ (for $h$ as above), the first
map is pull-back (relative to the map $\rho_h: Y_{U'}^B \to Y_U^B$
defined by right-multiplication by $h$),
the second is given by the morphism of sheaves induced by $h$ on
$D_{\vec{k},\vec{m},R}$, and the third is the trace relative to the projection $\rho_1:Y_{U'}^B \to Y_U^B$.
\end{remark}

For any $\vec{k} \in \ZZ_{\ge 2}^\Sigma$ and $\CO$-algebra $R$, we may also consider the action of $\CO_{B,(p)}^\times$ on 
$$D_{\vec{k},R} := \det{}^{\vec{1}-\vec{k}/2}
\bigotimes_{\sigma\in \Sigma}\Sym^{k_\sigma - 2} R^2.$$
For $U = U_pU^p$ with $U_p = \CO_{B,p}^\times$, we define $M_{\vec{k}}^B(U,R)$ to be
$$\{\,f:(B_{\f}^{(p)})^\times \to  D_{\vec{k},R} \,|\, \mbox{$f(\gamma_{\f}^{(p)} x u)
  = \gamma \cdot f(x)$ for all $\gamma\in \CO_{B,(p)}^\times$,
$x \in (B_{\f}^{(p)})^\times$,
$u \in U^p$} \,\}.$$
We again have a natural action of $(B_{\f}^{(p)})^\times$ on
the direct limit $\CM_{\vec{k}}^B(R)$ over open compact $U$ containing $\CO_{B,p}^\times$, from which we recover 
$M_{\vec{k}}^B(U,R)$ as $\CM_{\vec{k}}^B(R)^{U^p}$ and 
equip it Hecke operators $T_v$ and $S_v$ for
$v \not\in \Sigma^B \cup S_p$ such that $\CO_{B,v}^\times \subset U$. The definitions are consistent with those in \S\ref{ssec:defQC} in the sense that there are identifications
\begin{equation}\label{eqn:scalars}
M_{\vec{k}}^B(U,\CC) = \CC\otimes_{\Qbar}M_{\vec{k}}^B(U,\Qbar)
\quad\mbox{and}\quad M_{\vec{k}}^B(U,\Qpbar) = \Qpbar\otimes_{\Qbar}M_{\vec{k}}^B(U,\Qbar)\end{equation}
compatible with Hecke actions, where $M_{\vec{k}}^B(U,\Qbar)$ is defined as above with $R = \Qbar$.

We may also define an operator $T_{\varpi_\gp}$ on 
$M_{\vec{k}}^B(U,R)$ for $U$ containing $\CO_{B,\gp}^\times$.
We choose $\delta \in B^\times$ such that $\delta_p \in h U_p$
(where $h$ is as in the definition of $T_{\gp}$),
and write $\CO_{B,(p)}^\times \delta \CO_{B,(p)}^\times
 = \coprod_i \delta_i \CO_{B,(p)}^\times$.  Note in particular
that each $\det(\delta_i) \in \varpi_{\gp}\CO_{F,(p)}^\times$,
so that $p^{-f_{\gp}}\varpi_{\gp}^{\vec{k}/2}\delta_{i,p}$ defines
an endomorphism of $D_{\vec{k},R}$, and we let
$$(T_{\varpi_{\gp}}f)(x) = \sum_{i} (p^{-f_{\gp}}\varpi_{\gp}^{\vec{k}/2}\delta_{i,p}) \cdot f((\delta_{i,\f}^{(p)})^{-1}x).$$
Just as for $T_{\gp}$ on $M_{\vec{k},\vec{m}}^B(U,R)$, this gives a well-defined operator $T_{\varpi_{\gp}}$ on $M^B_{\vec{k}}(U,R)$ commuting with the $T_v$ and $S_v$
(or on $\CM^B_{\vec{k}}(R)$ commuting with $(B_{\f}^{(p)})^\times$).  Furthermore, the usual Hecke operator $T_{\gp}$
on $M_{\vec{k}}(U,\C)$ corresponds to $p^{f_{\gp}}\varpi_{\gp}^{-\vec{k}/2}T_{\varpi_{\gp}}$ under the identifications of
(\ref{eqn:scalars}).  (For completeness we note that if $U$ contains $\CO^\times_{B,\gp}$, then
the usual Hecke operator $S_{\gp}$ is well-defined on $M_{\vec{k}}(U,R)$ for any $\CO$-algebra $R$.)

If $p^NR = 0$ for some $N>0$, and $\xi:(\A_{F,\f}^{(p)})^\times \to R^\times$ is such that $\xi(\alpha) = \alpha^{\vec{m}+\vec{k}/2}$ for all $\alpha \in \CO_{F,(p),+}^\times$, we can again twist by $e_\xi = \xi\circ\det$ to obtain
$\CM^B_{\vec{k}}(R)(\xi\circ\det) \cong
\CM^B_{\vec{k},\vec{m}}(R)$, and hence isomorphisms
as in (\ref{eqn:twist2}) for sufficiently small $U$.
In particular we have $T_v(e_\xi f) = e_\xi \xi(\varpi_v) T_v(f)$
and $S_v(e_\xi f) = e_\xi \xi(\varpi_v)^2 S_v(f)$ for $v$ as above; furthermore it is straightforward to check that
\begin{equation} \label{eqn:Tptwist}
T_{\gp}(e_\xi f) = \epsilon \xi(x)^{-1} \varpi_{\gp}^{\vec{m} + \vec{1}} e_\xi T_{\varpi_{\gp}}(f),\end{equation}
where $\epsilon = p^{f_{\gp}} \Nm(\varpi_{\gp})^{-1}$
and $x = \varpi_{\gp}^{(p)}$.

Note that $M^B_{\vec{2}}(U,R) = M^B(U,R)$, as defined above with $T = R$ and $U_p = \CO_{B,p}^\times$ acting trivially.  More generally we let $M_{\vec{2}}^B(U,R) = M^B(U,R)$ for arbitrary open compact $U \subset B_{\f}^\times$.  Note that (\ref{eqn:scalars}) holds in this context as well, and that $M^B_{\vec{2}}(U,R)$ coincides with $M^B_{\vec{2},-\vec{1}}(U,R)$ as defined above if $p^NR = 0$ for some $N > 0$.

\subsection{Cohomology of Shimura curves over $\CO$-algebras}
\label{ssec:indefQO}
Now suppose that $B$ is a quaternion algebra over $F$ which is 
unramified at all $\gp \in S_p$ and at a unique $\sigma_0 \in \Sigma_\infty$.  Recall that we have chosen a maximal order $\CO_B$ and isomorphisms $\CO_{B,v} \cong M_2(\CO_{F,v})$ for all finite $v\not\in \Sigma^B$.  For open compact subgroups $U$ of $B_{\A,\f}^\times$, we consider the compact Riemann surface
$$Y_U^B = B_+^\times \backslash ((\uhp\times B_{\f}^\times)/U)$$
defined in \S\ref{ssec:indefQC}.

Let $T$ be an $\CO$-module with a smooth action of an open subgroup $U_p$ of $\CO_{B,p}^\times$.  For sufficiently small $U$
(and in particular satisfying $u_p \in U_p$ for all $u \in U$), consider the locally constant sheaf $\CT = \CT^B_U$ on $Y_U^B$ defined by (sections of)
$$B_+^\times \backslash ((\uhp\times B_{\f}^\times \times T)/U),$$
where the actions of $B_+^\times$ and $U$ are defined by 
$$\gamma\cdot(z,x,t)u = (\gamma_\infty(z), \gamma_\f x u, u_p^{-1}t).$$
Note that if $U = U_pU^p$ for some open compact subgroup $U^p$ of $(B_{\f}^{(p)})^\times$, then we may rewrite 
$Y_U^B$ as $(B_+^\times\cap U_p) \backslash ((\uhp\times 
(B^{(p)}_{\f})^\times)/U^p)$
and $\CT^B_U$ as 
$$(B_+^\times\cap U_p) \backslash ((\uhp\times (B^{(p)}_{\f})^\times \times T)/U^p),$$
where the actions are defined by $\gamma\cdot(z,x,t)u = (\gamma_\infty(z), \gamma^{(p)}_\f x u, \gamma_p t)$.

Note that for $U$, $U'$ as above and $h \in (B^{(p)}_{\f})^\times$ such that $U'\subset hUh^{-1}$, the identity on $T$ induces an isomorphism $\rho_h^*\CT^B_U \stackrel{\sim}{\to} \CT^B_{U'}$, 
where $\rho_h: Y^B_{U'} \to Y^B_U$ is the covering map induced by right multiplication by $h$.  We thus obtain pull-back and trace morphisms
$$H^i(Y_U^B,\CT_U^B) \longrightarrow
 H^i(Y_{U'}^B,\CT_{U'}^B)\quad\mbox{and}\quad
H^i(Y_{U'}^B,\CT_{U'}^B) \longrightarrow
H^i(Y_{U}^B,\CT_{U}^B)$$
relative to $\rho_h$.  More generally, for any $U$, $U'$ as above and $h \in (B^{(p)}_{\f})^\times$, we define the double coset operator $[U'hU]$ to be the composite
$$H^i(Y_U^B,\CT_U^B) \longrightarrow
 H^i(Y_{U''}^B,\CT_{U''}^B)\longrightarrow
H^i(Y_{U'}^B,\CT_{U'}^B),$$
where $U'' = U' \cap hUh^{-1}$, the first map is the pull-back relative to $\rho_h: Y^B_{U''} \to Y^B_U$, and the second is 
$[U'\cap\CO_F^\times:U''\cap \CO_F^\times]$ times\footnote{Note that this normalization factor is $[U':U'']/\deg(\rho_1)$.} the trace relative to $\rho_1:Y^B_{U''}\to Y^B_{U'}$.  These maps satisfy the usual compatibilities, so in particular we obtain a commuting family of operators $T_v$ and $S_v$ on $H^i(Y^B_U,\CT_U^B)$ for 
all $v\not\in \Sigma^B \cup S_p$ such that $U$ contains $\CO_{B,v}^\times$.  We let\footnote{We avoid considering direct limits over $U$ of the $M^B(U,T)$ since the natural maps $M^B(U,T) \to M^B(U',T)^U$ (for $U'$ normal in $U$) are not necessarily isomorphisms.}
 $M^B(U,T) = H^1(Y_U^B,\CT_U^B)$.

Suppose that $\vec{k},\vec{m}\in\ZZ$ with all $k_\sigma \ge 2$, and $R$ is an $\CO$-algebra such that $p^NR = 0$ for some $N>0$.
For sufficiently small $U$, we let $\mathcal{D}_{\vec{k},\vec{m},R}$ denote the locally constant sheaf on $Y_U^B$ associated to $D_{\vec{k},\vec{m},R}$ (defined as in \S\ref{ssec:defQO}), and
let
$$M^B_{\vec{k},\vec{m}}(U,R) = M^B(U,D_{\vec{k},\vec{m},R})
 = H^1(Y_U^B, \mathcal{D}_{\vec{k},\vec{m},R}).$$
Suppose as usual that 
$\vec{\ell} \in \Z^{\Sigma}$ and $\xi:(\A_{F,\f}^{(p)})^\times \to R^\times$ is a character such that 
$\xi(\alpha) = \alpha^{\vec{\ell}}$ for all $\alpha \in \CO_{F,(p),+}^\times$.
Then $\xi\circ\det$ defines an element $e_\xi \in H^0(Y_U^B, \mathcal{D}_{\vec{0},\vec{\ell},R})$ for sufficiently small $U$
(in particular if $U = U_pU^p$ and $\det(U^p) \subset \ker(\xi)$).
Furthermore if $h \in (B^{(p)}_{\f})^\times$ and $U'\subset hUh^{-1}$, then the pull-back 
$$H^0(Y_U^B, \mathcal{D}_{\vec{k},\vec{m},R}) \longrightarrow
  H^0(Y_{U'}^B, \mathcal{D}_{\vec{k},\vec{m},R})$$
relative to $\rho_h$ sends $e_\xi$ to $\xi(\det(h))e_\xi$.
It follows that the cup product with $e_\xi$ defines an
isomorphism as in (\ref{eqn:twist}) for sufficiently small $U$.

As in the case of definite quaternion algebras, we may define operators $T_{\gp}$ for $\gp \in S_p$ such that
$U = U_{\gp}U^{\gp}$, where $U_{\gp}$ corresponds to either $\GL_2(\CO_{F,\gp})$ or $U_1(\gp)_\gp$ and
$\sum_{\sigma\in \Sigma_{\gp}} (m_\sigma+1) \ge 0$
(or $p$ is invertible in $R$).  Indeed consider the
two covering maps $\rho_1,\rho_h:Y_{U'}^B \to Y_U^B$,
where $h \in \CO_{B,\gp}$ corresponds to 
$\smat{\varpi_{\gp}}{0}{0}{1}$
and $U' = U \cap hUh^{-1}$.  The endomorphism $h$
of $D_{\vec{k},\vec{m},R}$ then
induces a morphism 
$$\rho_h^*\mathcal{D}_{\vec{k},\vec{m},R}
  \longrightarrow \rho_1^*\mathcal{D}_{\vec{k},\vec{m},R}$$
of sheaves on $Y_{U'}^B$ (defined by 
$B_+^\times(z,xh,t)U' \mapsto B_+^\times(z,x,ht)U'$
on the fibre over $B_+^\times(z,x)U'$).
We then define $T_{\gp}$ on 
$H^i(Y_U^B, \mathcal{D}_{\vec{k},\vec{m},R})$ as in Remark~\ref{rmk:TpH0}, i.e., as the composite
$$H^i(Y_U^B, \mathcal{D}_{\vec{k},\vec{m},R})
 \to H^i(Y_{U'}^B, \rho_h^*\mathcal{D}_{\vec{k},\vec{m},R})
 \to H^i(Y_{U'}^B, \rho_1^*\mathcal{D}_{\vec{k},\vec{m},R})
\to H^i(Y_{U}^B,\mathcal{D}_{\vec{k},\vec{m},R}).$$
In particular this defines an endomorphism $T_{\gp}$ of $M^B_{\vec{k},\vec{m}}(U,R)$ commuting with the $T_v$ and $S_v$
(for $v$ as above), and we may similarly define 
$S_{\gp}$ (again assuming $U_\gp = \CO_{B,\gp}^\times$ and 
that $\sum_{\sigma\in \Sigma_{\gp}} (k_\sigma+ 2m_\sigma) \ge 0$
or $p$ is invertible in $R$).

For $\vec{k} \in \ZZ_{\ge 2}^\Sigma$ and $R$ any $\CO$-algebra, we define the locally constant sheaf $\mathcal{D}'_{\vec{k},R}$ on $Y_U^B$ by (sections of) 
$$
\CO_{B,(p),+}^\times \backslash ((\uhp\times (B^{(p)}_{\f})^\times \times D_{\vec{k},R})/U^p),$$
where $U = U_pU^p$, $U_p = \CO_{B,p}^\times$ and $D_{\vec{k},R}$ is defined as in \S\ref{ssec:defQO}.  For such $U$, $U'$ and $h \in (B^{(p)}_{\f})^\times$ satisfying $U'\subset hUh^{-1}$, we again have the isomorphism $\rho_h^*\mathcal{D}_{\vec{k},R} \stackrel{\sim}{\to} \mathcal{D}_{\vec{k},R}$ induced by the identity on $D'_{\vec{k},R}$, and hence pull-back and trace morphisms between $M_{\vec{k}}(U,R)$ and $M_{\vec{k}}(U',R)$.
We can therefore define double coset operators $[U'hU]$ as above (without the assumption that $U'\subset hUh^{-1}$), and hence obtain a commuting family of Hecke operators $T_v$ and $S_v$ on $M_{\vec{k}}(U,R)$ for $v\not\in \Sigma^B \cup S_p$ such that $\CO_{B,v}^\times \subset U$.  The definitions are again consistent with those in  
\S\ref{ssec:indefQC} in the sense that there are identifications
as in (\ref{eqn:scalars}) compatible with Hecke actions.

To define $T_{\varpi_\gp}$ for $\gp \in S_p$ and $U$ containing $\CO_{B,\gp}^\times$, choose $\delta \in B_+^\times$ such that 
$\delta_p \in h U_p$ (for $h$ is as in the definition of $T_{\gp}$).  We can then write 
$$Y_{U'}^B = 
\Gamma\backslash((\uhp \times B_{\f}^{(p)})^\times)/U^p),$$
where $\Gamma = B_+^\times \cap U_p' = \delta\CO^\times_{B,(p),+}\delta^{-1} \cap \CO^\times_{B,(p),+}$, so that $\rho_h:Y_{U'}^B \to Y_U^B$ takes the form $\Gamma(z,x)U^p \mapsto \CO_{B,(p),+}^\times(\delta_\infty^{-1}(z), (\delta_\f^{(p)})^{-1} x)U^p$.
We then have a morphism $\rho_h^*\mathcal{D}_{\vec{k},R} \to \rho_1^*\mathcal{D}_{\vec{k},R}$ of sheaves on $Y_{U'}^B$
given by
$$\Gamma(\delta_\infty^{-1}(z), (\delta_\f^{(p)})^{-1} x,t)U^p
\mapsto \Gamma(z, x, (p^{-f_{\gp}}\varpi_{\gp}^{\vec{k}/2}\delta_p) t)U^p,$$
on the fibre over $\Gamma(z,x)U^p$, and we define $T_{\varpi_{\gp}}$ on $H^i(Y_U^B,\mathcal{D}_{\vec{k},R})$ as the resulting composite
$$H^i(Y_U^B, \mathcal{D}_{\vec{k},R})
 \to H^i(Y_{U'}^B, \rho_h^*\mathcal{D}_{\vec{k},R})
 \to H^i(Y_{U'}^B, \rho_1^*\mathcal{D}_{\vec{k},R})
\to H^i(Y_{U}^B,\mathcal{D}_{\vec{k},R}).$$
In particular we obtain an operator
$T_{\varpi_{\gp}}$ on $M_{\vec{k}}(U,R)$, which it is 
straighforward to check commutes with the $T_v$ and $S_v$,
and corresponds to $p^{-f_{\gp}}\varpi_{\gp}^{\vec{k}/2}T_{\gp}$ under the identifications as in
(\ref{eqn:scalars}). (Again for completeness we note that
if $U$ contains $\CO_{B,\gp}^\times$, then the usual definition 
yields an operator $S_{\gp}$ on $M_{\vec{k}}(U,R)$ for any $\CO$-algebra $R$.)

If $p^NR = 0$ and $\xi:(\A_{F,\f}^{(p)})^\times \to R^\times$ is such that $\xi(\alpha) = \alpha^{\vec{m}+\vec{k}/2}$ for all $\alpha \in \CO_{F,(p),+}^\times$, we again have an isomorphism $e_\xi:\mathcal{D}_{\vec{k},R} \stackrel{\sim}{\to} \mathcal{D}_{\vec{k},\vec{m},R} $ defined by 
$\xi\circ\det$, i.e., 
$$\CO_{B,(p),+}^\times (z,x,t)U^p
 \mapsto \CO_{B,(p),+}^\times (z,x,\xi(\det(x))t)U^p$$
 on the fibre over $\CO_{B,(p),+}^\times (z,x,)U^p$.  This
 gives rise to isomorphisms
as in (\ref{eqn:twist2}) for sufficiently small $U$,
and satisfying the same compatibilities with Hecke
operators, and in particular (\ref{eqn:Tptwist}).

Finally, we let $M^B_{\vec{2}}(U,R) = M^B(U,R) = H^1(Y_U^B,R)$ for any sufficiently small open compact $U$, noting again that 
(\ref{eqn:scalars}) still holds, and that $M^B_{\vec{2}}(U,R)= M^B_{\vec{2},-\vec{1}}(U,R)$ if $p^NR = 0$ for some $N > 0$. 

\section{Hilbert modular forms in characteristic $p$}

\subsection{Partial Hasse invariants} \label{ssec:Hasse}
We first recall the existence of certain partial Hasse invariants
(as constructed by \cite{goren}, \cite{ag} and in this generality by \cite{RX}).  

For each $\sigma \in \Sigma$, we let $\vec{h}_\sigma = \nu_\sigma \vec{e}_{\varphi^{-1}\sigma} - \vec{e}_\sigma$, where
$\nu_\sigma = p$ if $\sigma = \sigma_{\gp,i,1}$ for some $\gp \in S_p$, $i \in \Z/f_\gp\Z$, and $\nu_\sigma = 1$ otherwise.  (In particular $n_\sigma = p$ for all $\sigma$ if $p$ is unramified in $F$.) We then have the partial Hasse invariant
$$H_{\sigma} \in M_{\vec{h}_\sigma,\vec{0}}(U,E).$$
Furthermore, multiplication by $H_\sigma$
defines an injective $\GL_2(\A_{F,\f}^{(p)})$-equivariant
homomorphism
$$M_{\vec{k},\vec{m}}(U,E) \hookrightarrow 
 M_{\vec{k}+\vec{h}_\sigma,\vec{m}}(U,E)$$
for all $\vec{k}$, $\vec{m}$ and $U$ as in \S\ref{sec:HMFO}.
Similarly we have elements 
$$G_{\sigma} \in M_{\vec{0},\vec{h}_\sigma}(U,E)$$
inducing $\GL_2(\A_{F,\f}^{(p)})$-equivariant isomorphisms
$$M_{\vec{k},\vec{m}}(U,E) \stackrel{\sim}{\longrightarrow} 
 M_{\vec{k},\vec{m}+\vec{h}_\sigma}(U,E)$$
(see \cite[\S4.1]{theta}).  Note in particular that $H_\sigma$
and $G_\sigma$ commute with the operators $T_v$ and $S_v$ 
(whenever defined) for $v\nmid p$.  Furthermore the
$q$-expansions $H_\sigma$ and $G_\sigma$ at every cusp
are non-zero constants (see \cite[\S8.1]{theta}).

Recall from \cite[\S8]{cone} that for each non-zero $f \in M_{\vec{k},\vec{m}}(U,E)$ and $\sigma \in \Sigma$, there is a maximal integer $r_\sigma$ such that $f$ is divisible by $H_\sigma^{r_\sigma}$ in the ring $M_{\tot}(U,E)$.  Then $f$ is divisible by $\prod_{\sigma\in \Sigma} H_\sigma^{r_\sigma}$, and 
 we define
$$\vec{k}_{\min}(f) = 
\vec{k} - \sum_{\sigma\in \Sigma} r_\sigma \vec{h}_\sigma.$$
The main result of \cite{cone} then states:
\begin{theorem} \label{thm:cone} If $0 \neq f \in M_{\vec{k},\vec{m}}(U,E)$, then 
$\vec{k}_{\min}(f) \in \Xi_{\min}$, where
$$\Xi_{\min} = \{\,\vec{k}\in \ZZ^{\Sigma}\,|\,
\mbox{$n_\sigma k_\sigma\ge k_{\varphi^{-1}(\sigma)}$
for all $\sigma\in \Sigma$}\,\}.$$
\end{theorem}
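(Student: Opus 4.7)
The plan is to reduce to the case where $f$ is not divisible by any partial Hasse invariant, and then to derive each inequality $n_\sigma k_\sigma \ge k_{\varphi^{-1}(\sigma)}$ from the local geometry of the vanishing divisor $Z_\sigma \subset \CY_{U,E}$ of $H_\sigma$. For the reduction, the maximality of the exponents $r_\sigma$ in the definition of $\vec{k}_{\min}(f)$, combined with the stated injectivity of multiplication by each $H_\sigma$, lets us write $f = (\prod_\sigma H_\sigma^{r_\sigma})\, g$ with $g \in M_{\vec{k}_{\min}(f),\vec{m}}(U,E)$ not divisible by any $H_\sigma$. Since the weight of $\prod_\sigma H_\sigma^{r_\sigma}$ is exactly $\vec{k} - \vec{k}_{\min}(f)$, it suffices to prove the following reformulation: \emph{if $0\neq g \in M_{\vec{k},\vec{m}}(U,E)$ is not divisible by any $H_\sigma$, then $\vec{k} \in \Xi_{\min}$.}

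Fix $\sigma \in \Sigma$; I aim to prove $n_\sigma k_\sigma \ge k_{\varphi^{-1}(\sigma)}$. Since the weight of $H_\sigma$ is $\vec{h}_\sigma = n_\sigma \vec{e}_{\varphi^{-1}(\sigma)} - \vec{e}_\sigma$, the divisor $Z_\sigma$ satisfies $\CO(Z_\sigma) \cong \CL_{\varphi^{-1}(\sigma)}^{n_\sigma} \otimes \CL_\sigma^{-1}$ as line bundles on $\CY_{U,E}$, and the non-divisibility hypothesis means that the restriction $g|_{Z_\sigma}$ is a nonzero global section of $\CA_{\vec{k},\vec{m},E}|_{Z_\sigma}$. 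The key geometric input, developed in the unramified case by~\cite{goren, ag} and in the general ramified case by~\cite{RX}, is a local-model analysis of the Pappas--Rapoport model along $Z_\sigma$: each irreducible component is smooth, and is itself equipped (\'etale-locally) with a Shimura-subvariety-type structure over which the restrictions $\CL_{\sigma'}|_{Z_\sigma}$ admit explicit descriptions in terms of the corresponding structure sheaves and the conormal bundle.

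Pick an irreducible component of $Z_\sigma$ on which $g|_{Z_\sigma}$ is nonzero (the $q$-expansion Principle, together with the fact that every $H_\sigma$ has nonzero constant $q$-expansion at each cusp, will be what guarantees such a component exists). Combining the adjunction formula for $Z_\sigma \hookrightarrow \CY_{U,E}$ (giving the conormal bundle as $\CL_\sigma|_{Z_\sigma} \otimes \CL_{\varphi^{-1}(\sigma)}^{-n_\sigma}|_{Z_\sigma}$) with the explicit description of the $\CL_{\sigma'}|_{Z_\sigma}$, one translates the existence of the nonzero section $g|_{Z_\sigma}$ of $\CA_{\vec{k},\vec{m},E}|_{Z_\sigma}$ into a positivity statement for the associated line bundle on the chosen component. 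This positivity condition is precisely $n_\sigma k_\sigma \ge k_{\varphi^{-1}(\sigma)}$: the assumption $n_\sigma k_\sigma < k_{\varphi^{-1}(\sigma)}$ would make the bundle ``too negative'' in the $\varphi^{-1}(\sigma)$-direction relative to the conormal contribution, and hence it would have no nonzero global section. Applying this to every $\sigma$ gives $\vec{k} \in \Xi_{\min}$.

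The hard part will be the local-model analysis underlying the second paragraph, especially in the ramified setting: the Pappas--Rapoport stratification behaves differently according to whether $\sigma = \sigma_{\gp,i,1}$ (so $n_\sigma = p$) or not (so $n_\sigma = 1$), producing qualitatively different descriptions of $Z_\sigma$ and of the restrictions of the $\CL_{\sigma'}$. Unifying the two cases, and then descending the argument inductively onto the components of $Z_\sigma$ (which may themselves have their own analogues of partial Hasse invariants that must be accounted for), is where the main technical work resides. Once this analysis is carried out uniformly, the cone inequality for each $\sigma$ follows as above.
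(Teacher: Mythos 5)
The paper does not give a proof of this theorem; it is stated as ``the main result of \cite{cone}'' (Diamond--Kassaei), and the only thing proved in the present paper in this vicinity is the refinement Theorem~\ref{thm:positivity}, which takes Theorem~\ref{thm:cone} as input. So there is no proof of Theorem~\ref{thm:cone} in the paper itself against which to compare your proposal in detail.

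On its own merits: your reduction in the first paragraph is correct and matches the standard setup in \cite{cone} (the maximality of the $r_\sigma$ together with the injectivity of multiplication by the $H_\sigma$ gives the claimed factorization $f = (\prod_\sigma H_\sigma^{r_\sigma})\, g$ with $g$ not divisible by any $H_\sigma$). The second paragraph is in the right general spirit — the argument of \cite{cone} does proceed by restricting a Hasse-primitive form to the vanishing loci $Z_\sigma$ and analyzing the restrictions of the automorphic line bundles there.

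However, the crucial mechanism is left as a black box in a way that conceals a genuine gap. The statement that assuming $n_\sigma k_\sigma < k_{\varphi^{-1}(\sigma)}$ ``would make the bundle too negative in the $\varphi^{-1}(\sigma)$-direction, and hence it would have no nonzero global section'' is not a claim that follows from any obvious positivity property of the $\CL_{\sigma'}$ on the strata; these bundles are torsion on irreducible components (as your own invocation of the adjunction $\CO(Z_\sigma) \cong \CL_{\varphi^{-1}(\sigma)}^{n_\sigma}\otimes\CL_\sigma^{-1}$ shows), so negativity in one $\sigma'$-direction does not by itself kill sections. The actual obstruction to sections in \cite{cone} comes from an inductive analysis on the strata using the partial Hasse invariants \emph{of the strata themselves}: $Z_\sigma$ carries its own family of Hasse-type divisors, and a careful accounting of how the ambient $\CL_{\sigma'}$ restrict (including the Frobenius-twisted relations coming from Verschiebung on the universal abelian scheme) shows that a nonzero section of $\CA_{\vec{k},\vec{m},E}|_{Z_\sigma}$ is forced to be divisible by one of these, which pushes the analysis down to deeper strata and eventually yields a contradiction if the weight inequality fails. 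You gesture at this (``may themselves have their own analogues of partial Hasse invariants that must be accounted for''), but as written the proposal replaces the inductive bookkeeping — which is the actual content of the theorem — with an unproved positivity heuristic. As an outline of strategy it is broadly consonant with \cite{cone}; as a proof it does not close.
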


Since $\Xi_{\min}$ is contained in the cone spanned (rationally) by the partial Hasse invariants, we obtain the following:
\begin{corollary} \label{cor:cone} If $\vec{k} \not\in \{\sum\limits_{\sigma\in \Sigma} r_\sigma \vec{h}_\sigma|
  \mbox{$r_\sigma \in \QQ_{\ge 0}$ for all $\sigma \in \Sigma$}\}$, then $M_{\vec{k},\vec{m}}(U,E) = 0$.
\end{corollary}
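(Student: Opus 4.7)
The plan is to deduce the corollary directly from Theorem~\ref{thm:cone}, provided we first verify the set-theoretic containment
\[\Xi_{\min} \;\subseteq\; \Bigl\{\,\sum_{\sigma \in \Sigma} s_\sigma \vec{h}_\sigma \;\Big|\; s_\sigma \in \QQ_{\geq 0}\,\Bigr\}.\]
Granting this, suppose toward contradiction that $f \in M_{\vec{k},\vec{m}}(U,E)$ is nonzero. By construction $\vec{k} = \vec{k}_{\min}(f) + \sum_\sigma r_\sigma \vec{h}_\sigma$ with $r_\sigma \in \ZZ_{\geq 0}$, and Theorem~\ref{thm:cone} places $\vec{k}_{\min}(f)$ in $\Xi_{\min}$. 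Applying the containment to the first summand then expresses $\vec{k}$ as a $\QQ_{\geq 0}$-combination of $\{\vec{h}_\sigma\}_{\sigma\in\Sigma}$, contradicting the hypothesis.

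To prove the containment we work one $\varphi$-orbit at a time: each $\vec{h}_\sigma$ is supported on $\Sigma_\gp$ for the unique prime $\gp$ with $\sigma \in \Sigma_\gp$, and the orbits of $\varphi$ on $\Sigma$ are precisely the sets $\Sigma_\gp$, so both the condition defining $\Xi_{\min}$ and the coefficient problem decouple. Fix $\gp$, set $n := e_\gp f_\gp$, and label $\Sigma_\gp = \{\sigma_0,\sigma_1,\ldots,\sigma_{n-1}\}$ so that $\varphi(\sigma_i) = \sigma_{i+1}$ with indices taken mod $n$. Write $\nu_i := \nu_{\sigma_i}$ and $k_i := k_{\sigma_i}$; by construction $\prod_{i=0}^{n-1}\nu_i = p^{f_\gp}$. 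The equation $\vec{k}|_{\Sigma_\gp} = \sum_i s_i \vec{h}_{\sigma_i}$ becomes the linear system $(N\Pi - I)\vec{s} = \vec{k}$, where $N = \operatorname{diag}(\nu_0,\ldots,\nu_{n-1})$ and $\Pi$ is the cyclic shift sending $\vec{e}_i$ to $\vec{e}_{i-1}$. Since $(N\Pi)^n = p^{f_\gp} I$, the telescoping identity
\[(N\Pi - I)\bigl(I + N\Pi + \cdots + (N\Pi)^{n-1}\bigr) = (p^{f_\gp}-1)I\]
inverts the system explicitly and yields
\[s_i \;=\; \frac{1}{p^{f_\gp}-1}\sum_{m=0}^{n-1}\nu_i\nu_{i+1}\cdots\nu_{i+m-1}\,k_{i+m}.\]

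It then remains to check that $s_i \geq 0$ whenever $\vec{k} \in \Xi_{\min}$. The defining condition $\nu_\sigma k_\sigma \geq k_{\varphi^{-1}\sigma}$ reads $\nu_{i+1}k_{i+1} \geq k_i$ for every $i$; iterating this inequality around the full cycle gives $p^{f_\gp} k_i \geq k_i$, which forces $k_i \geq 0$ for every $i$ (since $p^{f_\gp} > 1$). Every summand in the displayed formula for $s_i$ is then nonnegative, completing the argument. The plan involves no serious obstacle beyond bookkeeping with the cyclic structure of $\varphi$ on each $\Sigma_\gp$; the step most deserving of care is the final positivity check, which hinges on the product of the $\nu_i$ around a full orbit being $p^{f_\gp} > 1$.
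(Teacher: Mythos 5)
Your overall strategy is exactly the one the paper uses: the corollary follows from Theorem~\ref{thm:cone} once one knows that $\Xi_{\min}$ lies in the cone spanned rationally by the $\vec{h}_\sigma$, since for a nonzero $f$ one has $\vec{k} = \vec{k}_{\min}(f) + \sum_\sigma r_\sigma\vec{h}_\sigma$ with $r_\sigma\in\ZZ_{\ge 0}$ and $\vec{k}_{\min}(f)\in\Xi_{\min}$. The paper treats that containment as known (it is part of the framework of \cite{cone}), whereas you supply the verification, which is a reasonable thing to do and decoupling by prime $\gp$ is the right way to organize it.

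There is a small indexing slip in the explicit inverse. With $\varphi(\sigma_i)=\sigma_{i+1}$ one has $\vec{h}_{\sigma_i}=\nu_i\vec{e}_{i-1}-\vec{e}_i$, so $\bigl(\sum_i s_i\vec{h}_{\sigma_i}\bigr)_j=\nu_{j+1}s_{j+1}-s_j$; the system is therefore $(\Pi N - I)\vec{s}=\vec{k}$, not $(N\Pi - I)\vec{s}=\vec{k}$, and the resulting formula should read
\[
s_i \;=\; \frac{1}{p^{f_\gp}-1}\sum_{m=0}^{n-1}\nu_{i+1}\nu_{i+2}\cdots\nu_{i+m}\,k_{i+m}.
\]
The telescoping identity, the key relation $(\Pi N)^n=p^{f_\gp}I$ (which uses $\prod_i\nu_i=p^{f_\gp}$), and the positivity of all coefficients are unaffected, so your final positivity argument — iterating $\nu_{i+1}k_{i+1}\ge k_i$ around the cycle to get $(p^{f_\gp}-1)k_i\ge 0$ and hence $k_i\ge 0$, then reading off $s_i\ge 0$ — goes through as stated. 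The proof is correct once the indexing is fixed.
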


\subsection{Weight-shifting operators} \label{ssec:theta}
We recall also the existence and properties of certain 
weight-shifting operators in characteristic $p$.  
For each element $\tau = \tau_{\gp,i} \in \Sigma_0$, we have a 
partial $\Theta$-operator
$$\Theta_\tau: M_{\vec{k},\vec{m}}(U,E) \longrightarrow
  M_{\vec{k} + \vec{t}_\tau, \vec{m} - \vec{e}_\sigma}(U,E),$$
where $\sigma = \sigma_{\gp,i,e_\gp}$ and 
$\vec{t}_\tau = \vec{h}_\sigma + 2\vec{e}_\sigma$
(see \cite[\S5]{theta}).
Then each $\Theta_\tau$ is $\GL_2(\A_{F,\f}^{(p)})$-equivariant, and hence commutes with the Hecke operators 
$T_v$ and $S_v$ (whenever defined) for $v\nmid p$.
Furthermore the $\Theta_\tau$ are $E$-linear derivations
on $ M_{\tot}(U,E)$
which commute with each other (see \cite[\S8.2]{theta}),
as well as with multiplication
by partial Hasse invariants (since $\Theta_\tau(H_{\sigma'}) = 0$
for all $\tau \in \Sigma_0$, $\sigma'\in \Sigma$).
Their effect on $q$-expansions is given by the formula
$$r_m^t(\Theta_\tau(f)) = \pi_\tau(m) r_m^t(f),$$
where $\pi_\tau$ is an isomorphism $(M/\gp M)\otimes_{\F_{\gp}} E
 \stackrel{\sim}{\longrightarrow} E$ (again depending on a choice of basis; see \cite[(30)]{theta} for a more precise statement).

The partial $\Theta$-operators also have the following property (\cite[Thm.~5.2.1]{theta}):
\begin{theorem} \label{thm:thetadiv} Suppose that $\tau = \tau_{p,i}$, $\sigma = \sigma_{\gp,i,e_{\gp}}$ and $f \in M_{\vec{k},\vec{m}}(U,E)$. Then $\Theta_\tau(f)$ is divisible by $H_\sigma$ if and only if $f$ is divisible by $H_\sigma$ or $k_\sigma$ is divisible by $p$.
\end{theorem}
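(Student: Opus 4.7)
The plan is to handle both directions of the equivalence via a single local leading-order analysis of $\Theta_\tau$ along the Goren--Oort divisor $Z_\sigma := V(H_\sigma) \subset \mathcal{Y}_{U,E}$.

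The easy half, ``$H_\sigma \mid f \Rightarrow H_\sigma \mid \Theta_\tau(f)$'', is immediate from two facts already recalled: $\Theta_\tau$ is an $E$-linear derivation of $M_{\tot}(U,E)$, and $\Theta_\tau(H_{\sigma'}) = 0$ for every $\sigma' \in \Sigma$. If $f = H_\sigma g$, the Leibniz rule gives $\Theta_\tau(f) = H_\sigma \Theta_\tau(g)$. The remaining implications, namely the ``if'' in the case $p \mid k_\sigma$ with $H_\sigma \nmid f$ and the ``only if'' direction, cannot be seen from $q$-expansions alone: the formula $r_m^t(\Theta_\tau f) = \pi_\tau(m) r_m^t(f)$ describes $\Theta_\tau(f)$ only at the cusps, which do not meet $Z_\sigma$, and so yields no information about divisibility by $H_\sigma$ in the interior of the moduli space.

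For the main step I would pick a generic geometric point $x$ of $Z_\sigma$ and a regular system of parameters in $\widehat{\mathcal{O}}_{\mathcal{Y}_{U,E},x}$ in which $Z_\sigma$ is cut out by a single parameter $t$, so $H_\sigma = u \cdot t$ for a unit $u$. Using the Pappas--Rapoport local model, trivialize the relevant subquotient line bundles $\mathcal{L}_{\sigma'}$ compatibly with the filtration $\mathcal{F}_\tau^{(\bullet)}$, so that a local section of $\mathcal{A}_{\vec{k},\vec{m}}$ is encoded by a power series $F$. After pulling out the largest power of $H_\sigma$ dividing $f$, we may assume $F(0,\ldots) \neq 0$, i.e.\ $H_\sigma \nmid f$. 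Unwinding the construction of $\Theta_\tau$ (Gauss--Manin connection, Kodaira--Spencer identification, and the transition from $\mathcal{F}_\tau^{(e_\gp)}$ to the first subquotient of the next block) should then yield a formula of the shape
$$\Theta_\tau(f) \equiv c \cdot k_\sigma \cdot f \pmod{t},$$
for some $c \in E^\times$ depending on the chosen trivializations. Both remaining directions of the theorem would follow at once: $\Theta_\tau(f)$ is divisible by $H_\sigma$ iff its image is zero in $\mathcal{A}_{\vec{k}+\vec{t}_\tau,\vec{m}-\vec{e}_\sigma,E}/t\mathcal{A}_{\vec{k}+\vec{t}_\tau,\vec{m}-\vec{e}_\sigma,E}$ at a generic point of $Z_\sigma$, which by the congruence holds iff $k_\sigma \equiv 0 \pmod p$ or $H_\sigma \mid f$.

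The hard part is pinning down the scalar $c$ and, crucially, verifying that the only source of $p$-divisibility in the leading coefficient is the factor $k_\sigma$. The subtlety is that $H_\sigma$ itself measures the failure of a canonical comparison morphism between adjacent subquotients of the Pappas--Rapoport filtration to be an isomorphism, while $\Theta_\tau$ is built out of closely related transition data, so one must carefully track how the $t$-valuation propagates through the composition. The factor $k_\sigma$ enters through the Leibniz rule applied to the $\mathcal{L}_\sigma^{k_\sigma}$-factor of $\mathcal{A}_{\vec{k},\vec{m}}$ when the derivation hits the $\sigma$-coordinate; the role of the definitions of $\nu_\sigma$ and $\vec{h}_\sigma$ (in particular the distinction between $\sigma_{\gp,i,1}$ and the other embeddings above $\tau_{\gp,i}$) is precisely to ensure that the other contributions neither kill this factor nor introduce extra $p$'s.
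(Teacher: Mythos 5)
The theorem you are proving is stated in the paper purely as a citation --- it is \cite[Thm.~5.2.1]{theta} --- and the paper itself contains no proof of it. So there is no ``paper's own proof'' to compare against; the comparison has to be with the argument in the cited reference.

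That said, your proposal is on the right track but, by your own admission, incomplete in exactly the place that matters. The easy implication ($H_\sigma\mid f\Rightarrow H_\sigma\mid\Theta_\tau(f)$) via the Leibniz rule and $\Theta_\tau(H_{\sigma'})=0$ is correct and is how it is done. Your observation that $q$-expansions cannot detect $H_\sigma$-divisibility (because $H_\sigma$ has unit $q$-expansion and the cusps avoid $Z_\sigma$) is also correct and explains why a local computation along $Z_\sigma$ is unavoidable. And your guess at the shape of the local leading-order identity, namely that after trivializing, $\Theta_\tau(f)|_{Z_\sigma}$ agrees with $c\,k_\sigma\,f|_{Z_\sigma}$ for some unit $c$, is indeed the content of the key lemma in the cited proof, and it does dispatch both remaining implications simultaneously as you describe.

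The genuine gap is that this leading-order identity is asserted (``should then yield'') rather than proved, and it is exactly the hard part of the theorem. Establishing it requires unwinding the Gauss--Manin/Kodaira--Spencer construction of $\Theta_\tau$ against the Pappas--Rapoport filtration at a generic point of $Z_\sigma$, checking that the formal $\frac{\partial}{\partial t}$-term contributes a multiple of $t$ (hence vanishes mod $t$), and verifying that the coefficient of the remaining $k_\sigma\cdot f$-term is a unit in $E$ rather than something itself divisible by $p$. Without this computation you have not shown that $k_\sigma$ is the only source of $p$-divisibility in the constant term, which is the whole force of the ``only if'' direction (and of the ``if'' when $p\mid k_\sigma$, $H_\sigma\nmid f$). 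You flag this yourself, but flagging it does not close it: as written, the proposal reduces the theorem to an unproven local identity. If you want to make this self-contained you would need to reproduce the local model computation from \cite{theta}; otherwise the correct move in the present paper is simply to cite, as the paper does.

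One smaller point worth making precise: as written, the displayed congruence compares sections of line bundles of different weights ($\vec{k}+\vec{t}_\tau$ on the left, $\vec{k}$ on the right), and the choice of trivialization that makes the comparison meaningful is part of what must be tracked carefully --- different compatible choices rescale $c$ by a unit, which is harmless, but an incompatible choice could disguise a factor of $t$ as a unit or vice versa. This bookkeeping is precisely where the distinction between $\sigma_{\gp,i,1}$ and the other embeddings (i.e.\ the role of $\nu_\sigma$ in $\vec{h}_\sigma$) enters, as you note, so it cannot be waved away.
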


Finally for each $\gp \in \Sigma_p$, we have the partial Frobenius operator
$$V_\gp: M_{\vec{k},\vec{m}}(U,E) \longrightarrow
  M_{\vec{k}', \vec{m}'}(U,E)$$
defined in \cite[\S6]{theta}, where $\vec{k}' = \vec{k} + 
\sum_{\sigma \in \Sigma_\gp} k_\sigma \vec{h}_\sigma$ and
$\vec{m}' = \vec{m} + 
\sum_{\sigma \in \Sigma_\gp} m_\sigma \vec{h}_\sigma$.  These too are $\GL_2(\A_{F,\f}^{(p)})$-equivariant, and hence commute with the Hecke operators 
$T_v$ and $S_v$ (whenever defined) for $v\nmid p$.
Furthermore they commute with each other, and define
$E$-algebra endomorphisms of $M_{\tot}(U,E)$.
Their effect on $q$-expansions is given by the formula
\begin{equation} \label{eqn:Vonq}
r_m^t(V_\gp(f)) = r_{\varpi_{\gp}^{-1}m}^{xt}(f),\end{equation}
where again $x = \varpi_{\gp}^{(p)}$ (with a more precise
description in \cite[\S8.3]{theta}).
Finally, we have $\Theta_{\tau}\circ V_{\gp} = 0$ for all
$\tau \in \Sigma_{\gp,0}$; in fact the following stronger
result holds (\cite[Cor.~9.1.2]{theta}):
\begin{theorem} \label{thm:kertheta} Suppose that $f \in M_{\vec{k},\vec{m}}(U,E)$
and $\tau \in \Sigma_{\gp,0}$.  Then $\Theta_\tau(f) = 0$
if and only if
$$f = V_\gp(g) \prod_{\sigma\in\Sigma} G_\sigma^{s_\sigma} H_\sigma^{r_\sigma}$$
for some $g \in M_{\tot}(U,E)$, $\vec{r} \in \ZZ_{\ge 0}^{\Sigma}$, $\vec{s} \in \ZZ^{\Sigma}$.
\end{theorem}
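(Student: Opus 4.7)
The ``if'' direction follows from the Leibniz rule: since $\Theta_\tau$ is an $E$-linear derivation on $M_{\tot}(U,E)$, it suffices to show that $\Theta_\tau$ annihilates each of $V_\gp(g)$, $H_\sigma$, and $G_\sigma$. The first is the identity $\Theta_\tau\circ V_\gp = 0$ recalled just above the theorem, and the second is $\Theta_\tau(H_\sigma) = 0$. For $G_\sigma$, its $q$-expansion at each cusp at infinity is a non-zero constant, so the formula $r_m^t(\Theta_\tau(G_\sigma)) = \pi_\tau(m)\,r_m^t(G_\sigma)$ vanishes identically (it is zero for $m \neq 0$, and $\pi_\tau(0) = 0$); the $q$-expansion principle then gives $\Theta_\tau(G_\sigma) = 0$.

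For the ``only if'' direction, assume $\Theta_\tau(f) = 0$. By Theorem~\ref{thm:cone}, write $f = f_0 \prod_{\sigma'} H_{\sigma'}^{r_{\sigma'}}$ with each $r_{\sigma'}$ maximal, so that $f_0 \in M_{\vec{k}_{\min}(f),\vec{m}}(U,E)$ with $\vec{k}_{\min}(f) \in \Xi_{\min}$. The Leibniz rule combined with $\Theta_\tau(H_{\sigma'}) = 0$ and the injectivity of multiplication by $H_{\sigma'}$ forces $\Theta_\tau(f_0) = 0$, so after replacing $f$ by $f_0$ and absorbing the $H_{\sigma'}$'s into the right-hand side of the theorem, we may assume $\vec{k} \in \Xi_{\min}$. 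Now $\pi_\tau(m)\,r_m^t(f) = 0$ combined with the bijectivity of $\pi_\tau$ yields $r_m^t(f) = 0$ for every cusp $t$ at infinity and every $m$ with non-zero image in $M_x/\gp M_x$ --- which is exactly the $q$-expansion support pattern of elements in the image of $V_\gp$, in view of (\ref{eqn:Vonq}).

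It remains to construct $g \in M_{\vec{k}'',\vec{m}''}(U,E)$ for appropriate $(\vec{k}'',\vec{m}'')$ such that $f = V_\gp(g)\prod_\sigma G_\sigma^{s_\sigma} H_\sigma^{r_\sigma}$. The weight relation $\vec{k} = \vec{k}'' + \sum_{\sigma'\in\Sigma_\gp}k''_{\sigma'}\,\vec{h}_{\sigma'} + \sum_\sigma r_\sigma \vec{h}_\sigma$, combined with the freedom $r_\sigma \geq 0$ and the minimal-cone hypothesis on $\vec{k}$, determines an integral solution $\vec{k}''$; the $\vec{m}$-component is absorbed by the invertible $G_\sigma^{s_\sigma}$. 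The candidate $g$ is specified at the $q$-expansion level by $r_{m'}^{t'}(g) = r_{\varpi_\gp m'}^{x^{-1}t'}(f)$, which is coherent across cusps by (\ref{eqn:qexpinv}). The principal obstacle --- and the main geometric input --- is to show that this formal $q$-expansion comes from an honest element of $M_{\vec{k}'',\vec{m}''}(U,E)$; equivalently, that $V_\gp$ is surjective onto the subspace of $M_{\vec{k},\vec{m}}(U,E)$ with $q$-expansions supported on $\gp M_{x,+}\cup\{0\}$. I would prove this via the geometric realization of $V_\gp$ in \cite{theta} as pullback along a partial Frobenius morphism on the Pappas--Rapoport model, which identifies its image with the prescribed subspace; the $q$-expansion principle then concludes $f = V_\gp(g)\prod_\sigma G_\sigma^{s_\sigma} H_\sigma^{r_\sigma}$.
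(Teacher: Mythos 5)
The paper cites this result from \cite[Cor.~9.1.2]{theta} and gives no proof of its own, so I am assessing your proposal on its own terms.

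Your ``if'' direction is fine: $\Theta_\tau$ is an $E$-linear derivation killing $V_\gp(g)$, $H_\sigma$, and $G_\sigma$ (the last by your $q$-expansion argument), so the product rule gives $\Theta_\tau(f)=0$.

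The ``only if'' direction has the right skeleton --- strip partial Hasse invariants to reach $\Xi_{\min}$, read off from $r_m^t(\Theta_\tau f)=\pi_\tau(m)\,r_m^t(f)$ that the $q$-expansion of $f$ is supported on $\gp M_{x,+}\cup\{0\}$ --- but breaks down at the point where you construct $g$. First, the claim that the weight relation ``determines an integral solution $\vec{k}''$'' is not automatic. Unwinding $\vec{\ell}'=\vec{\ell}+\sum_{\sigma\in\Sigma_\gp}\ell_\sigma\vec{h}_\sigma$ gives $\ell'_{\sigma_{\gp,i,e_\gp}}=p\,\ell_{\sigma_{\gp,i+1,1}}$, so $\vec{k}$ lies in the image of the $V_\gp$ weight shift only if $p\mid k_{\sigma_{\gp,i,e_\gp}}$ for \emph{every} $i$. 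After stripping $H_\sigma$'s the exponents $r_\sigma$ must all be zero, so this divisibility is a real constraint; Theorem~\ref{thm:thetadiv} gives it for the single index $i$ attached to the given $\tau$, and you would need to first observe (this does follow from your $q$-expansion support statement together with the $q$-expansion principle, but you do not say so) that $\Theta_{\tau'}(f)=0$ for \emph{all} $\tau'\in\Sigma_{\gp,0}$ before invoking Theorem~\ref{thm:thetadiv} at the other indices.

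Second, and more fundamentally, the crux of the theorem is the statement you yourself label ``the principal obstacle'' and then do not prove: that $V_\gp$ surjects onto the subspace of forms with $q$-expansion supported on $\gp M_{x,+}\cup\{0\}$. Coherence of the candidate coefficients via (\ref{eqn:qexpinv}) does not produce a global section of the automorphic line bundle on the Pappas--Rapoport model --- the $q$-expansion principle gives injectivity, not surjectivity, of the $q$-expansion map. Saying you ``would prove this via the geometric realization of $V_\gp$\ldots as pullback along a partial Frobenius,'' which ``identifies its image with the prescribed subspace,'' is precisely the assertion that has to be proved; it is where all the geometric work of \cite[Cor.~9.1.2]{theta} lives. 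As written, the proposal reduces the hard direction to an unproven claim.
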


We now use an idea of Deo, Dimitrov and Wiese (see \cite[Prop.~1.22]{DDW}) to refine Theorem~\ref{thm:cone}.
\begin{theorem} \label{thm:positivity} If $0 \neq f \in M_{\vec{k},\vec{m}}(U,E)$, 
then $\vec{k}_{\min}(f) \in \Xi_{\min}^{+} \cup \{\vec{0}\}$, where
$$\Xi_{\min}^+ = \{\,\vec{k}\in \ZZ_{>0}^{\Sigma}\,|\,
\mbox{$n_\sigma k_\sigma\ge k_{\varphi^{-1}(\sigma)}$
for all  $\sigma\in \Sigma$}\,\}.$$
\end{theorem}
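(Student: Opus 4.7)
The plan is to sharpen Theorem~\ref{thm:cone} by ruling out weights $\vec{k}_{\min}(f) \in \Xi_{\min}$ with some zero and some positive entries. The argument follows the strategy introduced by Deo--Dimitrov--Wiese~\cite{DDW}, combining two structural observations about $\Xi_{\min}$ with an iterated application of the partial $\Theta$-operators together with the partial Frobenius $V_\gp$.

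First I establish two preliminary properties of $\Xi_{\min}$ from its defining inequalities $n_\sigma k_\sigma \geq k_{\varphi^{-1}\sigma}$.  \emph{(a)} $\Xi_{\min} \subset \Z_{\geq 0}^{\Sigma}$: if $\vec{k} \in \Xi_{\min}$ had a negative entry, then choosing $\sigma_0$ minimizing $k_{\sigma_0}$ in its $\varphi$-cycle and propagating through positions with $n_{\sigma_0} = 1$ (where the inequality forces equality of the minima) eventually reaches a $\sigma_0$ with $n_{\sigma_0} = p$, at which $pk_{\sigma_0} \geq k_{\varphi^{-1}\sigma_0} \geq k_{\sigma_0}$ forces $k_{\sigma_0} \geq 0$. \emph{(b)} For each $\gp \in S_p$, the restriction $\vec{k}_\gp = (k_\sigma)_{\sigma \in \Sigma_\gp}$ is either identically zero or everywhere strictly positive: if $k_\sigma = 0$ for some $\sigma \in \Sigma_\gp$, then $0 = n_\sigma k_\sigma \geq k_{\varphi^{-1}\sigma}$ combined with \emph{(a)} forces $k_{\varphi^{-1}\sigma} = 0$, and iterating around the $\varphi$-cycle yields $\vec{k}_\gp = \vec{0}$.

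After dividing $f$ by the maximal product $\prod_\sigma H_\sigma^{r_\sigma}$ dividing it, we may assume $\vec{k} = \vec{k}_{\min}(f) \in \Xi_{\min}$ and $f$ is not divisible by any $H_\sigma$.  By \emph{(a)} and \emph{(b)} it suffices to show that if $\vec{k}_\gp = \vec{0}$ for some $\gp \in S_p$ and $\vec{k} \neq \vec{0}$, then $f = 0$.  The key step is to verify that $\Theta_\tau(f) = 0$ for every $\tau \in \Sigma_{\gp, 0}$. Granting this, Theorem~\ref{thm:kertheta} yields a decomposition $f = V_\gp(g) \prod_\sigma G_\sigma^{s_\sigma} H_\sigma^{r_\sigma}$; the non-divisibility of $f$ by any $H_\sigma$ forces each $r_\sigma = 0$.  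A direct weight calculation using $\vec{k}_\gp = \vec{0}$ shows that $g$ has weight $\vec{k}$ again with $\vec{k}(g)_\gp = \vec{0}$, and since each $G_\sigma$ has non-zero constant $q$-expansion at every cusp, $g$ inherits the hypotheses up to the invertible factors $\prod G_\sigma^{s_\sigma}$; the argument therefore iterates, expressing $f$ as an invertible multiple of $V_\gp^n(g_n)$ for every $n \geq 1$.  By formula (\ref{eqn:Vonq}), the $q$-expansion of $V_\gp^n(g_n)$ is supported on $m \in \varpi_\gp^n M_x$, and since $\vec{k}$ is non-parallel (having both zero and positive entries) $f$ is automatically cuspidal, so the $q$-expansion principle yields $f = 0$, contradicting non-vanishing.

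The main obstacle is the verification that $\Theta_\tau(f) = 0$. Assume for contradiction $\Theta_\tau(f) \neq 0$ for some $\tau = \tau_{\gp, i}$ and set $\sigma = \sigma_{\gp, i, e_\gp}$. Since $k_\sigma = 0$ is divisible by $p$, Theorem~\ref{thm:thetadiv} gives $\Theta_\tau(f) = H_\sigma f_1$ for some $f_1 \in M_{\vec{k} + 2\vec{e}_\sigma, \vec{m} - \vec{e}_\sigma}(U, E)$. Observation \emph{(b)} applied to $\vec{k}_{\min}(f_1)$ forces $\vec{k}_{\min}(f_1)_\gp = \vec{0}$, since the remaining $\gp$-components of $\vec{k}(f_1)$ are already zero and can only weakly decrease under subtraction of non-negative integer combinations of the $\vec{h}_{\sigma'}$. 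Writing $f_1 = f_0 \prod_{\sigma'} H_{\sigma'}^{r_{\sigma'}}$ with $\vec{k}(f_0)_\gp = \vec{0}$ and matching the $\Sigma_\gp$-parts of weights gives $2\vec{e}_\sigma|_{\Sigma_\gp} = \sum_{\sigma' \in \Sigma_\gp} r_{\sigma'} \vec{h}_{\sigma'}|_{\Sigma_\gp}$; propagating the resulting linear system around the $\varphi$-cycle yields $r_\sigma \cdot (p^{f_\gp} - 1) = 2$, which has no solution in non-negative integers as soon as $p \geq 5$ or $f_\gp \geq 2$. The residual cases $(p, f_\gp) \in \{(2, 1), (3, 1)\}$ are handled by a refinement: iterating $\Theta_\tau$ (and using $p \mid k_\sigma + 2$ when $p = 2$) or exploiting the rigidity of weight-zero forms from \cite[Prop.~3.2.2]{theta} to conclude that $f_0$ itself must vanish, as in \cite[Prop.~1.22]{DDW}.
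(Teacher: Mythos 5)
Your approach via $\Theta$-operators and Theorem~\ref{thm:kertheta} is a genuinely different route from the paper's, but the pivotal step — establishing $\Theta_\tau(f) = 0$ — contains a gap. Your justification for ``observation \emph{(b)} forces $\vec{k}_{\min}(f_1)_\gp = \vec{0}$'' is incorrect: the $\gp$-components of $\vec{k}(f_1)$ away from $\sigma$ do \emph{not} ``only weakly decrease'' under subtraction of $\sum r_{\sigma'}\vec{h}_{\sigma'}$, since $\vec{h}_{\sigma'} = \nu_{\sigma'}\vec{e}_{\varphi^{-1}\sigma'} - \vec{e}_{\sigma'}$ contributes $+r_{\sigma'}$ at the position $\sigma'$ itself. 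The conclusion happens to hold when $e_\gp f_\gp \geq 2$ (but for a different reason, requiring a small case check on which all-positive vectors can occur), and it fails outright when $e_\gp f_\gp = 1$: there, for $p \geq 5$, one necessarily has $r_\sigma = 0$ and $\vec{k}_{\min}(f_1)_\gp = (2)$, so the linear system you solve never arises and no contradiction is obtained. In particular a completely split $\gp$ is a genuine obstruction for every $p$, not only for the residual pairs $(p,f_\gp)\in\{(2,1),(3,1)\}$ you flag — and those you do flag are dispatched only by a sketch (``iterating $\Theta_\tau$\ldots rigidity of weight-zero forms\ldots'') that is too vague to verify.

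The paper sidesteps all of this. Since $\vec{k}_\gp = \vec{0}$ and one reduces to $\vec{m} = \vec{0}$, the partial Frobenius $V_\gp$ is an \emph{endomorphism} of $M_{\vec{k},\vec{0}}(U,E)$; if the $q$-expansion of $f$ were non-constant, then $n(f) := \min\{\,v_\gp(m) \,:\, r_m^t(f) \neq 0\,\}$ would strictly increase under $V_\gp$ by (\ref{eqn:Vonq}), so the iterates $V_\gp^i(f)$ would be linearly independent — impossible by finite-dimensionality. Thus the $q$-expansion is constant. Writing $N\vec{k} = \sum_\sigma r_\sigma\vec{h}_\sigma$ with $r_\sigma \geq 0$ (by Corollary~\ref{cor:cone}), both $f^N$ and $\prod H_\sigma^{r_\sigma}$ then have the same weight and constant $q$-expansion, hence agree up to a constant on each component; reducedness of the vanishing locus of $H_\sigma$ gives $H_\sigma \mid f$ whenever $r_\sigma > 0$, contradicting minimality. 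This is uniform in $\gp$ and needs neither $\Theta$-operators nor the problematic case analysis. If you want to rescue your argument, you would need a direct proof that $\Theta_\tau(f) = 0$ that does not go through the flawed weight computation — and the finite-dimensionality argument above is effectively the tool that does this.
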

\begin{proof} Let $\vec{k} = \vec{k}_{\min}(f)$.  Note that we may assume $U = U(\gn)$ for some sufficiently small $\gn$ prime to $p$.  Furthermore since the isomorphism of (\ref{eqn:twist}) preserves divisibility by partial Hasse invariants, we may assume $\vec{m} = \vec{0}$.  
We must prove that if $k_\sigma = 0$ for some $\sigma \in \Sigma$, then $\vec{k} = \vec{0}$.  First note that by Theorem~\ref{thm:cone}, we have
$k_\sigma \ge 0$ for all $\sigma \in \Sigma$, and if $k_\sigma = 0$ for some $\sigma \in \Sigma_\gp$, then $k_\sigma = 0$ for all
$\sigma\in \Sigma_\gp$.  

Suppose then that $k_\sigma = 0$ for all $\sigma \in \Sigma_\gp$.
Note that this implies that $V_{\gp}$ is an endomorphism of
$M_{\vec{k},\vec{0}}(U,E)$. We claim that the $q$-expansion is constant at each cusp at $\infty$.  Suppose this is not the case, and let
$$n(f) = \min \{\,v_\gp(m)\,|\,\mbox{$r_m^t(f) \neq 0$ for 
 some $t \in (\A_{F,\f}^{(p)})^\times$}\,\}.$$
(Note that by (\ref{eqn:qexpinv}), it suffices to consider
one representative $t$ for each cusp, or equivalently each
component of $\CY_U$.) Equation (\ref{eqn:Vonq}) implies
that $n(V_{\gp}(f)) = n(f) + 1$, and hence that $n(V^i_{\gp}(f)) = n(f) + i$ for all $i \ge 0$.  It then follows from the $q$-expansion Principle that the elements $V^i_{\gp}(f)$ (for $i \ge 0$) are linearly independent, contradicting the finite-dimensionality of $M_{\vec{k},\vec{0}}(U,E)$.

We now show that $\vec{k} = \vec{0}$.  Let $N = \prod_{\gp \in S_p} (p^{f_{\gp}} - 1) = |\det(A)|$, where $A$ is the $d\times d$-matrix of coefficients of the weights $\vec{h}_\sigma$ of partial Hasse invariants. Then 
$N\vec{k} = \sum_{\sigma} r_\sigma \vec{h}_\sigma$ for some
$\vec{r} \in \ZZ^{\Sigma}$, where each $r_\sigma \ge 0$ by
Corollary~\ref{cor:cone}. Since the $q$-expansion of $H_\sigma$ at each cusp is a non-zero constant, it follows that for each connected component $\CY_t$ of $\CY_{U,E}$, there is a constant $c_t$ such that 
$$f^N|_{\CY_t} = c_t \prod_{\sigma\in \Sigma} H^{r_\sigma}_\sigma|_{\CY_t},$$
and hence $f^N = e \prod_{\sigma\in \Sigma} H^{r_\sigma}_\sigma$ for some $e \in H^0(\CY_{U,E},\CO_{\CY_{U,E}})$.
In particular, if $r_\sigma > 0$ for some $\sigma \in \Sigma$, then $f^N$ is divisible by $H_\sigma$.  Since the vanishing locus of $H_\sigma$ is reduced, it follows that $H_\sigma|f$, contradicting our assumption that $\vec{k} = \vec{k}_{\min}(f)$. Therefore $\vec{r} = \vec{0}$, so $\vec{k} = \vec{0}$.
\end{proof}

\begin{corollary} \label{cor:positivity} 
If $0 \neq f \in S_{\vec{k},\vec{m}}(U,E)$, 
then $\vec{k}_{\min}(f) \in \Xi^+_{\min}$.
\end{corollary}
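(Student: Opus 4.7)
The plan is to leverage Theorem~\ref{thm:positivity} and rule out the possibility $\vec{k}_{\min}(f) = \vec{0}$ by exploiting cuspidality. Theorem~\ref{thm:positivity} already gives $\vec{k}_{\min}(f) \in \Xi_{\min}^+ \cup \{\vec{0}\}$, so everything reduces to excluding this last case.

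Suppose for contradiction that $\vec{k}_{\min}(f) = \vec{0}$, and factor $f = g \prod_{\sigma \in \Sigma} H_\sigma^{r_\sigma}$ with $g \in M_{\vec{0}, \vec{m}}(U, E)$ nonzero. The first observation is that $g$ itself is a cusp form. This is because each partial Hasse invariant has nonzero constant $q$-expansion at every cusp (\S\ref{ssec:Hasse}), so $\prod_\sigma H_\sigma^{r_\sigma}$ is a unit in each completed local ring at a cusp of $\CY_{U,E}^{\min}$; hence vanishing of the constant term of $f$ at every cusp forces the same for $g$, placing $g$ in $S_{\vec{0}, \vec{m}}(U, E) \setminus \{0\}$.

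Next I would reduce to the case $\vec{m} = \vec{0}$: after enlarging $E$ if necessary and passing to a sufficiently small level of the form $U(\gn)$ with $\gn$ prime to $p$, the twisting isomorphism (\ref{eqn:twist}) for a character $\xi$ with $\xi(\alpha) = \alpha^{-\vec{m}}$ on $\CO_{F,(p),+}^\times$ (trivial on $V_{\gn,+}$) sends $g$ to a nonzero element of $S_{\vec{0}, \vec{0}}$ at that level; the cusp form condition is preserved because $r_0^t(e_\xi g) = \xi(t) r_0^t(g)$.

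Finally I would invoke the argument already used in the proof of Theorem~\ref{thm:positivity}: $V_\gp$ is then an endomorphism of the finite-dimensional space $M_{\vec{0}, \vec{0}}$, and if $g$ had nonconstant $q$-expansion at some cusp at $\infty$, the iterates $V_\gp^i(g)$ would be linearly independent by the $v_\gp$-valuation argument there. So the $q$-expansion is constant at every cusp at $\infty$, and being cuspidal forces this constant to vanish at each such cusp; since cusps at $\infty$ meet every connected component, the $q$-expansion Principle then yields $g = 0$, a contradiction. The only input beyond Theorem~\ref{thm:positivity} is that cuspidality is preserved by both the Hasse invariant factoring and the twist, both of which are immediate from the respective $q$-expansion formulas; this is the one point to watch, but no genuine obstacle arises.
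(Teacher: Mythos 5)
Your argument is correct, and it supplies the proof that the paper leaves implicit. The key points are all present: Theorem~\ref{thm:positivity} reduces you to excluding $\vec{k}_{\min}(f) = \vec{0}$; if $\vec{k}_{\min}(f) = \vec{0}$ then $f = g\prod_\sigma H_\sigma^{r_\sigma}$ with $0 \neq g \in M_{\vec{0},\vec{m}}(U,E)$, and since each $H_\sigma$ has a non-zero constant $q$-expansion at every cusp, $g$ inherits the vanishing of the constant term from $f$; after shrinking the level to $U(\gn)$ and twisting by $e_\xi$ with $\vec{\ell} = -\vec{m}$ (which rescales the $q$-expansion at cusps at $\infty$ by the unit $\xi(t)$, so preserves vanishing of the constant term there), the $V_\gp$-iteration argument from the proof of Theorem~\ref{thm:positivity} forces $e_\xi g$ to have constant $q$-expansion at each cusp at $\infty$; combined with the vanishing constant term and the $q$-expansion Principle (the cusps at $\infty$ meeting every connected component), this gives $e_\xi g = 0$, a contradiction. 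One minor remark: you do not actually need that $g$, or $e_\xi g$, is cuspidal in the full sense of vanishing constant term at \emph{all} cusps; the argument only uses vanishing at the cusps at $\infty$, which is all that the formula $r_0^t(e_\xi g) = \xi(t)r_0^t(g)$ directly gives you, so your proof is self-contained without needing to argue that twisting preserves cuspidality globally.
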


\subsection{Ampleness} \label{ssec:ample}
We assume that $U$ sufficiently small, in particular ensuring that the coherent sheaves $\CA^{\min}_{\vec{k},\vec{m},E}$ are in fact line bundles.  We let $\vec{\delta} \in \ZZ^{\Sigma}$ be defined by $\delta_{\sigma_{\gp,i,j}} = j-1$. (Note that $\vec{\delta} = \vec{0}$ if $p$ is unramified in $F$.)

We record the following slight variant of \cite[Lemma~4.5]{RX}.
\begin{lemma} \label{lem:ample} The line bundles $\CA^{\min}_{\vec{\ell}+\vec{\delta},\vec{m},E}$ are ample on $Y_E^{\min}$ for sufficiently large $\ell \in \ZZ$.
\end{lemma}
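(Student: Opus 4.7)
The plan is to deduce this as a mild extension of \cite[Lemma~4.5]{RX}, handling the twist by $\vec{m}$ via the trivialising sections $G_\sigma$.

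First I would observe that for each $\sigma \in \Sigma$ the section $G_\sigma \in M_{\vec{0},\vec{h}_\sigma}(U,E)$ recalled in \S\ref{ssec:theta} is a nowhere-vanishing global section of $\CA^{\min}_{\vec{0},\vec{h}_\sigma,E}$: on the interior $\CY_{U,E}$ this follows from the fact that multiplication by $G_\sigma$ induces an isomorphism of spaces of sections for all weights (so the induced map of line bundles is bijective on stalks), and at each cusp it follows from the $q$-expansion of $G_\sigma$ being a unit. Since $\CA^{\min}_{\vec{0},\vec{h}_\sigma,E}$ is a line bundle for sufficiently small $U$ by \S\ref{ssec:qexp}, it is therefore trivialised by $G_\sigma$. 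Consequently the isomorphism class of $\CA^{\min}_{\vec{k},\vec{m},E}$ depends on $\vec{m}$ only modulo the sublattice $\Lambda := \sum_\sigma \ZZ \vec{h}_\sigma$, which has finite index in $\ZZ^\Sigma$.

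Fixing a finite set $S$ of coset representatives for $\ZZ^\Sigma/\Lambda$, it then suffices to prove ampleness of $\CA^{\min}_{\vec{\ell}+\vec{\delta},\vec{m},E}$ for each $\vec{m} \in S$ and all $\ell$ sufficiently large. For each such fixed $\vec{m}$ one has a canonical isomorphism
$$\CA^{\min}_{\vec{\ell}+\vec{\delta},\vec{m},E} \cong (\det\omega)^{\otimes\ell} \otimes \CA^{\min}_{\vec{\delta},\vec{m},E},$$
where $\det\omega = \CA^{\min}_{\vec{1},\vec{0},E}$ is a line bundle (parallel weight is parallel) verified at cusps using the explicit $q$-expansion descriptions from \S\ref{ssec:qexp}. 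Since $\det\omega$ is ample on the minimal compactification of the Pappas--Rapoport model in characteristic $p$ (this being essentially the content of \cite[Lemma~4.5]{RX}), and since for any ample $\mathcal{L}$ and any line bundle $\mathcal{M}$ the bundle $\mathcal{L}^{\otimes n} \otimes \mathcal{M}$ is ample for $n \gg 0$, ampleness of $\CA^{\min}_{\vec{\ell}+\vec{\delta},\vec{m},E}$ follows for $\ell$ sufficiently large. Taking $\ell$ large enough uniformly over the finite set $S$ yields the lemma.

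The only substantive input is the base case: the ampleness of the Hodge-type automorphic line bundle on the minimal compactification of the PR model in positive characteristic, which is provided by \cite{RX}. The passage from $\vec{m}=\vec{0}$ to general $\vec{m}$ is then essentially a bookkeeping exercise via the trivialisation by the $G_\sigma$ together with the standard stability of ampleness under tensoring with fixed line bundles.
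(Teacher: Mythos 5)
Your reduction of the $\vec{m}$-dependence via the trivialising sections $G_\sigma$ is a clean alternative to the paper's appeal to $\CA^{\min}_{\vec{0},\vec{m},E}$ being a torsion line bundle, and that part of the argument is fine. However, the core of your proposal rests on a false premise.

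You assert that $\det\omega = \CA^{\min}_{\vec{1},\vec{0},E}$ is ample on $Y_E^{\min}$ and cite \cite[Lemma~4.5]{RX} for this. That bundle is \emph{not} ample on the Pappas--Rapoport model when $p$ is ramified in $F$. It is the pullback $\pi^*\omega$ of an ample bundle $\omega$ on the minimal compactification of the Deligne--Pappas model along the forgetful morphism $\pi:Y_E^{\min} \to Y_{-,E}^{\min}$, and that morphism is not finite: over the non-ordinary locus it has positive-dimensional fibers coming from the flag varieties parametrising Pappas--Rapoport filtrations. The pullback of an ample bundle under a non-finite morphism is merely nef/semiample, not ample, so $(\det\omega)^{\otimes\ell}$ is never ample, and the step ``$\CL^{\otimes n}\otimes\CM$ is ample for $n\gg 0$'' cannot be invoked. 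This is precisely why the statement of the lemma includes the non-parallel twist $\vec{\delta}$ (which is $\vec{0}$ exactly when $p$ is unramified). \cite[Lemma~4.5]{RX} already carries that twist, for the same reason.

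The missing idea is the \emph{relative} ampleness of $\CA^{\min}_{\vec{\delta},\vec{0},E}$ with respect to $\pi$. Concretely: the Pappas--Rapoport model embeds as a closed subscheme of a product of full flag varieties over the Deligne--Pappas model, and the relatively ample $\CO(1)$ on each flag factor pulls back to $\bigotimes_{j=1}^{e_\gp}\CL_{\sigma_{\gp,i,j}}^{j-1}$, which is exactly the weight $\vec{\delta}$ bundle. Once you have this, the correct conclusion is furnished by the standard fact (\cite[Lemma~0892]{stax}) that a relatively ample line bundle tensored with a sufficiently high power of the pullback of an ample bundle on the base is ample. Your decomposition $\CA^{\min}_{\vec{\ell}+\vec{\delta},\vec{m},E} \cong (\det\omega)^{\otimes\ell}\otimes\CA^{\min}_{\vec{\delta},\vec{m},E}$ is the right one to write down; the error is in treating the first factor as ample and the second as an innocuous constant, when in fact the ampleness has to come from their interaction via relative ampleness over $\pi$.
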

\begin{proof} Consider the morphism 
$\pi:Y_E^{\min} \to Y_{-,E}^{\min}$ defined by forgetting the Pappas--Rapoport filtrations), where  
$Y_{-,E}^{\min}$ is the minimal compactification of the 
Deligne--Pappas model (see \cite[\S2.10]{compact}), so that
$\CA^{\min}_{\vec{1},\vec{0},E} = \pi^*\omega$ for an ample
line bundle $\omega$ on $Y_{-,E}^{\min}$.

Furthermore recall that $\CA^{\min}_{\vec{\delta},\vec{0},E}$ is relatively ample (with respect to $\pi$).  Indeed $\pi$
is an isomorphism in a neighborhood of the cusps (more precisely, the ordinary locus), so we can replace $\pi$ by $Y_E \to Y_{E,-}$, and hence by the morphism
$\widetilde{\pi}: \widetilde{Y}_E \to \widetilde{Y}_{E,-}$,
and $\CA^{\min}_{\vec{\delta},\vec{0},E}$ by 
$\widetilde{\CA}_{\vec{\delta},\vec{0},E}$.
The very definition of $\widetilde{Y}_E$ factors 
$\widetilde{\pi}$ through a closed immersion
$$ \widetilde{Y}_E \hookrightarrow \bigtimes_{\tau \in \Sigma_0} 
\mathrm{Flag}_{\widetilde{Y}_{E,-}} (s_*\Omega_{A/\widetilde{Y}_{E,-}})_{\tau},$$
where the product is fibred over $\widetilde{Y}_{E,-}$
and the factors are the (full) flag varieties associated
to the vector bundles
$\CF^{(e_\gp)}_{\tau_{\gp,i}} = (s_*\Omega_{A/\widetilde{Y}_{E,-}})_{\tau_{\gp,i}}$ (where
$s:A\to\widetilde{Y}_{E,-}$ is the universal abelian scheme).
The relative ampleness then follows from the fact that the closed immersion identifies the pull-back of $\CO(1)$ under the usual projective embedding of the flag variety with 
$$\bigotimes_{j=1}^{e_\gp} \left(\bigwedge{\mathstrut}^{\!\!\!e_\gp - j}\left(\CF_{\tau_{\gp,i}}^{(e_\gp)}/\CF_{\tau_{\gp,i}}^{(j)}\right) \right) \cong \bigotimes_{j=1}^{e_\gp}\CL_{\sigma_{\gp,i,j}}^{j-1}.$$

It follows from \cite[Lemma~0892]{stax} that $\CA^{\min}_{\vec{\ell}+\vec{\delta},\vec{0},E}$ is ample for sufficiently large $\ell$. Finally since $\CA^{\min}_{\vec{0},\vec{m},E}$ is a torsion line bundle on $Y_E^{\min}$, the ampleness of $\CA^{\min}_{\vec{\ell}+\vec{\delta},\vec{m},E}$ follows from
\cite[Lemma~0890]{stax}.
\end{proof}

\section{Associated Galois representations} \label{sec:Galois}
\subsection{Associated Galois representations in characteristic zero} \label{ssec:GalChar0}
Recall that $\mathcal{C}_{\vec{k}}$ is the set of cuspidal automorphic representations of $\GL_2(\AA_F)$ whose local factor
is $D'_{k_{\sigma}}$ at each archimedean place $\sigma \in \Sigma$.
Thus if $\pi\in \mathcal{C}_{\vec{k}}$, then it is
$C$-algebraic (in the terminology
of \cite{BG}) if and only if all the $k_\sigma$ are even,
and $L$-algebraic if and only if all the $k_\sigma$ are odd.
In this case, through the work of many people, we can associate Galois representations  to $\pi$ compatible with its local factors $\pi_v$  under the local Langlands correspondence $\rec_{F_v}$.  More precisely, by the results of many authors (in particular, \cite{carayol}, \cite{BlasRog}, \cite{RogTun}, \cite{Taylor}, \cite{Jarvis}, \cite{Saito} and \cite{BLGGT2}), we have:

\begin{theorem}\label{thm:galois0}
Suppose that $w \in \ZZ$ is such that $w \equiv k_\sigma \bmod 2$
for all $\sigma \in \Sigma$, and let $\pi = \otimes'_v \pi_v$ be an irreducible subrepresentation of $|\det|^{w/2}\CS_{\vec{k}}$.  Then there exists
a unique (up to isomorphism) irreducible representation
$\rho_\pi: G_F \to \GL_2(\Qpbar)$
such that 
\begin{equation} \label{eqn:lgc}
\WD(\rho_\pi|_{G_{F_v}})^{\mathrm{F-ss}} \cong 
\rec_{F_v}(|\det|^{-1/2}\pi_v)\end{equation}
for all but finitely many non-archimedean places $v$
of $F$.  Furthermore if $k_\sigma \ge 2$ for all $\sigma \in \Sigma$, then (\ref{eqn:lgc}) holds
for all\footnote{See \S\ref{subsec:cryslift} below for definitions and conventions of notions from $p$-adic Hodge theory. We remark also that local-global compatibility is even known under mild technical hypotheses without the assumption that all $k_\sigma \ge 2$; see especially \cite[Thm.~1.4]{JN2015} for $v\nmid p$ and \cite[Thm.~6.11.2]{BP} for $v|p$.} $v$, in which case $\rho_\pi|_{G_{F_v}}$ is de Rham with
$\sigma$-labelled Hodge--Tate weights $(\frac{w+k_\sigma-2}{2},
\frac{w-k_\sigma}{2})$ for all $v|p$.
\end{theorem}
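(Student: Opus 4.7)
The plan is to deduce the theorem from the cited works via a twist, reducing everything to the setting of an automorphic representation in $\mathcal{C}_{\vec{k}}$. Uniqueness is immediate from the Chebotarev density theorem: any two irreducible $\rho, \rho': G_F \to \GL_2(\Qpbar)$ satisfying (\ref{eqn:lgc}) at a density-one set of unramified places have equal traces on the Frobenius elements there, hence equal semisimplifications, and so are isomorphic.

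For existence, the parity condition $w\equiv k_\sigma\bmod 2$ forces all the $k_\sigma$ to share the common parity of $w$. Writing $\pi = |\det|^{w/2}\pi'$, we have $\pi' \in \mathcal{C}_{\vec{k}}$, and my plan is first to obtain an associated Galois representation $\rho_{\pi'}: G_F \to \GL_2(\Qpbar)$ satisfying $\WD(\rho_{\pi'}|_{G_{F_v}})^{\Fsemis} \cong \rec_{F_v}(|\det|^{-1/2}\pi'_v)$ at all but finitely many $v$, and at all $v$ together with the de Rham property and the $\sigma$-labelled Hodge--Tate weight pair $\bigl(\tfrac{k_\sigma - 2}{2}, -\tfrac{k_\sigma}{2}\bigr)$ at $v\mid p$ when all $k_\sigma \ge 2$. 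This $\rho_{\pi'}$ is furnished by the cited references: Carayol, Blasius--Rogawski and Taylor handle existence in the cohomological range (via congruences and quadratic base change), Rogawski--Tunnell and Jarvis handle the (partial) weight-one case by Deligne--Serre style lifting from congruent higher-weight forms, and Saito together with BLGGT2 supply local-global compatibility at the remaining ramified places, including $v\mid p$. To pass from $\pi'$ to $\pi$, I will exploit the divisibility of $\Qpbar^\times$ to choose a continuous character $\widetilde{\chi}: G_F \to \Qpbar^\times$ with $\widetilde{\chi}^2 = \chi_{\cyc}^{w}$, and then set $\rho_\pi := \rho_{\pi'} \otimes \widetilde{\chi}$. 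Under the normalizations of local class field theory and $\rec_{F_v}$ fixed in the notation, $\chi_{\cyc}|_{G_{F_v}}$ corresponds to $|\cdot|_v$ for $v \nmid p$, so $\widetilde{\chi}|_{G_{F_v}}$ corresponds to $|\cdot|_v^{w/2}$; hence twisting by $\widetilde{\chi}$ on the Galois side matches twisting by $|\det|^{w/2}$ on the automorphic side under $\rec_{F_v}$, and (\ref{eqn:lgc}) transports from $\rho_{\pi'}$ to $\rho_\pi$. When $k_\sigma \ge 2$ for all $\sigma$, de Rhamness is preserved under twisting by the Hodge--Tate character $\widetilde{\chi}$, and since $\widetilde{\chi}$ contributes $\sigma$-labelled Hodge--Tate weight $w/2$ at every embedding $\sigma$ above $v$, each labelled Hodge--Tate weight of $\rho_{\pi'}$ shifts by $w/2$, producing the advertised pair $\bigl(\tfrac{w+k_\sigma-2}{2}, \tfrac{w-k_\sigma}{2}\bigr)$.

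The principal obstacle is not any single substantive input but rather the bookkeeping required to reconcile the normalization conventions of the cited references, which differ in their $|\det|^{\pm 1/2}$-twists, in signs for Hodge--Tate weights, and in geometric versus arithmetic Frobenius conventions; this is where one has to be most careful in order to see that the formulas line up as stated. The most delicate substantive ingredients used as black boxes are Saito's local-global compatibility at $v\mid p$ in the cohomological case and its partial weight-one analogue (provided by BLGGT2 together with Jarvis). Beyond these, the argument reduces to the twist by $\widetilde{\chi}$ just described.
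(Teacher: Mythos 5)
The paper itself offers no proof of this theorem: it simply states it as a compilation of the cited references, with the phrase ``by the results of many authors \ldots we have.'' So your proposal is filling in an argument the authors left implicit, and the structure you suggest --- get a ``base'' Galois representation from the literature and twist by an abelian character to account for $w$ --- is the right idea. Uniqueness via Chebotarev is fine.

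There is, however, a genuine gap in the existence argument in the $L$-algebraic case (all $k_\sigma$ odd). You propose to start from $\rho_{\pi'}$ with $\sigma$-labelled Hodge--Tate weights $\bigl(\tfrac{k_\sigma-2}{2},\,-\tfrac{k_\sigma}{2}\bigr)$ and then twist by $\widetilde{\chi}$ with $\widetilde{\chi}^2 = \chi_{\cyc}^w$. But when the $k_\sigma$ are all odd (and hence $w$ is odd), those weights are half-integers, so no such de Rham $\rho_{\pi'}$ exists --- the representations furnished by Carayol, Blasius--Rogawski, Taylor, Saito, etc.\ always have \emph{integer} labelled Hodge--Tate weights, which is exactly why the parity hypothesis $w\equiv k_\sigma \bmod 2$ appears in the theorem. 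Likewise, a continuous $\widetilde{\chi}$ with $\widetilde{\chi}^2 = \chi_{\cyc}^w$ cannot be de Rham when $w$ is odd (its unique Hodge--Tate weight would have to be $w/2$), so even if you produce such a square root using divisibility of $\Qpbar^\times$, twisting by it does not preserve de Rhamness, and the statement that ``each labelled Hodge--Tate weight of $\rho_{\pi'}$ shifts by $w/2$'' has no meaning. The repair is to anchor the normalization at $w_0 \in \{0,1\}$ chosen to match the parity of the $k_\sigma$: obtain from the literature the representation attached to the $C$-algebraic representation $|\det|^{w_0/2}\pi'$, which has integer Hodge--Tate weights $\bigl(\tfrac{w_0+k_\sigma-2}{2},\,\tfrac{w_0-k_\sigma}{2}\bigr)$, and then twist by the genuine integer power $\chi_{\cyc}^{(w-w_0)/2}$. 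The rest of your bookkeeping (matching $|\det|^{(w-w_0)/2}$ on the automorphic side to $\chi_{\cyc}^{(w-w_0)/2}$ on the Galois side via $\rec_{F_v}$, and using Chebotarev for uniqueness) then goes through unchanged.
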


Recall that for all but finitely many non-archimedean places
$v$ of $F$, the representation $\pi_v$ is unramified
i.e., $\pi_v^{U_v}$ is non-trivial, and hence
one-dimensional, for $U_v = \GL_2(\CO_{F,v})$.
For such a $v$, let $a_v$ and $d_v$ denote the eigenvalues
of the double-coset operators $T_v$ and $S_v$
on this one-dimensional space (where $T_v$ and $S_v$
are defined as in \S\ref{ssec:Hecke}).  In this case,
(\ref{eqn:lgc}) means that if $v\nmid p$, then
$\rho_\pi$ is unramified at $v$,
and that the image of a geometric Frobenius element at $v$
has characteristic
polynomial $X^2 - a_v X + \Nm_{F/\QQ}(v)d_v$.
On the other hand if $v|p$ (and $\pi_v$ is unramified), then $\rho_\pi|_{G_{F_v}}$ is crystalline and this is the characteristic polynomial of 
$\phi^{f_v}$ on $D_{\crys,F_v}(\rho_\pi|_{G_{F_v}})$
(where $\Nm_{F/\QQ}(v) = p^{f_v}$).
Note also that with our normalizations, the determinant of
$\rho_\pi$ is the product of a finite order character with
$\chi_{\mathrm{cyc}}^{w-1}$. 

\subsection{Associated Galois representations in characteristic $p$} \label{ssec:GalChar0}

We now prove an analogue of Theorem~\ref{thm:galois0} for mod $p$ Hilbert modular eigenforms.  This generalizes \cite[Thm.~6.1.1]{DS1}, which proves such a result under the hypothesis that $p$ is unramified in $F$.  Under an additional parity hypothesis on the weight, this was previously proved independently by Emerton, Reduzzi and Xiao (\cite{ERX}) and Goldring and Koskivirta (\cite{GK}), the latter as a special case of much more general results.  The parity hypothesis was removed in \cite{DS1} using congruences to forms of higher weight and level not necessarily prime to $p$.

Allowing $p$ to ramify in $F$, the approach of \cite{ERX} was generalized by Reduzzi and Xiao (\cite{RX}), and recent work of Shen and Zheng (\cite{SZ}) does the same for that of \cite{GK}, but these results still require a parity hypothesis on the weight.  Along with some additional work, the parity hypothesis can be removed by combining the methods of \cite{DS1} and \cite{RX}, but  we have decided to give a different argument, which is inspired by remarks of David Loeffler and George Boxer.  Again the proof ultimately relies on congruences to forms with level involving primes over $p$, but instead of producing them using the geometry of Hilbert modular varieties with (mildly) bad reduction, we lift (twists) to non-algebraic automorphic forms and apply the Jacquet--Langlands correspondence to transfer the problem to a more amenable setting.

\begin{theorem} \label{thm:galois}  Suppose that $U$ is an open compact subgroup of $\mathrm{GL}_2(\AA_{F,\f})$
containing $\mathrm{GL}_2(\CO_{F,p})$, and $Q$ is a finite set of primes containing all $v|p$ and
all $v$ such that $\mathrm{GL}_2(\CO_{F,v}) \not\subset U$.  Suppose that $\vec{k},\vec{m}\in \mathbb{Z}^{\Sigma}$ and
that $f \in M_{\vec{k},\vec{m}}(U,E)$ is an eigenform for
$T_v$ and $S_v$ (defined in \S\ref{ssec:Hecke}) for all $v \not\in Q$.   Then there is a Galois representation
$$\rho_f : G_F \to \mathrm{GL}_2(E)$$
such that if $v\not\in Q$, then $\rho_f$ is unramified at $v$ and the characteristic
polynomial of $\rho_f(\mathrm{Frob}_v)$ is 
$$X^2 - a_v  X + \mathrm{Nm}_{F/\mathbb{Q}}(v)d_v,$$
where $T_v f = a_v f$ and $S_v f = d_v f$.
\end{theorem}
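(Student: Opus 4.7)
The plan is to lift a twist of $f$ to a (possibly non-algebraic) characteristic zero eigenform of level prime to $p$, use the Jacquet--Langlands correspondence to transfer the problem to forms on a totally definite quaternion algebra, and there construct a congruence to an \emph{algebraic} eigensystem -- at level now divisible by primes of $p$ -- to which Theorem~\ref{thm:galois0} applies. The representation $\rho_f$ will then arise by twisting the Galois representation attached to that algebraic form by a global character correcting for all the twists accumulated along the way. As a first step, I use the isomorphism (\ref{eqn:twist}) in characteristic $p$ to replace $f$ by a form $f_1\in M_{\vec{k},\vec{m}_1}(U,E)$ with $\vec{k}+2\vec{m}_1$ parallel; this is possible even when $\vec{k}$ is non-paritious, because in characteristic $p$ the twisting allows half-integral exponents $\vec\ell\in\tfrac{1}{2}\ZZ^{\Sigma}$. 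I may further multiply $f_1$ by partial Hasse invariants (cf.\ \S\ref{ssec:Hasse}), which are $T_v$- and $S_v$-equivariant for $v\nmid p$, to push $\vec{k}$ deep into the cone of weights of ample automorphic line bundles supplied by Lemma~\ref{lem:ample}.

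With $\vec{k}+2\vec{m}_1$ parallel, the line bundle $\CA_{\vec{k},\vec{m}_1,\CO}$ is defined on $\CY_{U,\CO}$; and with $\vec{k}$ chosen so that $\CA^{\min}_{\vec{k},\vec{m}_1,E}$ is ample, Kodaira vanishing gives $H^1(\CY^{\min}_{U,E},\CA^{\min}_{\vec{k},\vec{m}_1,E})=0$. Flat base-change then forces $H^1(\CY^{\min}_{U,\CO},\CA^{\min}_{\vec{k},\vec{m}_1,\CO})$ to be $\varpi$-torsion-free, so the reduction map on global sections is surjective. Localising the Hecke algebra (which is finite over $\CO$) at the maximal ideal cut out by the eigensystem of $f_1$ and applying the standard going-up argument, I obtain a characteristic zero eigenform $\tilde f$ over a finite extension of $K$, whose Hecke eigenvalues reduce modulo $\varpi$ to those of $f_1$. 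The form $\tilde f$ generates a cuspidal automorphic representation $\pi$ lying in $|\det|^{w/2}\mathcal{C}_{\vec{k}}$ where $w$ is the parallel value of $\vec{k}+2\vec{m}_1$; when $\vec{k}$ is not paritious, $\pi$ is neither $L$- nor $C$-algebraic, so Theorem~\ref{thm:galois0} does not apply to $\pi$ directly.

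To get around this, I choose a totally definite quaternion algebra $B$ over $F$ with $\Sigma^B\cap S_p=\emptyset$ and apply Theorem~\ref{thm:JL} to transfer $\pi$ to $\Pi\in \mathcal{C}^B_{\vec{k}}$ (which requires $k_\sigma\ge 2$ for all $\sigma$, already arranged in Step~1). Since $Y_{U'}^B$ is a finite set, the integral quaternionic form spaces $M^B_{\vec{k},\vec{m}_1}(U',\CO')$ are free $\CO'$-modules of finite rank, so congruences between eigensystems of different levels at primes of $p$ pose no geometric obstruction. I then construct such a congruence to an algebraic eigensystem: by augmenting the level at each $\gp\in S_p$ (for instance to $U_1(\gp^{n_\gp})_\gp$) and twisting by an appropriate local character at those primes which absorbs the non-algebraic part of the infinitesimal character of $\pi$, I produce an eigensystem $\Pi^{\mathrm{alg}}$ on $B^{\times}$ of paritious weight, congruent modulo $\varpi$ to $\Pi$ on all Hecke operators $T_v,S_v$ with $v\notin Q\cup S_p$. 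Transferring $\Pi^{\mathrm{alg}}$ back to $\GL_2(\AA_F)$ via Jacquet--Langlands yields an algebraic Hilbert modular eigenform to which Theorem~\ref{thm:galois0} attaches a Galois representation $\rho^{\mathrm{alg}}$; reducing $\rho^{\mathrm{alg}}$ modulo $\varpi$ and twisting by the global Galois character (supplied by class field theory) corresponding to the combined twists of Steps~1 and~4 then yields $\rho_f$ with the prescribed characteristic polynomials at Frobenius.

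\textbf{Main obstacle.} The principal technical step is the construction of the local twists at primes $\gp\in S_p$ used in the quaternionic congruence. Those twists must simultaneously absorb the non-algebraicity of $\pi_\gp$ (so that the transferred global form becomes algebraic and thus accessible to Theorem~\ref{thm:galois0}), preserve Hecke eigenvalues modulo $\varpi$ at all primes outside $Q\cup S_p$, and be matched, via local and global class field theory, by a global Galois character that can be inverted at the end to recover $\rho_f$. In effect one must show that every continuous automorphic character of the relevant type on $(\AA_{F,\f}^{(p)})^\times$ can be trivialised by a ramified character at a finite set of finite places containing $S_p$, compatibly with the eigensystem mod $\varpi$; it is this that forces the systematic use of non-$L$- and non-$C$-algebraic forms in the argument, and represents the key departure from the approach of \cite{DS1} and \cite{RX}.
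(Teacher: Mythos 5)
Your high-level plan (twist so the weight is ``balanced,'' use Hasse invariants and ampleness to lift, transfer via Jacquet--Langlands, find a congruence to an algebraic form at level divisible by $p$, then apply Theorem~\ref{thm:galois0}) matches the paper's, but the central step is not conceived correctly. You propose ``twisting by an appropriate local character at those primes which absorbs the non-algebraic part of the infinitesimal character of $\pi$'' — but twisting by a character of $F_\gp^\times$ leaves the archimedean components and hence the algebraicity of $\pi$ unchanged; there is no such character. What actually produces the required congruence is a different and much simpler device: the Hecke-equivariant inclusion
$$M^B_{\vec{k},\vec{m}}(U_B,E) \hookrightarrow M^B_{\vec{2}}(U'_B,E)\otimes_E D_{\vec{k},\vec{m},E},$$
where $U' = \{\,u\in U\mid u_{\gp}\equiv 1 \bmod\gp\text{ for all }\gp\in S_p\,\}$. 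This is just the tautological restriction of a function $B_{\f}^\times\to D_{\vec{k},\vec{m},E}$ to its $U'_p$-invariance property, which becomes trivial since $D_{\vec{k},\vec{m},E}$ has trivial $U'_p$-action. It costs level at primes over $p$ but moves the weight to $\vec{2}$, which is paritious, so that after lifting $M^B_{\vec{2}}(U'_B,\CO)$ (free over $\CO$ in the definite case) and transferring back by Jacquet--Langlands, Theorem~\ref{thm:galois0} applies. No local twist at $p$, and no ``absorption of the infinitesimal character,'' is ever performed.

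Two further gaps. First, you do not address the Eisenstein (reducible) contributions: the given mod~$p$ eigensystem may arise from the boundary term $C_{\vec{k},\vec{m}}(U,E)$ rather than from $S_{\vec{k},\vec{m}}(U,E)$, and after lifting on the quaternion algebra, the characteristic-zero eigenform may lie in $I^B_{\vec{2}}(U'_B,\CC)$; in both cases one cannot invoke Theorem~\ref{thm:galois0} and must instead exhibit $\rho_f$ as a direct sum of characters, which requires a separate explicit analysis of these spaces as induced representations. Second, your appeal to a \emph{totally definite} quaternion algebra ramified only at archimedean places requires $[F:\QQ]$ to be even; when $d$ is odd one must pass to a Shimura curve (an algebra ramified at all but one archimedean place), and there the weight-shifting map becomes the pull-back $\pi_U^*$ on $H^1$, which is no longer injective, forcing an analysis of its kernel (via Hochschild--Serre and Shapiro's Lemma) to show it is supported only at reducible eigensystems. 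Neither case is treated in your proposal.
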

\begin{remark} \label{rmk:galconv} Recall from Remark~\ref{rmk:conventions} that the Hecke action defined in \cite{DS1} differs by a twist from the one defined here.  More precisely, if $f$ is an eigenform for the operator $T_v$ (resp.~$S_v$) of this paper with eigenvalue $a_v$ (resp.~$d_v$), then it is so for the operators defined in \cite{DS1}, but with eigenvalue $\Nm_{F/\QQ}(v)^{-1}a_v$ (resp.~$\Nm_{F/\QQ}(v)^{-2}d_v$).  Therefore the Galois representation associated to $f$ in \cite{DS1} is the twist by the mod $p$ cyclotomic character of the one in the preceding theorem.
\end{remark}
\begin{proof} First we note that the same arguments as in the start of the proof of \cite[Thm.~6.1.1]{DS1} apply to show that we can replace $E$ by any finite extension; furthermore we may assume that $U = U(\gn)$ for some sufficiently small $\gn$ prime to $p$ and that $Q$ is the set of primes dividing $\gn p$.  In particular, we choose $\gn$ so that if $\alpha \in \CO_F^\times$ is such that $(\alpha - 1)^2 \in \gn$, then $\alpha \equiv 1 \bmod \gp$ for all $\gp|p$.

Let $\TT = \TT^Q_{\vec{k},\vec{m}}(U,E)$ denote the 
$E$-subalgebra of
$\mathrm{End}_{\Fpbar}(M_{\vec{k},\vec{m}}(U,\Fpbar))$ generated by the operators
$T_v$ and $S_v$ for $v\not\in Q$. Restriction to $E\cdot f$ defines an $E$-algebra homomorphism $\TT \onto E$, and hence a maximal ideal $\gm_f \subset \TT$; moreover $\gm_f$ is generated by the elements
$\{\,T_v - a_v,\,S_v - d_v\,|\, v\notin Q\,\}$.
Conversely, every maximal ideal $\gm$ of $\TT$ is the kernel of an $E$-algebra homomorphism $\theta:\TT \to \Fpbar$; replacing $E$ by a finite extension, we may assume it contains the image of 
$\theta$, and hence that $\gm$ is generated by the elements $T_v - \theta(T_v)$, $S_v - \theta(S_v)$ for $v\not\in Q$.
As $\gm$ is in the support of $M_{\vec{k},\vec{m}}(U,E))$, it follows that the eigenspace
$$\begin{array}{rcl} M_{\vec{k},\vec{m}}(U,E)[\gm]
 &=& \{\,g\in M_{\vec{k},\vec{m}}(U,E)\,|\,\mbox{$Tg = 0$ for all
$T \in \gm$}\,\}\\
 &=& \{\,g\in M_{\vec{k},\vec{m}}(U,E)\,|\,\mbox{$T_vg = a_vg$,
 $S_vg = d_vg$ for all
$v\not\in Q$}\,\}\end{array}$$
is non-zero, i.e., that $\gm = \gm_f$ for some $f$ as in the statement of the theorem.

Recall that, under our assumption that $U$ is sufficiently small, $\CA_{\vec{k},\vec{m},E}^{\min} = i_*\CA_{\vec{k},\vec{m},E}$ is a line bundle on $\CY_{U,E}^{\min}$, in which case $\CA_{\vec{k},\vec{m},E}^{\sub} = j_*j^* \CA_{\vec{k},\vec{m},E}^{\min}$, where
$i$ is the open immersion $\CY_{U,E} \hookrightarrow \CY_{U,E}^{\min}$, and $j$ is the closed immersion $\CZ_{U,E} \hookrightarrow \CY_{U,E}^{\min}$.  We thus have an exact
sequence
\begin{equation} \label{eqn:cusp2modular}
0 \longrightarrow S_{\vec{k},\vec{m}}(U,E)
 \longrightarrow M_{\vec{k},\vec{m}}(U,E)
 \longrightarrow C_{\vec{k},\vec{m}}(U,E),
\end{equation}
where $C_{\vec{k},\vec{m}}(U,E) = H^0(\CZ_{U,E},j^* \CA_{\vec{k},\vec{m},E}^{\min})$.  
Just as for $\CM_{\vec{k},\vec{m}}(U,E)$, we obtain an
action of $\GL_2(\A_{F,\f}^{(p)})$ on 
$$C_{\vec{k},\vec{m}}(E) = \varinjlim_U C_{\vec{k},\vec{m}}(U,E),$$
and hence Hecke operators $T_v$ and $S_v$ on 
$C_{\vec{k},\vec{m}}(U,E) = 
C_{\vec{k},\vec{m}}(E)^{U^p}$ for all $v \not\in Q$.
Furthermore these operators commute (for varying $v$) and
are compatible with their action on 
$M_{\vec{k},\vec{m}}(U,E)$.  
We may therefore assume that $\gm$ is a maximal ideal of
either $\TT^0$ or $\TT^c$, where these (respectively)
denote the $E$-subalgebras of the 
endomorphism rings of $S_{\vec{k},\vec{m}}(U,E)$
and $C_{\vec{k},\vec{m}}(U,E)$ generated by
the $T_v$ and $S_v$ for all $v\not\in Q$.

We first treat the case of $\TT^c$ by explicitly describing
the action of $\GL_2(\A_{F,\f}^{(p)})$ on 
$C_{\vec{k},\vec{m}}(\Fpbar) = 
\Fpbar\otimes_E C_{\vec{k},\vec{m}}(E)$.
Recall that 
$$\CZ_{U,\Fpbar} = B(F)_+\backslash \GL_2(\A_{F,\f})/U
 = B(\CO_{F,(p)})_+ \backslash \GL_2(\A_{F,\f}^{(p)})/U^p
$$ (viewed as a scheme over $\Fpbar$).  Furthermore the construction of the minimal compactification provides a trivialization of $j^* \CA_{\vec{k},\vec{m},\Fpbar}^{\min}$, identifying it with the coherent sheaf defined by (sections of)
$$B(\CO_{F,(p)})_+\backslash (\Fpbar \times  \GL_2(\A_{F,\f}^{(p)})/U^p),$$
where $B(\CO_{F,(p)})_+$ acts on $\Fpbar$ via the character
$\xi_{\vec{k},\vec{m}}:\smat{\alpha}{\beta}{0}{\delta}\mapsto {\alpha}^{\vec{m}}{\delta}^{\vec{k}+\vec{m}}$. This identification is compatible with the action of $\GL_2(\A_{F,\f}^{(p)})$, so that $C_{\vec{k},\vec{m}}(\Fpbar)$ is isomorphic to $\Ind_{B(\CO_{F,(p)})_+}^{\GL_2(\A_{F,\f}^{(p)})}\xi_{\vec{k},\vec{m}}$, which we define to be the space of smooth
functions $\phi: \GL_2(\A_{F,\f}^{(p)})\to \Fpbar$ such that
$$\mbox{$\phi(\gamma g u) = \xi_{\vec{k},\vec{m}}(\gamma)\phi(g)$
 for all $\gamma\in B(\CO_{F,(p)})_+$, 
$g\in \GL_2(\A_{F,\f}^{(p)})$}$$
(where by {\em smooth}, we mean that $\phi(gu) = \phi(g)$ for 
all $g \in \GL_2(\A_{F,\f}^{(p)})$ and 
$u$ in some open compact subgroup $U'$ of $\GL_2(\A_{F,\f}^{(p)})$). Letting $[\xi_{\vec{k},\vec{m}}]: B(\CO_{F,(p)})_+ \to
W(\Fpbar)^\times$ denote the Teichm\"uller lift of $\xi_{\vec{k},\vec{m}}$, we thus have $C_{\vec{k},\vec{m}}(\Fpbar) = \Fpbar\otimes_{W}\widetilde{C}$, where $\widetilde{C} = \Ind_{B(\CO_{F,(p)})_+}^{\GL_2(\A_{F,\f}^{(p)})}[\xi_{\vec{k},\vec{m}}]$ and $W = W(\Fpbar)$.  Furthermore our hypothesis that $U$ is sufficiently small implies that
$C_{\vec{k},\vec{m}}(U,\Fpbar) = C_{\vec{k},\vec{m}}(\Fpbar)^{U^p} = \Fpbar\otimes_{W}\widetilde{C}^{U^p}$.  This identification in turn yields a surjective homomorphism $\widetilde{\TT}^c \to \TT^c$, where $\widetilde{\TT}^c$ is the $W$-algebra of
endomorphisms of $\widetilde{C}^{U^p}$ generated by
the $T_v$ and $S_v$ for $v\not\in Q$.  As $\widetilde{\TT}^c$ is free of finite rank over $W$, it follows that there is a $W$-algebra homomorphism $\widetilde{\theta}:\widetilde{\TT}^c \to \Zpbar$ lifting $\theta$, and hence an eigenvector
$\widetilde{f} \in (\Qpbar\otimes_W\widetilde{C})^{U^p}$ such that $T_v \widetilde{f} = \widetilde{a}_v \widetilde{f}$ and
$S_v \widetilde{f} = \widetilde{d}_v \widetilde{f}$ for all $v \not\in Q$, where the $\widetilde{a}_v$ and $\widetilde{d}_v$ 
are lifts of the $a_v$ and $d_v$ to $\Zpbar$.

We now determine the possible homomorphisms $\widetilde{\theta}$ by analyzing the smooth admissible representation
$$\Qpbar\otimes_W \widetilde{C} =
\Ind_{B(\A_{F,\f}^{(p)})}^{\GL_2(\A_{F,\f}^{(p)})}
\Ind_{B(\CO_{F,(p)})_+}^{B(\A_{F,\f}^{(p)})}
[\xi_{\vec{k},\vec{m}}],$$
where $[\xi_{\vec{k},\vec{m}}]$ is now viewed as taking values in $\Qpbar$.  Firstly the density of $\CO_{F,(p)}$ in $\AA_{F,\f}^{(p)}$ implies that
$\Ind_{B(\CO_{F,(p)})_+}^{B(\A_{F,\f}^{(p)})}
[\xi_{\vec{k},\vec{m}}]$ decomposes as the direct sum of all characters $\xi: B(\A_{F,\f}^{(p)})\to \Qpbar^\times$
of the form
$\smat{a}{b}{0}{d}
\mapsto \xi_1(a)\xi_2(d)$,
where $\xi_1$ and $\xi_2$ are smooth characters $(\A_{F,\f}^{(p)})^\times \to \Qpbar^\times$ such that 
$$\xi_1(\alpha) = [\overline{\alpha}^{\vec{m}}]
\,\,\,\mbox{for all}\,\,\,\alpha\in \CO^\times_{F,(p),+},
\quad\mbox{and}
  \quad \xi_1\xi_2(\delta) = [\overline{\delta}^{\vec{k}+2\vec{m}}]
 \,\,\,\mbox{for all}\,\,\,
 \delta\in \CO_{F,(p)}^\times.$$
It follows that $\Qpbar\otimes_W \widetilde{C}$ decomposes as
the direct sum over such $\xi$ of the representations
$$\Ind_{B(\A_{F,\f}^{(p)})}^{\GL_2(\A_{F,\f}^{(p)})} \xi
  \cong \bigotimes_{v \nmid p}{\!}^{^{\displaystyle{\prime}}}
\Ind_{B(F_v)}^{\GL_2(F_v)} \xi_v,$$
where $\otimes'$ denotes the restricted tensor product and
$\xi_v$ the restriction of $\xi$ to $B(F_v)$.  Finally if
$(\Ind_{B(F_v)}^{\GL_2(F_v)} \xi_v)^{\GL_2(\CO_{F,v})}\neq 0$,
then it is one-dimensional, with $T_v$ and $S_v$ acting
by multiplication by 
$$\widetilde{a}_v = \Nm_{F/\QQ}(v)\xi_1(\varpi_v) + \xi_2(\varpi_v)
\quad\mbox{and}\quad \widetilde{d}_v = \xi_1(\varpi_v)\xi_2(\varpi_v),$$
respectively, and the homomorphisms $\widetilde{\theta}:\widetilde{\TT}^c \to \Qpbar$ are precisely those defined by 
$T_v \mapsto  \widetilde{a}_v$, $S_v \mapsto \widetilde{d}_v$ for pairs of characters $(\xi_1,\xi_2)$ as above.

The analogous statement describing the possible homomorphisms
$\theta:\TT^c \to \Fpbar$ now follows.  In particular we conclude that 
$$a_v = \Nm_{F/\QQ}(v)\overline{\xi}_1(\varpi_v) + 
\overline{\xi}_2(\varpi_v)\quad\mbox{and}\quad
d_v = \overline{\xi}_1(\varpi_v)\overline{\xi}_2(\varpi_v),$$
where $\overline{\xi}_1$ and $\overline{\xi}_2$ are smooth characters $(\A_{F,\f}^{(p)})^\times \to \Fpbar^\times$ such that 
$$\overline{\xi}_1(\alpha) = \overline{\alpha}^{\vec{m}}
\,\,\,\mbox{for all}\,\,\,\alpha\in \CO^\times_{F,(p),+},
\quad\mbox{and}
  \quad \overline{\xi}_1\overline{\xi}_2(\delta) = \overline{\delta}^{\vec{k}+2\vec{m}}
 \,\,\,\mbox{for all}\,\,\,
 \delta\in \CO_{F,(p)}^\times.$$
The desired representation $\rho_f$ is therefore given by $\chi_{\mathrm{cyc}}^{-1}\chi_1 \oplus \chi_2$, where
$\chi_i$ corresponds via class field theory to the unique extension of $\xi_i$ to a character $\A_F^\times \to \Fpbar^\times$ trivial on $F_{\infty,+}^\times F^\times$ (and necessarily also the maximal pro-$p$ subgroup of $\CO_{F,p}^\times$).

Suppose now that $\gm$ is a maximal ideal of $\TT^0$, or equivalently that it is associated to an eigenform $f \in S_{\vec{k},\vec{m}}(U,E) = H^0(Y_{U,E}^{\min},\CA_{\vec{k},\vec{m},E}^{\sub})$.  By Lemma~\ref{lem:ample}, we may choose an integer $\ell \ge 0$ such that $\CA^{\min}_{\vec{\ell}+\vec{\delta},\vec{0},E}$ is ample.  We thus have
$$H^1(Y_{U,E}^{\min},\CA_{\vec{k}+N(\vec{\ell}+\vec{\delta}),\vec{m},E}^{\sub}) = 0$$
for sufficiently large integers $N$.  

Since $\vec{\ell}+\vec{\delta} \in \Xi_{\min} \subset \Xi_{\mathrm{Hasse}}$, some positive multiple is the weight of a product of partial Hasse invariants.  More explicitly, we have $(p-1)(\vec{\ell}+\vec{\delta}) = \sum_{\sigma \in \Sigma} r_\sigma \vec{h}_\sigma$, where
$$r_{\sigma_{\gp,i,j}} = \ell(e_\gp + (j-1)(p-1))
  +(e_\gp(e_{\gp}-1) + (j-1)(j-2)(p-1))/2.$$
Thus if $N = M(p-1)$, then multiplication by 
$\prod_\sigma H_\sigma^{M r_\sigma}$ defines a Hecke-equivariant injective map
$$S_{\vec{k},\vec{m}}(U,E) 
  \hookrightarrow S_{\vec{k}+ N(\vec{\ell}+\vec{\delta}),\vec{m}}(U,E).$$
We may therefore replace $\vec{k}$ by $\vec{k}+ N(\vec{\ell}+\vec{\delta})$ for some sufficiently large $N$ divisible by $p-1$ so as to assume $H^1(\CY_{U,E}^{\min},\CA_{\vec{k},\vec{m},E}^{\sub}) = 0$.

Since the character $(\AA_{F,\f}^{(p)})^\times \to E^\times$ defined by $\alpha \mapsto \overline{\alpha}^{\vec{m}+\vec{k}/2}$ has kernel containing $\CO_{F,(p),+}^\times \cap U^p$, and $\CO_{F,(p),+}^\times U^p$ has finite index in $(\AA_{F,\f}^{(p)})^\times$, we may extend it to a character $\xi:\AA_{F,\f}^{(p)} \to E^\times$
(after enlarging $E$ and hence $K$ if necessary).
Multiplication by the section $e_\xi \in H^0(\CY_{U,E},\CN)$ (defined in \S\ref{ssec:twist}) yields an isomorphism of sheaves 
$\CA_{\vec{k},\vec{m},E}^{\sub} \stackrel{\sim}{\longrightarrow}
\CA_{\vec{k},E}^{\sub}$, so we have that $H^1(\CY_{U,E}^{\min},\CA_{\vec{k},E}^{\sub}) = 0$.  Since the closed immersion $t:\CY_{U,E}^{\min} \hookrightarrow \CY_U^{\min}$ induces an isomorphism\footnote{Note that this is false with $\sub$ replaced by $\min$ unless $\vec{k}$ is parallel.} $t^*\CA_{\vec{k}}^{\sub} \stackrel{\sim}{\longrightarrow} \CA_{\vec{k},E}^{\sub}$, we have a short exact sequence
$$ 0 \longrightarrow \CA_{\vec{k}}^{\sub} 
 \stackrel{\varpi\cdot}{\longrightarrow}
\CA_{\vec{k}}^{\sub} \longrightarrow
t_* \CA_{\vec{k},E}^{\sub} \longrightarrow 0,$$
so the vanishing of $H^1(\CY_{U,E}^{\min},\CA_{\vec{k},E}^{\sub})$
implies that of the finitely generated $\CO$-module 
$H^1(\CY_{U}^{\min},\CA_{\vec{k}}^{\sub})$, and hence the surjectivity of the reduction map
$$S_{\vec{k}}(U,\CO) = H^0(\CY_{U}^{\min},\CA_{\vec{k}}^{\sub})
 \to H^0(\CY_{U,E}^{\min},\CA_{\vec{k},E}^{\sub}) 
 = S_{\vec{k}}(U,E).$$
Composition with the isomorphism $e_\xi$ of (\ref{eqn:twist2}) thus yields a surjection 
$$\pi:S_{\vec{k}}(U,\CO)\to
  S_{\vec{k},\vec{m}}(U,E),$$
such that 
$$T_v(\pi(h)) = \xi(\varpi_v)\pi(T_v(h))
 \quad\mbox{and}\quad
S_v(\pi(h)) = \xi(\varpi_v)^{2}\pi(S_v(h))$$
for all $h \in S_{\vec{k}}(U,\CO)$ and $v\not\in Q$.

Letting $\widetilde{\TT}^0$ denote the finite flat
$\CO$-algebra of endomorphisms of $S_{\vec{k}}(U,\CO)$ 
generated by the $T_v$ and $S_v$ for $v\not\in Q$, we obtain
a surjection 
$\widetilde{\TT}^0 \to \TT^0$ sending $T_v$ to $\xi(\varpi_v)T_v$ and $S_v$ to $\xi(\varpi_v)^2S_v$ for all $v\not\in Q$.
The same argument as for $\widetilde{\TT}^c$ thus yields an eigenform $\widetilde{f}' \in S_{\vec{k}}(U,\Qpbar)$ such that $T_v \widetilde{f}' = \widetilde{a}'_v \widetilde{f}$ and
$S_v \widetilde{f}' = \widetilde{d}'_v \widetilde{f}$ for all $v \not\in Q$, where the $\widetilde{a}'_v$ and $\widetilde{d}'_v$ 
are lifts of the $\xi(\varpi_v)^{-1}a_v$ and $\xi(\varpi_v)^{-2}d_v$ to $\Zpbar$.
Viewing the $\widetilde{a}'_v$ and $\widetilde{d}'_v$ as elements of $\CC$ via the chosen embedding $\Qbar \hookrightarrow \Qpbar$, we may replace $\widetilde{f}'$ by an eigenform in $S_{\vec{k}}(U,\CC)$ with the same eigenvalues.  We thus obtain an automorphic representation $\pi' \in \mathcal{C}_{\vec{k}}$ such that the local factor $\pi'_v$ is the unramified principal series with $T_v = \widetilde{a}'_v$ and $S_v = \widetilde{d}'_v$ on 
$(\pi'_v)^{\GL_2(\CO_{F,v})}$ for all $v\not\in Q$.

Suppose now that $d = [F:\QQ]$ is even, and let $B$ be the quaternion algebra over $F$ such that $\Sigma^B = \Sigma_\infty$.
We may then apply the Jacquet-Langlands correspondence (in the form of Theorem~\ref{thm:JL}) to obtain an automorphic representation $\Pi' \in \mathcal{C}_{\vec{k}}^B$ such that $\Pi'_v \cong \pi'_v$ for all $v\notin \Sigma_\infty$, and hence an eigenform $\widetilde{\varphi}' \in S^B_{\vec{k}}(U_B,\CC) = M^B_{\vec{k}}(U_B,\CC)$ with the same eigenvalues as $\widetilde{f}'$ for all $v \not\in Q$, where $U_B$ corresponds to $U$ under the isomorphism $B_{\A,\f}^\times \cong \GL_2(\A_{F,\f})$ obtained from a choice of maximal order $\CO_B$ and $\CO_{B,v} \cong M_2(\CO_{F,v})$ for all $v \not\in \Sigma_\infty$.
It then follows from the identifications~(\ref{eqn:scalars}) and
$\Qpbar\otimes_{\CO}M^B_{\vec{k}}(U_B,\CO) = M^B_{\vec{k}}(U_B,\Qpbar)$ that there is such an eigenform in $M^B_{\vec{k}}(U_B,\CO)$, and hence a non-zero $\varphi' \in M^B_{\vec{k}}(U_B,E)$ such that $T_v(\varphi') = \xi(\varpi_v)^{-1}a_v\varphi'$ and
$S_v(\varphi') = \xi(\varpi_v)^{-2}d_v\varphi'$ for all $v\not\in Q$.  The analogue of~(\ref{eqn:twist2}) for $B$ defined in \S\ref{ssec:defQO} then gives an eigenform $\varphi\in M^B_{\vec{k},\vec{m}}(U_B,E)$ with the same eigenvalues as our original $f$.

Now let $U' = \{\,u\in U\,|\,\mbox{$u_{\gp}\equiv 1\bmod \gp$ for all $\gp \in S_p$}\,\}$ and define ${\TT}^B$ to be the 
$E$-subalgebra of endomorphisms of $M^B_{\vec{2}}(U_B',E)$ generated by the operators $T_v$ and $S_v$ for all $v\not\in Q$.  Similarly letting $\widetilde{\TT}^B$ be the finite flat
$\CO$-algebra of endomorphisms of $M^B_{\vec{2}}(U_B',\CO)$ generated by the same operators, the Hecke equivariant surjection $M^B_{\vec{2}}(U_B',\CO) \onto M^B_{\vec{2}}(U_B',E)$ yields a surjection $\widetilde{\TT}^B \onto \TT^B$. 
Combining this with the Hecke-equivariant inclusion
\begin{equation} \label{eqn:levelp}
M^B_{\vec{k},\vec{m}}(U_B,E) \subset M^B_{\vec{2}}(U'_B,E)\otimes_E D_{\vec{k},\vec{m},E},\end{equation}
we deduce the existence of an $\CO$-algebra homomorphism 
$\widetilde{\TT}^B \to \Qpbar$ sending 
$T_v \mapsto \widetilde{a}_v$ and $S_v \mapsto \widetilde{d}_v$ 
for all $v\not\in Q$, where the $\widetilde{a}_v$ and $\widetilde{d}_v$ are lifts of the $a_v$ and $d_v$ to $\Zpbar$.
Furthermore it follows from (\ref{eqn:scalars}) that 
$\widetilde{a}_v,\widetilde{d}_v \in \Qbar$, and that
there is a Hecke eigenform
$\widetilde{\varphi}\in M^B_{\vec{2}}(U_B',\CC)$ such that 
$T_v\widetilde{\varphi} = \widetilde{a}_v\widetilde{\varphi}$ and $S_v\widetilde{\varphi} = \widetilde{d}_v\widetilde{\varphi}$ for all $v\not\in Q$.

Suppose that $\widetilde{\varphi} \in I_{\vec{2}}^B(U_B',\C) = (\CI^B_{\vec{2}})^{U_B'}$ (as defined in \S\ref{ssec:defQC}).  As a representation of $B_{\A,\f}^\times$,
$\CI_{\vec{2}}^B$ 
decomposes as $\oplus_{\psi} \psi\circ\det$, where $\psi$ runs over the smooth characters of $\A_{F,\f}^\times$ which are trivial on $F_+^\times$. We therefore have
$$I^B_{\vec{2}}(U_B',\CC) = \bigoplus_{\psi} \CC f_\psi$$
as a $\widetilde{\TT}_B$-module, 
where $\psi$ runs over characters $\A_{F,\f}^\times/(F_+^\times\det(U')) \to \Qbar^\times$, and
$$T_vf_\psi = (\Nm_{F/\Q}(v)+1)\psi(\varpi_v)\quad\mbox{and}\quad
S_vf_\psi = \psi(\varpi_v)$$
for all $v\not\in Q$.  It follows that $a_v = (\Nm_{F/\Q}(v)+1)\overline{\psi}(\varpi_v)$ and $d_v = \overline{\psi}(\varpi_v)$ for such a character $\overline{\psi}$, and the conclusion of the theorem is satisfied by $\rho_f = \chi^{-1}_{\mathrm{cyc}}\chi \oplus \chi$ where $\chi$ is the character corresponding to $\psi$ by class field theory.

Finally if $\widetilde{\varphi} \not\in I_{\vec{2}}^B(U_B',\C)$, then its image in $S_{\vec{2}}(U_B',\C)$ generates a cuspidal automorphic representation $\Pi \in \mathcal{C}_{\vec{2}}^B$ such that $\Pi_v$ is the unramified principal series such that $T_v = \widetilde{a}_v$ and $S_v = \widetilde{d}_v$ on $\Pi_v^{\CO_{B,v}^\times}$ for all $v\not\in Q$.  Applying Theorem~\ref{thm:JL} to $\Pi$ and then Theorem~\ref{thm:galois0} to $\pi = \JL(\Pi)$ yields a representation $\rho_\pi$ such that $\rho_f = \overline{\rho}_{\pi}$ satisfies the conclusion of the theorem.

Suppose now that $d=[F:\Q]$ is odd.  The same proof as in the case of even $d$ then carries over, but with $B$ chosen so that 
$\Sigma^B = \Sigma_\infty - \{\sigma_0\}$ for some $\sigma_0 \in \Sigma_\infty$ and only the following other change.

The analogue of the inclusion (\ref{eqn:levelp}) is the Hecke-equivariant map
$$\pi_U^*: M_{\vec{k},\vec{m}}^B(U_B,E)
 = H^1(Y^B_{U_B},\mathcal{D}_{\vec{k},\vec{m},E})
 \longrightarrow 
  H^1(Y^B_{U'_B}, \mathcal{D}_{\vec{k},\vec{m},E})
  = M_{\vec{2}}^B(U'_B,E)\otimes_E D_{\vec{k},\vec{m},E}$$
induced by the projection $\pi_U:Y^B_{U'_B} \to Y^B_{U_B}$,
which is not necessarily injective. If $U^p$ is sufficiently small that $\alpha \equiv 1 \bmod \gp$ for all $\gp \in S_p$ and $\alpha \in U \cap \CO_F^\times$, then 
$\pi_U$ is \'etale with Galois group
$G = U_B/U_B' \cong \prod_{\gp \in S_p} \GL_2(\F_{\gp})$, and
the Hochschild--Serre spectral sequence identifies $\ker(\pi_U^*)$ with
$$H^1(G, H^0(Y^B_{U'_B},E)\otimes_E {D}_{\vec{k},\vec{m},E}),$$
compatibly\footnote{The compatibility is presumably a formal consequence of general functoriality properties of the Grothendieck spectral sequences underlying this identification, but for lack of a clear reference to that effect, we note that it follows from the explicit description of the edge maps given in the Appendix of \cite[\S I.2]{mumford}.} with the action of the Hecke operators.
Moreover note that
$H^0(Y^B_{U'_B},E)$ is isomorphic to the space of functions
$C_{U'} \to E$, where
$$C_{U'} = \A_{F,\f}^\times/F_+^\times\det(U')$$
is identified with the set of components of $Y^B_{U'_B}$.
Assuming further that $U^p$ is sufficiently small that the kernel of the homomorphism $\delta:G \to C_{U'}$ induced by $\det$ is
$G_1 = \prod_{\gp \in S_p} \SL_2(\F_{\gp})$, 
Shapiro's Lemma gives a canonical isomorphism
$$\ker(\pi_U^*) \cong \mathrm{Coind}_{\delta(G)}^{C_{U'}} H^1(G_1, D'_{\vec{k},\vec{m},E}),$$
where $H^1(G_1, D_{\vec{k},\vec{m},E})$ is endowed with the 
natural action of 
$\delta(G) \cong G/G_1 \cong \prod_{\gp \in S_p} \F_{\gp}^\times$. Note in particular that since $\delta(G)$ is abelian of order prime to $p$, the representation $H^1(G_1, D_{\vec{k},\vec{m},E})$ decomposes as a direct sum of characters.
Furthermore if $h \in (B_{\f}^{(p)})^\times$ and $U_2 \subset hU_1h^{-1}$ for a pair of open compact subgroups $U_1,U_2$ as above, then the map
$$\mathrm{Coind}_{\delta_1(G)}^{C_{U_1'}} H^1(G_1, D_{\vec{k},\vec{m},E})
\longrightarrow
\mathrm{Coind}_{\delta_2(G)}^{C_{U_2'}} H^1(G_1, D_{\vec{k},\vec{m},E})$$
induced by $\rho_h$ is defined by composition with multiplication by the image of $\det(h)$ in $C_{U_2'}$.  An argument similar to the ones in the analyses of $C_{\vec{k},\vec{m}}(U,E)$ and $I_{\vec{2}}(U_B,\C)$ then shows that $a_v = (\Nm_{F/\Q}(v)+1)\overline{\psi}(\varpi_v)$ and $d_v = \overline{\psi}(\varpi_v)$ for some character $\overline{\psi}:\A_{F,\f}^\times/(F_+^\times\det(U')) \to \Fpbar^\times$, and hence the conclusion of the theorem is satisfied by a representation $\rho_f$ of the form $\chi^{-1}_{\mathrm{cyc}}\chi \oplus \chi$.

We may therefore assume $\varphi \in M_{\vec{2}}(U_B',E) \otimes_E D_{\vec{k},\vec{m},E}$ and complete the proof as in the case of even $d$.  Note also that having obtained an eigenform $\widetilde{\varphi} \in M_{\vec{2}}^B(U'_B,\C)$ with eigenvalues lifting those of $f$, we immediately obtain the desired cuspidal automorphic representation $\Pi \in \mathcal{C}_{\vec{2}}^B$ without the need to consider the space $I_{\vec{2}}^B(U_B',\C)$.
\end{proof}

We also record the following immediate consequence of the proof of Theorem~\ref{thm:galois}:
\begin{proposition} \label{prop:cuspidal}
If $f$ is as in the statement of Theorem~\ref{thm:galois} and $\rho_f$ is absolutely irreducible, then $f \in S_{\vec{k},\vec{m}}(U,E)$.
\end{proposition}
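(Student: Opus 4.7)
My plan is to run the contrapositive: assume $f$ is not cuspidal, and deduce that $\rho_f$ must be reducible. The entire argument is essentially already embedded in the proof of Theorem~\ref{thm:galois}.

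First I would look at the short exact sequence~(\ref{eqn:cusp2modular}). If $f \notin S_{\vec{k},\vec{m}}(U,E)$, then its image $\overline{f}$ under $M_{\vec{k},\vec{m}}(U,E) \to C_{\vec{k},\vec{m}}(U,E)$ is nonzero. Because that map is $\GL_2(\A_{F,\f}^{(p)})$-equivariant, and in particular Hecke-equivariant for all $T_v$, $S_v$ with $v \notin Q$, the element $\overline{f}$ is an eigenvector with precisely the same Hecke eigenvalues $a_v$, $d_v$ as $f$. Consequently $\gm_f$ lies in the support of $C_{\vec{k},\vec{m}}(U,E)$, i.e., it corresponds to a maximal ideal of the algebra $\TT^c$ in the proof of Theorem~\ref{thm:galois}.

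Next I would invoke the explicit description of the possible systems of Hecke eigenvalues occurring in $C_{\vec{k},\vec{m}}(U,E)$ that was carried out in the proof of Theorem~\ref{thm:galois} (the analysis of $\widetilde{C} = \mathrm{Ind}_{B(\CO_{F,(p)})_+}^{\GL_2(\A_{F,\f}^{(p)})}[\xi_{\vec{k},\vec{m}}]$ and its characteristic-zero decomposition into principal series). That analysis exhibited smooth characters $\overline{\xi}_1, \overline{\xi}_2 : (\A_{F,\f}^{(p)})^\times \to \Fpbar^\times$ such that
$$a_v \;=\; \Nm_{F/\Q}(v)\,\overline{\xi}_1(\varpi_v) + \overline{\xi}_2(\varpi_v) \quad\text{and}\quad d_v \;=\; \overline{\xi}_1(\varpi_v)\overline{\xi}_2(\varpi_v)$$
for every $v \notin Q$, and produced a reducible representation $\rho' := \chi_{\mathrm{cyc}}^{-1}\chi_1 \oplus \chi_2$ (with $\chi_i$ the Galois character attached via class field theory to $\overline{\xi}_i$) satisfying the same Frobenius characteristic polynomials as $\rho_f$ at all $v \notin Q$.

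Finally, I would conclude by Chebotarev plus Brauer--Nesbitt: the semisimplifications $\rho_f^{\mathrm{ss}}$ and $\rho'$ share the same trace and determinant on the Frobenius elements at a set of primes of density one, hence $\rho_f^{\mathrm{ss}} \cong \rho'$. Since $\rho'$ is a sum of characters, $\rho_f^{\mathrm{ss}}$ is reducible. But if $\rho_f$ is absolutely irreducible, then $\rho_f = \rho_f^{\mathrm{ss}}$, giving a contradiction. Therefore $f \in S_{\vec{k},\vec{m}}(U,E)$. The only minor subtlety to be careful about is that the representation associated to $f$ in Theorem~\ref{thm:galois} is only characterized by Frobenius traces at unramified primes, but this is precisely enough under the absolute irreducibility hypothesis.
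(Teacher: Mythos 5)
Your argument is correct and is the same argument the paper gives (the paper's proof is a two-sentence compression of exactly this: use the Hecke-equivariant map to $C_{\vec{k},\vec{m}}(U,E)$ to obtain an eigenform of the same eigensystem, then cite the explicit reducible description of the associated Galois representation from the proof of Theorem~\ref{thm:galois}). You have simply unpacked the phrase "by construction" into the Chebotarev/Brauer--Nesbitt step, which is the intended content.
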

\begin{proof} If $f \not\in S_{\vec{k},\vec{m}}(U,E)$, then the exact sequence (\ref{eqn:cusp2modular}) produces an eigenform $C_{\vec{k},\vec{m}}(U,E)$ with the same eigenvalues as those of $f$. By construction, the associated Galois representation is then reducible (after possibly extending scalars).
\end{proof}

\begin{remark} If $p > 2$, then any embedding $\overline{F} \hookrightarrow \CC$ gives a complex conjugation $c \in G_F$ such $\rho_f(c)$ has distinct eigenvalues $\pm 1 \in E$.  It follows that absolute irreducibility is equivalent to irreducibility if $p > 2$.
\end{remark}

\subsection{Ordinariness} \label{ssec:ord}

Let us now fix a prime $\gp \in S_p$, and suppose that $\vec{m} = -\vec{1}$ and $k_\sigma\ge 1 $ for all $\sigma \in \Sigma_{\gp}$.  In particular, the inequality (\ref{eqn:Tpinequality}) is satisfied, so there is an endomorphism $T_{\gp}$ of $M_{\vec{k},\vec{m}}(U,E)$; furthermore $T_{\gp}$ commutes with $T_v$ and $S_v$ for all $v$ such that $\GL_2(\CO_{F,v}) \subset U$.

\begin{theorem} \label{thm:ord} Suppose that $k_\sigma \ge 1$ for all $\sigma \in \Sigma_{\gp}$ and that $k_\sigma > 1$ for some $\sigma \in \Sigma_{\gp}$, and let $f \in M_{\vec{k},-\vec{1}}(U,E)$ be as in Theorem~\ref{thm:galois}.  If $T_{\gp}f = a_{\gp}f$ for some $a_{\gp} \in E^\times$, then 
$$\rho_f|_{G_{F_\gp}} \sim \left(\begin{array}{cc}\chi_2&*\\
 0 & \chi_\cyc^{-1}\chi_1\end{array}\right)$$
for some characters $\chi_1,\chi_2: G_{F_{\gp}} \to E^\times$,
where $\chi_2$ is the unramified character sending $\Frob_{\gp}$ to $a_{\gp}$.
\end{theorem}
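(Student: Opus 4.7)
The plan is to follow the strategy of the proof of Theorem~\ref{thm:galois}, keeping track of the operator $T_\gp$ throughout in order to lift $f$ to a classical Hilbert modular eigenform that is ordinary at $\gp$. I would then invoke the known shape of the associated Galois representation at $\gp$ for an ordinary classical Hilbert modular form (via the local-global compatibility of Theorem~\ref{thm:galois0} together with the standard analysis of crystalline/semistable representations in the ordinary case), and reduce mod $p$.

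Concretely, I would first twist $f$ by a suitable character and apply Jacquet--Langlands, as in the proof of Theorem~\ref{thm:galois}, to transfer it to a quaternionic eigenform $\varphi \in M^B_{\vec k,\vec m}(U_B,E)$ for an appropriate quaternion algebra $B$ (totally definite if $d$ is even, otherwise with $|\Sigma^B_\infty|=d-1$). Under the Hecke-equivariant embedding
$$M^B_{\vec k,\vec m}(U_B,E) \hookrightarrow M^B_{\vec 2}(U_B',E)\otimes_E D_{\vec k,\vec m,E},$$
with $U_B' = U_B \cap U_1(\gp)_\gp$, the operator $T_\gp$ on the source corresponds, up to a controlled twist factor as in~(\ref{eqn:Tptwist}), to $T_\gp$ at the deeper level on the target. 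The hypothesis $k_\sigma \geq 1$ on $\Sigma_\gp$ ensures $T_\gp$ is defined on the source, and the assumption that some $k_\sigma > 1$ guarantees that $D_{\vec k,\vec m,E}$ is non-trivial so that the subsequent lift will produce a cuspidal (rather than one-dimensional) representation.

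Next, exploiting that the integral Hecke algebra $\widetilde\TT^B$ acting on $M^B_{\vec 2}(U_B',\CO)$ and generated by $T_v,S_v$ ($v\not\in Q$) together with $T_\gp$ is finite flat over $\CO$, I would lift the mod $p$ system of eigenvalues attached to $\varphi$ to an $\CO$-algebra map $\widetilde\theta:\widetilde\TT^B \to \Zpbar$. Since $a_\gp \in E^\times$, any such lift necessarily takes $T_\gp$ to a $p$-adic unit $\widetilde a_\gp$. Applying Jacquet--Langlands in characteristic zero then produces a cuspidal $\pi \in \mathcal{C}_{\vec 2}$ whose local component $\pi_\gp$ admits a nonzero $U_1(\gp)_\gp$-fixed vector on which $T_\gp$ acts by the unit $\widetilde a_\gp$; hence $\pi_\gp$ is (a twist of) a principal series whose unramified Satake parameter is $\widetilde a_\gp$. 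By Theorem~\ref{thm:galois0}, $\rho_\pi|_{G_{F_\gp}}$ is crystalline or semistable, and the ordinary assumption forces
$$\rho_\pi|_{G_{F_\gp}} \sim \begin{pmatrix} \widetilde\chi_2 & * \\ 0 & \widetilde\chi_1 \chi_\cyc^{-1}\end{pmatrix},$$
with $\widetilde\chi_2$ unramified sending $\Frob_\gp$ to $\widetilde a_\gp$. Choosing a $G_F$-stable $\Zpbar$-lattice and reducing mod $p$ (and undoing the twist from the first step) yields the asserted form of $\rho_f|_{G_{F_\gp}}$, with an appropriate nonsplit lattice chosen in the exceptional case where the two diagonal characters coincide after reduction.

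The main obstacle is the careful verification that $T_\gp$ is compatible under the embedding into $M^B_{\vec 2}(U_B',E)\otimes D_{\vec k,\vec m,E}$ at the deeper level $U_1(\gp)_\gp$, and that one can genuinely lift the eigensystem while preserving the $T_\gp$-eigenvalue as a unit. The latter essentially follows from the fact that the ordinary projector $e_{\ord} = \lim T_\gp^{n!}$ is defined on the $\CO$-integral space and commutes with reduction mod $p$; verifying the Hecke compatibility of $T_\gp$ across the embedding requires a direct computation along the lines of~(\ref{eqn:Tptwist}) but no new ideas beyond those developed in Sections~\ref{sec:HMFC} and~\ref{sec:HMFO}.
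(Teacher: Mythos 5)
Your plan correctly identifies the broad strategy — transfer to a quaternion algebra, lift integrally keeping $T_\gp$ in the picture, invoke local–global compatibility for an ordinary $\pi$ of weight $\vec 2$, reduce — and this is indeed the shape of the paper's argument. However, there are two genuine gaps, of which the second is serious.

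First, you claim the assumption $k_\sigma>1$ for some $\sigma\in\Sigma_\gp$ ``guarantees that $D_{\vec k,\vec m,E}$ is non-trivial so that the subsequent lift will produce a cuspidal representation.'' That inference is false: non-triviality of the coefficient system does not rule out $f$ being an Eisenstein (non-cuspidal) eigenform, and the ordinarity hypothesis $a_\gp\in E^\times$ does not exclude it either. The Eisenstein case has real content here (one must still identify $a_\gp$ as a Frobenius eigenvalue) and has to be treated separately, which the paper does by a direct $q$-expansion computation comparing the effects of $T_\gp$ and $T_v$ on the nonzero constant term and invoking the explicit description of $\rho_f$ from the proof of Theorem~\ref{thm:galois}.

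Second, and more seriously, you assert that $T_\gp$ on $M^B_{\vec k,\vec m}(U_B,E)$ ``corresponds, up to a controlled twist factor as in~(\ref{eqn:Tptwist}), to $T_\gp$ at the deeper level'' under the inclusion into $M^B_{\vec 2}(U_B',E)\otimes_E D_{\vec k,\vec m,E}$, and later dismiss the verification as ``a direct computation...no new ideas.'' This is the crux and it is not a direct computation: $T_\gp$ at prime-to-$p$ level of weight $\vec k$ is built from the action of $h\in\CO_{B,\gp}$ on $D_{\vec k,\vec m,E}$, whereas $T_\gp$ at level $U_1(\gp)_\gp$ in weight $\vec 2$ is a double-coset operator relative to the deeper level structure, and these simply do not match under the tautological inclusion. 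What makes the comparison work is a change-of-weight argument in the spirit of Hida (cf.\ Remark~\ref{rmk:ord}): one introduces the coinduced representation $L_\chi = \Coind_H^{\GL_2(\F_\gp)}E(\chi)$, forms $T_{\vec k',\chi} = D_{\vec k',-\vec 1,E}\otimes L_\chi$ with $\vec k'$ obtained from $\vec k$ by replacing $k_\sigma$ by $2$ at $\Sigma_\gp$, and maps via the $\GL_2(\F_\gp)$-equivariant evaluation $\varepsilon\colon\bigotimes_{\sigma\in\Sigma_\gp}\Sym^{k_\sigma-2}E^2\to L_\chi$. The $T_\gp$-compatibility of this map, the fact that $T_\gp$ is nilpotent on its kernel (from $\ker\varepsilon\subset\ker\Delta$, $\im\delta\subset\im\varepsilon$), and the further identifications $M^B(U_B,T_{\vec k',\chi})\cong M^B(U_{B,0},T)\hookrightarrow M^B_{\vec k',-\vec 1}(U_{B,1},E)$ with compatibility of the $T_\gp$'s across these, occupy the bulk of the paper's proof and are verified by non-trivial diagram chases on sheaves over Shimura curves. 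Your proposal passes over this entirely. In particular the hypothesis $k_\sigma>1$ for some $\sigma\in\Sigma_\gp$ is what makes $\varepsilon\circ\Delta=\delta\circ\varepsilon$ (and hence the whole compatibility) go through, rather than playing the role you attribute to it.
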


\begin{remark} \label{rmk:ord} The key step in the proof (maintaining ordinariness in the shift from weight $\vec{k}$ and prime-to-$\gp$ level to weight $\vec{2}$ and level $\gp$) is based on a fundamental principle (and technique) conceived by Hida (see \cite[\S8]{hida_annals}, for example).  Aside from this and some technical adaptation of the preceding proof, the rest of the work is in dealing with cases where we know a priori that the global Galois representation is reducible, and therefore so is its restriction to the local  Galois group, but we still need to show it has an unramified constituent with the correct Frobenius eigenvalue.
\end{remark}

\begin{proof} As usual, we may replace $U$ by an arbitrary small open compact subgroup containing $\GL_2(\CO_{F,p})$.  In particular we may assume $U = U(\gn)$ for some $\gn$ prime to $p$.

Suppose first that $f \not\in S_{\vec{k},-\vec{1}}(U,E)$.  Recall from the proof of Theorem~\ref{thm:galois} that 
$\rho_f = \chi_{\cyc}^{-1}\chi_1 \oplus \chi_2$, where the
$\chi_i$ correspond to characters 
$\overline{\xi}_i: \AA_F^\times \to F^\times$ whose 
restrictions to $\CO_{F,p}^\times$ are determined by 
$\vec{k}$ and $\vec{m}$.  In particular, since 
$\vec{m} = -\vec{1}$, the character $\chi_{\cyc}^{-1}\chi_1$ is unramified at $\gp$ (in fact at all primes over $p$), so the content of the theorem in this case is the characterization of $a_{\gp}$ as a Frobenius eigenvalue.

By assumption, the constant term of the $q$-expansion of $f$ is non-zero for some cusp $c \in \CZ_{U,\Fpbar}$.  Writing $c = B(\CO_{F,(p)})_+ g U^p$  for some $g \in \GL_2(\A_{F,\f}^{(p)})$, we may replace $f$ by $gf$ (shrinking $\gn$ and enlarging $Q$ and $E$ as necessary) so as to assume $r_0^1 \neq 0$ (writing $r_m^t$ for the coefficient of $q^m$ in the $q$-expansion of $f$ at the cusp at $\infty$ associated to an element $t \in  (\AA_{F,\f}^{(p)})^\times$).  Since $T_\gp f = a_\gp f$, the assumption on $\vec{k}$ and the formula for the effect of $T_{\gp}$ on $q$-expansions imply that 
$$r_0^t = a_\gp\epsilon^{-1}r_0^{xt}$$
for all $t \in  (\AA_{F,\f}^{(p)})^\times$, where 
$\epsilon = p^{f_{\gp}}\Nm_{F/\QQ}(\varpi_\gp^{-1})
\in \CO_{F,\gp,+}^\times$ and $x = \varpi_\gp^{(p)} \in (\AA_{F,\f}^{(p)})^\times$.

Choose $v\not\in Q$ representing $\gp^{-1}$ in the strict ray class group $C_{\gn\infty} = F_+^\times\backslash \AA_{F,\f}^\times / U_\gn$ (where $U_{\gn} = \ker(\widehat{\CO}_F^\times \to (\CO_F/\gn)^\times$), so $x = \alpha\varpi_v u$ for some $\alpha \in \CO_{F,(p),+}^\times$ and $u \in U_{\gn}$, and hence (\ref{eqn:qexpinv}) implies that
\begin{equation}\label{eqn:r0t}
r_0^t = a_\gp\epsilon^{-1}r_0^{\alpha \varpi_v t u}  
= a_\gp\epsilon^{-1}\Nm_{F/\QQ}(\alpha^{-1})r_0^{\varpi_v t}
 = a_\gp\Nm_{F/\Q}(v) r_0^{\varpi_v t} \end{equation}
for all $t \in  (\AA_{F,\f}^{(p)})^\times$. 
Since $T_vf = a_vf$ and $S_vf = d_vf$, the formula for the effect of $T_v$ on $q$-expansions implies that
$$a_v r_0^t = \Nm_{F/\QQ}(v)r_0^{\varpi_v t} + d_v r_0^{\varpi_v^{-1} t}$$
for all $t \in  (\AA_{F,\f}^{(p)})^\times$.  The description of the Galois representation in this case renders this equality as
$$(\Nm_{F/\QQ}(v)\overline{\xi}_1(\varpi_v) + \overline{\xi}_2(\varpi_v))r_0^t = \Nm_{F/\QQ}(v)r_0^{\varpi_vt} +
\overline{\xi}_1(\varpi_v) \overline{\xi}_2(\varpi_v)r_0^{\varpi^{-1}_vt}.$$ 
Combining this with (\ref{eqn:r0t}) and the non-vanishing of $r_0^t$ for $t=1$, we deduce that $a^{-1}_\gp$ is a root of
$$(X -\Nm_{F/\QQ}(v) \overline{\xi}_1(\varpi_v))(X - \overline{\xi}_2(\varpi_v)).$$
It follows that either 
$$a_\gp = (\Nm_{F/\QQ}(v)\overline{\xi}_1(\varpi_v))^{-1}
 = \chi_{\cyc}^{-1}\chi_1(\Frob_v^{-1}) = \chi_{\cyc}^{-1}\chi_1(\Frob_\gp),$$
or that $a_\gp = \overline{\xi}_2(\varpi_v)^{-1} = \chi_2(\Frob_v^{-1})$ for all
$v\not\in Q$ representing $\gp^{-1}$ in $C_{\gn\infty}$.
The first possibility gives the desired conclusion with the roles of $\chi_{\cyc}^{-1}\chi_1$ and $\chi_2$ interchanged, and the second\footnote{Note that this is only possible if 
$\sum_{i,j} (k_{\sigma_{\gp,i,j}} - 1) p^i$ is divisible by
$p^{f_\gp} -1$ for all $\gp \in S_p$.}
implies that $\chi_2$ is unramified outside $\gn$ and 
that $a_\gp = \chi_2(\Frob_\gp)$.

We suppose from now on that $f \in S_{\vec{k},-\vec{1}}(U,E)$.  
As in the proof of Theorem~\ref{thm:galois}, we may assume 
$\vec{k}$ is sufficiently large that 
$H^1(\CY_{U,E}^{\min},\CA^{\sub}_{\vec{k},-\vec{1},E})=0$.
(Note that our assumption on $\vec{k}$ implies that multiplication by $\prod_\sigma H_\sigma^{M r_\sigma}$ is compatible with $T_\gp$.) We may assume furthermore that $k_\sigma \ge 2$ for
all $\sigma \in \Sigma$ and that $k_\sigma > 2$ for some $\sigma \in \Sigma_{\gp}$.

Consider now the surjective homomorphism
$\pi:S_{\vec{k}}(U,\CO) \to S_{\vec{k},-\vec{1}}(U,E)$
in the proof of Theorem~\ref{thm:galois}. 
The formulas describing the effect of $T_{\gp}$, 
$T_{\varpi_\gp}$ and $e_\xi$ on $q$-expansions imply that
$$\begin{array}{rcl} r_m^t(T_\gp(\pi(f))& =& \epsilon r_{\varpi_\gp m}^{x^{-1}t}(\pi(f)) = \epsilon \xi(x^{-1}t)r_{\varpi_\gp m}^{x^{-1}t}(\overline{f}) \\
&=&  \epsilon \xi(x^{-1}t)r_{m}^{t}(\overline{ T_{\varpi_\gp}f})
 =  \epsilon \xi(x)^{-1}r_{m}^{t}(\pi(T_{\varpi_\gp}f)) 
\end{array}$$ 
for all $t$ and $m$, so $T_\gp(\pi(f)) = \epsilon\xi(x)^{-1}
 \pi(T_{\varpi_{\gp}}f)$. Arguing as in the proof of Theorem~\ref{thm:galois}, but with $\TT^0$ (respectively $\widetilde{\TT}^0$) generated by $T_\gp$ (respectively, $T_{\varpi_\gp}$) along with the $T_v$ and $S_v$ for $v \not\in Q$, it follows that we may choose $\widetilde{f}'$ so that $T_{\gp}\widetilde{f}' = \widetilde{a}'_{\gp} \widetilde{f}'$, where $\varpi_\gp^{(\vec{k}/2)-\vec{1}} \widetilde{a}'_{\gp}$ is a lift of $\xi(x)a_\gp$ to $\Zpbar$.
It follows that the form $\widetilde{\varphi}'$ obtained in the proof of Theorem~\ref{thm:galois} via the Jacquet--Langlands correspondence satisfies $T_{\gp}\widetilde{\varphi}' = \widetilde{a}'_{\gp}\widetilde{\varphi}'$, and hence that 
$$T_{\varpi_{\gp}}\widetilde{\varphi}' = 
 p^{-f_{\gp}}\varpi^{\vec{k}/2} \widetilde{a}'_{\gp} \widetilde{\varphi}' = 
\epsilon^{-1} \varpi^{(\vec{k}/2) - \vec{1}} \widetilde{a}'_{\gp} \widetilde{\varphi}',$$ which in turns implies
that $T_{\varpi_{\gp}}\varphi' = \epsilon^{-1}\xi(x)a_{\gp} \varphi'$
(where $T_{\varpi_{\gp}}$ was defined in \S\ref{ssec:defQO} and \S\ref{ssec:indefQO}), and therefore by (\ref{eqn:Tptwist}), the form $\varphi = e_\xi \varphi' \in M^B_{\vec{k},-\vec{1}}(U_B,E)$ satisfies $T_{\gp}\varphi = a_{\gp}\varphi$.

Now consider the representation $L_\chi=\mathrm{Coind}_H^{\GL_2(\F_{\gp})}E(\chi)$ of $\GL_2(\F_{\gp})$, where $H$ is the subgroup of upper-triangular matrices and $\chi:H \to E^\times$ is the character defined by 
$$\chi\smat{a}{b}{0}{d} = \prod_{\sigma \in \Sigma_{\gp}}
   \sigma(d)^{k_\sigma - 2}.$$
Let $T_{\vec{k}',\chi}$ denote the representation $D_{\vec{k}',-\vec{1},E}\otimes_E L_\chi$ of $\CO_{B,p}^\times$, where $L_\chi$ is viewed as a representation of $U_{B,\gp} = \CO_{B,\gp}^\times$ via the fixed isomorphism with $\GL_2(\CO_{F,\gp})$ (and inflation from
$\GL_2(\F_{\gp})$), and $\vec{k}'$ is defined by $k'_\sigma = 2$ if $\sigma \in \Sigma_{\gp}$ and $k'_\sigma = k_\sigma$ otherwise.

We thus have the space $M^B(U_B,T_{\vec{k}',\chi})$, equipped with operators $T_v$ and $S_v$ for $v\not\in Q$, on which we now also define an endomorphism $T_{\gp}$.  First note that $L_{\chi}$ may be identified with the space of functions $\psi:\F_{\gp}^2 \to E$ such that $\psi(dx,dy) = \chi(d)\psi(x,y)$ for all $d,x,y \in \F_{\gp}$ (setting $\chi(0) = 0$), where the action of 
$\GL_2(\CO_{F,\gp})$ is defined by $(g\cdot \psi)(x,y) = \psi((x,y)g)$.  (To make the identification explicit, let $\psi$ correspond to the function $\smat{v}{w}{x}{y} \mapsto \psi(x,y)$.)  Letting $L_\chi'$ denote the restriction of $L_\chi$ to $U_{B,\gp}' = U_{B,\gp} \cap hU_{B,\gp}h^{-1}$ along $u \mapsto h^{-1}uh$ (where as usual $h \in \CO_{B,\gp}$ corresponds to $\smat{\varpi_{\gp}}{0}{0}{1}$), we have a $U'_{B,\gp}$-equivariant map 
$$\delta: L_\chi' \longrightarrow \Res_{U_{B,\gp}}^{U'_{B,\gp}}L_\chi$$
defined by $\delta(\psi)(x,y) = \psi(0,y)$.  Tensoring with the identity on $D_{\vec{k'},-\vec{1},E}$ then gives a 
$U_B'$-equivariant map 
$T_{\vec{k}',\chi}' \to \Res_{U_B}^{U'_B}
T_{\vec{k}',\chi}$ (where $U_B' = U_B \cap hU_Bh^{-1}$
and $T_{\vec{k}',\chi}'$ is the restriction of 
$T_{\vec{k}',\chi}$ along $u \mapsto h^{-1}uh$),
and hence a morphism
$$\rho_h^* \CT_{\vec{k}',\chi} \longrightarrow 
  \rho_1^* \CT_{\vec{k}',\chi}$$
of sheaves\footnote{When $[F:\Q]$ is even, so $B$ is totally definite, we may write everything more explicitly in terms of functions on $B_{\f}^\times$, but for the sake of uniformity, 
we view $M^B(U_B,T_{\vec{k}',\chi})$ (for example) as 
$H^0(Y^B_{U_B},\mathcal{T}_{\vec{k}',\chi})$, where $Y_B^{U_B}$ is
the finite set $B^\times \backslash B_{\f}^\times /U$
and $\mathcal{T}_{\vec{k}',\chi}$ is the sheaf associated to
$B^\times \backslash (B_{\f}^\times \times T_{\vec{k}',\chi})/U$ (see Remark~\ref{rmk:TpH0}).} on $Y_{U'_B}^B$.
We then define $T_{\gp}$ on $H^i(Y_{U_B}^B, \mathcal{T}_{\vec{k}',\chi})$ (and in particular
 on $M^B(U_B, T_{\vec{k}',\chi})$) as the resulting composite 
\begin{equation} \label{eqn:TpInd} H^i(Y_{U_B}^B, \mathcal{T}_{\vec{k}',\chi})
 \to H^i(Y_{U_B'}^B, \rho_h^*\mathcal{T}_{\vec{k}',\chi})
 \to H^i(Y_{U_B'}^B, \rho_1^*\mathcal{T}_{\vec{k}',\chi})
\to H^i(Y_{U_B}^B,\mathcal{T}_{\vec{k}',\chi}),\end{equation}
where as usual the first map is pull-back along $\rho_h$ and the third is the trace relative to $\rho_1$.
It is straightforward to check that $T_{\gp}$ commutes with the operators $T_v$ and $S_v$ for $v\not\in Q$.

We will now define a Hecke-equivariant map 
$$M^B_{\vec{k},-\vec{1}} (U_B,E) =
M^B(U_B, D_{\vec{k},-\vec{1},E})
\longrightarrow M^B(U_B, T_{\vec{k}',\chi}).$$
Recall that $\bigotimes_{\sigma \in \Sigma_{\gp}} 
\Sym^{k_\sigma - 2} E^2$ can be identified
with the space of homogeneous polynomials over $E$ in the variables $\{\,X_\sigma,Y_\sigma\,|\,\sigma \in \Sigma_{\gp}\,\}$ with total degree $k_\sigma - 2$ in each pair $(X_\sigma,Y_\sigma)$, where the action of $\GL_2(\F_{\gp})$ is defined by 
$$(g\cdot\Psi)(X_\sigma,Y_\sigma)_{\sigma\in \Sigma_{\gp}}
  = \Psi((X_\sigma,Y_\sigma)\sigma(g))_{\sigma\in \Sigma_{\gp}}.$$
(To make this identification explicit, the monomial $\prod_{\sigma \in \Sigma_{\gp}} X_\sigma^{m_\sigma}Y_\sigma^{n_\sigma}$ corresponds to 
$\bigotimes_{\sigma\in \Sigma_{\gp}}\left(e_\sigma^{\otimes m_\sigma}
\otimes f_\sigma^{\otimes n_\sigma}\right)$, where 
$(e_\sigma,f_\sigma)$ is the standard basis of $E^2$.)
We then have the $\GL_2(\F_{\gp})$-equivariant map
$$\varepsilon: \bigotimes_{\sigma \in \Sigma_{\gp}} 
\Sym^{k_\sigma - 2} E^2 \longrightarrow L_\chi$$
defined by evaluation, i.e., 
$(\varepsilon(\Psi))(x,y) = \Psi(\sigma(x),\sigma(y))$.
Transporting structure to $\CO_{B,\gp}^\times$ and tensoring with the identity on $D_{\vec{k}',-\vec{1},E}$ then yields an 
$\CO_{B,p}^\times$-equivariant map $D_{\vec{k},-\vec{1},E}
\to T_{\vec{k}',\chi}$, and hence a morphism of sheaves inducing maps
$$H^i(Y^B_{U_B}, \mathcal{D}_{\vec{k},-\vec{1},E})
\longrightarrow H^i(Y^B_{U_B}, \mathcal{T}_{\vec{k}',\chi}),$$
and in particular $M^B_{\vec{k},-\vec{1}} (U_B,E)
\to M^B(U_B, T_{\vec{k}',\chi})$.  The compatibility of the resulting map with the operators $T_v$ and $S_v$ for $v \not\in Q$ is straighforward.  For the compatibility with $T_{\gp}$, note that in the above optic, the morphism 
$\rho_h^* \mathcal{D}_{\vec{k},-\vec{1},E} \to
  \rho_1^* \mathcal{D}_{\vec{k},-\vec{1},E}$ in its definition
is induced by tensoring the identity on $D_{\vec{k}',-\vec{1},E}$ with the $U_{B,\gp}'$-linear map
$$\Delta: \left(\bigotimes_{\sigma \in \Sigma_{\gp}}
\Sym^{k_\sigma - 2} E^2 \right)' \longrightarrow 
\Res_{U_{B,\gp}}^{U_{B,\gp}'}\left(\bigotimes_{\sigma \in \Sigma_{\gp}}
\Sym^{k_\sigma - 2} E^2 \right)$$
defined by 
$\Delta(\Psi)(X_\sigma,Y_\sigma)_{\sigma\in \Sigma_{\gp}} = 
 \Psi(0,Y_\sigma)_{\sigma\in \Sigma_{\gp}}$ (where again we transport structure to $U_{B,\gp} = \CO_{B,\gp}^\times$ and 
the first $'$ denotes restriction along the inclusion $U_{B,\gp}' \to U_{B,\gp}$ defined by conjugation by $h^{-1}$).  The desired compatibility then follows from the fact that $\varepsilon\circ \Delta = \delta \circ \varepsilon$. Furthermore we have 
$\ker(\varepsilon) \subset \ker(\Delta)$ and $\im(\delta) \subset \im(\varepsilon)$.  The first inclusion immediately implies that $T_{\gp}$ annihilates the kernel of the map $M^B_{\vec{k},-\vec{1}} (U_B,E) \to  M^B(U_B, T_{\vec{k}',\chi})$ if $d$ is even.  In the case that $d$ is odd, consider the exact sequences
$$0 \to \CE \to \mathcal{D}_{\vec{k},-\vec{1},E} \to \CF \to 0 \quad\mbox{and}\quad 0 \to \CF \to \CT_{\vec{k}',\chi} \to \CG \to 0$$
of locally constant sheaves on $Y_{U_B}^B$, where $\CE$, $\CF$ and $\CG$ are, respectively, the kernel, image and cokernel of
$\mathcal{D}_{\vec{k},-\vec{1},E} \to \CT_{\vec{k}',\chi}$.
The maps in the resulting long exact sequences in cohomology are then compatible with operators $T_{\gp}$ defined in the usual way, but using the morphisms $\rho_h^*\CH \to \rho_1^*\CH$ induced by $\Delta$ and $\delta$ for $\CH = \CE$, $\CF$ and $\CG$.  The inclusions $\ker(\varepsilon) \subset \ker(\Delta)$ and $\im(\delta) \subset \im(\varepsilon)$ imply\footnote{In fact this argument shows that $H^i(Y^B_{U_B}, \mathcal{D}_{\vec{k},-\vec{1},E})
\to H^i(Y^B_{U_B}, \mathcal{T}_{\vec{k}',\chi})$ is an isomorphism on ordinary components, i.e., after localization at the subset $\{T_{\gp}^i\}$ of $E[T_{\gp}]$.} that this morphism, and hence $T_{\gp}$, is zero in the cases of $\CH = \CE$ and $\CG$, from which it follows that $T_{\gp}$ is nilpotent (in fact $T_{\gp}^2 = 0$) on the kernel of $M^B_{\vec{k},-\vec{1}} (U_B,E) \to  M^B(U_B, T_{\vec{k}',\chi})$.  In particular, the element $\varphi$ is not in its kernel, so its image, which we again denote by $\varphi$, is a non-zero element of $M^B(U_B, T_{\vec{k}',\chi})$ with the same eigenvalues with respect to the operators $T_{\gp}$, $T_v$ and $S_v$ (for $v\not\in Q$).

Now let $U_{B,0}$ denote the subgroup of $U_B$ consisting of elements $u$ such that $u_{\gp} \in \CO_{B,\gp}^\times \cong \GL_2(\CO_{F,\gp})$ is upper-triangular, and let $\CT$ be the locally constant sheaf on $Y^B_{U_{B,0}}$ associated to the representation $T= D_{\vec{k}',-\vec{1},E}\otimes_E E(\chi)$ of $U_{B,0,p}$ (where $\chi$ is viewed as a character of $U_{B,0,\gp}$ by inflation from $H$).  Note that $\CT_{\vec{k}',\chi}$ is canonically isomorphic to 
$\pi_*\CT$, where 
$\pi$ is the natural projection $Y^B_{U_{B,0}} \to Y^B_{U_B}$, so that $H^i(Y^B_{U_{B,0}}, \CT) \cong H^i(Y^B_{U_B}, \CT_{\vec{k}',\chi})$, and in particular $M^B(U_{B,0},T) \cong M^B(U_B,T_{\vec{k}',\chi})$.  Furthermore we claim that $T_{\gp}$ on $H^i(Y^B_{U_B}, \CT_{\vec{k}',\chi})$ corresponds to the composite
\begin{equation} \label{eqn:TpU0} H^i(Y_{U_{B,0}}^B, \mathcal{T})
 \to H^i(Y_{U'_{B,0}}^B, \sigma_h^*\mathcal{T})
 \to H^i(Y_{U'_{B,0}}^B, \sigma_1^*\mathcal{T})
\to H^i(Y_{U_{B,0}}^B,\mathcal{T}),\end{equation}
where $U'_{B,0} = U_{B,0} \cap h U_{B,0} h^{-1}$, $\sigma_1$ and $\sigma_h$ are the projections induced by $1$ and $h$, the first and last maps are pull-back and trace, and the middle one is  defined by the isomorphism of sheaves induced by the identity on $T$.  Indeed the desired compatibility amounts to the commutativity of the outer rectangle in the following diagram:
$$\xymatrix{H^i(Y_{U_B}^B,\pi_*\CT) \ar[r] \ar[d]_{\wr} 
\ar@{}[dr]|{(*)} &
H^i(Y_{U_B'}^B,\rho_h^*\pi_*\CT) \ar[r] \ar@{-->}[d]
\ar@{}[drr]|{(**)} &
H^i(Y_{U_B'}^B,\rho_1^*\pi_*\CT) \ar[r] &
H^i(Y_{U_B}^B,\pi_*\CT) \ar[d]_{\wr} \\
H^i(Y_{U_{B,0}}^B,\CT) \ar[r]  &
H^i(Y_{U_{B,0}'}^B,\sigma_h^*\CT) \ar[r] &
H^i(Y_{U_{B,0}'}^B,\sigma_1^*\CT) \ar[r] &
H^i(Y_{U_{B,0}}^B,\CT),}$$
where the rows are (\ref{eqn:TpInd}) and (\ref{eqn:TpU0}) and the outer horizontal maps are the canonical isomorphisms.  For the dashed arrow, let $\pi'$ denote the projection $Y_{U_{B,0}'} \to Y_{U_B'}$, so that $\rho_1 \circ \pi' = \pi\circ \sigma_1$ and $\rho_h \circ \pi' = \pi \circ \sigma_h$.
In particular, we have the natural transformation $\rho_h^*\pi_* \to \pi'_*\sigma_h^*$, giving the desired map
$$H^i(Y_{U_B'}^B,\rho_h^*\pi_*\CT) \longrightarrow
H^i(Y_{U_B'}^B,\pi'_*\sigma_h^*\CT) \stackrel{\sim}{\longrightarrow} H^i(Y_{U_{B,0}'}^B,\sigma_h^*\CT).$$
Furthermore it follows from general abstract formalism that the 
diagram
$$\begin{array}{rl}
 \pi_*\CT & \longrightarrow \qquad \rho_{h,*}\rho_h^*\pi_*\CT\\
          & \searrow  \qquad \qquad \qquad \downarrow\\
& \pi_*\sigma_{h,*}\sigma_h^*\CT = \rho_{h,*}\pi'_*\sigma_h^*\CT
\end{array}$$
commutes (where the two maps from $\pi_*\CT$ are induced by adjunction and the downward arrow by the morphism
$\rho_h^*\pi_*\CT \to \pi'_*\sigma_h^*\CT$),
and hence so does the square labeled $(*)$.

The commutativity of the rectangle $(**)$ follows from that of the  following diagram of morphisms of sheaves on $Y^B_{U_B}$:
$$\xymatrix{\rho_{1,*}\rho_h^*\pi_*\CT \ar[r] \ar[d] & 
 \rho_{1,*}\rho_1^*\pi_*\CT \ar[r] & \pi_*\CT \ar@{=}[d] \\
 \rho_{1,*}\pi'_*\sigma_1^*\CT \ar[r]^{\sim} & 
 \pi_*\sigma_{1,*}\sigma_1^*\CT \ar[r] & \pi_*\CT,}$$
where the downward arrow is given by the natural transformation $\rho_h^*\pi_* \to \pi'_*\sigma_h^*$, the two leftmost horizontal ones by the morphisms $\rho_h^*\CT_{\vec{k}',\chi} \to
\rho_1^*\CT_{\vec{k}',\chi}$ and $\sigma_h^*\CT \stackrel{\sim}{\to} \sigma_1^*\CT$ in the construction of the operators, and the  other two by trace maps.  To verify this commutativity, we consider the corresponding diagram on stalks, given by tensoring the identity on $D_{\vec{k}',-\vec{1},E}$ with the $U_{B,\gp}$-linear maps
$$\xymatrix{\Coind_{U_{B,\gp}'}^{U_{B,\gp}} L_\chi' \ar[r] \ar[d] & 
\Coind_{U_{B,\gp}'}^{U_{B,\gp}}\Res_{U_{B,\gp}}^{U'_{B,\gp}} L_\chi \ar[r] & L_\chi \ar@{=}[d] \\
\Coind^{U_{B,\gp}}_{U'_{B,0,\gp}}E(\chi') \ar@{=}[r] &
\Coind_{U_{B,0,\gp}}^{U_{B,\gp}}\Coind_{U'_{B,0,\gp}}^{U_{B,0,\gp}}\Res_{U_{B,0,\gp}}^{U'_{B,0,\gp}} E(\chi) \ar[r]  
 &\Coind_{U_{B,0,\gp}}^{U_{B,\gp}}E(\chi),} $$
where 
\begin{itemize}
\item $\chi'$ is the character of $U_{B,0,\gp}'$ defined by restricting $\chi$ along $u \mapsto h^{-1}uh$ (which 
is the same as its restriction along the natural inclusion, justifying the lower left equality),
\item the downward arrow is $\Coind_{U_{B,\gp}'}^{U_{B,\gp}}\varepsilon$, where $\varepsilon: L'_\chi \to \Coind_{U_{B,0,\gp}'}^{U_{B,\gp}'}E(\chi')$ is defined by
$\varepsilon(\psi(u)) = \psi(h^{-1}uh)$ for $u \in U_{B,\gp}'$ 
and $\psi \in L'_{\chi}$ (viewed as a function $U_{B,\gp}\to E$),
\item the top left arrow is 
$\Coind_{U_{B,\gp}'}^{U_{B,\gp}}\delta$, 
\item the top right arrow is the trace map $\tr_{U_{B,\gp}/\U'_{B,\gp}}$, 
\item and the bottom right arrow is
$\Coind_{U_{B,0,\gp}}^{U_{B,\gp}}\left(\tr_{U_{B,0,\gp}/ U'_{B,0,\gp}}\right)$.
\end{itemize}
It is straightforward to check that the composite along the left and bottom of the rectangle sends an element $\xi:U_{B,\gp} \to L_{\chi}'$ to the 
element of $L_\chi$ defined by
$$ u \mapsto \sum_{v \in U_{B,0,\gp}/U_{B,0,\gp}'} 
  \!\!\!\!\!\!\!\!\! \chi(v)\xi(v^{-1}u)(1),$$
and that this is the adjoint of $\delta$, giving the desired commutativity.

Now let $U_{B,1}$ denote the subgroup of $U_B$ consisting of elements $u$ such that $u_{\gp}$ is in the subgroup of $\CO_{B,\gp}^\times$ corresponding to $U_1(\gp)_\gp$. Note that $U_{B,1}$ is a normal subgroup of $U_{B,0}$ of index prime to $p$, and that we may view $\chi$ as a character of $U_{B,0}/U_{B,1} \cong k_{\gp}^\times$.  Furthermore the pull-back of $\CT$ to $Y^B_{U_{B,1}}$ may be identified with 
$\mathcal{D}_{\vec{k}',-\vec{1},E}$, and the injective map
$$M^B(U_{B,0},T) \hookrightarrow M^B_{\vec{k}',-\vec{1}}(U_{B,1},E)$$
is equivariant with respect to $T_{\gp}$ as well as the $T_v$ and $S_v$ for $v\not\in Q$.  We may therefore replace $\varphi$ by its image in $M^B_{\vec{k}',-\vec{1}}(U_{B,1},E)$.

Now let $U_{B,1}' = \{\,u\in U_{B,1},|\,\mbox{$u_{\gq}\equiv 1\bmod \gq$ for all $\gq \in S_p$, $\gq \neq \gp$}\,\}$
and consider the map
\begin{equation} \label{eqn:levelpord}
M^B_{\vec{k}',-\vec{1}}(U_{B,1},E) \longrightarrow M^B(U'_{B,1},E) \otimes_E D_{\vec{k}',-\vec{1},E},
\end{equation}
analogous to (\ref{eqn:levelp}). As in the proof of Theorem~\ref{thm:galois}, this is Hecke-equivariant (now also with respect to $T_{\gp}$), injective if $d$ is even, and has kernel isomorphic to
$$H^1(G',H^0(Y^B_{U'_{B,1}},E)\otimes_E D_{\vec{k}',-\vec{1},E})
\cong \Coind_{\delta(G'_1)}^{C_{U'_{B,1}}} H^1(G_1',D_{\vec{k}',-\vec{1},E})$$
if $d$ is odd, where now $G' = U_{B,1}/U'_{B,1}$ is isomorphic to the product of the $\GL_2(\FF_{\gq})$ for $\gq \in S_p$, $\gq \neq \gp$.  The same analysis as in the preceding proof shows that $T_{\gp}$ is $p^{f_{\gp}}$ times composition with multiplication by the image of $(\varpi_{\gp})_\gp$ in $C_{U'_{B,1}}$, and hence that $T_{\gp}$ annihilates the kernel.  It follows that $\varphi$ is not in this kernel, and we may deduce as before the existence of an eigenform $\widetilde{\varphi} \in M_{\vec{2}}(U'_{B,1},\C)$ with eigenvalues lifting those of $f$, for $T_{\gp}$ as well as $T_v$ and $S_v$ for all $v \not\in Q$.

If $d$ is even and $\widetilde{\varphi} \in I_{\vec{2}}^B(U_{B,1}',\CC)$, then the same analysis as in the proof of Theorem~\ref{thm:galois} shows that $\widetilde{\varphi} $ is in the span of $f_\psi$ for some character $\psi$ of $\A_{F,\f}^\times/F_+^\times\det(U_{B,1}')$ (in particular of conductor prime to $\gp$).  Now however we have
$$T_\gp f_\psi = p^{f_{\gp}}\psi((\varpi_{\gp})_\gp)f_\psi,$$ which contradicts the assumption that $a_{\gp} \neq 0$.
We may therefore replace $\widetilde{\varphi}$ by its image in $S_{\vec{2}}(U'_{B,1},\C)$ (whether $d$ is even or odd).

As in the proof of Theorem~\ref{thm:galois}, we obtain a cuspidal automorphic representation $\pi \in \mathcal{C}_{\vec{2}}$ such that $\pi_v$ is the unramified principal series such that $T_v = \widetilde{a}_v$ (lifting $a_v$) and $S_v = \widetilde{d}_v$ (lifting $d_v$) on $\pi_v^{\GL_2(\CO_{F,v})}$ for all $v\not\in Q$, and moreover that $\pi_{\gp}^{U_1(\gp)}$ contains an eigenvector for $T_{\gp}$ with eigenvalue $\widetilde{a}_{\gp}$ lifting $a_{\gp}$.  In particular $\pi_{\gp}$ has conductor dividing $\gp$, and more precisely is
\begin{itemize}
\item either a principal series representation of the form $I(\psi_1|\cdot|^{1/2},\psi_2|\cdot|^{1/2})$ with $\psi_2$ an unramified character such that $\psi_2(\varpi_{\gp}) = \widetilde{a}_{\gp}$ (and $\psi_1$ at most tamely ramified),
\item or a special representation of the form $\psi\otimes \St$,
where $\psi$ is an unramified character such that $\psi(\varpi_{\gp}) = \widetilde{a}_{\gp}$ and $\St$ is the Steinberg representation.
\end{itemize}
In the first case, Theorem~\ref{thm:galois0} implies that
$\rho_{\pi}|_{G_{F_{\gp}}}$ is potentially crystalline with
$\sigma$-labelled Hodge-Tate weights $(0,1)$ for all $\sigma \in \Sigma_{\gp}$ and $\WD(\rho_{\pi}|_{G_{F_{\gp}}})\cong \psi_1 \oplus \psi_2$.  It is then a standard exercise in $p$-adic Hodge theory to conclude that $\rho_{\pi}|_{G_{F_{\gp}}}$ has the form
$$\left(\begin{array}{cc}{\widetilde{\chi}_2}&{*}\\{0}&{\widetilde{\chi}_1\chi_{\cyc}^{-1}}\end{array}\right),$$
where $\widetilde{\chi}_2$ corresponds to $\psi_2$ and 
$\widetilde{\chi}_1$ to $\psi_1|\cdot|$ by local class field theory.  Similarly in the second case, we get that 
$\rho_{\pi}|_{G_{F_{\gp}}}$ has the form
$$\widetilde{\chi}\otimes \left( \begin{array}{cc}{1}&{*}\\0&{\chi_{\cyc}^{-1}}\end{array}\right),$$
where $\widetilde{\chi}$ corresponds to $\psi$.  It therefore follows in either case that $\rho_f|_{G_{F_{\gp}}} \cong
 \overline{\rho}^{{\mathrm{ss}}}_{\pi}|_{G_{F_{\gp}}}$ has the
desired form.
\end{proof}

\section{Geometric weight conjectures}
In this section we generalize the geometric Serre weight conjecture of \cite{DS1} to the case where $p$ is ramified in $F$, and discuss the relation with the corresponding generalization due to Gee (see~\cite[\S4]{Gtypes}) of the algebraic Serre weight conjecture of \cite{BDJ}.

\label{sec:conj}

\subsection{Geometric modularity}
Let 
$$\rho: G_F = \mathrm{Gal}(\overline F/F)\rightarrow \mathrm{GL}_2(\overline{\mathbb{F}}_p)$$
be an irreducible, continuous, totally odd representation of the absolute Galois group of $F$. 

We make the following definition as in \cite{DS1}:

\begin{definition} \label{def:geomod} We say that $\rho$ is {\em geometrically modular of weight $(\vec{k},\vec{m})$} if
$\rho$ is equivalent to the extension of scalars of $\rho_f$
for some open compact subgroup $U$
and eigenform $f \in M_{\vec{k},\vec{m}}(U;E)$ as in the statement of Theorem~\ref{thm:galois}.\end{definition}

Thus $\rho$ is geometrically modular of weight $(\vec{k},\vec{m})$ if there
is a non-zero element $f \in M_{\vec{k},\vec{m}}(U;E)$, for some $U\supset \mathrm{GL}_2(\CO_{F,p})$
and $E\subset \overline{\mathbb{F}}_p$, such that
$$T_v f = \mathrm{tr}(\rho(\mathrm{Frob}_v))f\quad\mbox{and} \quad\mathrm{Nm}_{F/\mathbb{Q}}(v) S_v f = \det(\rho(\mathrm{Frob}_v))f$$
for all but finitely many primes $v$.

Note also that by Proposition~\ref{prop:cuspidal} (and our running assumption that $\rho$ is irreducible), we can replace $M_{\vec{k},\vec{m}}(U,E)$ by $S_{\vec{k},\vec{m}}(U,E)$ in the definition of geometric modularity.

As in the setting of \cite{DS1}, it is a folklore conjecture that every $\rho$ as above is geometrically modular of {\em some}
weight $(\vec{k},\vec{m})$, and our aim is to predict exactly {\em which} weights in terms of
of the local behaviour of $\rho$ at primes  over $p$.

We first note an immediate consequence of the existence of partial Hasse invariants.  By the discussion in \S\ref{ssec:Hasse}, we have injective Hecke-equivariant maps
$$M_{\vec{k},\vec{m}}(U,E) \hookrightarrow M_{\vec{k}+\vec{\ell},\vec{m}}(U,E)$$
for all $\vec{\ell} \in \Xi_{\mathrm{Hasse}}$, where
$$\Xi_{\mathrm{Hasse}} = 
\{\,\sum_{\sigma\in \Sigma} r_\sigma \vec{h}_\sigma\,|\,
  \mbox{$r_\sigma \in \ZZ_{\ge 0}$ for all $\sigma \in \Sigma$}\,\}$$
is the (integral) Hasse cone.  We therefore have the following:
\begin{proposition} \label{prop:Hasse_effect}  Suppose that $\vec{k},\vec{m} \in \ZZ^\Sigma$ and that $\vec{\ell} \in \Xi_{\mathrm{Hasse}}$.  If $\rho$ is geometrically modular of weight $(\vec{k},\vec{m})$, then $\rho$ is geometrically modular of weight $(\vec{k}+\vec{\ell},\vec{m})$.
\end{proposition}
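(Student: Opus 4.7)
The proof is essentially immediate from the properties of partial Hasse invariants recalled in Section~\ref{ssec:Hasse}, and the point of the proposition is to record this observation. The plan is as follows.

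First, by hypothesis and Definition~\ref{def:geomod}, there exists an open compact $U \supset \GL_2(\CO_{F,p})$, a finite extension $E/\F_p$, and an eigenform $f \in M_{\vec{k},\vec{m}}(U,E)$ for the Hecke operators $T_v, S_v$ (for all $v$ outside a finite set containing those dividing the level and $p$) whose associated Galois representation $\rho_f$ becomes isomorphic to $\rho$ after extending scalars to $\Fpbar$. Writing $\vec{\ell} = \sum_{\sigma \in \Sigma} r_\sigma \vec{h}_\sigma$ with $r_\sigma \in \ZZ_{\ge 0}$ (which is possible by definition of $\Xi_{\mathrm{Hasse}}$), I form the product
\[
H := \prod_{\sigma \in \Sigma} H_\sigma^{r_\sigma} \in M_{\vec{\ell},\vec{0}}(U,E).
\]

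Next, I consider $f' := H \cdot f \in M_{\vec{k}+\vec{\ell},\vec{m}}(U,E)$, obtained by iterating the injective $\GL_2(\AA_{F,\f}^{(p)})$-equivariant multiplication maps recalled in \S\ref{ssec:Hasse}. Non-vanishing of $f'$ follows from injectivity of multiplication by each $H_\sigma$ (which in turn reflects the fact, noted in \S\ref{ssec:Hasse}, that the $q$-expansion of $H_\sigma$ at every cusp is a non-zero constant, together with the $q$-expansion Principle). Since the double-coset operators $T_v$ and $S_v$ for $v \nmid p$ with $\GL_2(\CO_{F,v}) \subset U$ are defined via the $\GL_2(\AA_{F,\f}^{(p)})$-action, the $\GL_2(\AA_{F,\f}^{(p)})$-equivariance of multiplication by $H$ shows that $f'$ is again an eigenform for these operators, with the same eigenvalues as $f$.

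Therefore $f'$ is a non-zero eigenform of weight $(\vec{k}+\vec{\ell},\vec{m})$ whose associated Galois representation (by the uniqueness part of Theorem~\ref{thm:galois}, or simply by inspection of the characterizing Frobenius characteristic polynomials) coincides with $\rho_f$. Extending scalars to $\Fpbar$, we conclude that $\rho$ is geometrically modular of weight $(\vec{k}+\vec{\ell},\vec{m})$. There is no substantive obstacle here; the only ingredients are the existence and $q$-expansion properties of the $H_\sigma$ and the equivariance of multiplication, both of which are already recorded in \S\ref{ssec:Hasse}.
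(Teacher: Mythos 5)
Your proof is correct and matches the paper's reasoning: the proposition is stated there as an immediate consequence of the injective, Hecke-equivariant multiplication maps by $\prod_\sigma H_\sigma^{r_\sigma}$ recalled in \S\ref{ssec:Hasse}, which is exactly what you spell out. The only point the paper leaves implicit — that matching Hecke eigenvalues at almost all $v$ forces $\rho_{f'}\sim\rho_f$ — you justify correctly via Chebotarev and Brauer--Nesbitt (using that $\rho$ is irreducible).
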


\subsection{Crystalline lifts and labelled Hodge-Tate weights}  \label{subsec:cryslift}
Suppose now that $\rho: G_{F_{\gp}} \to \Aut_{\Qpbar}(V) \cong \GL_d(\Qpbar)$ is a continuous representation of $G_{F_{\gp}}$, given by its
action on a $d$-dimensional vector space $V$ over $\Qpbar$.
Recall that $D_{\mathrm{crys}}(V) = (V \otimes_{\mathbb{Q}_p} B_{\mathrm{crys}})^{G_{F_{\gp}}}$ is a finitely-generated module over
$$(\Qpbar \otimes_{\mathbb{Q}_p} B_{\mathrm{crys}})^{G_{F_{\gp}}} = \Qpbar \otimes_{\mathbb{Q}_p} F_{\gp,0} \cong \prod_{\tau\in \Sigma_{\gp,0}} \Qpbar,$$
and hence decomposes as the direct sum of finite-dimensional $\Qpbar$-vector spaces $D_{\crys}(V)_\tau$ for $\tau \in \Sigma_{\gp,0}$ (on which $F_{\gp,0}$ acts via $\tau$).
Furthermore $D_\crys(V)$ is equipped with a semi-linear automorphism $\phi$, sending $D_{\crys}(V)_{\tau_{\gp,i}}$ to 
$D_{\crys}(V)_{\tau_{\gp,i-1}}$, so that $D_{\crys}(V)$ is in fact free of finite rank over $\prod_{\tau\in \Sigma_{\gp,0}}\Qpbar$, and we say that $\rho$ is {\em crystalline} if this is rank is $d$.

Similarly $\rho$ is {\em de Rham} (resp.~{\em Hodge--Tate}) if the 
filtered (resp.~graded)
module $D_{\mathrm{dR}}(V) = (V \otimes_{\mathbb{Q}_p} B_{\mathrm{dR}})^{G_{F_{\gp}}}$ (resp.~$D_{\mathrm{HT}}(V) =
(V \otimes_{\mathbb{Q}_p} B_{\mathrm{HT}})^{G_{F_{\gp}}}$) 
is free of rank $d$ over $\Qpbar \otimes_{\mathbb{Q}_p} F_{\gp}
 \cong \prod_{\sigma \in \Sigma_{\gp}} \Qpbar$.
Furthermore, if $\rho$ is crystalline, then it is de Rham, and hence also Hodge--Tate, in which case 
$D_{\mathrm{dR}}(V)\cong D_{\mathrm{crys}} \otimes_{F_{\gp,0}} F_{\gp}$
and $D_{\mathrm{HT}}(V) \cong \gr(D_{\mathrm{dR}}(V))$.
In particular, for each
$\sigma \in \Sigma_{\gp}$, the corresponding component 
$D_{\mathrm{HT}}(V)_\sigma$ of $D_{\mathrm{HT}}(V)$
is a graded $d$-dimensional vector space over $\Qpbar$.  
\begin{definition}  \label{def:HTtype}
If $\rho$ is Hodge--Tate and $\sigma \in \Sigma_{\gp}$, then the $\sigma$-{\em labelled weights}
of $V$ are the $d$-tuple of integers $(w_1,w_2,\ldots,w_d) \in \mathbb{Z}^d$ such that
$w_1 \ge w_2 \ge \cdots \ge w_d$
and $D_{\mathrm{HT}}(V)_\sigma$ is isomorphic to $\oplus_{i=1}^d \Qpbar[-w_i]$,
where $\Qpbar[-w_i]$ has degree $-w_i$.
\end{definition}

We restrict our attention now to the case $d=2$.

\begin{definition}  \label{def:weightlift} Suppose that 
$\rho: G_{F_{\gp}} \to \GL_2(\Fpbar)$ is a continuous representation and that $(\vec{k},\vec{m}) \in \mathbb{Z}_{\ge 1}^{\Sigma_{\gp}} \times \mathbb{Z}^{\Sigma_{\gp}}$.
We say that $\rho$ has a {\em crystalline lift} of weight $(\vec{k},\vec{m})$ if there exists a continuous representation:
$$\widetilde{\rho}:  G_{F_{\gp}} \to  \mathrm{GL}_2(\Zpbar)$$
such that $\widetilde{\rho}\otimes_{\Zpbar} \overline{\mathbb{F}}_p$ is isomorphic 
to $\rho$, and $\widetilde{\rho}\otimes_{\Zpbar}\Qpbar$ is crystalline with $\sigma$-labelled
Hodge--Tate weights $(-1-m_\sigma,-k_\sigma-m_\sigma)$ for all
$\sigma \in \Sigma_{\gp}$.\end{definition}

Note that this definition differs slightly from the one in \cite[Def.~7.2.2]{DS1}, reflecting the difference in conventions in the definitions of the Hecke action and
$\sigma$-labelled Hodge--Tate weights.

\subsection{Statement of the conjecture}

First recall the definition of the (positive) minimal cone appearing in Theorem~\ref{thm:positivity}:
$$\Xi_{\min}^+ = \left\{\,\vec{k}\in \ZZ_{>0}^{\Sigma}\,\left|
\begin{array}{c} k_{\gp,i,1} \le k_{\gp,i,2} \le \cdots \le k_{\gp,i,e_{\gp}}
 \le p k_{\gp,i+1,1} \\
\mbox{for all  $\gp\in S_p$, $i\in \Z/f_{\gp}\Z$}
\end{array}\right.\right\},$$
where we have written $k_{\gp,i,j}$ for $k_{\sigma_{\gp,i,j}}$.

Recall also that for $\vec{k} \in \ZZ^\Sigma$, we let $\vec{k}_\gp$ denote its image in $\ZZ^{\Sigma_\gp}$ under the natural projection.

Our generalization of Conjecture~7.3.1 of \cite{DS1} is then:

\begin{conjecture} \label{conj:geomweights}
Let $\rho: G_F \rightarrow \mathrm{GL}_2(\overline{\mathbb{F}}_p)$
be an irreducible, continuous, totally odd representation, and let $\vec{m} \in \mathbb{Z}^\Sigma$.
There exists $\vec{k}_{\mathrm{min}} = \vec{k}_{\mathrm{min}}(\rho,\vec{m}) \in \Xi_{\mathrm{min}}^+$ such that the following hold:
\begin{enumerate}
\item $\rho$ is geometrically modular of weight $(\vec{k},\vec{m})$ if and only if $\vec{k} - \vec{k}_{\mathrm{min}} \in \Xi_{\mathrm{Hasse}}$;
\item if $\vec{k} \in \Xi_{\mathrm{min}}^+$, then $\vec{k} - \vec{k}_{\mathrm{min}} \in \Xi_{\mathrm{Hasse}}$ if and only if $\rho|_{G_{F_{\gp}}}$
has a crystalline lift of weight $(\vec{k}_\gp,\vec{m}_\gp)$ for all $\gp \in S_p$.
\end{enumerate}
\end{conjecture}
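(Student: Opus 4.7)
The plan is to attack the conjecture by decomposing it into local statements at each $\gp \in S_p$, exploiting the weight-shifting phenomena provided by partial Hasse invariants and $\Theta$-operators. The easier half of part (1) --- that $\rho$ geometrically modular of $(\vec{k}_{\min},\vec{m})$ implies the same for $(\vec{k},\vec{m})$ when $\vec{k} - \vec{k}_{\min} \in \Xi_{\mathrm{Hasse}}$ --- follows directly from Proposition~\ref{prop:Hasse_effect}. The substantive content of (1) therefore lies in producing the minimal weight $\vec{k}_{\min} \in \Xi_{\min}^+$ and verifying that every geometric weight differs from it by an element of $\Xi_{\mathrm{Hasse}}$. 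The reduction into $\Xi_{\min}^+$ should follow from Theorem~\ref{thm:positivity} applied to a cuspidal eigenform realizing $\rho$; the degenerate case $\vec{k}_{\min}(f) = \vec{0}$ is excluded by the irreducibility of $\rho$, which forces any such eigenform to have non-constant $q$-expansion (cf.\ the analysis of $C_{\vec{k},\vec{m}}(U,E)$ in the proof of Theorem~\ref{thm:galois}).

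For uniqueness of $\vec{k}_{\min}$, one must show that any two geometric weights $\vec{k}_1, \vec{k}_2 \in \Xi_{\min}^+$ admit a common lower bound in $\Xi_{\min}^+$ under the Hasse ordering. Here the descent tool is the partial $\Theta$-operator of \S\ref{ssec:theta}: Theorems~\ref{thm:thetadiv} and~\ref{thm:kertheta} allow one to track how divisibility by $H_\sigma$ propagates under $\Theta_\tau$, and the goal is to extract from $\rho|_{G_{F_\gp}}$ an intrinsic local invariant at each $\gp$ that computes the $\gp$-component of $\vec{k}_{\min}$. The ordinary case of Theorem~\ref{thm:ord} is a prototype for how such local invariants can be read off from Hecke eigenvalues, and more generally from the local Galois representation.

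For part (2), I would reduce to the regular case $\vec{k} \in \ZZ^{\Sigma}_{\ge 2}$ via the partial Hasse invariant and $\Theta$-operator manipulations developed in \S7 for real quadratic $F$. In the regular setting, Theorem~\ref{ithm:alggeommod} together with its conjectured converse translate geometric into algebraic modularity, after which the algebraic Serre weight conjectures in the ramified case formulated in \cite{Gtypes} and established in substantial generality in \cite{GLS, GK, JN} precisely match algebraic weights with crystalline lifts having the labelled Hodge--Tate weights of Definition~\ref{def:weightlift}. Careful bookkeeping of the twists introduced in \S\ref{ssec:twist} should confirm that the numerical condition produced this way coincides with the one in the statement.

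The two main obstacles I foresee are the converse to Theorem~\ref{ithm:alggeommod} --- where one must construct a \emph{geometric} eigenform of specified weight $(\vec{k},\vec{m})$ from an algebraically modular representation, most plausibly via Jacquet--Langlands and congruences as in Theorem~\ref{thm:galois}, but with much finer weight control that will likely require Calegari--Geraghty style patching --- and the irregular boundary of $\Xi_{\min}^+$, where some $k_\sigma = 1$. This latter case sits outside the reach of algebraic Serre weight machinery, and the $\Theta$-descent of \S7 is currently tightly tied to the real quadratic ramified situation; its extension to totally real fields of arbitrary degree will demand new $p$-adic Hodge-theoretic inputs generalizing the ramified Kisin module computations of \cite{theta}.
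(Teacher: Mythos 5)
This statement is Conjecture~\ref{conj:geomweights}, which the paper explicitly formulates as an open conjecture: no proof is given, only a structured discussion of its content, supporting evidence, and partial results in later sections. So there is no proof in the paper against which your attempt can be checked for correctness; what you have written is, appropriately, a roadmap rather than a proof, and you are candid about the unresolved obstacles.

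Comparing your roadmap to the paper's own decomposition: your observations that the forward direction of (1) is Proposition~\ref{prop:Hasse_effect}, and that Theorem~\ref{thm:positivity} (together with the irreducibility of $\rho$, via Proposition~\ref{prop:cuspidal} and Corollary~\ref{cor:positivity}) places $\vec{k}_{\min}(f)$ in $\Xi_{\min}^+$ and excludes $\vec{0}$, match the paper's discussion exactly. But for the \emph{uniqueness} of $\vec{k}_{\min}$ in part (1), your emphasis on $\Theta$-operator descent is not quite the paper's mechanism, and it has a structural problem: $\Theta_\tau$ shifts the second weight component by $\vec{e}_\sigma$ (see Proposition~\ref{prop:theta_effect}), whereas part (1) is a statement for \emph{fixed} $\vec{m}$. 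The paper's own explanation of the difficulty locates it in the case where $\rho|_{G_{F_\gp}}$ is reducible for multiple $\gp \in S_p$, and the proposed remedy is the existence of an eigenform that is \emph{simultaneously} ordinary at all the relevant $\gp$ (with $k_\sigma = 1$ on the unramified primes), which the paper attributes to the main results of \cite{BLGG1, BLGG2} together with forthcoming work, not to $\Theta$-operators. Separately, the paper's framing of part (2) is that it is a statement purely about integral $p$-adic Hodge theory (supported by Breuil--M\'ezard considerations and Wiersema's thesis), independent of modular forms altogether; your plan merges (1) and (2) through Conjecture~\ref{conj:geomweights2}, which is how Section~7 handles the ramified quadratic case but is not the logical decomposition the paper presents for the general conjecture. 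You correctly identify the converse to Theorem~\ref{ithm:alggeommod} and the irregular boundary of $\Xi_{\min}^+$ as the central open obstructions, and the paper agrees that these, along with the "weak" conjecture, are what stand in the way.
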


We can view the conjecture as comprising several parts:
\begin{itemize}
\item Firstly, we incorporate the folklore conjecture that every continuous,  irreducible, totally odd
$\rho$ is modular, in the sense that it arises as $\rho_f$ from a mod $p$ Hilbert modular eigenform $f$ of {\em some} weight.  While this can be called the ``weak version'' of (the analogue of) Serre's Conjecture in this context, it is almost certainly the hardest part.

We remark on a possible alternative formulation of the weak version of the conjecture. Note that it follows from the construction of $\rho_f$ in the proof of Theorem~\ref{thm:galois} that if it is irreducible, then it arises as $\barrho_\pi$ for some $\pi \in \mathcal{C}_{\vec{2}}$; note however that the conductor of $\pi$ need not be prime to $p$.
Conversely, we will see below (Corollary~\ref{cor:reductions}) that if $\pi$ is as in the statement of Theorem~\ref{thm:galois0} (for any $\vec{k}$ and $w$ all of the same parity), and $\barrho_\pi$ is irreducible, then it arises as $\rho_f$ for some $f$ as in Theorem~\ref{thm:galois}.  The weak version is therefore equivalent to the assertion that every continuous,  irreducible, totally odd $\rho$ is of the form $\barrho_\pi$ for some $\pi \in \mathcal{C}_{\vec{2}}$.

\item Assuming $\rho$ is geometrically modular (of some weight), the existence of $\vec{k}_{\min}$ as in (1) is an assertion purely about mod $p$ Hilbert modular forms.  Recall from Theorem~\ref{thm:positivity} that for each eigenform $f$ giving rise to $\rho$, we have $\vec{k}_{\min}(f) \in \Xi_{\min}^+$ (the irreducibility of $\rho_f$ ruling out the possibility that
$\vec{k}_{\min}(f) = \vec{0}$).  The conjecture thus predicts that among all such $f$, i.e., those (for fixed $\vec{m}$) with the same systems of Hecke eigenvalues outside a finite set of primes, there are those for which $\vec{k}_{\min}(f)$ is uniquely minimal with respect to the partial ordering induced by $\Xi_{\mathrm{Hasse}}$.

The difficulty in proving this arises when $\rho_f|_{G_{F_\gp}}$ is reducible for multiple primes $\gp \in S_p$.  More precisely, if $f$ is ordinary at $\gp$, in the sense of the hypothesis of Theorem~\ref{thm:ord}, then $\rho_f$ will also arise from Hecke  eigenforms $g$, also with $\vec{m} = -\vec{1}$ but different $\vec{k}_{\min}$, such that $T_{\gp}(g) = 0$.  The conjectured existence of $\vec{k}_{\min}$ as in (1) is based on the expectation that $\rho$ arises from an $f$ (with $\vec{m} = -\vec{1}$) which is simultaneously ordinary at all $\gp \in S_p$ for which $\rho|_{G_{F_{\gp}}}$ is ramified and ordinary (in the sense of the conclusion of Theorem~\ref{thm:ord}), and with $k_\sigma = 1$ for all $\sigma \in S_{\gp}$ for which $\rho|_{G_{F_{\gp}}}$ is unramified.  The fact that this holds (at least under mild technical hypotheses) follows from the main results of \cite{BLGG1, BLGG2} and forthcoming work of the authors.

\item Complementarily, the existence of $\vec{k}_{\min}$ as in (2) is a statement purely about integral $p$-adic Hodge theory.  Indeed let $\rho:G_L \to \GL_2(\Fpbar)$ be any continuous representation, where $L$ is any finite extension of $\Q_p$,
and (re)define $\Xi_{\min}^+$ and $\Xi_{\mathrm{Hasse}}$ as subsets of $\ZZ^{\Sigma}$, where $\Sigma$ is the set of embeddings $\{\,\sigma:L \to \Qpbar\,\}$.  The assertion then amounts\footnote{The local statement is a priori stronger, but in fact equivalent by \cite[Prop.~A.1]{GK}.  Note also that we have assumed here that $\vec{m} = -\vec{1}$, to which the general case reduces by twisting arguments discussed below.}
to the existence of a unique minimal $\vec{k} \in \Xi_{\min}^+$ 
(with respect to the partial ordering induced by 
$\Xi_{\mathrm{Hasse}}$) such that $\rho$ has a crystalline lift 
of weight $(\vec{k},-\vec{1})$.  The existence of such a $\vec{k}$ is strongly suggested by its consistency with the Breuil--M\'ezard Conjecture in conjunction with modular representation theoretic considerations, and in particular Conjecture~2.3 of Wiersema's thesis~\cite{HWPhD} (in the case that $p$ is unramified in $F$).

\item Finally, the conjecture predicts that the minimal weights characterized in the preceding two points coincide; in particular if $\rho$ arises from a mod $p$ Hilbert modular eigenform of some weight, then the set of such weights is determined by the local Galois representations $\rho|_{G_{F_\gp}}$ for $\gp \in S_p$.
More precisely, granting the ``weak'' conjecture and the existence of minimal weights as in parts (1) and (2), then their coincidence is equivalent to the following generalization of \cite[Conj.~7.3.2]{DS1}, which we may view as the ``weight part'' of the analogue of Serre's conjecture in this context:
\end{itemize}
\begin{conjecture} \label{conj:geomweights2}
Suppose that $\rho: G_F \rightarrow \mathrm{GL}_2(\overline{\mathbb{F}}_p)$ is irreducible and
geometrically modular some weight, and that $\vec{k} \in \Xi_{\mathrm{min}}^+$.  Then
$\rho$ is geometrically modular of weight $(\vec{k},\vec{m})$ if and only if $\rho|_{G_{F_\gp}}$
has a crystalline lift of weight $(\vec{k}_\gp,\vec{m}_\gp)$ for all $\gp \in S_p$.
\end{conjecture}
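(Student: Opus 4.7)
The plan is to reduce both directions to the case of regular weights $\vec{k}\in\ZZ_{\geq 2}^\Sigma\cap\Xi_{\min}^+$, where one can bring in Theorem~\ref{ithm:alggeommod} and the algebraic Serre weight conjecture proved (under mild hypotheses) by Gee and collaborators. The bridges between regular and irregular weights will be the partial $\Theta$-operators from \S\ref{ssec:theta} on the automorphic side, and analogous Hodge-theoretic shifts on the Galois side, following the blueprint used in the real quadratic case treated in Section~7.

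For the forward direction, suppose $\rho$ is geometrically modular of weight $(\vec{k},\vec{m})$ with $\vec{k}\in\Xi_{\min}^+$. When $\vec{k}\in\ZZ_{\geq 2}^\Sigma$, one would first establish the converse of Theorem~\ref{ithm:alggeommod} conjectured in \S\ref{ssec:algmod} (geometric implies algebraic modularity in this range); granting this, crystalline lifts of $\rho|_{G_{F_\gp}}$ with the prescribed labelled Hodge--Tate weights at each $\gp$ are produced by applying Theorem~\ref{thm:galois0} to any characteristic zero lift of the corresponding cohomological eigenform. When some $k_\sigma=1$, the strategy is to apply $\Theta_\tau$-operators iteratively to shift $f$ to a regular weight $(\vec{k}',\vec{m}')$ (at the cost of increasing some $k$-components by $\vec{h}_\sigma+2\vec{e}_\sigma$ and shifting $\vec{m}$), deduce crystalline lifts of the shifted weight, and then invert the shift via a $p$-adic Hodge theory argument generalizing the one used in \S\ref{subsec:hodge} for real quadratic $F$. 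This inversion is delicate because the Hodge--Tate weights move by fundamental characters, and the local Galois representation may or may not admit a lift of the \emph{smaller} (non-regular) weight.

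For the reverse direction, suppose $\rho|_{G_{F_\gp}}$ admits crystalline lifts of weight $(\vec{k}_\gp,\vec{m}_\gp)$ at every $\gp\in S_p$. When $\vec{k}\in\ZZ_{\geq 2}^\Sigma$, I would invoke the algebraic Serre weight conjecture in its current state of knowledge (\cite{BDJ,Gtypes,GHS,GLS,GK}) to conclude that $\rho$ is algebraically modular of weight $(\vec{k},\vec{m})$, and then apply Theorem~\ref{ithm:alggeommod} to obtain geometric modularity of the same weight. For irregular $\vec{k}$, I would run the dual of the forward strategy: promote the irregular crystalline lift to a regular one of a suitable shifted weight $(\vec{k}',\vec{m}')$, apply the regular result to obtain a geometrically modular form $f'$ of weight $(\vec{k}',\vec{m}')$, and then exhibit a form of weight $(\vec{k},\vec{m})$ with matching Hecke eigenvalues using Theorem~\ref{thm:kertheta}, the partial Frobenius $V_\gp$, and partial Hasse invariants, together with stabilization arguments to kill the ambiguity in Hecke eigenvalues introduced by the shift.

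The hard part will be the reverse direction for irregular weights. Producing a form of \emph{lower} (and non-regular) weight from a form of higher regular weight is asymmetric with the forward case, where $\Theta_\tau$ freely manufactures weight-shifted forms; here one must recognize the ambient geometric form as being in the image of $V_\gp$ or the kernel of a suitable $\Theta_\tau$, and then perform the division. A full treatment would require (i) a companion-form phenomenon on the geometric side connecting partial weight one forms with weight-$p$ ``doppelg\"angers'', generalising Theorem~\ref{thm:kertheta} in a global direction; (ii) an integral $p$-adic Hodge theory dictionary matching crystalline lifts across the weight shift, including subtleties when several $k_\sigma$ equal $1$ simultaneously and when $p$ is deeply ramified in $F$; and (iii) removal of the Taylor--Wiles-type hypotheses inherited from the algebraic Serre weight inputs. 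Of these, (i) appears most serious, since the real quadratic arguments of Section~7 lean on ad hoc global inputs that do not obviously generalize to arbitrary totally real $F$.
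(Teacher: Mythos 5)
This statement is Conjecture~\ref{conj:geomweights2}, not a theorem, and the paper does not prove it: the authors explicitly present it as the ``weight part'' of the analogue of Serre's Conjecture in this setting, and prove only partial results towards it (Section~7, for real quadratic $F$ with $p$ ramified and $\vec{k}$ of the form $(1,w)$). There is therefore no paper-internal proof to compare against; what you have produced is a proof \emph{strategy}, and you should be aware that it is one the authors themselves regard as incomplete.

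That said, your strategy tracks the paper's own roadmap closely and accurately. The reduction to regular weights via partial $\Theta$-operators and partial Hasse invariants, the appeal to the algebraic Serre weight conjecture (Theorem~\ref{thm:GLSmain}) together with the bridge of Theorem~\ref{thm:defalggeom}(2), and the return to irregular weights via $V_\gp$, Theorem~\ref{thm:kertheta}, and stabilized eigenforms are exactly the ingredients deployed in Theorems~\ref{thm:3toPmod}--\ref{thm:2mod}. You also correctly flag the three genuine obstructions. The forward direction for regular weights hinges on Conjecture~\ref{conj:defalggeom}(2) (geometric implies algebraic modularity), which the paper leaves open apart from citing partial progress in \cite{SYPhD,SY}; the regular-weight reverse direction inherits the Taylor--Wiles hypotheses from \cite{GLS,GK}; and the passage from a higher regular weight back down to an irregular one depends on recognizing the form as lying in $\ker(\Theta_\tau)$ or $\im(V_\gp)$ and then dividing. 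On this last point, the paper's Section~7 arguments (comparing strongly stabilized eigenforms via Lemma~\ref{lem:strong}, bounding $\vec{k}_{\min}$ via Theorem~\ref{thm:positivity}, and using the degree constraints available only when $[F:\QQ]=2$ and $e_\gp = 2$) are genuinely ad hoc, so your assessment that the companion-form step (i) is the most serious obstruction is correct. Two smaller corrections: the ``$p$-adic Hodge theory inversion'' you invoke on the Galois side is, in the paper, a consequence of the Breuil--M\'ezard Conjecture in regular weights (see Remark~\ref{rmk:BM}), and in irregular weights it is made explicit only in the quadratic ramified case via Propositions~\ref{prop:repshape} and~\ref{prop:typical}; also, the existence of $\vec{k}_{\min}(\rho,\vec{m})$ as a \emph{unique} minimal weight (Conjecture~\ref{conj:geomweights}(1),(2)) is a separate open assertion that your plan implicitly uses but does not establish. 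So your proposal is a faithful reconstruction of the authors' intended approach and of its current frontiers, not a proof.
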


\begin{remark} \label{rmk:conj:geomweights2}
We remark that the ``only if'' in Conjecture~\ref{conj:geomweights2} may fail for certain $\vec{k} \not\in \Xi_{\min}^+$; see \cite[Example~3.2.21]{SYPhD} for an example due to Bartlett.  On the other hand, the results in S.~Yang's thesis (see \cite[Rmk.~4.1.3]{SYPhD}) show that, for the purpose of ensuring the ``only if'' implication, the inequalities in the definition of the $\Xi_{\mathrm{min}}^+$ are not sharp. (See also Remark~\ref{rmk:notsharp} below.)

In any case, for a version that covers all $\vec{k}$, we could replace ``weight $(\vec{k}_\gp,\vec{m}_\gp)$ for all $\gp \in S_p$'' with ``weight $(\vec{k}'_\gp,\vec{m}_\gp)$ for all $\gp \in S_p$ and some $\vec{k}' \in \Xi_{\min}^+$ such that $\vec{k} - \vec{k}' \in \Xi_{\mathrm{Hasse}}$.''
\end{remark}

\subsection{Dependence on $\vec{m}$} \label{ssec:galoistwist}
We first note that the conjectural characterizations of 
$\vec{k}_{\min}(\rho,\vec{m})$ depend only on a mod $p$ character associated to $\vec{m}$.
\begin{proposition} \label{prop:chars} 
Let $\rho: G_F \rightarrow \mathrm{GL}_2(\overline{\mathbb{F}}_p)$
be an irreducible, continuous representation.
Suppose that $\vec{k},\vec{m},\vec{n} \in \ZZ^{\Sigma}$ are such that $\prod_{\sigma \in \Sigma_\gp} \omega_\sigma^{m_\sigma} = \prod_{\sigma \in \Sigma_\gp} \omega_\sigma^{n_\sigma}$ for all $\gp \in S_p$,
where $\omega_\sigma:\F_{\gp}^\times \to \Fpbar^\times$ is the character induced by $\sigma$.
\begin{enumerate}
\item $\rho$ is geometrically modular of weight $(\vec{k},\vec{m})$ if and only if $\rho$ is geometrically modular of weight $(\vec{k},\vec{n})$;
\item if $\gp \in S_p$, then $\rho|_{G_{F_\gp}}$ has a crystalline lift of weight $(\vec{k}_\gp,\vec{m}_\gp)$ if and only if it has a crystalline lift of weight $(\vec{k}_\gp,\vec{n}_\gp)$.
\end{enumerate}
\end{proposition}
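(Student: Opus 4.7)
The proposition is symmetric in $\vec{m}$ and $\vec{n}$, so it suffices to prove the forward implication in each part: setting $\vec{\ell} = \vec{n} - \vec{m}$, the hypothesis becomes that the character $\prod_{\sigma \in \Sigma_\gp} \omega_\sigma^{\ell_\sigma}$ of $I_{F_\gp}$ is trivial for every $\gp \in S_p$. In both (1) and (2), the plan is to produce a twist by a character whose associated residual Galois character is trivial, so that twisting preserves $\rho$ (resp.~$\rho|_{G_{F_\gp}}$) while shifting the $\vec{m}$-component (resp.~the labelled Hodge--Tate weights) by $\vec{\ell}$.

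For part (1), suppose $f \in M_{\vec{k},\vec{m}}(U,E)$ is an eigenform with $\rho_f \otimes_E \Fpbar \cong \rho$. The plan is to construct a continuous character $\xi: (\AA_{F,\f}^{(p)})^\times \to E'^\times$ (enlarging $E$ to $E'$ if needed) satisfying $\xi(\alpha) = \alpha^{\vec{\ell}}$ for $\alpha \in \CO_{F,(p),+}^\times$ and whose associated Galois character $\chi_\xi$ is trivial; then the twist isomorphism (\ref{eqn:twist}) from \S\ref{ssec:twist} sends $f$ to an eigenform $e_\xi f \in M_{\vec{k},\vec{n}}(U',E')$ whose Hecke eigenvalues satisfy $\rho_{e_\xi f} \otimes \Fpbar = (\rho_f \otimes \chi_\xi) \otimes \Fpbar = \rho$, giving geometric modularity of $\rho$ in weight $(\vec{k},\vec{n})$. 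To construct $\xi$, pick any $\xi_0$ with the prescribed restriction to $\CO_{F,(p),+}^\times$; by the formula recalled in the Notation, $\chi_{\xi_0}|_{I_{F_\gp}} = \prod_{\sigma \in \Sigma_\gp} \omega_\sigma^{-\ell_\sigma}$ is trivial by hypothesis, so $\chi_{\xi_0}$ is unramified at every $\gp \in S_p$. The set of allowable $\xi$ is a torsor under characters of $(\AA_{F,\f}^{(p)})^\times/\CO_{F,(p),+}^\times$; via unique extension to $\AA_F^\times/F^\times F_{\infty,+}^\times$ and global class field theory, such characters correspond bijectively to unramified-at-$p$ continuous characters $G_F \to E'^\times$, so one can choose $\eta$ with $\chi_\eta = \chi_{\xi_0}^{-1}$ and set $\xi := \xi_0 \eta$.

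Part (2) is the local Galois analogue of the same argument. Given a crystalline lift $\tilde\rho$ of $\rho|_{G_{F_\gp}}$ of weight $(\vec{k}_\gp,\vec{m}_\gp)$, I would construct a crystalline character $\tilde\chi: G_{F_\gp} \to \Zpbar^\times$ with $\sigma$-labelled Hodge--Tate weight $-\ell_\sigma$ for each $\sigma \in \Sigma_\gp$ and with trivial residual character; tensoring then gives a crystalline lift $\tilde\rho \otimes \tilde\chi$ of $\rho|_{G_{F_\gp}}$ with labelled weights $(-1-n_\sigma,-k_\sigma-n_\sigma)$, as required by Definition~\ref{def:weightlift}. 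Any crystalline character with the prescribed labelled weights (produced for instance as a product of powers of Lubin--Tate characters) has residual inertial character $\prod_\sigma \omega_\sigma^{-\ell_\sigma}$, which is trivial by hypothesis; multiplying by an unramified crystalline character whose Frobenius eigenvalue is chosen to invert the residual Frobenius eigenvalue of $\tilde\chi$ then yields trivial reduction without altering the labelled Hodge--Tate weights. Neither part presents a serious obstacle: the main technical point in (1) is the class-field-theoretic bijection between characters of $(\AA_{F,\f}^{(p)})^\times/\CO_{F,(p),+}^\times$ and unramified-at-$p$ continuous characters of $G_F$, while in (2) it is the freedom to prescribe the Frobenius eigenvalue of an unramified crystalline character, both of which are standard.
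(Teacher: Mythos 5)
Your proof of part (2) is essentially identical to the paper's: both use the characterization of crystalline characters via $\prod_\sigma \sigma^{w_\sigma}$ on $\CO_{F_\gp}^\times$, observe the residual inertial character is trivial under the hypothesis, and twist by an unramified character to fix the Frobenius eigenvalue. For part (1), however, you take a genuinely different route. The paper observes that the hypothesis on $\vec{m}$ and $\vec{n}$ is equivalent to $\vec{n} - \vec{m} \in \ker(\lambda)$, where $\lambda: \ZZ^\Sigma \to \prod_\gp \Hom(\FF_\gp^\times, \Fpbar^\times)$ sends $\vec{\ell}$ to $\left(\prod_{\sigma\in\Sigma_\gp}\omega_\sigma^{\ell_\sigma}\right)_\gp$; then a determinant computation shows $\ker(\lambda) = \sum_\sigma \ZZ\vec{h}_\sigma$, so one can chain together the Hecke-equivariant isomorphisms $M_{\vec{k},\vec{m}}(U,E) \xrightarrow{\sim} M_{\vec{k},\vec{m}+\vec{h}_\sigma}(U,E)$ furnished by the elements $G_\sigma$ of \S\ref{ssec:Hasse} to deduce $M_{\vec{k},\vec{m}}(U,E) \cong M_{\vec{k},\vec{n}}(U,E)$ directly as Hecke modules. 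You instead construct a global character $\xi$ with $\xi(\alpha) = \alpha^{\vec{\ell}}$ on $\CO_{F,(p),+}^\times$ whose associated Galois character is trivial, and twist by $e_\xi$; the existence of such a $\xi$ requires the class-field-theoretic surjectivity you describe (that unramified-at-$p$ Galois characters arise from characters of $(\AA_{F,\f}^{(p)})^\times$ trivial on $\CO_{F,(p),+}^\times$, which follows from the unique-extension statement recalled in the paper's Notation and the observation that the adelic character attached to an unramified-at-$p$ Galois character is trivial on the diagonally embedded $\CO_{F,(p),+}^\times$). Both arguments are valid; the paper's is more self-contained and avoids the class field theory bookkeeping, whereas yours is structurally parallel to the proof of part (2) and does not require identifying $\ker(\lambda)$ with the lattice spanned by the $\vec{h}_\sigma$ or invoking the $G_\sigma$ operators.
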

\begin{proof} First note that the condition on $\vec{m}$ and $\vec{n}$ means that $\vec{n}-\vec{m} \in \ker(\lambda)$, where
$$\lambda:\ZZ^\Sigma \longrightarrow
  \prod_{\gp \in S_p} \Hom(\FF_\gp^\times,\Fpbar^\times)$$
is the surjective homomorphism defined by $\lambda(\vec{\ell})_{\gp}
 = \prod_{\sigma\in \Sigma_\gp} \omega_\sigma^{\ell_\sigma}$.
Since $\vec{h}_\sigma \in \ker(\lambda)$ for all $\sigma \in \Sigma$,
and
$$[\ZZ^\Sigma:\sum_{\sigma \in \Sigma} \ZZ\vec{h}_\sigma]
 = |\det(A)| = \prod_{\gp \in S_p} (p^{f_\gp} - 1),$$
where $A$ is the $d\times d$-matrix with columns $\vec{h}_\sigma$, it follows that $\ker(\lambda) = \sum_{\sigma \in \Sigma} \ZZ\vec{h}_\sigma$, and therefore $\vec{n} = \vec{m} + \sum_{\sigma \in \Sigma} r_\sigma \vec{h}_\sigma$ for some $\vec{r} \in \ZZ^\Sigma$.

Recall from \S\ref{ssec:Hasse} that we have Hecke-equivariant isomorphisms 
$$M_{\vec{k},\vec{m}}(U,E) \stackrel{\sim}{\to} M_{\vec{k},\vec{m}+\vec{h}_\sigma}(U,E)$$ 
for all $\sigma \in \Sigma$, so it follows that $M_{\vec{k},\vec{m}}(U,E) \cong M_{\vec{k},\vec{n}}(U,E)$, which implies (1).

To prove (2), recall that a character $\xi:G_{F_\gp} \to \Qpbar^\times$ (necessarily $\Zpbar^\times$-valued) is crystalline with $\sigma$-labelled Hodge--Tate weights $(w_\sigma)$ for $\sigma\in \Sigma_\gp$ if and only if the restriction 
$\CO_{F_\gp}^\times \to \Qpbar^\times$ of the character corresponding to $\xi$ via local class field theory has the form 
$\prod_{\sigma\in \Sigma_\gp} \sigma^{w_\sigma}$ (see \cite[App.~B]{brian}).  
The condition on $\vec{m}$ and $\vec{n}$ therefore implies that there is such a character $\xi$ with trivial reduction and $\sigma$-labelled Hodge--Tate weights $(m_\sigma - n_\sigma)$ for $\sigma \in \Sigma_\gp$.  Thus if $\widetilde{\rho}$ is a crystalline lift of $\rho|_{G_{F_\gp}}$ of weight $(\vec{k}_\gp,\vec{m}_\gp)$, then $\xi\otimes\widetilde{\rho}$ is 
a crystalline lift of $\rho|_{G_{F_\gp}}$ of weight $(\vec{k}_\gp, \vec{n}_\gp)$.
\end{proof}

Let $Q$ be a finite set of primes of $F$ containing $S_p$, and
suppose that $\chi:G_F \to \Fpbar^\times$ is a continuous character unramified outside $Q$.  Write $\xi:\A_F^\times/F^\times F_{\infty,+}^\times \to \Fpbar^\times$
for the character corresponding to $\chi$ via class field theory, so in particular, $\xi(\varpi_v) = \chi(\Frob_v)$ for all $v \not \in Q$.  Note also that $\chi$ is tamely ramified at all $\gp \in S_p$, and that $\xi|_{\CO_{F,\gp}^\times}$ induces the character $\F_{\gp}^\times \to \Fpbar^\times$ corresponding to $\chi|_{I_{\gp}}$ via local class field theory. 

\begin{proposition} \label{prop:chartwist} Let $\rho: G_F \rightarrow \mathrm{GL}_2(\overline{\mathbb{F}}_p)$ be an irreducible, continuous representation, and let $\chi: G_F \rightarrow\overline{\mathbb{F}}_p^\times$ be a continuous character. Suppose that $\vec{k},\vec{\ell},\vec{m} \in \ZZ^{\Sigma}$, and that $\xi|_{\CO_{F,\gp}^\times} = 
\prod_{\sigma \in \Sigma_\gp} \overline{\sigma}^{-\ell_\sigma}$for all $\gp \in S_p$, where $\xi$ is the character corresponding to $\chi$ via class field theory.
\begin{enumerate}
\item $\rho$ is geometrically modular of weight $(\vec{k},\vec{m})$ if and only if $\chi\otimes\rho$ is geometrically modular of weight $(\vec{k},\vec{\ell}+\vec{m})$;
\item if $\gp \in S_p$, then $\rho|_{G_{F_\gp}}$ has a crystalline lift of weight $(\vec{k}_\gp,\vec{m}_\gp)$ if and only if $(\chi\otimes\rho)|_{G_{F_\gp}}$ has a crystalline lift of weight $(\vec{k}_\gp,\vec{\ell}_\gp+\vec{m}_\gp)$.
\end{enumerate}
In particular, Conjectures~\ref{conj:geomweights} and~\ref{conj:geomweights2} hold for $\rho$ if and only if they hold for $\chi\otimes\rho$.
\end{proposition}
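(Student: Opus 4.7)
The plan is to carry out the twist by $\chi$ in parallel on the automorphic and Galois sides, verifying in each case that weights shift by $\vec\ell$ compatibly; both parts then combine to give the last assertion.

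For (1), take $\xi$ to denote the id\`ele class character of $\A_F^\times / F^\times F_{\infty,+}^\times$ corresponding to $\chi$, and let $\xi' = \xi|_{(\A_{F,\f}^{(p)})^\times}$. Using triviality of $\xi$ on $F^\times F_{\infty,+}^\times$ together with the hypothesis $\xi|_{\CO_{F,\gp}^\times} = \prod_\sigma \overline{\sigma}^{-\ell_\sigma}$ for all $\gp \in S_p$, a direct computation (decompose any $\alpha \in \CO_{F,(p),+}^\times$ as a product of its archimedean, $p$-adic, and prime-to-$p$ id\`elic components and use triviality of $\xi$ on the diagonal copy of $F^\times$) yields $\xi'(\alpha) = \alpha^{\vec\ell}$ for all such $\alpha$. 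This is precisely the hypothesis required by the twisting isomorphism (\ref{eqn:twist}), which therefore provides, for sufficiently small $U$, a Hecke-compatible bijection
$$e_{\xi'}: M_{\vec k,\vec m}(U, E) \stackrel{\sim}{\longrightarrow} M_{\vec k,\vec\ell+\vec m}(U, E)$$
multiplying $T_v$- and $S_v$-eigenvalues by $\xi'(\varpi_v)$ and $\xi'(\varpi_v)^2$ respectively, and hence (for $v\notin S_p$ outside the ramification locus of $\chi$) by $\chi(\Frob_v)$ and $\chi(\Frob_v)^2$. Comparing the characteristic polynomials of Frobenius provided by Theorem~\ref{thm:galois} then identifies $\rho_{e_{\xi'} f}$ with $\chi \otimes \rho_f$, and running the same argument with $(\chi, \vec\ell)$ replaced by $(\chi^{-1}, -\vec\ell)$ gives the converse.

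For (2), for each $\gp \in S_p$ I would invoke the standard existence of crystalline characters $G_{F_\gp} \to \Zpbar^\times$ realising any prescribed $\sigma$-labelled Hodge--Tate weights compatible with a given restriction to inertia (as in App.~B of \cite{brian}, also used in the proof of Proposition~\ref{prop:chars}); applied with weights $(-\ell_\sigma)_{\sigma \in \Sigma_\gp}$, this produces $\tilde\chi_\gp: G_{F_\gp}\to\Zpbar^\times$ lifting $\chi|_{G_{F_\gp}}$. Tensoring a crystalline lift of $\rho|_{G_{F_\gp}}$ of weight $(\vec k_\gp, \vec m_\gp)$ by $\tilde\chi_\gp$ shifts each $\sigma$-labelled Hodge--Tate weight by $-\ell_\sigma$, giving a crystalline lift of $(\chi \otimes \rho)|_{G_{F_\gp}}$ of weight $(\vec k_\gp, \vec\ell_\gp + \vec m_\gp)$; twisting by $\tilde\chi_\gp^{-1}$ gives the converse.

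Finally, for the ``in particular'' assertion, parts (1) and (2) set up, for each fixed $\vec k$, matching bijections between the two sides of the biimplications stated in Conjectures~\ref{conj:geomweights} and \ref{conj:geomweights2} for $(\rho,\vec m)$ and for $(\chi\otimes\rho,\vec\ell+\vec m)$. Since $\Xi_{\mathrm{min}}^+$ and $\Xi_{\mathrm{Hasse}}$ depend only on $\vec k$, and $\vec m$ ranges over the same set $\ZZ^\Sigma$ via $\vec m \leftrightarrow \vec\ell + \vec m$, both conjectures hold for $\rho$ if and only if they hold for $\chi\otimes\rho$. The only substantive point in the argument is the adelic manipulation establishing $\xi'(\alpha) = \alpha^{\vec\ell}$ on $\CO_{F,(p),+}^\times$; once this bookkeeping is done, everything reduces to transport of structure under the respective twists.
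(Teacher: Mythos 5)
Your proposal is correct and follows the same approach as the paper's own proof: part (1) via the twisting isomorphism $e_\xi$ of (\ref{eqn:twist}) applied to an eigenform and comparison of characteristic polynomials of Frobenius, and part (2) via a crystalline character lifting $\chi|_{G_{F_\gp}}$ constructed from [brian, App.~B] and tensoring. The only difference is expository — you spell out the adelic computation showing $\xi'(\alpha)=\alpha^{\vec\ell}$ on $\CO_{F,(p),+}^\times$ and the final bookkeeping, which the paper compresses into one sentence each.
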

\begin{proof} (1) Suppose that $\rho$ is geometrically of weight $(\vec{k},\vec{m})$, and let $f \in M_{\vec{k},\vec{m}}(U,E)$ be an eigenform such that $\rho_f$ is isomorphic to $\rho$.  Note that since $\xi$ is trivial on $F^\times F_{\infty,+}^\times$, its restriction to $(\AA_{F,\f}^{(p)})^\times$ satisfies $\xi(\alpha) = \alpha^{\vec{\ell}}$ for all $\alpha \in \CO_{F,(p),+}^\times$.  Therefore, shrinking $U$ and enlarging $E$ as necessary, we have the isomorphism
$$e_\xi: M_{\vec{k},\vec{m}}(U,E)
  \stackrel{\sim}{\longrightarrow} M_{\vec{k},\vec{\ell}+\vec{m}}(U,E)$$
of (\ref{eqn:twist}), and hence an eigenform $\varphi = e_\xi f  \in M_{\vec{k},\vec{\ell}+\vec{m}}(U,E)$ such that
$T_v(\varphi) = \xi(\varpi_v) a_v \varphi $ and $S_v(\varphi) = \xi(\varpi_v)^2 d_v \varphi$ for all but finitely many primes $v$ of $F$, where $a_v = \tr(\rho(\Frob_v))$ and $\Nm_{F/\Q}(v)d_v = \det(\rho(\Frob_v))$. It follows that $\rho_{\varphi}$ is isomorphic to $\chi\otimes \rho_f$, and therefore that $\chi\otimes \rho$ is modular of weight $(\vec{k},\vec{\ell}+\vec{m})$.

The converse holds by symmetry.

(2) We claim that $\chi|_{G_{F_\gp}}$ has a crystalline lift $\widetilde{\chi}_{\gp}$ whose $\sigma$-labelled Hodge--Tate weight is $-\ell_\sigma$.  Indeed, this follows from \cite[App.~B]{brian} 
on taking $\widetilde{\chi}_{\gp}$ to correspond via local class field theory to a lift of $\xi|_{F_\gp}^\times$ whose restriction to $\CO_{F,\gp}^\times$ is $\prod_{\sigma\in \Sigma_\gp}\sigma^{-\ell_\sigma}$.  Therefore if $\widetilde{\rho}_\gp$ is a crystalline lift of $\rho|_{G_{F_\gp}}$ of weight $(\vec{k}_\gp,\vec{m}_\gp)$, then 
$\widetilde{\chi}_\gp \otimes \widetilde{\rho}_\gp$ is a crystalline lift of $(\chi\otimes\rho)|_{G_{F_\gp}}$ of weight $(\vec{k}_\gp,\vec{\ell}_\gp+\vec{m}_\gp)$.  This gives one direction of the implication, and the converse again holds by symmetry.
\end{proof}

\begin{remark} Recall that conjectures along the lines of those above are formulated in \cite{DS1} under the assumption that $p$ is unramified in $F$.  However, due to the different conventions in the definition of the Hecke operators, the Galois representation associated to an eigenform $f$ in~\cite{DS1} is $\chi\otimes \rho_f$ in the notation of this paper, where $\chi$ is the cyclotomic character (see Remark~\ref{rmk:galconv}).  Therefore $\rho$ is modular of weight $(\vec{k},\vec{m})$ in the notation of~\cite{DS1} if and only if $\chi^{-1}\otimes \rho$ is modular of weight $(\vec{k},\vec{m})$ in the notation of this paper, which, by the preceding proposition, is equivalent to $\rho$ being modular of weight $(\vec{k},\vec{m}-\vec{1})$

The analogous statement holds for our conventions with respect to weights of crystalline lifts, i.e., $\rho|_{G_{F_{\gp}}}$ has a crystalline lift of weight $(\vec{k}_\gp,\vec{m}_\gp)$ in the notation of \cite{DS1} if and only if it has a crystalline lift of weight $(\vec{k}_\gp,\vec{m}_\gp-1)$ in the notation of this paper.

It follows that if $p$ is unramified in $F$, then Conjectures~\ref{conj:geomweights} and~\ref{conj:geomweights2} above are equivalent to Conjectures~7.3.1 and~7.3.2 of \cite{DS1}, but with $\vec{k}_{\min}(\rho,\vec{m})$ of that paper equal to 
$\vec{k}_{\min}(\chi^{-1}\otimes\rho,\vec{m}) = \vec{k}_{\min}(\rho,\vec{m}-\vec{1})$ of this paper (where $\chi$ is the cyclotomic character).
\end{remark}

Finally we note the following consequence of the effect of 
$\Theta$-operators.
\begin{proposition} \label{prop:theta_effect} Let $\rho: G_F \rightarrow \mathrm{GL}_2(\overline{\mathbb{F}}_p)$ be an irreducible, continuous representation.  Suppose that 
$\vec{k},\vec{m} \in \ZZ^{\Sigma}$ and $\varsigma \in \Sigma$. If $\rho$ is geometrically modular of weight $(\vec{k},\vec{m})$, 
then $\rho$ is geometrically modular of weight
$(\vec{k}+\vec{h}_\varsigma+2\vec{e}_\varsigma,\vec{m}-\vec{e}_\varsigma)$. 
\end{proposition}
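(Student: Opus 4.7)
The plan is to start with an eigenform $f \in M_{\vec{k},\vec{m}}(U,E)$ giving rise to $\rho$, write $\varsigma = \sigma_{\gp,i,j}$, set $\tau = \tau_{\gp,i}$ and $\sigma_e = \sigma_{\gp,i,e_\gp}$, and exploit the partial $\Theta$-operator $\Theta_\tau$. Since $\Theta_\tau$ is $\GL_2(\AA_{F,\f}^{(p)})$-equivariant, if $\Theta_\tau(f) \neq 0$ then $\Theta_\tau(f)$ is a Hecke eigenform in $M_{\vec{k}+\vec{h}_{\sigma_e}+2\vec{e}_{\sigma_e},\vec{m}-\vec{e}_{\sigma_e}}(U,E)$ with the same Hecke eigenvalues as $f$, hence also giving rise to $\rho$.

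To pass from this last-embedding-shifted weight to the general target $(\vec{k}+\vec{h}_\varsigma+2\vec{e}_\varsigma,\vec{m}-\vec{e}_\varsigma)$, I would verify two identities. The difference in the $\vec{k}$-direction, $(\vec{h}_\varsigma+2\vec{e}_\varsigma) - (\vec{h}_{\sigma_e}+2\vec{e}_{\sigma_e})$, equals $\sum_{k=j}^{e_\gp-1}\vec{h}_{\sigma_{\gp,i,k}} + \sum_{k=j+1}^{e_\gp}\vec{h}_{\sigma_{\gp,i,k}}$ (a telescoping computation, using $\vec{h}_{\sigma_{\gp,i,1}} = p\vec{e}_{\sigma_{\gp,i-1,e_\gp}} - \vec{e}_{\sigma_{\gp,i,1}}$ when $j=1$ and the corresponding expression for $j>1$), and so lies in $\Xi_{\mathrm{Hasse}}$. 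The difference in the $\vec{m}$-direction, $\vec{e}_{\sigma_e}-\vec{e}_\varsigma = -\sum_{k=j+1}^{e_\gp}\vec{h}_{\sigma_{\gp,i,k}}$, lies in $\sum_\sigma \ZZ\vec{h}_\sigma$. Proposition~\ref{prop:Hasse_effect} (multiplication by partial Hasse invariants) then absorbs the $\vec{k}$-shift, and Proposition~\ref{prop:chars} (handling $\vec{m}$-twists by elements of $\sum \ZZ\vec{h}_\sigma$, concretely via the $G_\sigma$-isomorphisms from \S\ref{ssec:Hasse}) absorbs the $\vec{m}$-shift.

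The hard part, and main obstacle, is the case $\Theta_\tau(f) = 0$. By Theorem~\ref{thm:kertheta}, one can then factor $f = V_\gp(g)\prod_\sigma G_\sigma^{s_\sigma} H_\sigma^{r_\sigma}$ for some $g \in M_{\tot}(U,E)$, $\vec{r} \in \ZZ_{\ge 0}^\Sigma$, $\vec{s} \in \ZZ^\Sigma$; since $V_\gp$, $G_\sigma$, and $H_\sigma$ are all injective and $\GL_2(\AA_{F,\f}^{(p)})$-equivariant, $g$ is itself a Hecke eigenform giving rise to $\rho$. Using Proposition~\ref{prop:cuspidal} (so $f$, and hence $g$, is cuspidal) together with Corollary~\ref{cor:positivity}, I would replace $g$ by a form $\tilde g$ with $\vec{k}_{\tilde g} \in \Xi_{\min}^+$, still giving rise to $\rho$. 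The proof then proceeds by induction on $\sum_\sigma k_\sigma$: the effect of $V_\gp$ on $\tilde g$, using that each $k_{\tilde g,\sigma_{\gp,i,1}} \ge 1$ contributes $(p-1)k_{\tilde g,\sigma_{\gp,i,1}}$ to the total, yields the strict inequality $\sum_\sigma k_{\tilde g,\sigma} < \sum_\sigma k_{f,\sigma}$, so the inductive hypothesis gives modularity in weight $(\vec{k}_{\tilde g}+\vec{h}_\varsigma+2\vec{e}_\varsigma, \vec{m}_{\tilde g}-\vec{e}_\varsigma)$. A direct calculation exploiting $\vec{k}_{\tilde g} \in \Xi_{\min}^+$, combined with the observation that $V_\gp(H_\sigma)$ has weight in $\Xi_{\mathrm{Hasse}}$ for every $\sigma$, establishes $\vec{k}_f - \vec{k}_{\tilde g} \in \Xi_{\mathrm{Hasse}}$ and $\vec{m}_f - \vec{m}_{\tilde g} \in \sum_\sigma \ZZ\vec{h}_\sigma$, so Propositions~\ref{prop:Hasse_effect} and~\ref{prop:chars} lift this to modularity in the desired weight, completing the induction (the base case being the minimum of $\sum_\sigma k_\sigma$ over weights modular for $\rho$, where Case B is impossible by minimality, forcing Case A).
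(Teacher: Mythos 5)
Your proof is correct and follows essentially the same strategy as the paper: apply $\Theta_\tau$ when $\Theta_\tau(f)\neq 0$ and handle the kernel case via Theorem~\ref{thm:kertheta}, then transfer shifts in $\vec{k}$ and $\vec{m}$ through Propositions~\ref{prop:Hasse_effect} and~\ref{prop:chars}.

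The organizational differences are cosmetic rather than substantive. For the reduction from general $\varsigma=\sigma_{\gp,i,j}$ to $\sigma_e=\sigma_{\gp,i,e_\gp}$, you verify the telescoping identities directly and pass from $\sigma_e$ to $\varsigma$ all at once; the paper instead does this one step at a time via the single-step identity $(\vec{h}_\varsigma+2\vec{e}_\varsigma)-(\vec{h}_{\varsigma'}+2\vec{e}_{\varsigma'})=\vec{h}_\varsigma+\vec{h}_{\varsigma'}$ (with $\varsigma'=\sigma_{\gp,i,j+1}$) and a downward induction on $e_\gp-j$; these are equivalent, and your algebra checks out. For the kernel case, the paper first strips $f$ of all partial Hasse factors, then locates a \emph{maximal} integer $n$ with $f=V_\gp^n(\phi)\prod G_\sigma^{r_\sigma}$ (boundedness of $n$ coming from the same $\sum_\sigma$-monotonicity of $V_\gp$ that you use), and observes that maximality forces $\Theta_\tau(\phi)\neq 0$. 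You instead package this as strong induction on $\sum_\sigma k_\sigma$, with termination secured by the same fact that $\vec{k}_{\tilde g}\in\Xi_{\min}^+$ has all components at least $1$, so that $V_\gp$ strictly increases the coordinate sum. The only place requiring some care in your formulation — which you flag correctly — is rewriting $V_\gp(g)$ as $V_\gp(\tilde g)\cdot V_\gp(\prod H_\sigma^{r'_\sigma})$ (using that $V_\gp$ is an algebra homomorphism) and noting each $V_\gp(H_\sigma)$ has weight $\nu_\sigma\vec{h}_{\varphi^{-1}\sigma}\in\Xi_{\mathrm{Hasse}}$, so that $\vec{k}_f-\vec{k}_{\tilde g}$ really does lie in $\Xi_{\mathrm{Hasse}}$; the paper sidesteps this bookkeeping by first removing all $H_\sigma$ divisors from $f$. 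Both routes buy the same thing; yours is a little more self-contained in that it avoids the preliminary Hasse-divisibility reduction, at the cost of slightly more care in the weight accounting.
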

\begin{proof} Suppose that $\rho$ is geometrically modular of weight $(\vec{k},\vec{m})$, and let $f \in M_{\vec{k},\vec{m}}(U,E)$ be an eigenform giving rise to $\rho$.  We first treat that the case where $\varsigma = \sigma_{\gp,i,e_{\gp}}$ for some $\tau = \tau_{\gp,i}\in \Sigma_0$. We wish to reduce to the case that $f \not\in \ker(\Theta_\tau)$.  Recall that $\ker(\Theta_\tau)$ is described in Theorem~\ref{thm:kertheta} in terms of the images of the maps
$V_\gp: M_{\vec{\ell},\vec{n}}(U,E) \hookrightarrow
 M_{\vec{\ell}',\vec{n}'}(U,E)$,
where $\vec{\ell}' = \vec{\ell} + \sum_{\sigma\in \Sigma_\gp} \ell_\sigma\vec{h}_\sigma$ and $\vec{n}' = \vec{n} + \sum_{\sigma\in \Sigma_\gp} n_\sigma\vec{h}_\sigma$.

First we claim that we may assume $f$ is not divisible by any partial Hasse invariant $H_\sigma$.  Indeed, we may write $f = \phi\prod_{\sigma\in \Sigma} H_\sigma^{r_\sigma}$ for some $\vec{r} \in \ZZ^{\Sigma}_{\ge 0}$ and $\phi \in M_{\vec{k}',\vec{m}}(U,E)$ not divisible by any $H_\sigma$, where $\vec{k}' = \vec{k}_{\min}(f)$ and $\vec{k} - \vec{k}' \in \Xi_{\mathrm{Hasse}}$.  If the proposition holds for $\phi$, then we have that $\rho$ is geometrically modular of weight $(\vec{k}'+\vec{h}_\varsigma+2\vec{e}_\varsigma,\vec{m}-\vec{e}_\varsigma)$, and hence of weight $(\vec{k}+\vec{h}_\varsigma+2\vec{e}_\varsigma,\vec{m}-\vec{e}_\varsigma)$ by Proposition~\ref{prop:Hasse_effect}.

Next we show that there is a maximal non-negative integer $n$ such that 
$$f = V_\gp^n(\phi) \prod_{\sigma\in \Sigma} G_\sigma^{r_\sigma}$$ for some $\phi \in M_{\vec{k}',\vec{m}'}(U,E)$, $\vec{k}',\vec{m}',\vec{r}\in \ZZ^{\Sigma}$.
Indeed since $V_\gp$ and multiplication by the $G_\sigma$ are Hecke-equivariant, the formula implies that $\phi$ is an eigenform giving rise to $\rho$, whose irreducibility then implies that $\vec{k}' \neq \vec{0}$.  Furthermore since $f$ is not divisible by any $H_\sigma$, neither is $\phi$ (see \cite[\S9.3]{theta} for the effect of $V_\gp$ on the $H_\sigma$ and $G_\sigma$), so Theorem~\ref{thm:positivity} implies that $\vec{k}' = \vec{k}_{\min}(\phi) \in \Xi_{\min}^+$.  The boundedness of $n$ then follows from the fact that in the definition of $V_\gp$, if each $\ell_\sigma > 0$, then $\sum_{\sigma\in \Sigma_\gp} \ell'_\sigma > \sum_{\sigma\in \Sigma_\gp} \ell_\sigma$.

Now note that for $\phi$ as above, we have $\Theta_\tau(\phi)\neq 0$.  Indeed, if $\phi \in \ker(\Theta_\tau)$, then Theorem~\ref{thm:kertheta} implies that 
$$\phi = V_\gp(g) \prod_{\sigma\in \Sigma} G_\sigma^{s_\sigma}$$
for some $g \in M_{\vec{\ell},\vec{n}}(U,E)$, $\vec{\ell},\vec{n},\vec{s}\in \ZZ^{\Sigma}$ (since $\phi$ is not divisible by any $H_\sigma$), which contradicts the characterization of $n$.
Therefore $\rho$ arises from $\Theta_\tau(\phi)$, and hence is geometrically modular of weight $(\vec{k}'+\vec{h}_\varsigma+2\vec{e}_\varsigma,\vec{m}'-\vec{e}_\varsigma)$.
Since $\vec{k} -\vec{k}' \in \Xi_{\mathrm{Hasse}}$ and $\lambda(\vec{m}) = \lambda(\vec{m}')$ (in the notation of the proof of Proposition~\ref{prop:chars}), it follows 
that $\rho$ is geometrically modular of weight $(\vec{k}+\vec{h}_\varsigma+2\vec{e}_\varsigma,\vec{m}-\vec{e}_\varsigma)$
(by Propositions~\ref{prop:Hasse_effect} and ~\ref{prop:chars}).

Finally suppose that $\varsigma = \sigma_{\gp,i,j}$ for some $j < e_\gp$.  Note that
$$(\vec{h}_\varsigma + 2\vec{e}_\varsigma) - (\vec{h}_{\varsigma'} + 2\vec{e}_{\varsigma'}) = \vec{h}_\varsigma + \vec{h}_{\varsigma'} \in \Xi_{\mathrm{Hasse}}\quad\mbox{and}\quad \lambda(\vec{e}_\varsigma)
 = \lambda(\vec{e}_{\varsigma'}),$$
where $\varsigma =  \sigma_{\gp,i,j+1}$.
The desired conclusion therefore follows (by induction on $e_\gp - j$) from Propositions~\ref{prop:Hasse_effect} and ~\ref{prop:chars}.
\end{proof}

\begin{remark} \label{rmk:BM} Under the assumption that $k_\sigma \ge 2$ for all $\sigma \in \Sigma$, a statement analogous to Proposition~\ref{prop:theta_effect} for crystalline lifts follows from the Breuil--M\'ezard Conjecture and a modular representation theory calculation.  In the same vein, so does an analogue of Proposition~\ref{prop:Hasse_effect}, under the further assumption that $\vec{k}$ and $\vec{k} + \vec{\ell} \in \Xi_{\min}$.
\end{remark}

\begin{remark} \label{rmk:theta_effect} If there exist $\vec{k}_{\min}(\rho,\vec{m})$ as in part (1) of Conjecture~\ref{conj:geomweights}, then Proposition~\ref{prop:theta_effect} implies that
$$\vec{k}_{\min}(\rho,\vec{m} - \vec{e}_\sigma)
 \le \vec{k}_{\min}(\rho,\vec{m}) + \vec{h}_\sigma + 2\vec{e}_\sigma$$
where $\sigma = \sigma_{\gp,i,e_\gp}$ and the partial ordering is defined by $\vec{k} \le \vec{k}'$ if $\vec{k}' - \vec{k} \in \Xi_{\mathrm{Hasse}}$.  (The same holds for 
$\sigma = \sigma_{\gp,i,j}$ if $j < e_\gp$, but in view of the final step in the proof of Proposition~\ref{prop:theta_effect}, this is a weaker assertion.) Furthermore, it follows from \cite[Thm.~5.2.1]{theta} and the proof of Proposition~\ref{prop:theta_effect} that if $\vec{k} =  \vec{k}_{\min}(\rho,\vec{m})$ satisfies $p|k_\sigma$, then in fact
$$\vec{k}_{\min}(\rho,\vec{m} - \vec{e}_\sigma)
 \le \vec{k}(\rho,\vec{m}) + 2\vec{e}_\sigma.$$

Recall also from Proposition~\ref{prop:chars} (and its proof) that $\vec{k}_{\min}(\rho,\vec{m})$ will depend only on the image $\lambda(\vec{m}) \in \Hom(T,\Fpbar^\times)$, where $T = \prod_{\gp \in S_p} \F_{\gp}^\times$.  We may view the resulting function
$\Hom(T,\Fpbar^\times) \to \Xi_{\min}^+$
(defined by $\lambda(\vec{m}) \to \vec{k}_{\min}(\rho,\vec{m})$)
as a generalized ``$\Theta$-cycle.''  Note however that the information is slightly cruder than what is provided by the original notion in the classical setting (as in \cite[\S3]{edixhoven}), since it fails to distinguish between ordinary and non-ordinary forms giving rise to the same Galois representation.
\end{remark}

\subsection{Algebraic modularity} \label{ssec:algmod}
We now discuss the relation with ``algebraic'' Serre weight conjectures in this context, as formulated in \cite{BDJ} for $p$ unramified in $F$, and then generalized to the ramified case by Schein~\cite{Schein} and Gee~\cite{Gtypes}.   Recall that (for $F$ unramified at $p$) a weight $V$ in \cite{BDJ} is an $\Fpbar$-representation of $\GL_2(\CO_F/p)$, and the notion of a representation $\rho:G_F \to \GL_2(\Fpbar)$ being modular of weight $V$ is defined terms of the manifestation of $\rho$ in Jacobians of certain Shimura curves over $F$, or equivalently, the appearance of $\rho$ in their cohomology with coefficients in an \'etale sheaf determined by $V$ (see~\cite[Lemma~2.4]{BDJ}).
Furthermore, this is in turn equivalent to the presence of the corresponding system of Hecke eigenvalues arising in such cohomology (see~\cite[Lemmas~4.10, 4.11]{BDJ}).  One can make similar definitions in terms of Hecke eigensystems on forms associated to totally definite quaternion algebras (as in \cite{gee_inv}).

To adapt these notions to our setting, let $\CO_B$ be a maximal order in a quaternion algebra $B$ over $F$, where $B$ is as in \S\ref{ssec:defQO} or \S\ref{ssec:indefQO}, so $B$ is unramified at all $\gp \in S_p$ and at most one archimedean place of $F$.  In either case, we defined a space $M_{\vec{k},\vec{m}}^B(U,\Fpbar)$ for any $\vec{k}\in \ZZ_{\ge 2}^\Sigma$, $\vec{m} \in \ZZ^\Sigma$ and sufficiently small open compact $U \subset B_{\A,\f}^\times$ containing $\CO_{B,p}^\times$, and equipped it with an action of (commuting) Hecke operators $T_v$ and $S_v$ for all primes $v \not\in S_p$ such that $B$ is unramified at $v$ and $\CO_{B,v}^\times \subset U$.  We thus have an action of $\TT^Q$ on $M_{\vec{k},\vec{m}}^B(U,\Fpbar)$, where $\TT^Q$ is the polynomial ring over $\Fpbar$ in the variables $T_v$ and $S_v$ for all $v \not\in Q$, and $Q$ is any finite set of primes of $F$ containing all $v$ such that $v\in S_p$, $B_v^\times$ is ramified, or $\CO_{B,v}^\times \not\subset U$.

Suppose now that $\rho:G_F \to \GL_2(\Fpbar)$ is any continuous, irreducible, totally odd representation, and $Q$ is a finite set of primes containing all those in $S_p$ and those at which $\rho$ is ramified.  We let $\gm_\rho^Q$ denote the maximal ideal of $\TT^Q$ 
generated by the elements
$$T_v - \tr(\rho(\Frob_v)) \quad\mbox{and}\quad 
 \Nm_{F/\Q}(v)S_v - \det(\rho(\Frob_v))$$
for all $v\not\in Q$.

We then make the following definition:
\begin{definition} \label{def:algmod} For $\rho$, $\vec{k}$ and $\vec{m}$ as above, we say that $\rho$ is {\em definitely modular} (resp.~{\em algebraically modular}) of weight $(\vec{k},\vec{m})$ if 
$$M_{\vec{k},\vec{m}}^B(U,\Fpbar)[\gm_\rho^Q] \neq 0$$
for some $B$, $U$ and $Q$ as above, where $B$ is totally definite (resp.~unramified at exactly one archimedean place).
\end{definition}
Note that since $M_{\vec{k},\vec{m}}^B(U,\Fpbar)$ is finite-dimensional, the action on it of $\TT^Q$ factors through an Artinian quotient.  It follows that the
condition in the definition is equivalent to
the non-vanishing of $M_{\vec{k},\vec{m}}^B(U,\Fpbar)_{\gm_\rho^Q}$, or equivalently the containment $\Ann_{\TT^Q}(M_{\vec{k},\vec{m}}^B(U,\Fpbar)) \subset \gm_\rho^Q$.

We note also that if $\gm_\rho^Q$ is in the support of $M_{\vec{k},\vec{m}}^B(U,\Fpbar)$, then so is $\gm_\rho^{Q'}$ for all finite $Q' \supset Q$.  Furthermore, if $\CO_{B,p}^\times \subset U' \subset U$ and $Q'$ contains all $v$ such that $\CO_{B,v}^\times \not\subset U'$, then $\gm_\rho^{Q'}$ is also in the support of $M_{\vec{k},\vec{m}}^B(U',\Fpbar)$.  This is clear if $B$ is totally definite; otherwise, we need to show that $\gm_\rho^{Q'}$ is not in the kernel of
the map $M_{\vec{k},\vec{m}}^B(U,\Fpbar) \to M_{\vec{k},\vec{m}}^B(U',\Fpbar)$ induced by the projection $Y_{U'}^B \to Y_U^B$, for which we may assume that $U'$ is normal in $U$.  As in the proof of Theorem~\ref{thm:galois}, the Hochschild--Serre spectral sequence identifies this kernel with
$H^1(G,H^0(Y_{U'}^B,\mathcal{D}_{\vec{k},\vec{m},\Fpbar}))$ as a $\TT^{Q'}$-module, where $G = U/U'(U\cap \CO_F^\times)$, and a standard argument shows that $H^0(Y_{U'}^B,\mathcal{D}_{\vec{k},\vec{m},\Fpbar})_{\gm_{\rho}^{Q'}} = 0$ (see for example the proof of Lemma~\ref{lem:JHfactors} below).

The above notion of algebraic modularity generalizes the one in \cite{BDJ} and \cite{DS1}, but with different conventions.
More precisely, if $p$ is unramified in $F$, then $\rho$ is algebraically modular of weight $(\vec{k},\vec{m})$ in the sense above if and only if it is algebraically modular of $(\vec{k},\vec{m}+\vec{1})$ in the sense of \cite{DS1}, or equivalently modular of weight $D_{\vec{k},-\vec{k}-\vec{m}-\vec{1},\Fpbar}$
in the sense of \cite{BDJ}, where 
$$D_{\vec{k},-\vec{k}-\vec{m}-\vec{1},\Fpbar} = \bigotimes_{\sigma \in \Sigma} (\det\nolimits^{-k_\sigma-m_\sigma}\otimes_{\Fpbar} \Sym^{k_\sigma-2} \Fpbar^2)$$ 
(as defined in \S\ref{ssec:defQO}) is viewed as a representation of $\GL_2(\CO_F/p\CO_F)$.

We note that part (1) of both Propositions~\ref{prop:chars} and~\ref{prop:chartwist} hold with ``geometrically'' replaced by either ``definitely'' or ``algebraically.'' Indeed in the case of Proposition~\ref{prop:chars}, this is immediate from the definitions, and for Proposition~\ref{prop:chartwist}, the same proof as for geometric modularity carries over.

Recall that Conjecture~7.5.2 of \cite{DS1} 
predicts (for $p$  unramified in $F$) that if $\rho$ is algebraically modular of weight $(\vec{k},\vec{m})$ (where $\vec{k} \in \ZZ^\Sigma_{\ge 2})$, then it is algebraically modular of the same weight, and that the converse holds if in addition $\vec{k} \in \Xi_{\rm min}$.
We extend the conjecture to the current setting as follows:
\begin{conjecture}  \label{conj:defalggeom} Suppose that $\rho:G_F \to \GL_2(\Fpbar)$ is a continuous, irreducible, totally odd representation, and that $(\vec{k},\vec{m}) \in \ZZ^\Sigma_{\ge 2} \times \ZZ^\Sigma$.
\begin{enumerate}
\item $\rho$ is definitely modular of weight $(\vec{k},\vec{m})$ if and only if $\rho$ is algebraically modular of weight $(\vec{k},\vec{m})$.
\item If $\rho$ is algebraically modular of weight $(\vec{k},\vec{m})$ then $\rho$ is geometrically modular of weight $(\vec{k},\vec{m})$; furthermore the converse holds if $\vec{k} \in \Xi_{\rm min}$.
\end{enumerate}
\end{conjecture}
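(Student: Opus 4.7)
The main claim to establish is Theorem~\ref{ithm:alggeommod}, i.e., the forward direction of part~(2) of Conjecture~\ref{conj:defalggeom}: algebraic modularity of weight $(\vec{k},\vec{m})$ with $\vec{k} \in \ZZ_{\ge 2}^\Sigma$ implies geometric modularity of the same weight. The strategy mirrors the proof of Theorem~\ref{thm:galois} but runs in the reverse direction: rather than starting from a mod $p$ Hilbert modular form and transferring via Jacquet--Langlands to a quaternionic form, one starts with a quaternionic form and produces a Hilbert modular one. The hypothesis $\vec{k} \in \ZZ_{\ge 2}^\Sigma$ is crucial because it ensures that the relevant characteristic zero automorphic representations have cohomological weight, so no recourse to non-algebraic forms is needed.

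First I would establish part~(1), which is essentially independent: since $\vec{k} \in \ZZ_{\ge 2}^\Sigma$, for any quaternion algebra $B$ unramified at $p$, the integral structure $M^B_{\vec{k},\vec{m}}(U,\CO)$ has $K$-base change described automorphically as in \S\ref{ssec:defQC}--\S\ref{ssec:indefQC}, and the associated Hecke algebra is finite flat over $\CO$. The mod $p$ eigensystem corresponding to $\gm_\rho^Q$ therefore lifts to a characteristic zero eigensystem in $M^B_{\vec{k},\vec{m}}(U,\Qpbar)$, and Theorem~\ref{thm:JL} transfers this between any two quaternion algebras $B, B'$ over $F$ unramified at the primes in $Q$ (which can be arranged by choosing $Q$ large enough). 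Reducing modulo $\varpi$ recovers the eigensystem on the target side, yielding the equivalence of definite and algebraic modularity.

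For the forward direction of part~(2), begin with an eigenform $\varphi \in M^B_{\vec{k},\vec{m}}(U_B,\Fpbar)$ giving rise to $\rho$, taking $B$ totally definite and $U_B$ prime to $p$ (possible by part~(1), after possibly enlarging $Q$). Finite flatness of the integral Hecke algebra on $M^B_{\vec{k},\vec{m}}(U_B,\CO)$ lifts the eigensystem to $\Qpbar$, producing a corresponding $\Pi \in \mathcal{C}^B_{\vec{k}}$ via the identification \eqref{eqn:scalars}. Applying Theorem~\ref{thm:JL} yields a cuspidal $\pi = \JL_{\vec{k}}^B(\Pi) \in \mathcal{C}_{\vec{k}}$ of level prime to $p$, contributing a classical eigenform in $S_{\vec{k}}(U,\CC)$ for a suitable $U$. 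By finite flatness of the integral Hecke algebra on $S_{\vec{k}}(U,\CO)$ (built from the line bundle $\CL_{\vec{k},\CO}$ of \S\ref{ssec:HMFp}), the same eigensystem appears integrally, and reduction modulo $\varpi$ produces a non-zero eigenform $f \in S_{\vec{k}}(U,E)$. Finally, the twisting isomorphism of \eqref{eqn:twist2}, for a character $\xi:(\AA_{F,\f}^{(p)})^\times \to E^\times$ with $\xi(\alpha) = \alpha^{\vec{m}+\vec{k}/2}$ on $\CO_{F,(p),+}^\times$, converts $f$ into an eigenform in $M_{\vec{k},\vec{m}}(U,E)$ with Hecke eigenvalues matching those of $\rho$.

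The main obstacle, and the reason for the $\Xi_{\min}$ hypothesis in the converse direction of part~(2), is the reverse lifting problem: going from a mod $p$ Hilbert modular eigenform to characteristic zero is obstructed by $H^1(\CY_U^{\min}, \CA^{\sub}_{\vec{k},\vec{m},\CO})$. When $\vec{k} \in \Xi_{\min}$, Lemma~\ref{lem:ample} together with multiplication by sufficiently high powers of partial Hasse invariants brings the weight into an ample range, forcing the vanishing of $H^1$ and enabling the characteristic zero lift (exactly the maneuver used in the proof of Theorem~\ref{thm:galois}). Outside $\Xi_{\min}$ no such technique is available, which is why the converse implication is genuinely beyond the scope of this method and must remain conjectural.
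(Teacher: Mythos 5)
Your proposal contains a serious gap in the argument for part~(1), and the argument for part~(2) depends on an unproven direction of part~(1).

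For part~(1), you propose to lift the mod~$p$ eigensystem on one quaternion algebra $B$ to characteristic zero and then transfer via Jacquet--Langlands to another $B'$. This does not work in general: Theorem~\ref{thm:JL} requires $\pi_v$ to be a \emph{discrete series} representation of $\GL_2(F_v)$ for every finite $v$ in $\Sigma^{B'}$. When $B$ is totally definite and unramified at every finite place, a generic characteristic-zero lift $\Pi \in \mathcal{C}^B_{\vec{k}}$ yields a $\pi = \JL^B_{\vec{k}}(\Pi)$ that is unramified (in particular, principal series) at every finite place, and by the parity constraint on $|\Sigma^{B'}|$ any indefinite $B'$ must be ramified at at least one finite place (when $d$ is even) --- so there is simply no transfer available. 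This is exactly why the paper's proof of part~(1) introduces an auxiliary prime $w$ chosen by Chebotarev (so that $\Frob_w$ acts as a complex conjugation, forcing $a_w = 0$ and $\Nm_{F/\QQ}(w) \equiv -1 \bmod p$), and then performs level-raising at $w$ using the analogue of Ihara's Lemma (Lemma~\ref{lem:ihara}) together with a duality argument on $M^B_{\vec{k}}(U,E)$ via the perfect pairing on $D_{\vec{k},E}$. This produces a characteristic-zero lift whose local component at $w$ is discrete series (because it occurs at level $U_0(w)$ but not at level $U$), enabling transfer to a $B'$ that is additionally ramified at $w$. Your argument also proves nothing about the converse ``algebraic implies definite'', which requires an analogue of Ihara's lemma for Shimura curves and hence remains conjectural (or requires the Taylor--Wiles hypothesis via \cite{GLS}).

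For part~(2), you write that one may ``take $B$ totally definite... possible by part~(1)''. But that reduction requires the implication ``algebraic implies definite'', which is precisely the direction of part~(1) that the paper does \emph{not} prove (the theorem only proves ``definite implies algebraic''). The paper's actual proof of part~(2) works directly with the indefinite $B$: one twists to $M^B_{\vec{k}}(U,E)$, and the obstruction to lifting the eigensystem to $M^B_{\vec{k}}(U,\CO)$ is controlled not by automatic finite flatness but by showing $H^2(Y^B_U,\mathcal{D}_{\vec{k},E})_{\gm} = 0$ after localization at the maximal ideal --- which is established in the proof of Lemma~\ref{lem:JHfactors} via Poincar\'e duality, the irreducibility of $\rho$, and an explicit analysis of the Hecke action on $H^0$ of the sheaves on the set of connected components. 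Your claim that ``finite flatness of the integral Hecke algebra ... lifts the eigensystem'' is simply false for Shimura curve $H^1$: the reduction map $M^B_{\vec{k},\vec{m}}(U,\CO) \to M^B_{\vec{k},\vec{m}}(U,\Fpbar)$ need not be surjective on eigenspaces without such a vanishing input. Finally, your closing remark about the converse direction of part~(2) and the role of $\Xi_{\min}$ is speculative and not quite how the paper conceptualizes the difficulty (that direction is about eliminating weights, and what is needed is an integral $p$-adic Hodge theory constraint rather than an ampleness argument), but since the theorem only claims the forward direction this is not a substantive issue.
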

Under a Taylor--Wiles hypothesis on $\rho$, part (1) follows from results of Gee--Liu--Savitt~\cite{GLS} (see Theorem~\ref{thm:GLSmain} and Corollary~\ref{cor:GLSmain} below). As explained in \cite{DS1}, the first assertion in (2) is straightforward if all $k_\sigma$ have the same parity.  Here we prove the forward directions of (1) and (2) unconditionally.  The other direction of (2) is more difficult, but some cases are proved in S.~Yang's PhD thesis~\cite{SYPhD} (see also~\cite{SY}).
  
Before proving the implications, we recall the following well-known behavior of weights with respect to definite (resp.~algebraic) modularity.  Note that the action of $\GL_2(\CO_{F,p})$ on $D_{\vec{k},\vec{m},\Fpbar}$ factors through $\prod_{\gp \in S_p}\GL_2(\FF_\gp)$, and recall that this representation is irreducible if and only if $\vec{k}$ has the property that for each $\tau = \tau_{\gp,i} \in \Sigma_0$, we have 
$$\mbox{$2 \le k_{\gp,i,j_\tau} \le p + 1$ for some $j_\tau \in {1,\ldots,e_\gp}$,}\quad \mbox{and $k_{\gp,i,j}=2$ for all $j \neq j_\tau$}.$$
Observe also that such a $D_{\vec{k},\vec{m},\Fpbar}$ depends only on the pair $(\lambda(\vec{m}),\vec{k}^0)$, where 
$\lambda(\vec{m})$ is a character of $\prod_{\gp \in S_p}\F_{\gp}^\times$ (as in the proof of Proposition~\ref{prop:chars}) and $\vec{k}^0 \in \ZZ^{\Sigma_0}$ is defined by $k^0_{\gp,i} = \max\{k_{\gp,i,1},\ldots,k_{\gp,i,e_\gp} \}$.  In particular, it does not depend on the $j_\tau$ such that $k^0_{\gp,i} = k_{\gp,i,j_\tau}$ (but such a $\vec{k}$ is in $\Xi_{\rm min}$ if and only if $k^0_{\gp,i} = k_{\gp,i,e_\gp}$ for all $\tau \in \Sigma_0$). Recall also that for such $\vec{k}$, the representation $D_{\vec{k},\vec{m},\Fpbar}$ is dual to $D_{\vec{k},-\vec{k}-\vec{m},\Fpbar}$.  The following lemma reduces the question of definite (resp.~algebraic) modularity to consideration of such weights.

\begin{lemma}  \label{lem:JHfactors} Suppose that $\rho:G_F \to \GL_2(\Fpbar)$ is a continuous, irreducible, totally odd representation, and that $(\vec{k},\vec{m}) \in \ZZ^\Sigma_{\ge 2} \times \ZZ^\Sigma$.  Then $\rho$ is definitely (resp.~algebraically) modular of weight $(\vec{k},\vec{m})$ if and only if $\rho$ is definitely (resp.~algebraically) modular of some weight $(\vec{\ell},\vec{n})$ such that $D_{\vec{\ell},\vec{n},\Fpbar}$ is a Jordan--H\"older factor of $D_{\vec{k},\vec{m},\Fpbar}$.
\end{lemma}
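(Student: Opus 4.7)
The plan is to filter $D_{\vec{k},\vec{m},\Fpbar}$ as a representation of $\CO_{B,p}^\times$ and transport the filtration through the construction of $M^B_{\vec{k},\vec{m}}(U,\Fpbar)$. Since the action of $\CO_{B,p}^\times \cong \prod_{\gp\in S_p}\GL_2(\CO_{F,\gp})$ on $D_{\vec{k},\vec{m},\Fpbar}$ factors through the finite quotient $\prod_{\gp}\GL_2(\FF_\gp)$, I can choose a Jordan--H\"older filtration
$$0 = V_0 \subset V_1 \subset \cdots \subset V_r = D_{\vec{k},\vec{m},\Fpbar}$$
of $\CO_{B,p}^\times$-modules whose successive quotients $V_i/V_{i-1}$ are precisely the Jordan--H\"older constituents of $D_{\vec{k},\vec{m},\Fpbar}$, each isomorphic to some $D_{\vec{\ell}_i,\vec{n}_i,\Fpbar}$.

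For the totally definite case, I would take $U$ sufficiently small that every stabilizer $\Gamma_x = B^\times \cap xUx^{-1}$ (for $x\in B_{\f}^\times$) acts trivially on $D_{\vec{k},\vec{m},\Fpbar}$, and hence on every $V_i$. Under the identification $M^B(U,T) \cong \bigoplus_{[x]\in B^\times\backslash B_{\f}^\times/U} T^{\Gamma_x}$, the functor $T \mapsto M^B(U,T)$ is then exact on representations of $\CO_{B,p}^\times$ on which $\Gamma_x$ acts trivially for each $x$. The filtration $\{V_i\}$ therefore induces a Hecke-equivariant filtration of $M^B_{\vec{k},\vec{m}}(U,\Fpbar)$ with graded pieces $M^B_{\vec{\ell}_i,\vec{n}_i}(U,\Fpbar)$, and since localization at $\gm_\rho^Q$ is exact, $\gm_\rho^Q$ lies in the support of $M^B_{\vec{k},\vec{m}}(U,\Fpbar)$ if and only if it lies in the support of some graded piece. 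Combined with the observations preceding the lemma regarding independence of definite modularity from the choice of $U$ and $Q$, this will settle the definite case.

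For the indefinite case, the same filtration $\{V_i\}$ gives a filtration of locally constant $\Fpbar$-sheaves on the compact Shimura curve $Y_U^B$, and the associated long exact sequences in cohomology will reduce the statement to the vanishing claim that, for $U$ sufficiently small and each Jordan--H\"older factor $D_{\vec{\ell},\vec{n},\Fpbar}$, both $H^0(Y_U^B,\mathcal{D}_{\vec{\ell},\vec{n},\Fpbar})_{\gm_\rho^Q}$ and $H^2(Y_U^B,\mathcal{D}_{\vec{\ell},\vec{n},\Fpbar})_{\gm_\rho^Q}$ vanish; this vanishing is the main obstacle. The idea will be that Poincar\'e duality on the compact Riemann surface $Y_U^B$ reduces $H^2$ to an $H^0$ (of a dual sheaf), so it suffices to handle $H^0$; for $U$ sufficiently small, this $H^0$ reduces to a direct sum over the connected components of $Y_U^B$ of invariants in $D_{\vec{\ell},\vec{n},\Fpbar}$, on which the Hecke operators $T_v$ and $S_v$ (for $v\not\in Q$) act through ``Eisenstein'' eigenvalues of the form $(1+\Nm(v))\chi(\varpi_v)$ and $\chi(\varpi_v)^2\Nm(v)^{-1}$ for a finite-order character $\chi$ of $\A_{F,\f}^\times/F^\times\det(U^p)$ determined by $(\vec{\ell},\vec{n})$. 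The irreducibility (and total oddness) of $\rho$ then preclude $\gm_\rho^Q$ from coinciding with any such Eisenstein maximal ideal, giving the required vanishing; the long exact sequences in cohomology then split into short exact sequences after localizing at $\gm_\rho^Q$, and the support comparison proceeds exactly as in the definite case.
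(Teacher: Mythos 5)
Your outline follows the same route as the paper (Jordan--H\"older filtration, exactness of $H^0$ on the finite set in the definite case, long exact sequences plus localization on the Shimura curve in the indefinite case, Poincar\'e duality to reduce $H^2$ to $H^0$, and an Eisenstein analysis of $H^0$), so the overall architecture is right. However, there is a genuine gap at the crux of the indefinite case: you assert that on the space of $\pi_1$-invariants in $D_{\vec{\ell},\vec{n},\Fpbar}$ (over each component) the Hecke operators act with Eisenstein eigenvalues, but you give no reason for this. Without controlling the image of the monodromy group, the invariants could be anything, and nothing forces the Hecke action on them to factor through $\det$. The point the paper uses is the Strong Approximation theorem: the image of $B_+^\times\cap gUg^{-1}$ in $\prod_{\gp\in S_p}\GL_2(\FF_\gp)$ contains $\prod_{\gp}\SL_2(\FF_\gp)$, so the invariants vanish outright unless $\vec{\ell}=\vec{0}$; and only in the residual case $\vec{\ell}=\vec{0}$ does the sheaf descend along $\det$ to the set of components, which is what produces the Eisenstein eigensystem. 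You should make this two-step reduction explicit --- it is the load-bearing part, not a routine detail.

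Separately, your formula for $S_v$ has a spurious factor: with the paper's normalization (where the maximal ideal is generated by $T_v-\tr(\rho(\Frob_v))$ and $\Nm_{F/\QQ}(v)S_v-\det(\rho(\Frob_v))$), the Eisenstein eigenvalues on a sheaf factoring through a character $\chi$ are $T_v = (1+\Nm_{F/\QQ}(v))\chi(\varpi_v)$ and $S_v = \chi(\varpi_v)^2$, not $\chi(\varpi_v)^2\Nm_{F/\QQ}(v)^{-1}$. With your stated formula, the characteristic polynomial $X^2-T_vX+\Nm_{F/\QQ}(v)S_v$ does not factor as $(X-\chi(\varpi_v))(X-\Nm_{F/\QQ}(v)\chi(\varpi_v))$, so the resulting Hecke eigensystem is not visibly Eisenstein and the Chebotarev/Brauer--Nesbitt contradiction with irreducibility of $\rho$ would not go through as written.
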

\begin{proof} We only sketch the argument, which is standard.

Consider first the case of algebraic modularity.  Adopting the notation of \S\ref{ssec:indefQO}, an exact sequence $0 \to T \to T' \to T'' \to 0$ of $\CO$-modules with smooth actions of $U_p = \CO_{B,p}^\times$ yields an exact sequence
$$ 0 \longrightarrow \CT \lra \CT' \lra \CT'' \lra 0$$
of locally constant sheaves on $Y_B^U$, where $U = U_pU^p$ for sufficiently small $U^p$.  Furthermore the morphisms in the resulting long exact sequence
$$H^0(Y_U^B,\CT'')\to H^1(Y_U^B,\CT) \to H^1(Y_U^B,\CT') \to H^1(Y_U^B,\CT'') \to H^2(Y_U^B,\CT)$$ 
are $\TT^Q$-equivariant.  It follows that if $\gm_\rho^Q$ is in the support of $M_{\vec{k},\vec{m}}(U,\Fpbar) = H^1(Y_U^B,\mathcal{D}_{\vec{k},\vec{m},\Fpbar})$, then it is in the support of 
$M_{\vec{\ell},\vec{n}}(U,\Fpbar) = H^1(Y_U^B,\mathcal{D}_{\vec{\ell},\vec{n},\Fpbar})$ for some Jordan--H\"older factor  $D_{\vec{\ell},\vec{n},\Fpbar}$ of $D_{\vec{k},\vec{m},\Fpbar}$.

To prove the converse, it suffices to show that $\gm_\rho^Q$ is not in the support of $H^i(Y_U^B,\mathcal{D}_{\vec{\ell},\vec{n},\Fpbar})$ for any such Jordan-H\"older factor and $i=0$ or $2$. Furthermore, we claim that these spaces vanish for such $\ell$, unless $\vec{\ell} = \vec{0}$.  For $i=0$, this follows from the fact that the sections on each connected component are identified with the invariants in $D_{\vec{\ell},\vec{n},\Fpbar}$ under the action of $B_+^\times \cap gUg^{-1}$ for some $g \in (B_{\f}^{(p)})^\times$, and the image of this group in 
$\prod_{\gp\in S_p}\GL_2(\FF_\gp)$ contains $\prod_{\gp\in S_p}\SL_2(\FF_\gp)$ by the Strong Approximation Theorem.  The vanishing for $i = 2$ then follows from Poincar\'e Duality (and the duality between $D_{\vec{k},\vec{m},\Fpbar}$ and $D_{\vec{k},-\vec{k}-\vec{m},\Fpbar}$).  

We now assume $\vec{\ell} = \vec{0}$ and analyze the Hecke action on $H^0(Y_U^B, \mathcal{D}_{\vec{0},\vec{n},\Fpbar})$ by identifying it with the space of $U^p$-invariants in the smooth $\Fpbar$-representation
$$\varinjlim H^0(Y_U^B, \mathcal{D}_{\vec{0},\vec{n},\Fpbar})
 = \varinjlim H^0(C_U^B, \pi_{U,*}\mathcal{D}_{\vec{0},\vec{n},\Fpbar})$$
of $(B_{\f}^{(p)})^\times$, where the limit is over sufficiently small $U^p$ and $\pi_U:Y_U^B \to C_U^B = \AA_{F,\f}^\times/F_+^\times\det(U)$ is the projection to the set of connected components.  Furthermore $\pi_{U,*}\mathcal{D}_{\vec{0},\vec{n},\Fpbar}$ may be identified with the locally constant sheaf over $C_U$ defined by sections of
$$ (\AA_{F,\f}^\times \times \Fpbar(\xi_{\vec{n}}))/F_+^\times\det(U),$$
where the character $\xi_{\vec{n}}$ of $F_+^\times\det(U)$ is trivial on $F_+^\times\det(U^p)$ and (the inflation of) $x \mapsto x^{\vec{n}}$ on $\CO_{F,p}^\times$.  In particular, it follows that if $v\not\in Q$ splits completely in the abelian extension $L$ of $F$ corresponding via class field theory to $\ker(\xi_{\vec{n}})$, then
$$T_v = 1 + \Nm_{F/\QQ}(v)\quad\mbox{and}\quad S_v = 1$$
on $H^0(Y_U^B, \mathcal{D}_{\vec{0},\vec{n},\Fpbar})$.
So if $\gm_\rho^Q$ lies in its support, then the Cebotarev Density and Brauer--Nesbitt Theorems imply that $\rho|_{G_L}^{\mathrm{ss}}$ is isomorphic to $1 + \chi_{\cyc}^{-1}$, which in turn implies that $\rho$ is reducible, yielding the desired contradiction.
Using Poincar\'e Duality, we similarly obtain a contradiction to $\gm_\rho^Q$ being in the support of $H^2(Y_U^B, \mathcal{D}_{\vec{0},\vec{n},\Fpbar})$.

The proof for definite modularity is similar, but much easier, since it only involves $H^0$.  The result therefore follows directly from the exactness of sequences of the form
$$0 \lra  H^0(Y_U^B,\CT) \lra H^0(Y_U^B,\CT') \lra H^0(Y_U^B,\CT'') \lra 0.$$ 
\end{proof}

We recall also an analogue of Ihara's Lemma in the setting of totally definite quaternion algebras, essentially due to Taylor~\cite{Taylor}. Let $B$ be a totally definite quaternion algebra over $F$ unramified at all $\gp \in S_p$, $U$ a sufficiently small open compact subgroup of $B_{\A,\f}^\times$ containing $\CO_{B,p}^\times$, and $Q$ a sufficiently large finite set of primes of $F$.  For $w \not\in Q$, choose an element  $h \in \CO_{B,w}\cong M_2(\CO_{F,w})$ such that $\det(h)$ is a uniformizer, let $U_0(w) = U \cap hUh^{-1}$, and  consider the level-raising map 
$$\beta:M^B_{\vec{k},\vec{m}}(U,\Fpbar)^2 \lra M^B_{\vec{k},\vec{m}}(U_0(w),\Fpbar)$$
defined by $\beta(f_1,f_2)(x) = f_1(x) + f_2(x h)$.  Note that $\beta$ is $\TT^{Q'}$-linear, where $Q' = Q \cup \{w\}$.

The analogue of Ihara's Lemma is the following, for which our proof is based on the one in \cite[Lemma~2]{DT}.  
\begin{lemma} \label{lem:ihara} Suppose that $B$, $U$, $Q$, $w$ and $\beta$ are as above, and $\rho$, $\vec{k}$ and $\vec{m}$ are as in Conjecture~\ref{conj:defalggeom}.  Then $\gm_{\rho}^{Q'}$ is not in the support of $\ker(\beta)$.
\end{lemma}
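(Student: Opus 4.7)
The plan is to identify $\ker(\beta)$ with a space of ``Eisenstein-type'' automorphic forms on $B$, then derive from its Hecke structure that any associated Galois representation is reducible, contradicting the hypothesis on $\rho$.

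First I would identify the kernel explicitly. The map $(f_1,f_2)\mapsto f_2$ should yield an isomorphism $\ker(\beta)\cong M^B(\widetilde U,\Fpbar)$, where $\widetilde U$ agrees with $U$ at all places away from $w$ and has local component $\widetilde U_w := \langle U_w,h^{-1}U_w h\rangle$ at $w$. The key local identity is
$$\langle \GL_2(\CO_{F,w}),\, h^{-1}\GL_2(\CO_{F,w})h\rangle \;=\; \GL_2(F_w)^{\circ} \;:=\; \{g\in \GL_2(F_w) : v_w(\det g) = 0\},$$
since iterating conjugation by $h$ produces all $\smat{1}{\varpi_w^{-n}}{0}{1}$ and analogously all lower-triangular unipotents, forcing $\SL_2(F_w)\subset \widetilde U_w$, while $\det(\widetilde U_w) = \CO_{F,w}^{\times}$. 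Surjectivity of $(f_1,f_2)\mapsto f_2$ uses that $\GL_2(F_w)^\circ$ is normal in $\GL_2(F_w)$: given $f\in M^B(\widetilde U,\Fpbar)$, the element $-h\cdot f$ is again $\widetilde U_w$-invariant and so in particular $U_w$-invariant, and $(-h\cdot f,f)$ lies in $\ker(\beta)$.

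Next I would apply strong approximation for $B^1$, valid because $B^1(F_w)=\SL_2(F_w)$ is non-compact. The reduced norm then induces a bijection from $B^\times\backslash B_\AA^\times/B_\infty^\times \widetilde U$ onto the finite ray class group
$$C \;:=\; F^\times \backslash \AA_F^\times \Big/ \CO_{F,w}^\times \det(U^{(p,w)}) \det(U_p) F_{\infty,+}^\times,$$
where $U^{(p,w)}$ denotes the prime-to-$pw$ part of $U$. Consequently $M^B(\widetilde U,\Fpbar)$ is identified with a space of functions $C\to D_{\vec k,\vec m,\Fpbar}$ subject to invariance under the finite stabilizer subgroups $\Gamma_c\subset F^\times$ acting on the weight representation via the diagonal embedding. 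Since $U$ may be shrunk as needed, these stabilizers can be arranged to equal $\{\pm 1\}$ (or be trivial), and the space decomposes, up to a parity condition on the weight, into $\psi$-isotypic lines for characters $\psi:C\to \Fpbar^\times$.

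The Hecke computation is now direct. For $v\not\in Q'$, all $\Nm(v)+1$ right cosets in $U_v h_v U_v/U_v$ translate to the same element of $C$ (their determinants agree modulo $\CO_{F,v}^\times$), so on the $\psi$-line one has $T_v=(\Nm(v)+1)\psi(\varpi_v)$ and $S_v=\psi(\varpi_v)^2$. If $\gm_\rho^{Q'}$ lay in the support of $\ker(\beta)$, then for the global character $\chi:G_F\to\Fpbar^\times$ corresponding to $\psi$ via class field theory, we would obtain
$$\tr(\rho(\Frob_v))=(\Nm(v)+1)\chi(\Frob_v)\quad\text{and}\quad \det(\rho(\Frob_v))=\Nm(v)\chi(\Frob_v)^2$$
for all $v\not\in Q'$. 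The eigenvalues of $\rho^{\mathrm{ss}}(\Frob_v)$ would then be $\chi(\Frob_v)$ and $\Nm(v)\chi(\Frob_v)=\chi_{\cyc}^{-1}\chi(\Frob_v)$; by Brauer--Nesbitt and Chebotarev this would force $\rho^{\mathrm{ss}}\cong \chi\oplus \chi_{\cyc}^{-1}\chi$, contradicting the irreducibility of $\rho$. The main technical nuisance in the programme is tracking the weight-equivariance through strong approximation---controlled by the $\Gamma_c$-invariance conditions above---but once these are managed (as they can be, for sufficiently small $U$), the argument reduces to an Eisenstein calculation on the ray class group $C$.
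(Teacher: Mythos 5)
Your argument is correct and follows essentially the same route as the paper: both rest on the observation that any $(f_1,f_2)\in\ker(\beta)$ is right-invariant under $UB_w^1$, then apply strong approximation for the norm-one group $B^1$ (using non-compactness of $B^1_w$) to force the kernel to be ``Eisenstein,'' and finish with Chebotarev plus Brauer--Nesbitt against the irreducibility of $\rho$. The only cosmetic difference is in how the endgame is phrased: you unwind the determinant fibration explicitly, giving an isomorphism of $\ker(\beta)$ with functions on a ray class group $C$, and read off eigenvalues $T_v=(\Nm(v)+1)\psi(\varpi_v)$, $S_v=\psi(\varpi_v)^2$ for \emph{all} $v\notin Q'$, concluding $\rho^{\mathrm{ss}}\cong\chi\oplus\chi_{\cyc}^{-1}\chi$; the paper instead shrinks $U_p$ to a principal congruence subgroup $U_p'$ killing the weight action, restricts to $v$ split completely in the class field $L$ attached to $F_+^\times\det(U')$, and concludes $\rho|_{G_L}$ has trivial semisimplification. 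Your formulation is marginally more robust since it does not implicitly require $L\supset F(\zeta_p)$ to make the eigenvalue reduce to $2$; on the other hand you should be a little careful with the phrase ``decomposes into $\psi$-isotypic lines,'' since $C$ may have order divisible by $p$, so strictly one should speak of generalized eigenspaces or of the support of a maximal ideal of the Hecke algebra rather than a direct-sum decomposition --- but this does not affect the conclusion. Also, note that $\widetilde{U}_w=\GL_2(F_w)^\circ$ is not compact, so ``$M^B(\widetilde U,\Fpbar)$'' is an abuse of the paper's notation; the paper simply works with the invariance properties of $f_1$ directly to avoid this.
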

\begin{proof} If $f_1$, $f_2 \in M_{\vec{k},\vec{m}}^B(U,\Fpbar)$ are such that ${\beta}(f_1,f_2) = 0$, then $f_1(x) = -f_2(xh)$ for all $x \in B_{\A}^\times$.  It follows that $f_1(\gamma_{\f} x u) = u_p^{-1}f_1(x)$ for all $\gamma \in B^\times$, $x \in B_{\f}^\times$ and $u \in \langle U, h^{-1}U h \rangle = U B_w^1$,
where $B_w^1$ is the kernel of $\det:B_w^\times \to F_w^\times$. In particular, $f_1(\gamma_{\f} x u) = f_1(x)$ for all $\gamma \in B^\times$, $x \in B_{\f}^\times$ and $u \in U' B_w^1$,
where $U' = U^p U'_p$ and $U_p' \subset \CO_{B_p}^\times$ is the principal congruence subgroup of level $\prod_{\gp\in S_p}\gp$.

Suppose now that $v\not\in Q'$ splits completely in the abelian extension $L$ of $F$ corresponding via class field theory to $F_+^\times\det(U')$.  Then for any $x\in B_{\f}^\times$ and $g \in B_v^\times$, the Strong Approximation Theorem implies that $xg = \gamma_{\f} xu$ for some $\gamma \in B^\times$, $u \in U'B_w^1$, from which it follows that $T_vf_1 = (\Nm_{F/\QQ}(v)+1)f_1 = 2f_1$ and $S_vf_1 = f_1$. Thus if $\gm_{\rho}^{Q'}$ is in the support of $\ker(\beta)$, then $T_v - 2, S_v - 1 \in \gm_\rho^{Q'}$ for all such $v$, which (by the Chebotarev Density and Brauer--Nesbitt Theorems) implies that $\rho|_{G_L}$ has trivial semi-simplification, contradicting the irreducibility of $\rho$.
\end{proof}

We now prove one direction of each part of Conjecture~\ref{conj:defalggeom}.

\begin{theorem} \label{thm:defalggeom}
Suppose that $\rho$, $\vec{k}$ and $\vec{m}$ are as in Conjecture~\ref{conj:defalggeom}.
\begin{enumerate}
\item If $\rho$ is definitely modular of weight $(\vec{k},\vec{m})$,
then $\rho$ is algebraically modular of weight $(\vec{k},\vec{m})$.
\item If $\rho$ is algebraically modular of weight $(\vec{k},\vec{m})$, then $\rho$ is geometrically modular of weight $(\vec{k},\vec{m})$.
\end{enumerate}
\end{theorem}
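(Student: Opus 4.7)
The plan for both parts is to transfer between characteristic $p$ and characteristic $0$ via the Jacquet--Langlands correspondence, following the template of the proof of Theorem~\ref{thm:galois}. In both parts, Lemma~\ref{lem:JHfactors} reduces us to the case where $D_{\vec{k},\vec{m},\Fpbar}$ is irreducible, and throughout we will make use of a character $\xi$ of $(\A_{F,\f}^{(p)})^\times$ satisfying $\xi(\alpha) = \alpha^{\vec{m}+\vec{k}/2}$ on totally positive units, as in the proof of Theorem~\ref{thm:galois}, together with the twist isomorphism $e_\xi$ to shift between modules of weight $\vec{k}$ and of weight $(\vec{k},\vec{m})$; the corresponding twist on the Galois side is by the character $\chi$ associated to $\xi$ by class field theory.

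For part~(1), let $f \in M^B_{\vec{k},\vec{m}}(U,\Fpbar)$ be an eigenform annihilated by $\gm_\rho^Q$, with $B$ totally definite. The twist $e_\xi^{-1} f$ lies in $M^B_{\vec{k}}(U',\Fpbar)$ and realizes $\chi^{-1}\otimes \rho$. Since $M^B_{\vec{k}}(U',\CO)$ is finite free over $\CO$, Hensel's Lemma lifts the mod-$p$ eigensystem to a characteristic zero one, giving $\Pi \in \mathcal{C}^B_{\vec{k}}$; Theorem~\ref{thm:JL} yields $\pi = \JL^B_{\vec{k}}(\Pi) \in \mathcal{C}_{\vec{k}}$. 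Choose an indefinite quaternion algebra $B'$ over $F$, unramified at all $\gp \in S_p$, ramified at exactly one archimedean place, and at a finite set of auxiliary primes chosen so that $\pi_v$ is a discrete series representation at each $v \in \Sigma^{B'}$, and apply Theorem~\ref{thm:JL} in reverse to obtain $\Pi' \in \mathcal{C}^{B'}_{\vec{k}}$. Realize $\Pi'_{\f}$ in $M^{B'}_{\vec{k}}(U'',\Qpbar) = H^1(Y^{B'}_{U''},\mathcal{D}'_{\vec{k},\Qpbar})$ for a suitable $U''\supset \CO^\times_{B',p}$, descend to the $\CO$-lattice $M^{B'}_{\vec{k}}(U'',\CO)$, reduce mod~$p$, and twist by $e_\xi$ to obtain an eigenform in $M^{B'}_{\vec{k},\vec{m}}(U'',\Fpbar)$ realizing $\rho$; auxiliary primes introduced can be removed using Lemma~\ref{lem:ihara}.

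For part~(2), let $B'$ be as in the definition of algebraic modularity. Twisting by $e_\xi^{-1}$ takes our eigenform in $H^1(Y^{B'}_U,\mathcal{D}_{\vec{k},\vec{m},\Fpbar})$ to an eigenform in $H^1(Y^{B'}_{U'},\mathcal{D}'_{\vec{k},\Fpbar})$ realizing $\chi^{-1}\otimes \rho$, and the Bockstein long exact sequence from $0\to \CO\stackrel{\varpi}{\to}\CO\to E\to 0$ reads
\[ H^1(Y^{B'}_{U'},\mathcal{D}'_{\vec{k},\CO}) \lra H^1(Y^{B'}_{U'},\mathcal{D}'_{\vec{k},E}) \lra H^2(Y^{B'}_{U'},\mathcal{D}'_{\vec{k},\CO}). \]
After localization at $\gm_{\chi^{-1}\otimes \rho}^Q$ the $H^2$ term vanishes by a Poincar\'e duality argument analogous to that in the proof of Lemma~\ref{lem:JHfactors} (using the irreducibility of $\rho$), so the class lifts to give $\Pi \in \mathcal{C}^{B'}_{\vec{k}}$ matching $\chi^{-1}\otimes \rho$. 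Theorem~\ref{thm:JL} produces $\pi = \JL^{B'}_{\vec{k}}(\Pi) \in \mathcal{C}_{\vec{k}}$, realized by an eigenform $F \in S_{\vec{k}}(U'',K)$ for some level $U''\supset \GL_2(\CO_{F,p})$ prime to $p$. Since $S_{\vec{k}}(U'',\CO)$ is a finite free $\CO$-module (being a torsion-free finitely generated $\CO$-module of sections of $\CL^{\sub}_{\vec{k},\CO}$ on the proper scheme $\CY^{\min}_{U'',\CO}$) whose base change to $K$ is $S_{\vec{k}}(U'',K)$, we may rescale $F$ to lie in $S_{\vec{k}}(U'',\CO)$ with nonzero reduction $\bar F \in S_{\vec{k}}(U'',E)$; by Theorem~\ref{thm:galois0} the Galois representation associated to $\bar F$ is the reduction of $\rho_\pi$, equal to $\chi^{-1}\otimes\rho$. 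Applying the twist isomorphism $e_\xi$ of~(\ref{eqn:twist2}) produces the desired eigenform in $M_{\vec{k},\vec{m}}(U'',E)$ giving rise to $\rho$.

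The principal technical obstacle in both parts is managing the interplay of integral structures and twist/central-character normalizations through the chain of transfers---most notably the vanishing of localized $H^2$ in the Bockstein step of part~(2), which depends crucially on the irreducibility of $\rho$, together with the precise tracking of Hecke eigensystems across the iterated Jacquet--Langlands and $e_\xi$-twist operations.
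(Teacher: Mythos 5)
Part~(2) of your proposal is essentially the paper's argument: twist by $e_\xi$, lift via the long exact sequence in cohomology (using $H^2_{\gm} = 0$, which the paper establishes inside Lemma~\ref{lem:JHfactors}), apply Jacquet--Langlands from the indefinite $B'$ to $\GL_2(\A_F)$, reduce, and untwist. That direction of the correspondence has no local obstruction, so the argument goes through.

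Part~(1), however, has a genuine gap. After lifting the Hecke eigensystem on the totally definite $B$ to an automorphic representation $\pi \in \mathcal{C}_{\vec{k}}$ in characteristic zero, you propose to choose an indefinite $B'$ ``ramified at a finite set of auxiliary primes chosen so that $\pi_v$ is a discrete series representation at each $v\in\Sigma^{B'}$.'' But nothing guarantees such primes exist. When $d=[F:\Q]$ is even, $B$ can be ramified at no finite place, and the Hensel lift $\pi$ can perfectly well be an unramified principal series at \emph{every} finite place; then $B'$ (which must be ramified at $d-1$ archimedean places, and hence at an odd, nonzero number of finite places) has nowhere to live, and the Jacquet--Langlands transfer you want does not exist. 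The crucial missing idea is that one must first produce a \emph{congruent} form whose automorphic representation is special at a carefully chosen auxiliary prime $w$ before transferring. The paper does this by choosing $w$ via Chebotarev so that $\Frob_w$ is conjugate to a complex conjugation (giving $a_w=0$ and $\Nm(w)\equiv -1 \bmod p$), and then running a level-raising argument: it studies the map $\beta':M^B_{\vec{k}}(U,\CO)^2 \to M^B_{\vec{k}}(U_0(w),\CO)$, uses Lemma~\ref{lem:ihara} to show the reduction of $\beta'_\gm$ is injective, pairs with its adjoint $\alpha'$, and exploits that $\alpha'\circ\beta'$ degenerates mod $\gm$ to conclude that $\coker(\beta')_\gm$ is nonzero and torsion-free, hence contains a characteristic-zero eigenform that is genuinely new at $w$. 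That form's $\Pi_w$ is then discrete series, so $B'$ with $\Sigma^{B'}=\Sigma^B\cup\{w\}\setminus\{\sigma_0\}$ works. Note also that your proposed use of Lemma~\ref{lem:ihara} (``auxiliary primes introduced can be removed'') inverts its actual role: it is a level-\emph{raising} ingredient used to manufacture the special-at-$w$ form, not a level-lowering tool.
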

\begin{proof}
(1) Firstly, by Lemma~\ref{lem:JHfactors}, we may assume that $D_{\vec{k},\vec{m},\Fpbar}$ is irreducible, and in particular that $2\le k_\sigma \le p+1$ for all $\sigma \in \Sigma$.

Let $B$, $U$ and $Q$ be as in Definition~\ref{def:algmod}, with $B$ totally definite, so there exists a non-zero $f\in M_{\vec{k},\vec{m}}(U,\Fpbar)$ (for some sufficiently small open compact subgroup $U$ of level prime to $p$ and sufficiently large finite set of primes $Q$) such that $T_v f = a_v f$ and $S_v f = d_v f$ for all $v \not\in Q$, where
$$a_v  = \tr(\rho(\Frob_v))\quad\mbox{and}\quad d_v = \Nm_{F/\QQ}(v)^{-1}\det(\rho(\Frob_v)).$$

As usual, we choose a character $\xi:(\A_{F,\f}^{(p)})^\times \to \Fpbar^\times$ such that $\xi(\alpha) = \alpha^{\vec{m}+\vec{k}/2}$ for all $\alpha \in \CO_{F,(p),+}^\times$ and (after shrinking $U$ and enlarging $Q$ as needed) apply the isomorphism
$$e_\xi: M^B_{\vec{k}}(U,\Fpbar) \stackrel{\sim}{\longrightarrow}
M^B_{\vec{k},\vec{m}}(U,\Fpbar)$$
to obtain an eigenform $\varphi \in M^B_{\vec{k}}(U,\Fpbar)$ such that $T_v \varphi = a_v'\varphi$ and $S_v \varphi = d_v'\varphi$ for all $v \not\in Q$,  where $a_v' = \xi(\varpi_v)^{-1}a_v$ and $d_v' = \xi(\varpi_v)^{-2}d_v$.

Let $L = \overline{F}^{\ker(\rho)}$ denote the splitting field of $\rho$.  By the Chebotarev Density Theorem, we may choose a prime $w \not\in Q$ such that the conjugacy class of $\Frob_w$ in $\Gal(L(\zeta_p)/F)$ is that of a complex conjugation.  Since $\rho$ is totally odd and $\chi_{\mathrm{cyc}}(\Frob_w) = -1$, we have $a_w = \tr(\rho(\Frob_w)) = 0$ and $\Nm_{F/\Q}(w) \equiv - 1 \bmod p$.

Now consider the level-raising map
$$\beta':M^B_{\vec{k}}(U,\CO)^2 \lra M^B_{\vec{k}}(U_0(w),\CO)$$
defined by $\beta'(\varphi_1,\varphi_2)(x) = \varphi_1(x) + \varphi_2(x h)$, where $U_0(w)$ and $h$ are as in the definition of $\beta$ (preceding Lemma~\ref{lem:ihara}).  Thus $\beta'$ is $\TT$-linear, where $\TT$ is the polynomial ring over $\CO$ in the variables $T_v$ and $S_v$ for $v \not\in Q' = Q \cup \{w\}$.  Let $\gm$ denote the kernel of the homomorphism $\TT \to E$ defined by $T_v \mapsto a_v'$, $S_v \mapsto d_v'$.  It follows from Lemma~\ref{lem:ihara} and the commutativity of the diagram
$$\xymatrix{
M^B_{\vec{k}}(U,\Fpbar)_{\gm}^2 
\ar[r]^-{\beta'_{\gm}\otimes_{\CO}\Fpbar} 
\ar[d]^{\wr}_{(e_\xi,\,\xi(\varpi_w)^{-1}e_{\xi})} 
& M^B_{\vec{k}}(U_0(w),\Fpbar)_{\gm} 
\ar[d]^{\wr}_{e_\xi} \\
M^B_{\vec{k},\vec{m}}(U,\Fpbar)_{\gm_\rho^{Q'}}^2 
\ar[r]^-{\beta_{\gm_\rho^{Q'}}}
& M^B_{\vec{k},\vec{m}}(U_0(w),\Fpbar)_{\gm_\rho^{Q'}}}$$
that the reduction
$$\overline{\beta}'_{\gm} : M^B_{\vec{k}}(U,E)^2_{\gm} \longrightarrow M^B_{\vec{k}}(U_0(w),E)_{\gm}$$
of $\beta'_{\gm}$ is injective, and hence that $\beta'_{\gm}$ is injective with torsion-free cokernel.

Let us note also that the same argument, applied with $\vec{m}$ replaced by $-\vec{k}-\vec{m}$, $\rho$ replaced by $\chi_\cyc^{-1}\otimes \rho^\vee$ and $\xi$ replaced by $\xi^{-1}$ shows that the same conclusions hold with $\gm$ replaced by $\gn$, where $\gn$ is the kernel of the homomorphism $\TT \to E$ defined by $T_v \mapsto d_v'^{-1}a_v'$, $S_v \mapsto d_v'^{-1}$.

Next recall that since $2\le k_\sigma  \le p + 1$ for all $\sigma$, the $\CO_{B,(p)}^\times$-module $D_{\vec{k},E}$ is equipped with a perfect $E$-bilinear pairing
$$\langle \,\,\,,\,\, \rangle: D_{\vec{k},E} \times D_{\vec{k},E} \to E$$
satisfying $\langle \gamma a ,\gamma b \rangle = \langle a , b\rangle$ for all $\gamma \in \CO_{B,(p)}^\times$, $a,b\in D_{\vec{k},E}$ (see \cite{frazer} for an explicit description).
We thus have a perfect pairing on $M^B_{\vec{k}}(U',E)$, for any sufficiently small $U'$ containing $\CO_{B,p}^\times$, defined by
$$\langle \varphi_1,\varphi_2 \rangle = \sum_x \langle \varphi_1(x),\varphi_2(x) \rangle$$
where the sum is over double coset representatives, i.e., 
$(B_{\f}^{(p)})^\times = \coprod_x \CO_{B,(p)}^\times x U'^p$.
One sees also that for any $g \in (B_{\f}^{(p)})^\times$, the adjoint of the double coset operator $[UgU']$ is $[U'g^{-1}U]$.
In particular, the adjoint of $T_v$ (resp.~$S_v$) on $M^B_{\vec{k}}(U,E)$ is $S_v^{-1}T_v$ (resp.~$S_v^{-1}$) for all $v \not\in Q$.
Furthermore, extending the pairing diagonally to a 
perfect pairing on $M^B_{\vec{k}}(U,E)^2$, the adjoint of 
$\overline{\beta}'$ is the map
$$\overline{\alpha}' = ([U_0(w) 1 U],[U_0(w)h^{-1}U]): M^B_{\vec{k}}(U_0(w),E) \longrightarrow M^B_{\vec{k}}(U,E)^2$$
In particular $\overline{\alpha}'$ is $\TT$-linear, and $\overline{\alpha}'_{\gm}$ is surjective (since $\overline{\beta}'_{\gn}$ is injective).

Now consider the composite 
$\overline{\alpha}'\circ\overline{\beta}'$, which is the $\TT$-linear endomorphism of $M_{\vec{k}}(U,E)^2$ given by the matrix
$$\left(\begin{array}{cc} \Nm_{F/\QQ}(w) + 1 & T_w \\
 S_w^{-1}T_w & \Nm_{F/\Q}(w) + 1 \end{array} \right).$$
Since $(\varphi,0) \in \ker(\overline{\alpha}'\circ\overline{\beta}')[\gm]$, the localization $\overline{\alpha}'_{\gm}\circ\overline{\beta}'_{\gm}$ is not an isomorphism, so its cokernel is non-trivial.  Since $\overline{\alpha}'_{\gm}$ is surjective, it follows that $\beta'_{\gm}$ has non-trivial cokernel.  Furthermore, since this cokernel is torsion-free, it follows that there is a prime ideal $\gp$ of $\TT$ in the support of $\coker(\beta')$ such that $\gp \cap \CO = 0$, and hence (enlarging $K$ if necessary), an eigenform 
$\widetilde{\varphi} \in \coker(\beta')\otimes_{\CO}K$ such that that $T_v\widetilde{\varphi} = \widetilde{a}_v'\widetilde{\varphi}$ and $S_v\widetilde{\varphi} = \widetilde{d}_v'\widetilde{\varphi}$ for some $\widetilde{a}_v'\in \CO$ lifting $a_v'$ and $\widetilde{d}_v'\in \CO$ lifting $d_v'$.

Letting $\Pi \in \mathcal{C}_{\vec{k}}^B $ denote the (necessarily cuspidal) automorphic representation generated by 
$\widetilde{\varphi}$, we have that $\Pi_w^{U_0(w)_w} \neq 0$,
but $\Pi_w^{U_w} = 0$, from which it follows that $\Pi_w$ is a discrete series representation of $B_w^\times \cong \GL_2(F_w)$. Theorem~\ref{thm:JL} therefore implies the existence of a cuspidal automorphic representation $\Pi' \in \mathcal{C}_{\vec{k}}^{B'}$ such that $\Pi'_v \cong \Pi_v$ for all finite $v \neq w$, where $B'$ is a quaternion algebra over $F$ such that $\Sigma^{B'} = \Sigma^B \cup \{w\} - \{\sigma_0\}$ for some archimedean place $\sigma_0$. This in turn yields an eigenform $\widetilde{\varphi}' \in M_{\vec{k}}^{B'}(U',\CO)$, where $U'$ contains $\CO_{B',p}^\times$ and $\CO_{B',v}^\times$ for all $v \not\in Q'$,  with $T_v \widetilde{\varphi}' = \widetilde{a}_v'\widetilde{\varphi}'$ and $S_v \widetilde{\varphi}' = \widetilde{d}_v'\widetilde{\varphi}'$ for all such $v$.  Reducing mod $\varpi$ thus yields an eigenform $\varphi' \in M^{B'}_{\vec{k}}(U',E)$ such that $T_v {\varphi}' = {a}_v'{\varphi}'$ and $S_v {\varphi}' = {d}_v'{\varphi}'$ for all $v \not\in Q'$, and finally applying $e_\xi$ yields the desired eigenform $f' \in M^{B'}_{\vec{k},\vec{m}}(U',E)$ such that $T_v f' = {a}_vf'$ and $S_vf' = {d}_vf'$ for all $v\not\in Q'$.

(2) Now let $B$, $U$ and $Q$ be as in Definition~\ref{def:algmod}, but with $B$ indefinite, so there exists a non-zero $f\in M_{\vec{k},\vec{m}}(U,\Fpbar)$ for some sufficiently small $U$ of level prime to $p$ and sufficiently large $Q$ such that $T_v f = a_v f$ and $S_v f = d_v f$ for all $v \not\in Q$, where
$a_v  = \tr(\rho(\Frob_v))$ and $d_v = \Nm_{F/\QQ}(v)^{-1}\det(\rho(\Frob_v))$.

Again choose $\xi:(\A_{F,\f}^{(p)})^\times \to \Fpbar^\times$ such that $\xi(\alpha) = \alpha^{\vec{m}+\vec{k}/2}$ for all $\alpha \in \CO_{F,(p),+}^\times$ (and shrink $U$ and enlarge $Q$ and $E$), so that applying $e_\xi$ yields $\varphi \in M^B_{\vec{k}}(U,E)$ such that $T_v \varphi = a_v'\varphi$ and $S_v \varphi = d_v'\varphi$ for all $v \not\in Q$,  where $a_v' = \xi(\varpi_v)^{-1}a_v$ and $d_v' = \xi(\varpi_v)^{-2}d_v$.

The next step is to lift $\varphi$ to an eigenform in $M^B_{\vec{k}}(U,\CO)$.  To that end, let $\TT$ denote the 
polynomial ring over $\CO$ in the variables $T_v$ and $S_v$ for $v\notin Q$, and $\gm$ the kernel of the homomorphism $\TT \to E$ defined by $T_v \mapsto a_v'$ and $S_v \mapsto d_v'$ for $v\not\in Q$.  We claim that the homomorphism $M^B_{\vec{k}}(U,\CO)_\gm \to M^B_{\vec{k}}(U,E)_{\gm}$ induced by reduction mod $\varpi$ is surjective. Indeed from the long exact cohomology sequence 
$$H^1(Y_U^B,\mathcal{D}_{\vec{k},\CO}) \to 
H^1(Y_U^B,\mathcal{D}_{\vec{k},E}) \to 
H^2(Y_U^B,\mathcal{D}_{\vec{k},\CO}) \stackrel{\varpi\cdot}{\to} 
H^2(Y_U^B,\mathcal{D}_{\vec{k},\CO}) \to 
H^2(Y_U^B,\mathcal{D}_{\vec{k},E})$$
and Nakayama's Lemma, we see that it suffices to prove that 
$H^2(Y_U^B,\mathcal{D}_{\vec{k},E})_{\gm} = 0$.
Applying $e_\xi: H^2(Y_U^B,\mathcal{D}_{\vec{k},\Fpbar})\stackrel{\sim}{\to} H^2(Y_U^B,\mathcal{D}_{\vec{k},\vec{m},\Fpbar})$, we are reduced to proving that $H^2(Y_U^B,\mathcal{D}_{\vec{k},\vec{m},\Fpbar})_{\gm_\rho^Q} = 0$, which was already established in the proof of Lemma~\ref{lem:JHfactors}.
Similarly, we see that $H^0(Y_U^B,\mathcal{D}_{\vec{k},E})_{\gm} = 0$, and it follows that the sequence
$$0 \longrightarrow 
M^B_{\vec{k}}(U,\CO)_{\gm}  \stackrel{\varpi\cdot}{\longrightarrow}
M^B_{\vec{k}}(U,\CO)_{\gm}  \longrightarrow 
M_{\vec{k}}(U,E)_{\gm}  \longrightarrow 0$$
is exact.

We have now shown that $M^B_{\vec{k}}(U,\CO)_{\gm}$ is non-zero and torsion-free, so the usual argument gives an eigenform $\widetilde{\varphi} \in M^B_{\vec{k}}(U,K)$ (after enlarging $K$ if necessary) such that $T_v(\widetilde{\varphi}) = \widetilde{a}_v'\widetilde{\varphi}$ and $S_v(\widetilde{\varphi}) = \widetilde{d}_v'\widetilde{\varphi}$ for all $v\not\in Q$, where $\widetilde{a}_v'\in \CO$ lifts $a_v'$ and $\widetilde{d}_v'$ lifts $d_v'$.  This in turns yields a cuspidal automorphic representation $\Pi \in \mathcal{C}_{\vec{k}}^B$, to which we can apply Theorem~\ref{thm:JL} to obtain an eigenform $\widetilde{\varphi}' \in M_{\vec{k}}(U,K)$ with the same eigenvalues as $\widetilde{\varphi}$ for $T_v$ and $S_v$ for all $v\not\in Q$.  Finally, the usual argument of reducing mod $\varpi$ and applying $e_\xi$ yields an eigenform $f' \in M_{\vec{k},\vec{m}}(U,E)$ with the same eigenvalues as $f$.
\end{proof}

\begin{remark} \label{rmk:GLSmain} As we will recall below (Corollary~\ref{cor:GLSmain}), the converse to (1) in the preceding theorem follows from the main result of \cite{GLS} under a Taylor--Wiles hypothesis on $\rho$.

Alternatively, one can try to argue more directly along the lines of our proof of (1) above.  However this requires an analogue of Ihara's Lemma in the context of Shimura curves over $F$, for which the strongest results to date are due to Manning and Shotton \cite{MS}, which also require a Taylor--Wiles hypothesis on $\rho$.

Finally we remark that the argument we gave for part (2) of the theorem can be easily adapted to give a more direct proof that definite modularity of weight $(\vec{k},\vec{m})$ implies geometric modularity of weight $(\vec{k},\vec{m})$.
\end{remark}

\begin{corollary} \label{cor:reductions} Suppose that $\pi$ is as in the statement of Theorem~\ref{thm:galois} and that $\rho := \overline{\rho}_\pi$ is irreducible. Then $\rho$ is geometrically modular of some weight $(\vec{\ell},\vec{n})$ with $\vec{\ell} \in [2,p+1]^\Sigma$ such that $\ell_{\gp,i,j} = 2$ for all $\gp \in S_p$, $i \in \ZZ/f_\gp \ZZ$ and $1 \le j \le e_\gp-1$.
\end{corollary}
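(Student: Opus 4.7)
The plan is to transfer $\pi$ via the Jacquet--Langlands correspondence to a form on a suitable quaternion algebra, extract algebraic modularity of $\rho$ in some weight $(\vec{k},\vec{m})\in \ZZ_{\ge 2}^\Sigma\times\ZZ^\Sigma$, pass to an irreducible Jordan--H\"older constituent of $D_{\vec{k},\vec{m},\Fpbar}$ via Lemma~\ref{lem:JHfactors}, and conclude by Theorem~\ref{thm:defalggeom}(2).  (We tacitly assume $k_\sigma\ge 2$ for all $\sigma$, as is implicit in the full local--global compatibility statement of Theorem~\ref{thm:galois0}.)

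First I would apply Theorem~\ref{thm:JL} with a quaternion algebra $B$ over $F$ chosen so that $\Sigma^B\subset\Sigma_\infty$ has even cardinality: totally definite if $d=[F:\QQ]$ is even, or with $\Sigma^B_\infty = \Sigma_\infty - \{\sigma_0\}$ for some archimedean $\sigma_0$ if $d$ is odd.  The transferred automorphic representation $\Pi\in\mathcal{C}^B_{\vec{k}}$ satisfies $\Pi_v\cong \pi_v$ at all finite places.  Following the arguments in the final paragraphs of the proofs of Theorem~\ref{thm:galois} and of both parts of Theorem~\ref{thm:defalggeom} -- namely, the rationality identifications (\ref{eqn:scalars}), together with, in the indefinite case, localization of $M^B_{\vec{k}}(U,\CO)$ at the Hecke maximal ideal attached to~$\rho$ and the vanishing of the corresponding localizations of $H^0$ and $H^2$ -- one produces, after enlarging~$E$, a non-zero eigenform in $M^B_{\vec{k}}(U,E)$ for suitable $U\supset\CO_{B,p}^\times$ and suitable $Q$, whose Hecke eigenvalues are the reductions of those attached to~$\Pi$.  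Twisting by $e_\xi$ for a finite-order character $\xi$ satisfying $\xi(\alpha)=\alpha^{\vec{m}+\vec{k}/2}$ on $\CO_{F,(p),+}^\times$ (as in the proof of Theorem~\ref{thm:galois}) yields an eigenform in $M^B_{\vec{k},\vec{m}}(U,E)$ with $T_v$, $S_v$ eigenvalues $\tr(\rho(\Frob_v))$ and $\Nm_{F/\QQ}(v)^{-1}\det(\rho(\Frob_v))$ at $v\not\in Q$, for an appropriate $\vec{m}$.  This gives definite modularity of $\rho$ of weight $(\vec{k},\vec{m})$ when $d$ is even and algebraic modularity when $d$ is odd; Theorem~\ref{thm:defalggeom}(1) upgrades the former to algebraic modularity.

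Next I would invoke Lemma~\ref{lem:JHfactors} to find $(\vec{\ell}',\vec{n}')$ such that $D_{\vec{\ell}',\vec{n}',\Fpbar}$ is an irreducible Jordan--H\"older factor of $D_{\vec{k},\vec{m},\Fpbar}$ and $\rho$ is algebraically modular of weight $(\vec{\ell}',\vec{n}')$.  Irreducibility forces, for each $\tau=\tau_{\gp,i}\in\Sigma_0$, a unique index $j_\tau\in\{1,\ldots,e_\gp\}$ with $2\le \ell'_{\gp,i,j_\tau}\le p+1$ and $\ell'_{\gp,i,j}=2$ for $j\neq j_\tau$.  Using the observation recalled just before Lemma~\ref{lem:JHfactors} that $D_{\vec{\ell},\vec{n},\Fpbar}$ depends only on the pair $(\lambda(\vec{n}),\vec{\ell}^0)$, I would rearrange the components to move the distinguished embedding to position $j_\tau=e_\gp$: set $\ell_{\gp,i,e_\gp}:=\ell'_{\gp,i,j_\tau}$, $\ell_{\gp,i,j}:=2$ for $1\le j<e_\gp$, and $\vec{n}:=\vec{n}'$.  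Then $D_{\vec{\ell},\vec{n},\Fpbar}=D_{\vec{\ell}',\vec{n}',\Fpbar}$, so $\rho$ is algebraically modular of weight $(\vec{\ell},\vec{n})$ with $\vec{\ell}\in\Xi_{\min}\cap[2,p+1]^\Sigma$ of precisely the required shape.  Theorem~\ref{thm:defalggeom}(2) then delivers geometric modularity of weight $(\vec{\ell},\vec{n})$.

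Every step is a direct consequence of the structural results developed in this section.  The only mildly delicate point is the production of the mod-$p$ eigenform on $B_{\AA}^\times$ out of $\Pi$ in the indefinite case, but this is handled by exactly the same localization-and-Nakayama argument that appears in the second half of the proof of Theorem~\ref{thm:defalggeom}(2).
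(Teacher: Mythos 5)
Your overall strategy -- Jacquet--Langlands transfer, producing a mod-$p$ eigenform on a quaternion algebra unramified at $p$, passing to an irreducible Jordan--H\"older constituent, and finishing with Theorem~\ref{thm:defalggeom}(2) -- is exactly the paper's route, and your rearrangement step (moving the distinguished index to $j_\tau = e_\gp$ via the dependence of $D_{\vec{\ell},\vec{n},\Fpbar}$ only on $(\lambda(\vec{n}),\vec{\ell}^0)$) correctly supplies a detail the paper leaves implicit. However, there is one genuine gap, and it concerns a point the paper explicitly flags in the discussion just before the corollary: the conductor of $\pi$ need not be prime to $p$.

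You assert that the arguments produce a non-zero eigenform in $M^B_{\vec{k}}(U,E)$ for some $U\supset\CO_{B,p}^\times$ with eigenvalues reducing to those of $\rho$. But $M^B_{\vec{k}}(U,E)$ (or $M^B_{\vec{k},\vec{m}}(U,E)$) is only defined for $U_p=\CO_{B,p}^\times$, and if $\pi_{\gp}$ is ramified for some $\gp\in S_p$ then $\Pi^{U}=0$: the transferred representation has no $\CO_{B,p}^\times$-fixed vector, so it contributes nothing to cohomology at level prime to $p$ with coefficients in $\mathcal{D}_{\vec{k},\vec{m}}$. The proofs you delegate to (the last paragraphs of Theorem~\ref{thm:galois} and Theorem~\ref{thm:defalggeom}(2)) do not cover this situation, because in both cases the starting point is already a form of level prime to $p$.

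The paper's actual proof handles this by first taking a possibly deep level $U'=U'_pU'^p$ at which $\Pi$ does contribute, so that $\gm_\rho^Q$ lies in the support of $H^1(Y^B_{U'},\mathcal{D}_{\vec{k},\vec{m},\Fpbar})$, and then pushing forward along the \'etale projection $\varrho\colon Y^B_{U'}\to Y^B_U$ with $U_p=\CO_{B,p}^\times$, identifying $\varrho_*\mathcal{D}_{\vec{k},\vec{m},\Fpbar}$ with the sheaf attached to $T=\Ind_{U'_p}^{\CO_{B,p}^\times}D_{\vec{k},\vec{m},\Fpbar}$. One then applies (the easy direction of) the Lemma~\ref{lem:JHfactors} argument to $T$ rather than to $D_{\vec{k},\vec{m},\Fpbar}$. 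This is the step you cannot skip, and it is also what makes the conclusion correct: because $\ker(\GL_2(\CO_{F,\gp}/\gp^n)\to\GL_2(\FF_\gp))$ is a pro-$p$ group, every irreducible $\Fpbar$-constituent of $T$ factors through $\prod_{\gp}\GL_2(\FF_\gp)$, hence is of the form $D_{\vec{\ell},\vec{n},\Fpbar}$ with $\vec{\ell}$ of the required shape. Your proposal as written only establishes the corollary under the unstated hypothesis that $\pi$ is unramified at every prime above $p$.
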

\begin{proof} Suppose first that $[F:\QQ]$ is odd, and let $B$ be a quaternion algebra over $F$ which is unramified at all finite places and exactly one archimedean place of $F$. By the Jacquet--Langlands correspondence, there is an irreducible subrepresentation $\Pi$ of $|\det|^{w/2}\mathcal{S}_{\vec{k}}$
such that $\Pi_v \cong \pi_v$ for all finite primes $v$ of $F$. For some sufficiently small open compact $U' = U'_pU'^p \subset B_{\mathbf{f}}^\times$, we have that $T_v$ (resp.~$\Nm_{F/\QQ}(v)S_v$) acts as $\tr(\rho_\pi(\Frob_v))$ (resp.~$\det(\rho_\pi(\Frob_v))$) on $\Pi^{U'}$ for all $v$ not in some finite set of non-archimedean places $Q$ of $F$, from which it follows that
$\gm_\rho^Q$ is in the support of 
$$M^B_{\vec{k},\vec{m}}(U',\Fpbar) = H^1(Y^B_{U'},\mathcal{D}_{\vec{k},\vec{m},\Fpbar}),$$
where $m_\sigma = -(w+k_\sigma)/2$ for $\sigma \in \Sigma$.

Letting $U = \CO_{B,p}^\times U_p'$, the projection $\varrho:Y_{U'}^B \to Y_U^B$ is \'etale (for sufficiently small $U_p'$), and $\varrho_*\mathcal{D}_{\vec{k},\vec{m},\Fpbar}$ is isomorphic to the locally constant sheaf $\mathcal{T}$ on $Y^B_U$ associated to the smooth representation $T = \Ind_{U_p'}^{\CO_{B,p}^\times}D_{\vec{k},\vec{m},\Fpbar}$ of $\CO_{B,p}^\times$.  We thus obtain an isomorphism $ H^1(Y^B_{U'},\mathcal{D}_{\vec{k},\vec{m},\Fpbar})
 \cong H^1(Y^B_{U},\mathcal{T})$
of $\TT^Q$-modules, and (the easier direction of) the proof of Lemma~\ref{lem:JHfactors} then implies that $\gm_\rho^Q$ is in the support of $M_{\vec{\ell},\vec{n}}(U,\Fpbar)$ for some Jordan--H\"older factor $D_{\vec{\ell},\vec{n},\Fpbar}$ of $T$. 

It follows that $\rho$ is algebraically modular of some weight $(\vec{\ell},\vec{n})$ as in the statement of the corollary. Theorem~\ref{thm:defalggeom}(2) now implies that $\rho$ is geometrically modular of such a weight. 

The proof in the case that $[F:\QQ]$ is even is similar, but using  forms on a definite quaternion algebra over $F$ and parts (1) and (2) of the theorem.
\end{proof}

\begin{corollary} \label{cor:wtredn} Suppose that $\rho$ is irreducible and geometrically modular of some weight $(\vec{k},\vec{m})$.  Then it is geometrically modular for some weight $(\vec{\ell},\vec{n})$ as in the conclusion of Corollary~\ref{cor:reductions}.
\end{corollary}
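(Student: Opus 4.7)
The plan is to reduce the statement to Corollary~\ref{cor:reductions} by producing a cuspidal automorphic representation $\pi \in \mathcal{C}_{\vec{2}}$ with $\overline{\rho}_\pi \cong \rho$. Once such a $\pi$ is in hand, Corollary~\ref{cor:reductions} applies directly (taking $w = 0$, so that $\pi$ is an irreducible subrepresentation of $\CS_{\vec{2}}$ in the sense of Theorem~\ref{thm:galois0}), and it furnishes a weight $(\vec{\ell},\vec{n})$ of exactly the shape required.

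To produce $\pi$, I will re-run the cuspidal branch of the proof of Theorem~\ref{thm:galois}. Fix an eigenform $f \in M_{\vec{k},\vec{m}}(U,E)$ witnessing the geometric modularity of $\rho$; by Proposition~\ref{prop:cuspidal}, in fact $f \in S_{\vec{k},\vec{m}}(U,E)$. Following the proof of Theorem~\ref{thm:galois}: twist by an auxiliary $e_\xi$ into $S_{\vec{k}}(U,E)$, lift to characteristic zero in $S_{\vec{k}}(U,\CO)$, transfer by Jacquet--Langlands to a quaternion algebra $B$ over $F$ (totally definite if $[F:\QQ]$ is even, split at exactly one archimedean place otherwise), reduce modulo $p$, re-twist to land in $M^B_{\vec{k},\vec{m}}(U_B,E)$, push through the Hecke-equivariant map
$$M^B_{\vec{k},\vec{m}}(U_B,E) \longrightarrow M^B_{\vec{2}}(U'_B,E) \otimes_E D_{\vec{k},\vec{m},E},$$
where $U' \subset U$ satisfies $U'_\gp = U_1(\gp)_\gp$ for all $\gp \in S_p$, and finally lift the resulting eigenform to $\widetilde{\varphi} \in M^B_{\vec{2}}(U'_B,\CC)$. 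The irreducibility of $\rho$ excludes the reducible alternatives analyzed in that proof: $\widetilde{\varphi} \notin I^B_{\vec{2}}(U'_B,\CC)$ in the definite case, and it does not come from the kernel of the pull-back $\pi_U^\ast$ analyzed via Hochschild--Serre in the indefinite case (in both alternatives, the resulting $\rho_f$ would be reducible of the form $\chi_{\mathrm{cyc}}^{-1}\chi \oplus \chi$). Consequently $\widetilde{\varphi}$ generates a cuspidal automorphic representation $\Pi \in \mathcal{C}_{\vec{2}}^B$, and $\pi := \JL(\Pi) \in \mathcal{C}_{\vec{2}}$ satisfies $\rho \cong \overline{\rho}_\pi$ by Theorem~\ref{thm:galois0}.

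The only point that needs attention, and the closest thing to an obstacle, is verifying that the construction in the proof of Theorem~\ref{thm:galois} really does terminate at weight $\vec{2}$ rather than at the original weight $\vec{k}$; this is exactly the role of the level-raising/weight-lowering step from $M^B_{\vec{k},\vec{m}}(U_B,E)$ to $M^B_{\vec{2}}(U'_B,E) \otimes_E D_{\vec{k},\vec{m},E}$, and it is what makes Corollary~\ref{cor:reductions} applicable without any further input. The rest is bookkeeping.
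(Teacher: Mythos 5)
Your proposal is correct and matches the paper's own argument: the paper simply observes that the construction in the proof of Theorem~\ref{thm:galois} already yields $\rho_f \cong \overline{\rho}_\pi$ for some $\pi \in \mathcal{C}_{\vec{2}}$, and then invokes Corollary~\ref{cor:reductions}; you have unpacked exactly this. One small slip: in the proof of Theorem~\ref{thm:galois} the level $U'$ has $U'_\gp$ equal to the \emph{principal congruence} subgroup $\{u \in \GL_2(\CO_{F,\gp}) : u \equiv 1 \bmod \gp\}$, not $U_1(\gp)_\gp$ --- one needs $U'_p$ to act trivially on $D_{\vec{k},\vec{m},E}$ for the Hecke-equivariant inclusion (\ref{eqn:levelp}) into $M^B_{\vec{2}}(U'_B,E)\otimes_E D_{\vec{k},\vec{m},E}$ to make sense, and $U_1(\gp)_\gp$ does not have this property.
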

\begin{proof}  From the construction of the Galois representation $\rho_f$ associated to an eigenform $f \in M_{\vec{k},\vec{m}}(U,\Fpbar)$, we have that $\rho_f \cong \rhobar_\pi$ for some $\pi \in \mathcal{S}_{\vec{2}}$.  Applying Corollary~\ref{cor:reductions} therefore gives the desired conclusion.
\end{proof}

In view of Theorem~\ref{thm:defalggeom}, one can obtain results towards Conjecture~\ref{conj:geomweights2} using ones on the algebraic Serre weight conjecture, which was proved by Gee--Liu--Savitt~\cite{GLS} (combined with work of Gee--Kisin~\cite{GK} or Newton~\cite{JN}) under a Taylor--Wiles hypothesis.  We recall the statement, or more precisely a variant which is immediate from Theorems~4.2.1 and~6.1.8 of \cite{GLS} and the fact that if $\rho$ is geometrically modular of some weight, then it is algebraically modular of some weight:
\begin{theorem}[Gee--Liu--Savitt] \label{thm:GLSmain} Suppose that $\rho$ is irreducible and geometrically modular of some weight $(\vec{k},\vec{m})$. Suppose further that $p >2$, $\rho|_{F(\zeta_p)}$ is irreducible, and if $p=5$, then the projective image of $\rho|_{F(\zeta_5)}$ is not isomorphic to $A_5$.  If $(\vec{\ell},\vec{n}) \in \ZZ^{\Sigma}_{\ge 2} \times \ZZ^\Sigma$ is such that $D'_{\vec{\ell},\vec{n},\Fpbar}$ is irreducible, then the following are equivalent:
\begin{enumerate}
\item $\rho$ is definitely modular of weight $(\vec{\ell},\vec{n})$;
\item $\rho$ is algebraically modular of weight $(\vec{\ell},\vec{n})$;
\item $\rho|_{G_{F_{\gp}}}$ has a crystalline lift of weight $(\vec{\ell}_\gp,\vec{n}_\gp)$ for all $\gp \in S_p$.
\end{enumerate}
\end{theorem}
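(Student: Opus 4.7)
The plan is to deduce the three-way equivalence from the main results of Gee--Liu--Savitt together with the bridge provided by Theorem~\ref{thm:defalggeom} and Corollary~\ref{cor:reductions}. Note that the assumption that $\rho$ is geometrically modular of some weight is there precisely so that it can be fed into the \cite{GLS} machinery, which requires \emph{algebraic} modularity of some (not necessarily specified) weight as input.

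First I would dispose of the implication $(1)\Rightarrow (2)$, which is immediate from part (1) of Theorem~\ref{thm:defalggeom}. Next, I would bootstrap the geometric modularity hypothesis to algebraic modularity of some (a priori different, and irreducible) weight. Concretely, the construction of $\rho_f$ in Theorem~\ref{thm:galois} shows that $\rho\cong\overline{\rho}_\pi$ for some cuspidal $\pi\in\mathcal{C}_{\vec{2}}$, whereupon Corollary~\ref{cor:reductions} produces an algebraic Serre weight $(\vec{\ell}_0,\vec{n}_0)$ with $D_{\vec{\ell}_0,\vec{n}_0,\Fpbar}$ irreducible of which $\rho$ is algebraically modular.

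The equivalence $(2)\Leftrightarrow (3)$ is then exactly what Theorems~4.2.1 and~6.1.8 of \cite{GLS} assert, under the three hypotheses on $\rho$ (namely $p>2$, irreducibility of $\rho|_{G_{F(\zeta_p)}}$, and the $A_5$-exclusion when $p=5$), which are the Taylor--Wiles conditions needed to run the patching arguments of \cite{GK} (and, in the context of odd $[F:\Q]$, of \cite{JN}) that underlie the proofs of those theorems. It is at this point that the irreducibility of $D_{\vec{\ell},\vec{n},\Fpbar}$ is needed, since the statements in \cite{GLS} are phrased in terms of modularity with respect to individual irreducible Serre weights.

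Finally, for $(2)\Rightarrow (1)$, I would argue by a Jacquet--Langlands transfer entirely analogous to the proof of Theorem~\ref{thm:defalggeom}(1), but in the other direction across the parity of $[F:\Q]$: starting from an algebraic modularity eigensystem in weight $(\vec{\ell},\vec{n})$, lift Hecke-equivariantly to characteristic zero using the ampleness/vanishing inputs (with weight $\vec{\ell}\ge 2$ componentwise there is no issue with torsion obstructions), obtain a cuspidal automorphic representation $\Pi\in\mathcal{C}^B_{\vec{\ell}}$ with $B$ unramified at exactly one archimedean place, apply Theorem~\ref{thm:JL} to transfer to a totally definite quaternion algebra $B'$, and reduce modulo $\varpi$ to land in $M^{B'}_{\vec{\ell},\vec{n}}(U,\Fpbar)$. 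The main obstacle, conceptually, is the invocation of \cite{GLS}: the Taylor--Wiles hypothesis cannot be removed by our methods, and the restriction to weights with $D_{\vec{\ell},\vec{n},\Fpbar}$ irreducible is inherited from the algebraic side; everything else is routine given the formalism set up in Sections~3--5 above.
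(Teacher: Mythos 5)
Your overall shape is right: bootstrap geometric modularity to algebraic modularity of some weight (via the observation that $\rho\cong\overline{\rho}_\pi$ for some $\pi\in\mathcal{C}_{\vec{2}}$ and the Jordan--H\"older argument of Corollary~\ref{cor:reductions}), get $(1)\Rightarrow(2)$ from Theorem~\ref{thm:defalggeom}(1), and cite \cite{GLS}. But the final step, proving $(2)\Rightarrow(1)$ by ``a Jacquet--Langlands transfer entirely analogous to the proof of Theorem~\ref{thm:defalggeom}(1), but in the other direction,'' has a genuine gap and is in fact precisely the difficult direction the paper flags in Remark~\ref{rmk:GLSmain}.

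The issue is a parity obstruction. The indefinite $B$ of Definition~\ref{def:algmod} is ramified at $d-1$ archimedean places, so its set of finite ramified places has the opposite parity to that of any totally definite $B'$. Thus $\Sigma^{B'}_{\mathrm{fin}}$ and $\Sigma^B_{\mathrm{fin}}$ always differ at an odd number of finite primes. When $d$ is odd (the case relevant to your step), nothing forces $\Sigma^B_{\mathrm{fin}}$ to be nonempty, so in general one has to \emph{add} a finite prime $w$ in passing from $B$ to $B'$. Theorem~\ref{thm:JL} then requires the characteristic-zero lift $\Pi$ to be discrete at $w$, which the lift produced from the localized $H^1$ of $Y^B_U$ has no reason to be. To arrange this Hecke-equivariantly one must first raise the level at $w$ inside the cohomology of the Shimura curve, and that is exactly the analogue of Ihara's Lemma for Shimura curves over totally real fields, which is not ``routine'': it is the content of Manning--Shotton and itself requires a Taylor--Wiles hypothesis. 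Indeed the paper's proof of Theorem~\ref{thm:defalggeom}(1) (the definite-to-indefinite direction) goes through level raising via Lemma~\ref{lem:ihara}, which works because the quaternion algebra there is totally definite (Taylor's Ihara lemma); the mirror-image argument you propose would need the Shimura-curve analogue. So your claim that this direction is routine given the Section~3--5 formalism is not accurate.

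The fix is simply not to try to recover $(1)$ from $(2)$ by a separate transfer: Theorems~4.2.1 and~6.1.8 of \cite{GLS} (together with \cite{GK} and \cite{JN}) establish the Serre weight conjecture on \emph{both} the definite quaternion algebra side and the Shimura curve side, so they directly give $(1)\Leftrightarrow(3)$ as well as $(2)\Leftrightarrow(3)$. With the bootstrap to algebraic modularity of some weight in place (to satisfy the input hypothesis in \cite{GLS}), the three-way equivalence is then immediate, which is how the paper's proof proceeds.
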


Combined with Lemma~\ref{lem:JHfactors}, this implies:
\begin{corollary} \label{cor:GLSmain} Suppose that $p > 2$, $(\vec{k},\vec{m}) \in \ZZ^{\Sigma}_{\ge 2} \times \Sigma$, and $\rho:G_F \to \GL_2(\Fpbar)$ is such that $\rho|_{F(\zeta_p)}$ is irreducible, and if $p=5$, then the projective image of $\rho|_{F(\zeta_5)}$ is not isomorphic to $A_5$.  Then $\rho$ is definitely modular of weight $(\vec{k},\vec{m})$ if and only if 
$\rho$ is algebraically modular of weight $(\vec{k},\vec{m})$.
\end{corollary}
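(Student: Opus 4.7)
The plan is to combine Theorem~\ref{thm:GLSmain} with Lemma~\ref{lem:JHfactors}, using the latter to reduce each form of modularity to the (Jordan--H\"older) irreducible case, where the former applies directly. Note that the forward implication ``definitely modular $\Rightarrow$ algebraically modular'' is already furnished unconditionally by Theorem~\ref{thm:defalggeom}(1), so the real content is the converse direction; still, it is convenient to handle both directions uniformly.

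First I would verify that the hypothesis of Theorem~\ref{thm:GLSmain}, namely that $\rho$ be geometrically modular of some weight, is automatically satisfied in our setting. Indeed, if $\rho$ is either definitely or algebraically modular of weight $(\vec{k},\vec{m})$, then Theorem~\ref{thm:defalggeom}(1) upgrades definite modularity to algebraic modularity (of the same weight), and Theorem~\ref{thm:defalggeom}(2) upgrades algebraic modularity to geometric modularity. Thus in either case $\rho$ is geometrically modular of some weight, and the Taylor--Wiles type hypothesis on $\rho|_{F(\zeta_p)}$ is inherited directly from the statement of the corollary, so Theorem~\ref{thm:GLSmain} is available for any weight we need.

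Next I would execute the reduction. Suppose $\rho$ is definitely modular of weight $(\vec{k},\vec{m})$. By Lemma~\ref{lem:JHfactors}, there exists $(\vec{\ell},\vec{n}) \in \ZZ^{\Sigma}_{\ge 2}\times \ZZ^{\Sigma}$ with $D_{\vec{\ell},\vec{n},\Fpbar}$ a Jordan--H\"older factor of $D_{\vec{k},\vec{m},\Fpbar}$---in particular irreducible as a $\GL_2(\CO_F/p)$-representation---such that $\rho$ is definitely modular of weight $(\vec{\ell},\vec{n})$. The equivalence (1)$\Leftrightarrow$(2) of Theorem~\ref{thm:GLSmain} then yields that $\rho$ is algebraically modular of weight $(\vec{\ell},\vec{n})$, and a second application of Lemma~\ref{lem:JHfactors} (in the opposite direction, since it is an ``if and only if'') promotes this back up to algebraic modularity of weight $(\vec{k},\vec{m})$. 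The converse implication proceeds identically with the roles of ``definitely'' and ``algebraically'' swapped, again invoking only the two lemmas and the same equivalence in Theorem~\ref{thm:GLSmain}.

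There is no real obstacle here, as the main work is packaged inside Theorem~\ref{thm:GLSmain} and Lemma~\ref{lem:JHfactors}; the only point to be careful about is matching the irreducibility condition in Theorem~\ref{thm:GLSmain} with the ``Jordan--H\"older factor'' output of Lemma~\ref{lem:JHfactors}, which is immediate. (Minor bookkeeping note: the statement of Theorem~\ref{thm:GLSmain} refers to irreducibility of $D'_{\vec{\ell},\vec{n},\Fpbar}$, which we read as the representation $D_{\vec{\ell},\vec{n},\Fpbar}$ introduced in Section~\ref{ssec:defQO}; the criterion for irreducibility recalled before Lemma~\ref{lem:JHfactors}---namely that for each $\tau \in \Sigma_0$, exactly one $k_{\gp,i,j_\tau}$ lies in $[2,p{+}1]$ and all others equal $2$---guarantees the compatibility.)
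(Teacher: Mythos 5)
Your proof is correct and follows the route the paper intends (which it compresses to the single sentence that the corollary follows from Theorem~\ref{thm:GLSmain} ``combined with Lemma~\ref{lem:JHfactors}''). You add a useful point of care that the paper leaves implicit: verifying that the ``geometrically modular of some weight'' hypothesis of Theorem~\ref{thm:GLSmain} is indeed satisfied via Theorem~\ref{thm:defalggeom}, and noting that the forward implication is already unconditional by Theorem~\ref{thm:defalggeom}(1). Your reading of $D'_{\vec{\ell},\vec{n},\Fpbar}$ as $D_{\vec{\ell},\vec{n},\Fpbar}$ is also right.
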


\subsection{Ordinary modularity} \label{ssec:ordmod}
Suppose now that $\vec{k} \in \ZZ_{\ge 1}^\Sigma$, so 
the Hecke operator $T_\gp$ on $M_{\vec{k},-\vec{1}}(U,E)$ is defined for all $\gp \in S_p$.  Since the operators $T_{\gp}$ commute with each other as well as the $T_v$ and $S_v$ for $v\not\in Q$, it follows that if $\rho \sim \rho_f$ for some 
eigenform $f \in M_{\vec{k},-\vec{1}}(U,E)$ as in the statement of Theorem~\ref{thm:galois}, then we may in fact take $f$ to be an eigenform for the $T_{\gp}$ for all $\gp \in S_p$ as well.

Recall from Theorem~\ref{thm:ord} that if $f$ is ordinary at a prime $\gp \in S_p$ in the sense that $T_{\gp}f = a_{\gp}f$ for some $a_{\gp} \in E^\times$, then so is the associated local Galois representation $\rho_f|_{G_{F_\gp}}$ in the sense that it has an unramified subrepresentation (with our conventions), provided $k_\sigma > 1$ for some $\sigma \in S_{\gp}$.

On the other hand, if $k_\sigma = 1$ for all $\sigma \in \Sigma_\gp$, then one direction of Conjecture~\ref{conj:geomweights2} amounts to the well-known expectation that $\rho_f$ be unramified at $\gp$.
Using the argument of~\cite[Prop.~2.5]{DDW}, we at least have
the following:
\begin{proposition} \label{prop:ppw1} Suppose that $f \in M_{\vec{k},-\vec{1}}(U,E)$ is an eigenform (as in Theorem~\ref{thm:galois}), and moreover that $T_\gp f = a_\gp f $ for some $a_\gp \in E$.  Then $\rho_f|_{G_{F_\gp}}^{\mathrm{ss}}$ is unramified and $a_\gp(f) = \tr(\rho_f(\gamma))$ for any
$\gamma$ lifting $\Frob_\gp$ to $G_{F_{\gp}}$.
\end{proposition}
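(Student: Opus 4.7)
The plan is to adapt the doubling argument of \cite[Prop.~2.5]{DDW} to the present ramified Hilbert modular setting, treating the essentially new case $k_\sigma = 1$ for all $\sigma \in \Sigma_\gp$ (which is implicit from the context of \S\ref{ssec:ordmod}, since in the complementary case some $k_\sigma > 1$, the conclusion can be extracted from Theorem~\ref{thm:ord} together with a direct computation of $\det(\rho_f)|_{I_{F_\gp}}$, at least when $a_\gp \in E^\times$; the case $a_\gp = 0$ in this complementary regime requires a short separate argument via congruences).

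Assuming $k_\sigma = 1$ for all $\sigma \in \Sigma_\gp$, I would first compute, by telescoping using $\vec{h}_{\sigma_{\gp,i,1}} = p\vec{e}_{\sigma_{\gp,i-1,e_\gp}} - \vec{e}_{\sigma_{\gp,i,1}}$ and $\vec{h}_{\sigma_{\gp,i,j}} = \vec{e}_{\sigma_{\gp,i,j-1}} - \vec{e}_{\sigma_{\gp,i,j}}$ for $2\le j \le e_\gp$, that $\sum_{\sigma \in \Sigma_\gp} \vec{h}_\sigma = (p-1)\sum_{i\in\ZZ/f_\gp\ZZ} \vec{e}_{\sigma_{\gp,i,e_\gp}}$. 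Consequently $V_\gp(f)$ lies in $M_{\vec{k}',\vec{m}'}(U,E)$ with $k'_{\sigma_{\gp,i,e_\gp}} = p > 1$ and $k'_\sigma = 1$ for the other $\sigma \in \Sigma_\gp$, i.e., precisely a weight to which Theorem~\ref{thm:ord} can be applied. After renormalizing $\vec{m}'$ back to $-\vec{1}$ using the invertible sections $G_\sigma$ of \S\ref{ssec:Hasse}, and correspondingly multiplying $f$ by an appropriate product of partial Hasse invariants to match weights (and by a character twist via Proposition~\ref{prop:chars} if needed), I would view both $f$ and $V_\gp(f)$ as elements of a common Hecke module $M_{\vec{k}',-\vec{1}}(U,E')$ for some finite extension $E'/E$.

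The central step is then a direct $q$-expansion computation, using the formulas of \S\ref{ssec:Hecke} and \S\ref{ssec:theta}, verifying an identity of the form
\[ T_\gp\bigl(V_\gp(f)\bigr) \;=\; a_\gp V_\gp(f) \;-\; c\cdot f \]
in this common space, with $c \in E$ an explicit constant equal to a normalization of $\det(\rho_f(\gamma))$ for any lift $\gamma$ of $\Frob_\gp$ (well-defined as a scalar because the partial weight one hypothesis forces $\det(\rho_f)|_{I_{F_\gp}}$ to be trivial). This shows $T_\gp$ acts on the two-dimensional subspace $W = \langle f, V_\gp(f)\rangle$ with characteristic polynomial $X^2 - a_\gp X + c$. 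Factoring as $(X-\alpha)(X-\beta)$ over $\overline{E}$ produces eigenforms $g_\alpha, g_\beta \in W\otimes_E\overline{E}$ with $T_\gp$-eigenvalues $\alpha,\beta$ and with $\rho_{g_\alpha} \cong \rho_{g_\beta} \cong \rho_f$. Since $\vec{k}'$ has $k'_{\sigma_{\gp,i,e_\gp}} = p > 1$, Theorem~\ref{thm:ord} applies to each of $g_\alpha, g_\beta$ (provided the corresponding eigenvalue is nonzero), yielding unramified characters $\chi_\alpha,\chi_\beta$ of $G_{F_\gp}$ with $\chi_\star(\Frob_\gp) = \star$ each occurring as a subrepresentation of $\rho_f|_{G_{F_\gp}}$. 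Hence $\rho_f|_{G_{F_\gp}}^{\mathrm{ss}} \cong \chi_\alpha \oplus \chi_\beta$ is unramified, with Frobenius trace $\alpha + \beta = a_\gp$.

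The main obstacle will be the verification of the quadratic identity, which requires careful bookkeeping of the normalization factors (partial Hasse invariants, the $G_\sigma$-sections, and a possible character twist) in the ramified setting where $e_\gp > 1$; the combinatorics is more intricate than the classical unramified case handled in \cite{DDW}. A subsidiary obstacle is the degenerate case $c = \alpha\beta = 0$, where only one of $g_\alpha, g_\beta$ is a genuine ordinary eigenform; however, the unramifiedness of $\det(\rho_f)|_{G_{F_\gp}}$ together with the single unramified constituent produced by Theorem~\ref{thm:ord} suffices to force unramifiedness of the full semisimplification and to pin down $\alpha + \beta = a_\gp$.
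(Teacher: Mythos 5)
Your overall strategy matches the paper's: apply the doubling map of \cite[Prop.~2.5]{DDW} to pass to weight $\vec{k}' = \vec{k} + \sum_{\sigma\in\Sigma_\gp}\vec{h}_\sigma$ (your telescoping computation of this sum is correct), extract eigenforms whose $T_\gp$-eigenvalues are the roots of $X^2 - a_\gp X + \det(\rho_f(\Frob_\gp))$, and then feed these into Theorem~\ref{thm:ord}. However, there is a concrete problem with the linear-algebra step as you have written it.

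Your proposed identity $T_\gp(V_\gp f) = a_\gp V_\gp f - c\cdot f$, together with $T_\gp f = a_\gp f$ (which you implicitly carry over to the higher-weight space), would make the matrix of $T_\gp$ on $W = \langle f, V_\gp f\rangle$ upper triangular with both diagonal entries equal to $a_\gp$, giving characteristic polynomial $(X - a_\gp)^2$, \emph{not} $X^2 - a_\gp X + c$ as you then assert. This inconsistency points to a real misconception: in the present situation the image of $f$ in $M_{\vec{k}',-\vec{1}}(U,E)$ under multiplication by $\prod_{\sigma\in\Sigma_\gp} H_\sigma$ is \emph{not} a $T_\gp$-eigenform. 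The $q$-expansion formula $r_m^t(T_\gp g) = \epsilon\, r_{\varpi_\gp m}^{x^{-1}t}(g) + \varpi_\gp^{\vec{k}-\vec{1}}\, r_{\varpi_\gp^{-1}m}^{xt}(S_{\varpi_\gp} g)$ has a non-vanishing second term precisely because $k_\sigma = 1$ for all $\sigma\in\Sigma_\gp$ makes $\varpi_\gp^{\vec{k}-\vec{1}}$ a unit; after multiplying by Hasse invariants, the shifted weight $\vec{k}'$ has $k'_{\sigma_{\gp,i,e_\gp}} = p > 1$, so $\varpi_\gp^{\vec{k}'-\vec{1}} \equiv 0$ and $T_\gp$ on $M_{\vec{k}',-\vec{1}}$ drops its second term. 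Consequently $T_\gp$ does \emph{not} commute with this particular Hasse multiplication, and a short $q$-expansion calculation gives $T_\gp(\prod H_\sigma\cdot f) = a_\gp \prod H_\sigma\cdot f - (\text{unit})\cdot \prod G_\sigma \cdot V_\gp(f)$ and $T_\gp(\prod G_\sigma \cdot V_\gp f) = (\text{unit})\cdot \prod H_\sigma\cdot f$, i.e.\ the matrix is $\begin{pmatrix} a_\gp & * \\ * & 0\end{pmatrix}$ with both off-diagonal entries nonzero — the mod-$p$ analogue of the classical $p$-stabilization matrix. This is exactly what yields the desired characteristic polynomial and is the content that makes the doubling argument work.

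Two further remarks. First, the "degenerate case $c = \alpha\beta = 0$" you flag does not arise: the constant term of the polynomial is $\det(\rho_f(\Frob_\gp))$, automatically a unit since $\rho_f$ takes values in $\GL_2(E)$; in particular Theorem~\ref{thm:ord} is applicable to both stabilizations, and the case analysis in the paper is just distinct roots versus a repeated root (in which case one uses the unramified determinant to identify the quotient constituent). Second, your opening remarks about "the complementary case some $k_\sigma > 1$" are misdirected: the conclusion that $\rho_f|_{G_{F_\gp}}^{\mathrm{ss}}$ is unramified is simply false in that regime (the determinant is already ramified), so the proposition is stated — and should only be read — under the standing assumption $k_\sigma = 1$ for all $\sigma\in\Sigma_\gp$, as the displayed formula $\det(\rho_f(\Frob_\gp)) = \delta\epsilon\varpi_\gp^{\vec{k}-\vec{1}} \in E^\times$ already forces.
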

\begin{proof} We first note that $\det(\rho_f)$ is unramified at 
$\gp$, and moreover that 
$$\det(\rho_f(\Frob_\gp)) = \delta\epsilon\varpi_{\gp}^{\vec{k}-\vec{1}} \in E^\times,$$
where $\delta$ is the eigenvalue of $S_{\varpi_{\gp}}$ on $f$ and $\epsilon$ is the reduction of $p^{f_\gp}\Nm(\varpi_\gp)^{-1}$ (see \S\ref{ssec:Hecke}).

Letting $\vec{k}' = \vec{k} + \sum_{\sigma \in \Sigma_{\gp}} \vec{h}_\sigma$ and relating the actions of $T_\gp$ under the ``doubling map''
$$ M_{\vec{k},-\vec{1}}(U,E)^2 \lra  M_{\vec{k}',-\vec{1}}(U,E)$$
as in \cite[Prop.~2.5]{DDW}, one deduces the existence of an eigenform $f_\alpha \in M_{\vec{k}',-\vec{1}}(U,E)$ such that $T_\gp f_\alpha = \alpha f_\alpha$ and $\rho_{f_\alpha} \sim \rho_f$ for each root $\alpha$ of the polynomial
$$X^2 - a_\gp(f) X + \det(\rho_f(\Frob_\gp)).$$
It then follows from Theorem~\ref{thm:ord} that $\rho_f$
has a non-trivial unramified subrepresentation $L_\alpha$ on which 
$\Frob_\gp$ acts as $\alpha$.

If the polynomial above has distinct roots $\alpha \neq \alpha'$, then it follows that $\rho|_{F_{\gp}}$ is isomorphic to $L_\alpha \oplus L_{\alpha'}$.  Therefore $\rho_f$ is unramified at $\gp$ and 
$$\tr(\rho_f(\Frob_{\gp})) = \alpha + \alpha' = a_{\gp}.$$
On the other hand, if the polynomial factors as $(X-\alpha)^2$, then $\det(\rho_f)$ is the unramified character sending $\Frob_{\gp}$ to $\alpha^2$.  Therefore the quotient of $\rho_f|_{G_{F_\gp}}$ by $L_\alpha$ is also isomorphic to $L_\alpha$, and $\rho|_{F_{\gp}}^{\mathrm{ss}}$ is isomorphic to $L_\alpha \oplus L_{\alpha}$, again giving the desired conclusion.
\end{proof}

\begin{remark} Note that for $f$ as in the proposition, it is immediate that $\rho_f$ is unramified at $\gp$ if the polynomial in the proof has distinct roots (or if $\rho_f$ itself is reducible).  If it has repeated roots (and $\rho_f$ is irreducible), then the unramifiedness is proved by De~Maria (\cite[Thm.~D]{DM}) under the additional assumption that the $k_\sigma$ are odd (for all $\sigma \in \Sigma$).
\end{remark}

In view of the proposition, if $k_\sigma = 1$ for all $\sigma \in \Sigma_{\gp}$, then we view any $T_{\gp}$-eigenform $f \in M_{\vec{k},-\vec{1}}(U,E)$ as being ordinary at $\gp$.

We extend the notions of ordinariness to more general weights 
$(\vec{k},\vec{m})$ with $\vec{k} \in \ZZ_{\ge 1}^\Sigma$ by twisting by characters as in \S\ref{ssec:galoistwist}.  Suppose that $f \in M_{\vec{k},\vec{m}}(U,E)$ is an eigenform
for $T_v$ and $S_v$ for all $v\not\in Q$
(as in Theorem~\ref{thm:galois}).  Enlarging $E$ if necessary, let $\xi: \AA_{F}^\times/F^\times F_{\infty,+}^\times \to E^\times$ be a continuous character such that $\xi(u_p) =  \overline{u}_p^{\vec{m}+\vec{1}}$ for all $u \in \CO_{F,p}^\times$, so its restriction to $(\AA_{F,\f}^{(p)})^\times$ is as in \S\ref{ssec:twist} with $\vec{\ell} = -\vec{m}-\vec{1}$.
Shrinking $U$ and enlarging $Q$ as necessary, we have that 
$e_\xi f \in  M_{\vec{k},-\vec{1}}(U,E)$ is an eigenform for $T_v$ and $S_v$ for all $v\not\in Q$.  We say that $f$ is {\em $\gp$-ordinary} if either $k_\sigma = 1$ for all $\sigma \in \Sigma_{\gp}$ or $T_\gp(e_\xi f) = a_{\gp,\xi} e_{\xi}f$ for some $a_{\gp,\xi} = a_{\gp,\xi}(f) \in E^\times$.  If $\xi_1$ and $\xi_2$ are two such characters, then
$$T_{\gp}(e_{\xi_2}f) = T_{\gp}(e_\mu e_{\xi_1} f)
 = \mu(\varpi_{\gp,\gp})e_{\mu} T_{\gp}(e_{\xi_1} f)$$
where $\mu = \xi_1^{-1}\xi_2$.
It follows that the notion is independent of the choice of $\xi$,
and that if $f$ is $\gp$-ordinary, then the $a_{\gp,\xi}$ as $\xi$ varies are related by the formula 
$a_{\gp,\xi_2} = \mu(\Frob_{\gp})a_{\gp,\xi_1}$, where we have written $\mu$ also for the character $G_F \to E^\times$ to which it corresponds via class field theory.
\begin{definition} \label{def:ordgeommod} Suppose that $S$ is a subset of $S_p$, $\vec{k} \in \ZZ_{\ge 1}^{\Sigma}$ and $\vec{m} \in \ZZ^{\Sigma}$. We say that a continuous, irreducible $\rho:G_F \to \GL_2(\Fpbar)$ is 
{\em $S$-ordinarily geometrically modular} if $\rho$ arises from some $f \in M_{\vec{k},\vec{m}}(U,E)$ (as in Theorem~\ref{thm:galois}) which is $\gp$-ordinary for all $\gp \in S$.
\end{definition} 

We may similarly define ordinarily definite and algebraic modularity.  Recall that for $\vec{k} \in \ZZ_{\ge 2}^\Sigma$, 
$\gp \in S_p$ and $B$ and $U$ as in Definition~\ref{def:algmod}, 
the Hecke operator $T_\gp$ is defined on $M_{\vec{k},-\vec{1}}^B(U,\Fpbar)$ (see \S\ref{ssec:defQO}, \ref{ssec:indefQO}). Recall also that for arbitrary $\vec{m}$, we have (after possibly shrinking $U$) isomorphisms 
$$ e_{\xi}: M^B_{\vec{k},\vec{m}}(U,\Fpbar) \stackrel{\sim}{\lra} M^B_{\vec{k},-\vec{1}}(U,\Fpbar) $$
associated to characters $\xi: \AA_{F}^\times/F^\times F_{\infty,+}^\times \to \Fpbar^\times$ such that $\xi(u_p) =  \overline{u}_p^{\vec{m}+\vec{1}}$ for all $u_p \in \CO_{F,p}^\times$.
\begin{definition} \label{def:ordalgmod} Suppose that $S$ is a subset of $S_p$, $\vec{k} \in \ZZ_{\ge 2}^{\Sigma}$ and $\vec{m} \in \ZZ^{\Sigma}$. We say that a continuous, irreducible $\rho:G_F \to \GL_2(\Fpbar)$ is 
{\em $S$-ordinarily definitely} (resp.~{\em algebraically}) {\em modular} if there exists a non-zero $\varphi \in M^B_{\vec{k},\vec{m}}(U,\Fpbar)[\gm^Q_\rho]$ for some $B$, $U$ and $Q$ as in Definition~\ref{def:algmod}, such that $B$ is totally definite (resp.~unramified at exactly one archimedean place) and for all $\gp \in S$, we have
$$T_\gp(e_\xi \varphi) = a_{\gp,\xi} e_\xi \varphi$$
for some $a_{\gp,\xi} = a_{\gp,\xi}(\varphi) \in \Fpbar^\times$ and character $\xi: \AA_{F}^\times/F^\times F_{\infty,+}^\times \to \Fpbar^\times$ as above.
\end{definition} 

Just as for ordinary geometric modularity, if the condition on $e_\xi \varphi$ in the definition holds for some character $\xi$ such that $\xi(u_p) =  \overline{u}_p^{\vec{m}+\vec{1}}$ for all $u_p \in \CO_{F,p}^\times$ above, then in fact the condition holds for all such $\xi$.  Furthermore, the $a_{\gp,\xi}$ for varying $\xi$ are related in the same way as for ordinarily geometric modularity.

The analogue of Theorem~\ref{thm:ord} holds in the setting for definite or algebraic modularity.  Furthermore, we can extend the description of the local behavior at $\gp$ to all $\gp$-ordinarily modular Galois representations as follows (writing $\gp$ instead of $\{\gp\}$).
\begin{theorem} \label{thm:ord2} Suppose that $\gp \in S_p$, $\vec{k},\vec{m} \in \ZZ^\Sigma$, with $k_\sigma \ge 1$ (resp.~$\ge 2$) for all $\sigma \in \Sigma$, and that $\rho:G_F \to \GL_2(\Fpbar)$ is irreducible and $\gp$-ordinarily geometrically (resp.~definitely or algebraically) modular of weight $(\vec{k},\vec{m})$.
Then
$$\rho|_{G_{F_{\gp}}} \sim \left(\begin{array}{cc} \chi_2 & * \\ 0 &\chi_{\mathrm{cyc}}^{-1}\chi_1\end{array}\right),$$
for some characters $\chi_1,\chi_2:G_{F_{\gp}} \to \Fpbar^\times$, where $\chi_2$ corresponds via class field theory to the character $F_{\gp}^\times \to \Fpbar^\times$ defined by 
$x \mapsto \xi^{-1}(x)a_{\gp,\xi}^{v_{\gp}(x)}$ for any $\xi$ and $a_{\gp,\xi}$ as in Definition~\ref{def:ordgeommod} (resp.~\ref{def:ordalgmod}), unless $k_\sigma = 1$ for all $\sigma \in \Sigma_{\gp}$, in which case the same conclusion holds with $a_{\gp,\xi}$ replaced by any root of the polynomial
$$X^2 - a_{\gp,\xi} X + (\chi^2\det(\rho))(\Frob_{\gp}),$$
where $\chi$ corresponds via class field theory to $\xi$ (so $\chi^2\det(\rho)$ is unramified at $\gp$).
\end{theorem}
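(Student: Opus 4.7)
The plan is to reduce all three settings---geometric, definite, and algebraic modularity---to the case of geometric modularity with $\vec{m} = -\vec{1}$, where Theorem~\ref{thm:ord} and Proposition~\ref{prop:ppw1} apply directly. Given a witnessing eigenform $f$ (respectively $\varphi$) and a character $\xi$ as in Definition~\ref{def:ordgeommod} (respectively Definition~\ref{def:ordalgmod}), I would pass to the twist $f' := e_\xi f$ of weight $(\vec{k}, -\vec{1})$, which satisfies $T_\gp(f') = a_{\gp,\xi} f'$ by (essentially) the formula (\ref{eqn:Tptwist}) absorbing the scalar $\epsilon\xi(x)^{-1}\varpi_\gp^{\vec{m}+\vec{1}}$ into the definition of $a_{\gp,\xi}$. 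Since $\rho_{f'} \cong \chi \otimes \rho_f$, where $\chi$ is the global Galois character corresponding to $\xi$, it suffices to produce the required upper-triangular shape for $\rho_{f'}|_{G_{F_\gp}}$ with unramified $\chi_2'$ satisfying $\chi_2'(\Frob_\gp) = a_{\gp,\xi}$ (or the polynomial condition in the partial weight one subcase): untwisting then yields $\chi_2 = \chi^{-1}\chi_2'$, whose local restriction corresponds via local class field theory to $x \mapsto \xi^{-1}(x) a_{\gp,\xi}^{v_\gp(x)}$.

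For the geometric case with $\vec{m} = -\vec{1}$, I would apply Theorem~\ref{thm:ord} directly to $f'$ whenever some $k_\sigma > 1$ with $\sigma \in \Sigma_\gp$. When $k_\sigma = 1$ for all $\sigma \in \Sigma_\gp$---a subcase that can arise only in the geometric setting, as the definite and algebraic definitions require $\vec{k} \in \ZZ^\Sigma_{\ge 2}$---Proposition~\ref{prop:ppw1} gives that $\rho_{f'}|_{G_{F_\gp}}^{\mathrm{ss}}$ is unramified, and its doubling argument through weight $\vec{k}' = \vec{k} + \sum_{\sigma \in \Sigma_\gp} \vec{h}_\sigma$ exhibits the two Frobenius eigenvalues as roots of $X^2 - a_{\gp,\xi} X + \det(\rho_{f'}(\Frob_\gp))$. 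Since $\det(\rho_{f'}) = \chi^2 \det(\rho)$ and $\det(\rho_{f'})|_{G_{F_\gp}}$ is unramified (so that $\chi^2 \det \rho$ is too), untwisting produces the polynomial $X^2 - a_{\gp,\xi} X + (\chi^2 \det \rho)(\Frob_\gp)$ stated in the theorem.

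It remains to reduce the definite and algebraic cases to the geometric one, which I would do by re-running the proof of Theorem~\ref{thm:defalggeom}, parts (1) and (2), for $\varphi' := e_\xi \varphi \in M^B_{\vec{k},-\vec{1}}(U,\Fpbar)$ while tracking the $T_\gp$-eigenvalue $a_{\gp,\xi}$ at each step. All intermediate maps used there---mod $\varpi$ reduction of characteristic zero cohomology, the Jacquet--Langlands transfer, and the level-raising $\beta'$ at an auxiliary prime $w \neq \gp$---are Hecke-equivariant with respect to $T_\gp$. Since $a_{\gp,\xi} \in \Fpbar^\times$, the $\gp$-ordinary projector commutes with the matrix computation of $\overline{\alpha}' \circ \overline{\beta}'$, so the relevant cokernel remains nontrivial after localizing, producing a characteristic zero eigenform with unit $T_\gp$-eigenvalue $\widetilde{a}_{\gp,\xi} \in \CO^\times$ lifting $a_{\gp,\xi}$; applying JL and reducing mod $\varpi$ transfers this ordinarity to a geometric $\gp$-ordinary $f'$ to which the previous paragraph applies. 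The main obstacle will be this bookkeeping: verifying that the unit $T_\gp$-eigenvalue survives each transfer, and that under the local Langlands correspondence it corresponds (via the eventual $\pi_\gp$) to the value $\psi_2(\varpi_\gp)$ of the principal series, or $\psi(\varpi_\gp)$ of a Steinberg twist, that feeds into the concluding $p$-adic Hodge theory argument borrowed from the end of the proof of Theorem~\ref{thm:ord}.
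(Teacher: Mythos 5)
Your geometric case matches the paper exactly (twist by $e_\xi$ to reduce to $\vec{m} = -\vec{1}$, then apply Theorem~\ref{thm:ord} or Proposition~\ref{prop:ppw1}, then untwist). For the definite/algebraic case your route genuinely differs, and in fact avoids a technicality that the paper has to address. The paper does not pass through the geometric setting; instead it observes that the quaternionic eigenform supplied by ordinarily definite/algebraic modularity (after twisting) is exactly the data available at an intermediate stage of the proof of Theorem~\ref{thm:ord} --- specifically at the introduction of the induced representation $L_\chi$ --- and resumes that proof from there. Because the weight $\vec{k}$ is fixed at that entry point and there is no room to multiply by Hasse invariants, the paper must separately treat the degenerate case $k_\sigma = 2$ for all $\sigma \in \Sigma_\gp$, where the key identity $\varepsilon\circ\Delta = \delta\circ\varepsilon$ fails. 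Your route instead first establishes (a version of) the transfer of $\gp$-ordinarity from the quaternionic side to the geometric side by re-running the proof of Theorem~\ref{thm:defalggeom}(2) with $T_{\varpi_\gp}$ in the Hecke algebra --- this is exactly the content of Theorem~\ref{thm:ordmods}, which the paper proves later --- and then applies Theorem~\ref{thm:ord} as a black box; since that theorem's proof internally raises the weight, the $k_\sigma \equiv 2$ edge case never surfaces. The trade-offs: the paper's route is self-contained within the proof of Theorem~\ref{thm:ord} and does not require developing level-raising-with-$T_\gp$-bookkeeping first, while yours cleanly factors the argument through a reusable transfer statement at the cost of proving it (be sure to use $T_{\varpi_\gp}$ rather than $T_\gp$ on the integral characteristic-zero spaces, as the latter does not preserve integrality). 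One small correction: your final sentence suggests you may still need the concluding local Langlands and $p$-adic Hodge theory analysis from the end of Theorem~\ref{thm:ord}, but on your route that is already encapsulated in the black-box invocation of Theorem~\ref{thm:ord}, so it does not need to be re-verified.
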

\begin{proof} Consider first the case of $\gp$-ordinarily geometric modularity, and let $f$ and $\xi$ be as in Definition~\ref{def:ordgeommod}. Since $\rho_f \sim \chi^{-1}\rho_{e_{\xi}f}$ (where $\chi$ corresponds to $\xi$), we can replace $f$ by $e_{\xi}f$ and reduce to the case $\vec{m} = - \vec{1}$ and $\xi = 1$.  The assertions are then given by Theorem~\ref{thm:ord} and Proposition~\ref{prop:ppw1}.

Consider now the case of $\gp$-ordinarily definite or algebraic modularity, and let $\varphi$ and $\xi$ be as in Definition~\ref{def:ordalgmod}. Since $e_\xi\varphi \in M^B_{\vec{k},-\vec{1}}(U,\Fpbar)[\gm^Q_{\chi\otimes\rho}]$, we can again reduce to the case $\vec{m} = - \vec{1}$ and $\xi = 1$.  An inspection of the proof of Theorem~\ref{thm:ord} (starting from the introduction of the representation $L_\chi$) shows that this is precisely the input required in order to obtain the desired conclusion, except that it is assumed at that point\footnote{The formula $\varepsilon\circ\Delta = \delta\circ \varepsilon$ in the proof fails if $k_\sigma = 2$ for all $\sigma \in \Sigma_{\gp}$.} that $k_\sigma > 2$ for some $\sigma \in \Sigma_{\gp}$.   However if $k_\sigma = 2$ for all $\sigma \in \Sigma_{\gp}$, then one can a give a similar (and much simpler) argument by making the following changes:  

Firstly, in the setup, replace the representation $L_\chi$ by the trivial representation $E$, $\varepsilon$ by the identity, $U_{B,0}$ and $U_{B,1}$ by $U_B$ and $\pi$ by the identity. 

One can then skip directly to the introduction of the subgroup $U_{B,1}'$ in the proof. The formulas describing $T_{\gp}$ on the kernel of (\ref{eqn:levelpord}) in the algebraic case\footnote{Note also that whether we are in the algebraic and definite case is no longer determined by whether the degree $d = [F:\QQ]$ is even or odd.} and on $I_{\vec{2}}^B(U_{B,1}',\C)$ in the definite case are no longer valid (the factor of $p^{f_{\gp}}$ being replaced by $p^{f_{\gp}} + 1$).  However if $\varphi$ is in the kernel of (\ref{eqn:levelpord}) or $\widetilde{\varphi} \in I_{\vec{2}}^B(U_{B,1}',\C)$, then the proof of Theorem~\ref{thm:galois} gives that $\rho \sim \chi_{\mathrm{cyc}}^{-1}\psi \oplus \psi$ for some character $\psi$, contradicting our assumption here that $\rho$ is irreducible.

Finally we obtain a cuspidal automorphic representation $\pi \in \mathcal{C}_{\vec{2}}$ such that $\rho \sim \rhobar_{\pi}$ and $\pi_{\gp}$ is an unramified principal series $I(\psi_1|\cdot|^{1/2},\psi_2|\cdot|^{1/2})$ such that $\psi_1(\varpi_\gp)$ and $\psi_2(\varpi_\gp)$ are the roots of
$$X^2 - \widetilde{a}_{\gp}X + \widetilde{d}_{\gp} p^{f_{\gp}}$$
for some lift $\widetilde{a}_{\gp}$ of $a_{\gp}$ and root of unity $\widetilde{d}_{\gp}$ (both viewed in $\CO^\times \cap \Qbar \subset \CC$).  We may therefore assume that $\psi_2(\varpi_{\gp})$ is a lift of $a_{\gp}$ and conclude exactly as in the proof of Theorem~\ref{thm:ord}. 
\end{proof}

We conjecture the following converse to Theorem~\ref{thm:ord2}:
\begin{conjecture} \label{conj:ord} Suppose that 
$\rho:G_F \to \GL_2(\Fpbar)$ is irreducible, $S \subset S_p$ and
$(\vec{k},\vec{m}) \in \ZZ_{\ge 1}^\Sigma \times \ZZ^\Sigma$.  Suppose that $\rho|_{G_{F_{\gp}}}$ has a crystalline lift of weight $(k_\sigma,m_\sigma)_{\sigma\in \Sigma_{\gp}}$ for all $\gp \in S_p$, and moreover that for all $\gp \in S$, we have
$$\rho|_{G_{F_{\gp}}} \sim \left(\begin{array}{cc} \chi_{2,\gp} & * \\ 0 &\chi_{\mathrm{cyc}}^{-1}\chi_{1,\gp}\end{array}\right)$$
for some characters $\chi_{1,\gp}$, $\chi_{2,\gp}$ such that $\chi_{2,\gp}|_{I_{F_{\gp}}}$ corresponds via class field theory to the character $\CO_{F_{\gp}}^\times \to \Fpbar^\times$ defined by $ \prod_{\sigma\in \Sigma_{\gp}} \overline{\sigma}^{-1-m_\sigma}$. Then $\rho$ is $S$-ordinarily geometrically modular of weight $(\vec{k},\vec{m})$, and furthermore is $S$-ordinarily definitely and algebraically modular of weight $(\vec{k},\vec{m})$ if $\vec{k} \in \ZZ^\Sigma_{\ge 2}$.
\end{conjecture}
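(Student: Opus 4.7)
The plan is to separate the problem into the regular weight case $\vec{k}\in\ZZ^\Sigma_{\ge 2}$ (where algebraic Serre weight technology applies) and the partial weight one case, which is then reduced to the regular weight case by companion-form arguments analogous to those deployed in Section~7.

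First I would perform the usual reductions. By Proposition~\ref{prop:chartwist} (in its geometric, definite, and algebraic forms) applied to a suitable character $\chi$ whose restriction to each $\CO_{F,\gp}^\times$ has the prescribed type, I would twist $\rho$ so as to reduce to the case $\vec{m} = -\vec{1}$, in which the hypothesis becomes that $\chi_{2,\gp}$ is unramified and the expected $T_\gp$-eigenvalue is simply $\chi_{2,\gp}(\Frob_\gp)$. Then, assuming first $\vec{k}\in\ZZ^\Sigma_{\ge 2}$, I would pass from geometric to algebraic modularity: it suffices by Theorem~\ref{thm:defalggeom}(2) (combined with Theorem~\ref{thm:ord2}) to establish the $S$-ordinary refinement of Corollary~\ref{cor:GLSmain}, namely that under a Taylor--Wiles hypothesis, the crystalline liftability assumption together with the ordinary-shape assumption on $\rho|_{G_{F_\gp}}$ for $\gp\in S$ implies the existence of an algebraic modular form $\varphi \in M^B_{\vec{k},-\vec{1}}(U,\Fpbar)$ with $T_\gp \varphi = \chi_{2,\gp}(\Frob_\gp)\varphi$ for each $\gp\in S$.

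For this ordinary algebraic statement the strategy is to follow the automorphy-lifting approach of~\cite{GLS,GK,JN} and to ensure that ordinariness is preserved at every stage. Concretely, one first constructs a characteristic-zero ordinary crystalline lift $\widetilde{\rho}:G_F \to \GL_2(\Zpbar)$ of $\rho$ whose local restrictions at primes in $S$ are the ordinary lifts produced by the hypothesis (using the results of Barnet-Lamb--Gee--Geraghty~\cite{BLGG1,BLGG2} to globalize the local ordinary deformation data, the Taylor--Wiles hypothesis being invoked here to run the relevant ordinary modularity lifting theorem). The resulting $\widetilde{\rho}$ is then automorphic, corresponding to some $\pi \in \mathcal{C}_{\vec{k}}$ whose local factors at $\gp\in S$ are ordinary principal series, and reducing mod $p$ produces the required $\gp$-ordinary algebraic eigenform; a Jordan--H\"older argument as in Lemma~\ref{lem:JHfactors} then allows one to realize $\rho$ inside the desired weight $(\vec{k},-\vec{1})$. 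Transferring via Theorem~\ref{thm:defalggeom}(2) yields geometric modularity, and the $T_\gp$-eigenvalue is preserved throughout by construction (via the comparison in \S\ref{ssec:defQO}--\ref{ssec:indefQO} between $T_\gp$ and $T_{\varpi_\gp}$).

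To handle the partial weight one case (some $k_\sigma = 1$ for $\sigma \in \Sigma_\gp$), I would imitate the doubling/companion-form construction of~\cite{DDW} and its refinement in Section~7, now applied systematically at every $\gp \in S_p$ where $\vec{k}_\gp$ is irregular. Concretely, replace $\vec{k}$ by $\vec{k}' = \vec{k} + \sum_{\sigma\in \Sigma^{(1)}} \vec{h}_\sigma$, where $\Sigma^{(1)} \subset \Sigma$ is the set of embeddings with $k_\sigma = 1$; then $\vec{k}'\in \ZZ^\Sigma_{\ge 2}$ and the crystalline liftability assumption for $\vec{k}$ implies one for $\vec{k}'$ (the extension of weights is checked via the Fontaine--Laffaille/Breuil--Kisin description of the ordinary filtration, exactly as in \S\ref{subsec:hodge} of this paper for real quadratic $F$; this is where one invokes the auxiliary reducibility hypothesis on $\rho|_{G_{F_\gp}}$ at places where partial weight one occurs, to ensure that an ordinary crystalline lift in weight $\vec{k}'$ descends to one in weight $\vec{k}$). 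The regular-weight case just established yields a $T_\gp$-eigenform $f' \in M_{\vec{k}',-\vec{1}}(U,\Fpbar)$ with $T_\gp$-eigenvalue $\chi_{2,\gp}(\Frob_\gp)$; finally, the doubling relation gives a pair of eigenforms in weight $(\vec{k},-\vec{1})$ whose $T_\gp$-eigenvalues are the two roots of $X^2 - \alpha_\gp X + \det(\rho_f)(\Frob_\gp)$, one of which is the prescribed $\chi_{2,\gp}(\Frob_\gp)$, as in the proof of Proposition~\ref{prop:ppw1}.

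The main obstacle, as in~\cite{DS1} and Section~7 of the present paper, will be making the companion-form step work simultaneously at several primes $\gp$ with $k_\sigma = 1$ while preserving ordinariness at every $\gp \in S$, and in particular controlling the interaction between the weight-shifting by partial Hasse invariants (which destroys $\Theta$-operator injectivity when $p | k_\sigma$) and the Hida-theoretic projection onto ordinary components. A secondary difficulty is the Taylor--Wiles hypothesis that is currently required to invoke~\cite{GLS}; removing it in our setting would presumably require ad hoc arguments analogous to those used in Theorems~\ref{thm:3toPcrys} and~\ref{thm:2crys} to remove the corresponding hypothesis in partial weight one for real quadratic fields.
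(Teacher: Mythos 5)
This statement is labeled \emph{Conjecture}~\ref{conj:ord} in the paper and is \emph{not} proved there: immediately after stating it, the authors remark that they expect a proof (for $\vec{k}\in\ZZ^\Sigma_{\ge 2}$, under the Taylor--Wiles hypothesis of Theorem~\ref{thm:GLSmain}) to follow by imposing an ordinariness condition on the local deformation rings in the argument of Gee--Liu--Savitt, but that they ``have not checked the details,'' and they instead record only the weaker Theorem~\ref{thm:GLSord}, which suffices for the applications of Section~7. So there is no paper proof to compare your sketch against.

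Your proposed strategy for the regular-weight, algebraic/definite part is broadly the one the authors have in mind, and the transfer to geometric modularity via Theorem~\ref{thm:ordmods} is the mechanism the paper itself uses. But note that what your plan would produce, even if carried out, is strictly weaker than Conjecture~\ref{conj:ord}: the conjecture carries neither a Taylor--Wiles hypothesis nor any reducibility hypothesis on $\rho|_{G_{F_\gp}}$ at primes $\gp$ where $\vec{k}_\gp$ is irregular, whereas you invoke both. The paper's own Theorems~\ref{thm:3toPmod} and~\ref{thm:2mod} show that even in the simplest ramified quadratic case these extra hypotheses were needed for the companion-form reduction, and the reducibility hypothesis in the $w=2$ case is not cosmetic (see the remark after Theorem~\ref{thm:2crys}).

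The partial weight one reduction also has a concrete technical error. You write $\vec{k}'=\vec{k}+\sum_{\sigma\in\Sigma^{(1)}}\vec{h}_\sigma$ with $\Sigma^{(1)}$ the set of $\sigma$ with $k_\sigma=1$; but $\vec{h}_\sigma=\nu_\sigma\vec{e}_{\varphi^{-1}\sigma}-\vec{e}_\sigma$ \emph{lowers} the $\sigma$-component by $1$, so $k'_\sigma=0$ for $\sigma\in\Sigma^{(1)}$, and the resulting $\vec{k}'$ is not in $\ZZ^\Sigma_{\ge 2}$. Even the fix from the proof of Proposition~\ref{prop:ppw1} (which only shifts by $\sum_{\sigma\in\Sigma_\gp}\vec{h}_\sigma$ when $k_\sigma=1$ for \emph{all} $\sigma\in\Sigma_\gp$) produces $k'_{\gp,i,j}=1$ for $j<e_\gp$, so it does not land in regular weight when $e_\gp>1$. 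Making the weight shift, the crystalline-lift comparison, and the doubling argument work simultaneously at all $\gp\in S_p$ with irregular $\vec{k}_\gp$ is exactly the open problem, and you should not present it as a routine imitation of~\cite{DDW}.
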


\begin{remark} Note that the hypotheses on $\rho|_{G_{F_{\gp}}}$ imply also that the characters $\chi_{1,\gp}|_{I_{F_{\gp}}}$ correspond to $ \prod_{\sigma\in \Sigma_{\gp}} \overline{\sigma}^{1-k_\sigma - m_\sigma}$ for $\gp \in S$.  Conversely, if 
$\rho|_{G_{F_{\gp}}}$ has this form for such characters $\chi_{1,\gp}$, $\chi_{2,\gp}$, then it necessarily has  a crystalline lift of weight $(k_\sigma,m_\sigma)_{\sigma\in \Sigma_{\gp}}$, provided $k_\sigma \ge 2$ for all $\sigma \in \Sigma_{\gp}$ and some $k_\sigma > 2$.
\end{remark}

We expect that the proof of Theorem~\ref{thm:GLSmain} can be modified (by imposing an ordinariness condition in the relevant local deformation rings) to establish the preceding conjecture in the definite and algebraic settings under the additional hypotheses of that theorem.  However, we have not checked the details, so we content ourselves with the following result, which can be deduced more directly from results of Gee--Kisin~\cite{GK}, and will suffice for the purpose of the intended applications to the ramified quadratic case in the next section.

\begin{theorem} 
\label{thm:GLSord} Suppose that $\rho$ is irreducible and geometrically modular of some weight $(\vec{k},\vec{m})$. Suppose further that $p >2$, $\rho|_{F(\zeta_p)}$ is irreducible, and if $p=5$, then the projective image of $\rho|_{F(\zeta_5)}$ is not isomorphic to $A_5$.  Let $(\vec{\ell},\vec{n}) \in \ZZ^{\Sigma}_{\ge 2} \times \ZZ^\Sigma$ be such that $D'_{\vec{\ell},\vec{n}}$ is irreducible and for each $\gp \in S_p$, 
\begin{itemize}
\item $\max_{1 \le j \le e_{\gp}} \ell_{\gp,i,j} < p + 1$ for some $i \in \ZZ/f_{\gp}\ZZ$;
\item $\rho|_{G_{F_{\gp}}}$ has a crystalline lift of weight $(\vec{\ell}_\gp,\vec{n}_{\gp})$ ;
\item $\rho|_{G_{F_{\gp}}} \sim
\left(\begin{array}{cc} \chi_{2,\gp} & * \\ 0 &\chi_{\mathrm{cyc}}^{-1}\chi_{1,\gp}\end{array}\right)$
for some characters $\chi_{1,\gp}$, $\chi_{2,\gp}$ such that $\chi_{2,\gp}|_{I_{F_{\gp}}}$ corresponds to the character $\CO_{F_{\gp}}^\times \to \Fpbar^\times$ defined by $ \prod_{\sigma\in \Sigma_{\gp}} \overline{\sigma}^{-1-n_\sigma}$.
\end{itemize} Then $\rho$ is $S_p$-ordinarily definitely and algebraically modular of weight $(\vec{\ell},\vec{n})$.
\end{theorem}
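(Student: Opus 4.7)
The plan is to deduce the theorem directly from the modularity lifting theorems of Gee--Kisin~\cite{GK}, which encode ordinary local conditions at each $\gp \in S_p$. First, since $\rho$ is geometrically modular of some weight, Theorem~\ref{thm:galois} combined with Corollary~\ref{cor:reductions} shows that $\rho$ is already algebraically modular of some weight, and Theorem~\ref{thm:GLSmain}, applied using the hypothesis that $\rho|_{G_{F_\gp}}$ has a crystalline lift of weight $(\vec{\ell}_\gp,\vec{n}_\gp)$ for every $\gp$ and that $D'_{\vec{\ell},\vec{n}}$ is irreducible, promotes this to both algebraic and definite modularity of weight $(\vec{\ell},\vec{n})$.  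What remains is to produce an eigenform in $M^B_{\vec{\ell},\vec{n}}(U,\Fpbar)$ (for $B$ totally definite, and also for $B$ indefinite at one archimedean place) whose Hecke eigensystem cuts out $\rho$ and which is in addition $\gp$-ordinary for every $\gp \in S_p$.

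I would argue inside the integral Hecke module $M^B_{\vec{\ell},\vec{n}}(U,\CO)_{\gm_\rho^Q}$ by imposing, at each $\gp \in S_p$, the framed ordinary crystalline condition with labelled Hodge--Tate weights $(-1-n_\sigma,\,-\ell_\sigma-n_\sigma)_{\sigma \in \Sigma_\gp}$ and with unramified quotient a lift of the prescribed character $\chi_{2,\gp}$.  The bound $\max_j \ell_{\gp,i,j} < p+1$ for some $i$ ensures that the associated ordinary crystalline deformation ring at $\gp$ is a well-behaved component of the framed crystalline ring, and its characteristic-zero locus is non-empty by the crystalline lift hypothesis combined with the ordinary shape of $\rho|_{G_{F_\gp}}$.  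This is precisely the input required for the Taylor--Wiles--Kisin patching argument of~\cite{GK} (whose running Taylor--Wiles hypothesis on $\rho$ is exactly the one in the statement) to give an $R=\TT$-type isomorphism for the ordinary quotient of the Hecke algebra.  Its non-vanishing yields a characteristic-zero Hecke eigenform $\widetilde{\varphi}$ on the relevant quaternionic Shimura variety whose associated Galois representation is ordinary of the prescribed shape at every $\gp$ and lifts $\rho$.

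Reducing $\widetilde{\varphi}$ modulo $\varpi$ as in the proof of Theorem~\ref{thm:galois}, and untwisting via the isomorphisms $e_\xi$ of \S\ref{ssec:defQO} and \S\ref{ssec:indefQO}, one extracts a non-zero eigenform in $M^B_{\vec{\ell},\vec{n}}(U,\Fpbar)[\gm_\rho^Q]$; $\gp$-ordinariness of the reduction follows because the relevant $T_\gp$-eigenvalue of $\widetilde{\varphi}$ is congruent mod $\varpi$ to the Frobenius value of $\chi_{2,\gp}$, hence a $p$-adic unit.  The Jacquet--Langlands step in the proof of Theorem~\ref{thm:galois} then transfers between the definite and indefinite incarnations, preserving ordinariness since it is a condition on the local factor at each $\gp$.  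The principal obstacle is the local analysis at ramified primes: when $e_\gp > 1$ the labelled weights are non-regular and the Fontaine--Laffaille framework is inadequate, so one must work via the Kisin-module / Pappas--Rapoport description of the ordinary crystalline deformation ring and verify that the constraint $\max_j \ell_{\gp,i,j} < p+1$ isolates a single Jordan--H\"older block on the Serre-weight side, so that the local-global patching input matches cleanly with the irreducible $D'_{\vec{\ell},\vec{n}}$.
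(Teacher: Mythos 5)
Your proposal is, in essence, the route the authors explicitly decline to take. Immediately before the statement of this theorem, the paper says: ``We expect that the proof of Theorem~\ref{thm:GLSmain} can be modified (by imposing an ordinariness condition in the relevant local deformation rings) \ldots{} However, we have not checked the details, so we content ourselves with the following result, which can be deduced more directly from results of Gee--Kisin~\cite{GK}.'' Your central step --- ``This is precisely the input required for the Taylor--Wiles--Kisin patching argument of~\cite{GK} \ldots{} to give an $R=\TT$-type isomorphism for the ordinary quotient of the Hecke algebra'' --- does not appear in~\cite{GK}: the patching there uses general potentially crystalline (or potentially Barsotti--Tate) local conditions, not ordinary ones, and the results on ordinary crystalline deformation rings with labelled Hodge--Tate weights of the shape you need are not cited and would require independent verification. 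Asserting the existence of such a patching argument as a black box is the gap.

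The paper's actual proof takes a different and more economical route. After twisting to $\vec{n}=-\vec{1}$ and choosing an auxiliary prime $w$, it applies~\cite[Lemma~4.2.5]{GK} to $\chi_{\mathrm{cyc}}\otimes\rho$ to produce a characteristic-zero automorphic representation $\pi \in \mathcal{C}_{\vec{2}}$ (i.e.\ parallel weight $2$) with prescribed local behavior: special at $w$, tamely ramified principal series $I(\psi_1|\cdot|^{1/2},\psi_2|\cdot|^{1/2})$ at each $\gp\in S_p$ (with $\psi_2$ unramified), and local Galois representation in the prescribed ordinary upper-triangular shape. It then transfers $\pi$ via Theorem~\ref{thm:JL} to a totally definite (or once-indefinite) $B$, and --- this is the step you don't have --- runs a representation-theoretic weight-shifting argument: it builds the coinduced representation $T = \mathrm{Coind}_{U_{0,p}}^{U_p}E(\chi)$ of $\CO_{B,p}^\times$, constructs an injective $U_p$-equivariant map $\varepsilon: D'_{\vec{\ell},-\vec{1}} \to T$, and shows by factoring $\varepsilon$ over the primes in $S_0$ and tracking $T_\gp$ through the resulting maps on cohomology (using the $\ker(\varepsilon)\subset\ker(\Delta)$, $\im(\delta)\subset\im(\varepsilon)$ mechanism from the proof of Theorem~\ref{thm:ord}) that the maximal ideal $\widetilde{\gm}_\rho$ (including $T_\gp\mapsto\chi_{2,\gp}(\Frob_\gp)$) does not lie in the support of the cokernel, hence lies in the support of $M^B_{\vec{\ell},-\vec{1}}(U,E)$. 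Everything you would want to know about the ordinary local deformation ring is thereby replaced by a finite-dimensional calculation with $\GL_2(\FF_\gp)$-representations. Your first paragraph (algebraic modularity of weight $(\vec{\ell},\vec{n})$ via Theorem~\ref{thm:GLSmain}) is correct but is not actually used by the paper's proof; and your final sentence about isolating ``a single Jordan--H\"older block on the Serre-weight side'' gestures at the right phenomenon but is not a substitute for the explicit coinduction and $T_\gp$-compatibility computations that make the argument go through.
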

\begin{proof} Firstly, we can reduce to the case $\vec{n} = -\vec{1}$ by twisting.

Note that the hypothesis that $\rho|_{G_{F_{\gp}}}$ has a crystalline lift of weight $(\ell_\sigma,n_\sigma)_{\sigma\in \Sigma_{\gp}}$ implies that $\chi_{1,\gp}|_{I_{F_{\gp}}}$ corresponds to $\prod_{\sigma \in \Sigma_\gp} \overline{\sigma}^{2-\ell_\sigma}$. Recall also that the hypothesis that $D'_{\vec{\ell},\vec{n}}$ is irreducible means that $2 \le \ell_{\gp,i,j} \le p + 1$ for all $i=1,\ldots,e_{\gp}$ and $j \in \ZZ/f_{\gp}\ZZ$, and that $\ell_{\gp,i,j} > 2$ for at most one $j$ for each $i$.
Let $S_0$ denote the set of primes in $S_p$ such that
$k_\sigma \neq 2$ for some $\sigma \in \Sigma_{\gp}$. 

For each $\gp \in S_p$, let $\tau_{\gp}$ denote the inertia type $1 \oplus \xi_{\gp}$, where $\xi_{\gp}:I_{F_{\gp}} \to \CO^\times$ (for some sufficiently large $\CO$) is the Teichm\"uller lift of $\chi_{1,\gp}|_{I_{F_{\gp}}}$.

As in the proof of Theorem~\ref{thm:defalggeom}, let us also choose an auxiliary prime $w \not\in S_p$ at which $\rho$ is unramified and the conjugacy class of $\Frob_w$ in $\Gal(L(\zeta_p)/F)$ is that of a complex conjugation, where $L$ is the splitting field of $\rho$.

By \cite[Lemma~4.2.5]{GK} (applied to $\chi_{\mathrm{cyc}}\otimes \rho$), there is an automorphic representation $\pi \in \mathcal{C}_{\vec{2}}$ such that
\begin{itemize}
\item $\overline{\rho}_\pi \sim \rho$;
\item $\pi_w \sim \psi_w \otimes \mathrm{St}$ for some unramified character $\psi_w$;
\item if $\gp \in S_p$, then $\pi_{\gp}$ is principal series of the form $I(\psi_1|\cdot|^{1/2},\psi_2|\cdot|^{1/2})$ where $\psi_1|_{\CO_{F,\gp}^\times}$ corresponds to $\xi_{\gp}$ and $\psi_2$ is unramified,
\item if $\gp \in S_p$, then $\rho|_{G_{F_{\gp}}}  \sim \left(\begin{array}{cc} \widetilde{\chi}_{2,\gp} & * \\ 0 &\chi_{\mathrm{cyc}}^{-1}\widetilde{\chi}_{1,\gp}\end{array}\right) $, where $\widetilde{\chi}_{2,\gp}$ is an unramified lift of $\chi_{2,\gp}$ and $\widetilde{\chi}_{1,\gp}$ is a tamely ramified lift of $\chi_{1,\gp}$ (so $\widetilde{\chi}_{1,\gp}|_{I_{F_{\gp}}} = \xi_{\gp}$).
\end{itemize}
Since $\pi_{\gp}$ corresponds via local Langlands to $\mathrm{WD}(\rho_\pi|_{G_{F_{\gp}}}) \cong \widetilde{\chi}_{2,\gp}\oplus \widetilde{\chi}_{1,\gp}|\cdot|^{-1}$, we have $\psi_1 = \widetilde{\chi}_{1,\gp}|\cdot|^{-1}$ and $\psi_2 =\widetilde{\chi}_{2,\gp}$, after possibly interchanging $\psi_1$ and $\psi_2$ if $\gp \not\in S_0$ (conflating characters of $W_{F_{\gp}}$ and $F_{\gp}^\times$).

Let $B$ denote the totally definite quaternion algebra over $F$ ramified at either $S_\infty$ or $S_\infty \cup \{w\}$, according to whether $d$ is even or odd.  Then by Theorem~\ref{thm:JL}, there is an automorphic representation $\Pi \in \mathcal{C}_{\vec{2}}^B$ such that $\pi_v \cong \Pi_v$ for all finite places $v \neq w$.

For $\gp \in S_p$, define $U_{\gp,0}$ (resp.~$U_{\gp,1}$) to be $\CO_{B,\gp}^\times$ to be the open compact subgroup of $\CO_{B,\gp}^\times$ corresponding to 
$$ U_0(\gp)_{\gp} = \left\{\left.\,\smat{a}{b}{c}{d}\in \GL_2(\CO_{F,\gp})\,\right|\,c \in \gp\CO_{F,\gp}\,\right\}$$
(resp.~$U_1(\gp)_{\gp}$); otherwise let $U_{\gp,0} = U_{\gp,1} = \CO_{B,\gp}^\times$.  Let $U_1 \subset U_0 \subset U$ be open compact subgroups of $B_{\f}^\times$ of the form 
$$U_{1,p} U^p \subset U_{0,p} U^p \subset U_pU^p,$$
where $U_{i,p} = \prod_{\gp \in S_p} U_{i,\gp}$ for $i=0,1$, $U_p= \CO_{B,p}^\times$, and (the same) $U^p \subset (B_{\f}^{(p)})^\times$ is sufficiently small that $\Pi^{U_1} \neq 0$ (in addition to the usual sense).

For a sufficiently large finite set of primes of $Q$ containing $S_p$, we have $T_v = \widetilde{a}_v$ and $S_v = \widetilde{d}_v$ on $\Pi^{U_1}$ for all $v \not\in Q$, where $\widetilde{a}_v$ is a lift of $\tr(\rho(\Frob_v))$ and $\widetilde{d}_v$ is a lift of $\Nm_{F/\QQ}(v)^{-1}\det(\rho(\Frob_v))$.  Furthermore, for $\gp \in S_0$, we have $T_{\gp} = \widetilde{a}_{\gp}$ on $\Pi^{U_1}$, where $\widetilde{a}_{\gp} \in \CO^\times$ is a lift of $\chi_{2,\gp}(\Frob_{\gp})$ (more precisely,  $\widetilde{\chi}_{2,\gp}(\Frob_{\gp})$ or $\widetilde{\chi}_{2,\gp}(\Frob_{\gp}) + p^{f_{\gp}} \widetilde{\chi}_{1,\gp}(\Frob_{\gp})$, according to whether or not $\gp \in S_0$). Furthermore for each $\gp \in S_0$, $U_{0,\gp}/U_{1,\gp} \cong \FF_{\gp}^\times$ acts on $\Pi^{U_1}$ via the Teichm\"uller lift of $\prod_{\sigma \in \Sigma_\gp} \overline{\sigma}^{2-\ell_\sigma}$.  

Let $\widetilde{\TT}^Q$ denote the polynomial ring over $\CO$ in the variables $T_v$ and $S_v$ for $v \not\in Q$, and $T_\gp$ for $\gp \in S_p$, and let $\widetilde{\gm}$ denote the kernel of the homomorphism $\widetilde{\TT}^Q \to E$ defined by
$$\begin{array}{c}
T_v \mapsto \tr(\rho(\Frob_v))\quad\mbox{and}\quad S_v \mapsto \Nm_{F/\QQ}(v)^{-1}\det(\rho(\Frob_v)),\quad\mbox{for $v\not\in Q$;}\\
T_{\gp} \mapsto \chi_{2,\gp}(\Frob_{\gp}), \quad\mbox{for $\gp\in S_p$.}
\end{array}$$
Let $\chi = \prod_{\gp \in S_0} \overline{\sigma}^{\ell_\sigma-2}$ and $\widetilde{\chi}$ its Teichm\"uller lift, viewed as characters of $U_{0,p}/U_{1,p} \cong \FF_{\gp}^\times$, inflated to characters of $U_{0,p}$, and consider the action of $\widetilde{\TT}^Q$ on
$$M^B(U_0,\CO(\widetilde{\chi})) = (M^B(U_1,\CO)\otimes_{\CO}\CO(\widetilde{\chi}))^{U_0}.$$
Since the surjective map
$$M^B(U_1,\CO) \otimes_{\CO} \CC = M^B(U_1,\CC)  \onto \Pi^{U_1}$$
is compatible with the action of $U_0$ and $\widetilde{\TT}$, it follows that there is an eigenform in $M^B(U_0,\CO(\widetilde{\chi}))$ for $\widetilde{\TT}^Q$ on which $T_v$ acts as $\widetilde{a}_v$ and $S_v$ as $\widetilde{d}_v$ for $v \not\in Q$, and $T_{\gp}$ as $\widetilde{a}_{\gp}$ for $\gp \in S_p$.  Therefore the maximal ideal $\widetilde{\gm}^Q$ is in the support of $M^B(U_0,\CO(\widetilde{\chi}))$, and hence in that of
$M^B(U_0,E(\chi))$.

Now let $T = \mathrm{Coind}_{U_{0,p}}^{U_p}E(\chi)$.
As in the proof of Theorem~\ref{thm:ord}, we have a $U_p$-equivariant homomorphism 
$\varepsilon: D'_{\vec{\ell},-\vec{1}} \to T$;
in this case it is even injective as a consequence of the irreducibility of $D'_{\vec{\ell},-\vec{1}}$.  Furthermore, we have an endomorphism $T_{\gp}$ on $M^B(U,T)$ for each $\gp \in S_p$ which is compatible with the resulting maps
$$M^B_{\vec{\ell},-\vec{1}}(U,E) \longrightarrow M^B(U,T) \stackrel{\sim}{\lra} M^B(U_0,E(\chi)).$$
We claim that $\widetilde{\gm}_\rho$ is not in the support of the cokernel.  In order to prove this, write 
 $S_0 = \{\gp_1,\ldots,\gp_r\}$, $D_{\vec{\ell},-\vec{1}} = \bigotimes_{i=1}^r D_i$ where $D_i = \bigotimes_{\sigma \in \Sigma_{\gp_i}} \Sym^{\ell_{\sigma}-2} E^2$, $T = \bigotimes_i T_i$ where $T_i = \Coind_{U_{0,\gp}}^{U_{\gp}}\chi_i$ and $\chi_i = \prod_{\sigma\in\Sigma_{\gp_i}} \overline{\sigma}^{\ell_\sigma-2}$, and factor $\varepsilon$ as the composite of morphisms of the form
$$\varepsilon_i: (\bigotimes_{i=1}^{j-1} T_i) \otimes (\bigotimes_{i=j}^r D_i)
 \longrightarrow (\bigotimes_{i=1}^{j} T_i) \otimes (\bigotimes_{i=j+1}^r D_i)$$
for $j=1,\ldots,r$ (defined as the tensor product of the morphism $D'_j \to T_j$ with the identity in the other factors).
The argument in the proof of Theorem~\ref{thm:ord} then shows that $T_{\gp_i}$ annihilates the cokernel of the map on cohomology induced by $\varepsilon_i$, from which it follows that $\widetilde{\gm}_\rho$ is not in its support.  Therefore $\widetilde{\gm}_{\rho}$ is not in the support of the cokernel of the resulting composite morphism.  It follows that $\widetilde{\gm}_\rho$ is in the support of $M^B_{\vec{\ell},-\vec{1}}(U,E)$ and hence that $M^B_{\vec{\ell},-\vec{1}}(U,E)[\widetilde{\gm}_\rho] \neq 0$, which implies that $\rho$ is $S_p$-ordinarily definitely modular.

The proof of $S_p$-ordinarily algebraic modularity is exactly the same, starting instead with the quaternion algebra over $F$ ramified at either $S_\infty \cup \{w \}- \{\sigma_0\}$ or $S_\infty - \{\sigma_0\}$ for some $\sigma_0 \in \Sigma_\infty$. 
\end{proof}

We will also make use of the following refinement of some of the implications in Theorem~\ref{thm:defalggeom}:
\begin{theorem} \label{thm:ordmods} Suppose that $\rho$, $\vec{k}$ and $\vec{m}$ are as in Conjecture~\ref{conj:defalggeom}, and let $S$ be a subset of $S_p$. If $\rho$ is $S$-ordinarily definitely or algebraically modular of weight $(\vec{k},\vec{m})$, then $\rho$ is $S$-ordinarily geometrically modular of weight $(\vec{k},\vec{m})$.
\end{theorem}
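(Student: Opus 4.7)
The plan is to revisit the proofs of Theorem~\ref{thm:defalggeom}(1) and~(2), tracking the $T_\gp$ eigenvalues for $\gp \in S$ through each construction. The key point is that all the operations used—twisting by characters $e_\xi$, level-raising at auxiliary primes $w \notin S_p$, lifting from characteristic $p$ to characteristic zero via cohomological vanishing, and applying the Jacquet--Langlands correspondence—are compatible with the local Hecke actions at primes in $S$, so that localization at an enlarged maximal ideal incorporating the $T_\gp$-eigenvalues behaves formally the same way.

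First I would reduce to the case $\vec{m} = -\vec{1}$ by twisting. Choose a character $\xi$ as in Definition~\ref{def:ordalgmod}, so that $e_\xi \varphi \in M^B_{\vec{k},-\vec{1}}(U,\Fpbar)$ is an eigenform for each $T_\gp$ with eigenvalue $a_{\gp,\xi} \in \Fpbar^\times$, associated to the Galois representation $\chi^{-1}\rho$ where $\chi$ corresponds to $\xi$. It suffices to prove the theorem for $\chi^{-1}\rho$ of weight $(\vec{k},-\vec{1})$, as applying $e_\xi^{-1}$ to the resulting geometric form $f'$ and twisting back yields the desired conclusion for $\rho$. In particular, since $\vec{k} \in \ZZ_{\geq 2}^{\Sigma}$, the integral operators $T_{\varpi_\gp}$ act on $M^B_{\vec{k}}(U,\CO)$ and $M_{\vec{k}}(U,\CO)$, and one checks via the relation (\ref{eqn:Tptwist}) (and its analogue for Hilbert modular forms, cf.\ \S\ref{ssec:Hecke}) that on pre-twisted forms, ordinariness translates to $T_{\varpi_\gp}$ acting as a unit $b_\gp \in \Fpbar^\times$.

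For the definite case, the proof of Theorem~\ref{thm:defalggeom}(1) produces a characteristic zero lift via an Ihara-type level-raising at an auxiliary prime $w \notin S_p$ followed by JL. Since $w \notin S_p$, the operators $T_\gp$ (equivalently $T_{\varpi_\gp}$) commute with the maps $\beta$, $\alpha$ and their composites, and also with the isomorphism $e_\xi$. Enlarging the Hecke algebra $\TT$ to include the $T_{\varpi_\gp}$ for $\gp \in S$, we enlarge the maximal ideal $\gm$ by appending the generators $T_{\varpi_\gp} - \widetilde{b}_\gp$, where $\widetilde{b}_\gp \in \CO^\times$ is a fixed lift of $b_\gp$. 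The torsion-freeness and surjectivity arguments in the original proof only use the vanishing of certain cohomology groups after localization, and further localizing at a larger maximal ideal preserves these vanishings. The resulting eigenform produces an automorphic representation $\Pi$ whose local factor $\Pi_\gp$ at each $\gp \in S$ is the unramified principal series with $T_{\varpi_\gp}$-eigenvalue lifting $b_\gp$ on the spherical vector; this data is preserved under JL (which affects only $\Pi_\sigma$ for $\sigma \in \Sigma^B_\infty$ and $\Pi_w$), so the transferred form $f' \in M^{B'}_{\vec{k},-\vec{1}}(U',E)$ inherits the ordinariness at all $\gp \in S$.

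For the algebraic case, the same strategy applies to the proof of Theorem~\ref{thm:defalggeom}(2): extend $\TT$ to include $T_{\varpi_\gp}$ for $\gp \in S$, localize at the enlarged maximal ideal $\gm$, and observe that the vanishing of $H^0(Y^B_U,\mathcal{D}_{\vec{k},E})_{\gm_\rho^{Q}}$ and $H^2(Y^B_U,\mathcal{D}_{\vec{k},E})_{\gm_\rho^{Q}}$ established there persists under further localization, yielding the short exact sequence of torsion-free $\CO$-modules needed to lift $\varphi'$ to a characteristic zero eigenform $\widetilde{\varphi}$ with integral $T_{\varpi_\gp}$-eigenvalues lifting $b_\gp$. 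Applying JL (which again preserves the local $\Pi_\gp$ at primes of $S$) yields a Hilbert modular eigenform in $M_{\vec{k}}(U,K)$ whose reduction mod $\varpi$ and retwisting by $e_\xi$ produces the desired $f'$. The main obstacle, which is essentially bookkeeping, is verifying that the compatibility between $T_{\varpi_\gp}$ as defined on $B$-forms in \S\ref{ssec:indefQO} (or \S\ref{ssec:defQO}) and on Hilbert modular forms in \S\ref{ssec:Hecke} intertwines with the JL correspondence—this follows from the fact that both correspond, at $\gp \notin \Sigma^B$, to the same unramified Hecke operator arising from the matching of spherical Hecke algebras across JL.
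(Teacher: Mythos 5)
Your argument is essentially correct, but you take a longer route than the paper for the definite case. The paper's own proof handles both the definite and algebraic cases uniformly by running the argument of Theorem~\ref{thm:defalggeom}(2) with the enlarged Hecke algebra $\widetilde{\TT}$ including the $T_{\varpi_\gp}$ for $\gp\in S$: localize at the enlarged maximal ideal, observe that the cohomological vanishing established for $\gm_\rho^Q$ persists under the finer localization, lift directly to characteristic zero (for a totally definite $B$ this is immediate, since $M^B_{\vec{k}}(U,\CO)$ is already free), apply JL, reduce, and untwist. You instead treat the definite case by routing through Theorem~\ref{thm:defalggeom}(1): introduce an auxiliary prime $w$, invoke Ihara's Lemma for the level-raising map $\beta$, pass to an indefinite quaternion algebra $B'$ ramified at $w$, and then apply the part~(2) machinery to $B'$. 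This is valid — the commutation of $T_{\varpi_\gp}$ ($\gp \in S_p$) with the maps $\beta,\alpha$ and with $e_\xi$, and the preservation of torsion-freeness and support non-vanishing under localization at the larger ideal $\widetilde{\gm}$, all hold as you claim, and ordinariness is indeed preserved under JL since $B$ and $B'$ are unramified at all $\gp \in S_p$. But it buys nothing here: the detour via Ihara's Lemma and an auxiliary prime $w$ is only required when one wants to move from definite to algebraic modularity at the same weight; for the target of \emph{geometric} modularity, the direct JL transfer from the definite quaternion algebra to $\GL_2(\AA_F)$ is available and simpler. Your treatment of the algebraic case (threading $T_{\varpi_\gp}$ through the $H^0/H^2$ vanishing and the lift to $\CO$-coefficients, then JL and untwist) is the same as the paper's.
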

\begin{proof} Firstly, as usual we may twist to reduce to the case 
$\vec{m} = - \vec{1}$ and suppose the existence of a non-zero $\varphi \in M^B_{\vec{k},-\vec{1}}(U,\Fpbar)[\gm_\rho^Q]$ (with $B$ as in Definition~\ref{def:ordalgmod}) such that for each $\gp \in S$, we have $T_\gp \varphi = a_\gp \varphi$ 
from some $a_\gp \in \CO^\times$.

We then proceed exactly as in the proof of part (2) of Theorem~\ref{thm:defalggeom}, except that instead of $\TT$, we use the polynomial ring  $\widetilde{\TT}$ over (some sufficiently large) $\CO$ in the variables $T_v$ and $S_v$ for $v\not\in Q$, and $T_{\varpi_{\gp}}$ for $\gp \in S$.  Letting $\xi$, $a_v'$ and $d_v'$ (for $v\not\in Q$) be as in the proof of Theorem~\ref{thm:defalggeom} and setting $a_{\gp}' = \epsilon^{-1}\xi(\varpi_{\gp}^{(p)})a_{\gp}$ for $\gp \in S$, 
we get that $\widetilde{\gm}$ is in the support of $M^B_{\vec{k}}(U,E)$, where
$\widetilde{\gm}$ is the $\CO$-algebra homomorphism defined by
$$
T_v \mapsto a_v' \,\,\mbox{and}\,\, S_v \mapsto d_v',\,\,\mbox{for $v\not\in Q$;}\quad
T_{\gp} \mapsto a_{\gp}', \,\,\mbox{for $\gp\in S_p$.}
$$
It then follows as in the proof there that $M^B_{\vec{k}}(U,\CO)_{\widetilde{\gm}}$ is non-zero and torsion-free. This  implies the existence of an eigenform in $M^B_{\vec{k}}(U,\CO)$ for the operators in $\widetilde{\TT}$, with eigenvalues lifting those above (after possibly enlarging $\CO$).  This in turn gives an automorphic representation to which we may apply the Jacquet--Langlands correspondence to obtain an eigenform in $M_{\vec{k}}(U,\CO)$ with the same property.  Finally reducing mod $\varpi$ and untwisting yields an eigenform $f \in M_{\vec{k},-\vec{1}}(U,E)$ with the same eigenvalues as the original $\varphi$ for $T_v$ and $S_v$ for all $v\not\in Q$, as well as the $T_{\gp}$ for $\gp \in S$.  In particular $\rho_f \sim \rho$ and $f$ is $\gp$-ordinary for all $\gp \in S$, as required.
\end{proof}

\begin{remark} \label{rmk:defalgord}  The proof of part (1) of Theorem~\ref{thm:defalggeom} can be similarly modified to show that if $\rho$ is $S$-ordinarily definitely modular of weight $(\vec{k},\vec{m})$ and $D_{\vec{k},\vec{m}}$ is irreducible, then $\rho$ is $S$-ordinarily algebraically modular of weight $(\vec{k},\vec{m})$.
\end{remark}

\section{Partial weight one modularity} \label{ssec:pp1}
In the final sections, we will specialize to the case where $F$ is a real quadratic field in which $p$ ramifies.
We will determine precisely when the conjecture predicts that a representation $\rho$ arises from a Hilbert modular form of partial weight one and prove strong results in this direction, analogous to those of \cite[\S11]{DS1} for the quadratic inert case. 

In order to do this, we need some extensions of results from \cite{DS1} and \cite{GLS}.  We do this in more general settings, as it requires no extra work and could be useful in extending our 
methods beyond the ramified quadratic case.

\subsection{Normalized and stabilized eigenforms} \label{ssec:eigenforms}
We start with slight generalizations of various notions and results from Chapter~10 of \cite{DS1}.  Recall that it is assumed there that $p$ is unramified in $F$, and also that $k_\sigma \ge 2$ for all $\sigma \in \Sigma$ in statements involving Hecke operators at primes over $p$.  Using the results of \cite{FD:KS} (and \cite{compact}), we may remove the assumption that $p$ be unramified and relax the assumption on the weight to require only that $k_\sigma \ge 1$ for all $\sigma \in \Sigma$.
The proofs in \cite{DS1} carry over essentially without change, except to adapt them to our notation and conventions. 

Firstly, we have the following strengthening of \cite[Lemma~10.2.1]{DS1}:
\begin{lemma} \label{lem:levelU1}
If $\rho:G_F \to \GL_2(\Fpbar)$ is irreducible and geometrically modular of weight $(\vec{k},\vec{m})$, then there exist an ideal $\gn$ prime to $p$ and a finite extension $E$ of $\F_p$ in $\Fpbar$
such that $\rho$ is equivalent to $\rho_f$ for some 
$f \in S_{\vec{k},\vec{m}}(U_1(\gn),E)$, where $f$ is an eigenform for the operators $T_v$ for all $v\nmid p$ and $S_v$ for all $v\nmid \gn p$.  Furthermore if $\vec{k} \in \ZZ_{\ge 1}^{\Sigma}$, $\vec{m} = -\vec{1}$, $S\subset S_p$ and $\rho$ is $S$-ordinarily geometrically modular of weight $(\vec{k},\vec{m})$, then we may take $f$ to be an eigenform for $T_\gp$ for all $\gp \in S_p$, with $T_{\gp}f \neq 0$ for all $\gp \in S$.
\end{lemma}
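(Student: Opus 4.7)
The plan is to reduce to level $U_1(\gn)$ for a suitable $\gn$ prime to $p$ and then to produce a simultaneous eigenvector for all the Hecke operators that act at this finer level.

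First, by Definition~\ref{def:geomod} there is a non-zero eigenform $f_0\in M_{\vec{k},\vec{m}}(U_0,E_0)$ for the operators $T_v,S_v$ ($v\notin Q_0$) with $\rho_{f_0}\sim\rho$, for some $U_0\supset \GL_2(\CO_{F,p})$, finite extension $E_0/\F_p$ in $\Fpbar$, and finite set of primes $Q_0$ containing $S_p$ as well as all $v$ with $\GL_2(\CO_{F,v})\not\subset U_0$. Since $\rho$ is irreducible, Proposition~\ref{prop:cuspidal} places $f_0$ in $S_{\vec{k},\vec{m}}(U_0,E_0)$. I choose any non-zero ideal $\gn\subset \CO_F$, prime to $p$, such that $U_1(\gn)\subset U_0$, which is possible since the groups $U_1(\gn)$ for such $\gn$ form a neighborhood basis of $\GL_2(\CO_{F,p})$ in $\GL_2(\A_{F,\f})$. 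The natural inclusion $S_{\vec{k},\vec{m}}(U_0,E_0)\hookrightarrow S_{\vec{k},\vec{m}}(U_1(\gn),E_0)$ then lets me view $f_0$ as a cusp form of level $U_1(\gn)$.

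Next, I form the generalized eigenspace
$$V := \bigl\{g\in S_{\vec{k},\vec{m}}(U_1(\gn),E_0)\,\bigm|\,(T_v-a_v)^N g = (S_v-d_v)^N g = 0\text{ for all }v\notin Q_0,\ N\gg 0\bigr\},$$
where $a_v,d_v$ are the $T_v,S_v$-eigenvalues of $f_0$. This $V$ is finite-dimensional over $E_0$ and contains $f_0$. Since the operators $T_v$ ($v\nmid p$), $S_v$ ($v\nmid \gn p$), together with $T_\gp$ for all $\gp\in S_p$ in the ordinary setting (where~(\ref{eqn:Tpinequality}) holds because $\vec{m}=-\vec{1}$ and $\vec{k}\in\ZZ_{\ge 1}^\Sigma$), all commute with each other and with the defining operators of $V$, they preserve $V$. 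After enlarging $E_0$ to a finite extension $E\subset\Fpbar$ containing all their eigenvalues on $V$, the standard argument for a commuting family of endomorphisms of a finite-dimensional vector space over a sufficiently large field yields a non-zero common eigenvector $f\in V\otimes_{E_0}E$. Because the $T_v,S_v$-eigenvalues of $f$ for $v\notin Q_0$ match those of $f_0$, we still have $\rho_f\sim\rho$, giving the first assertion.

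For the ordinary refinement, I would restrict to the $S$-ordinary subspace
$$V^{\ord,S} := \bigcap_{\gp\in S}\im\bigl(T_\gp^{N}|_V\bigr)$$
for $N$ large enough that each image stabilizes; this is the largest subspace on which each $T_\gp$ ($\gp\in S$) acts invertibly, and it is preserved by the full commuting Hecke algebra. When some $k_\sigma>1$ with $\sigma\in\Sigma_\gp$, the $\gp$-ordinarity of $f_0$ forces $T_\gp f_0 = a_\gp f_0$ with $a_\gp\in E_0^\times$, so $f_0\in V^{\ord,S}$; a common eigenvector in $V^{\ord,S}\otimes_{E_0}E$ then has $T_\gp$-eigenvalue in $E^\times$ for every such $\gp\in S$. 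The subtlest point, which I view as the main obstacle, is the edge case where $k_\sigma=1$ for every $\sigma\in\Sigma_\gp$ for some $\gp\in S$: here $\gp$-ordinarity of $f_0$ carries no constraint, so $V^{\ord,S}$ could a priori vanish at such $\gp$. To handle this, I would invoke Proposition~\ref{prop:ppw1} together with the non-vanishing of $\det\rho(\Frob_\gp)$ to extract within $V$ a $T_\gp$-eigenform whose eigenvalue is one of the two necessarily non-zero Frobenius eigenvalues of $\rho|_{G_{F_\gp}}$, and then incorporate the corresponding eigenspace into the simultaneous-diagonalization step.
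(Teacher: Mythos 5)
There is a genuine gap at the very first step, and it is fatal. You claim that for a given open compact $U_0\supset\GL_2(\CO_{F,p})$ one can choose $\gn$ prime to $p$ with $U_1(\gn)\subset U_0$, on the grounds that the $U_1(\gn)$ "form a neighborhood basis." This is false: every $U_1(\gn)$ contains all matrices $\smat{a}{b}{0}{1}$ with $a,b$ arbitrary (and more generally places no constraint at all on the $a,b$ entries), so $\bigcap_{\gn}U_1(\gn)$ is a very large subgroup, and in particular $U_1(\gn)\not\subset U(\gn_0)$ for any $\gn$ once $\gn_0\neq\CO_F$. Since $\rho$ a priori arises from a form of level $U(\gn_0)$, the natural inclusion you invoke goes in the \emph{wrong} direction for your purposes: $U(\gn_0)\subset U_1(\gn)$ gives $S_{\vec{k},\vec{m}}(U_1(\gn),E)\hookrightarrow S_{\vec{k},\vec{m}}(U(\gn_0),E)$, not the reverse. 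Passing from an eigenform at the small level $U(\gn)$ to an eigenform at the \emph{larger} level $U_1(\gn)$ (where the extra operators $T_v$ for $v\mid\gn$ make sense) is precisely the nontrivial content of the lemma; the paper handles it by invoking the argument of \cite[Lemma~10.2.1]{DS1}, which decomposes the level-$U(\gn)$ eigenspace under the prime-to-$p$ abelian quotient $U_1(\gn)/U(\gn)$ and relates the isotypic pieces to level-$U_1$ forms by twisting, after shrinking $\gn$ and enlarging $E$. Your proposal never engages with this step, so the rest of the simultaneous-diagonalization argument (which is otherwise standard and fine) is built on a level that you have not reached.

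A secondary point: your proposed remedy for the case $k_\sigma=1$ for all $\sigma\in\Sigma_\gp$ is also off. By Proposition~\ref{prop:ppw1} the $T_\gp$-eigenvalue in that case is $\tr(\rho(\Frob_\gp))$, not an individual Frobenius eigenvalue, so the non-vanishing of $\det\rho(\Frob_\gp)$ does not force $T_\gp f\neq 0$. You would instead need to track how the $T_\gp$-eigenvalue is preserved under the double-coset manipulations used in the level transition, which is how the paper's remark following the lemma handles the ordinary refinement.
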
 
Indeed the argument in \cite{DS1} starts with an eigenform, say  $g$, in $M_{\vec{k},\vec{m}}(U(\gn),E)$ for the operators $T_v$ and $S_v$ for all $v \nmid \gn p$ giving rise to $\rho$, and produces one in $M_{\vec{k},\vec{m}}(U_1(\gn),E)$ after possibly shrinking $\gn$ and enlarging $E$. By the remark after Definition~\ref{def:geomod}, we may replace $M_{\vec{k},\vec{m}}(U_1(\gn),E)$ by $S_{\vec{k},\vec{m}}(U_1(\gn),E)$, and the fact that we may take $f$ to be an eigenform for the $T_v$ for $v|\gn$  follows from their commutativity with each other and the $T_v$ and $S_v$ for $v\nmid \gn p$.  Under the additional hypothesis, the commutativity of the $T_v$ and $S_v$ for $v\nmid \gn p$ with the $T_{\gp}$ for $\gp \in S$ allows us to assume $g$ is also an eigenform for the $T_{\gp}$, with eigenvalue in $E^\times$ if $\gp \in S$.  The fact that the same will be true for the resulting eigenform in $M_{\vec{k},-\vec{1}}(U_1(\gn),E)$ follows from the commutativity between the $T_{\gp}$ and the double coset operators in the construction of $f$. (Note that we allow $S$ to be empty.)

As in \cite[\S10.3]{DS1}, we may refine the twisting construction of \S\ref{ssec:twist} in order to work with eigenforms of level $U_1(\gn)$ (for varying $\gn$). To align with the conventions of this paper, let us fix $t_1 \in (\AA_{\f}^{(p)})^\times$ and 
$m_1 \in F_+^\times$ such that 
$m_1 t_1$ generates $\widehat{\gd}^{-1} = \gd^{-1} \otimes_{\CO_F}\widehat{\CO}_F$, and let $c = (t_1 m_1)^{-1}$.

Suppose now that $\ell \in \ZZ^\Sigma$ and $\xi: \A_F^\times/F^\times F_{\infty,+}^\times \to E^\times$ is a continuous character such that $\xi(u_p) = \overline{u}_p^{-\vec{\ell}}$ for all $u_p \in \CO_{F,p}^\times$, and let $\gm$ denote its conductor, i.e., the largest (prime-to-$p$) ideal $\gm$ of $\CO_F$ such that $\xi$ is trivial on $\ker ((\widehat{\CO}_F^{(p)})^\times \onto (\CO_F/\gm)^\times )$. Note that the restriction of $\xi$ to $(\A_{F,\f}^{(p)})^\times$ is a character as in \S\ref{ssec:twist}, i.e., $\xi(\alpha) = \alpha^{\vec{\ell}}$ for all $\alpha \in \CO_{F,(p),+}^\times$.

The construction in \cite[\S10.3]{DS1} then yields the following:
\begin{lemma} \label{lem:U1twist}  Suppose that $f \in S_{\vec{k},\vec{m}}(U_1(\gn),E)$.  Then there exists an element $f_{\xi} \in S_{\vec{k},\vec{\ell}+\vec{m}}(U_1(\gm^2\gn),E)$ with $q$-expansion coefficients given by
$$r^t_m(f_\xi) = \left\{\begin{array}{cl}
\xi((ctm)^{(\gm)})r^t_m(f),& \mbox{if $(ctm)_v \in \CO_{F,v}^\times$ for all $v|\gm$;}\\
0,& \mbox{otherwise;}\end{array}\right.$$
for all $t \in (\AA_{F,\f}^{(p)})^\times$ and $m \in t^{-1}\widehat{\gd}^{-1} \cap F^\times_+$ (where $\cdot^{(\gm)}$ denotes the projection to components prime to $\gm$).  In particular, if 
$r^t_m(f) \neq 0$ for some $t,m$ such that $(ctm)_v \in \CO_{F,v}^\times$ for all $v|\gm$, then $f_\xi \neq 0$, in which case if $f$ is an eigenform for $T_v$ for all $v\nmid p$ and $S_v$ for all $v\nmid \gn p$, then so is $f_\xi$ for $T_v$ for all $v\nmid p$ and $S_v$ for all $v\nmid \gm\gn p$, and $\rho_{f_\xi} \sim \chi \otimes \rho_f$ where $\chi$ is the character corresponding to $\xi$ via class field theory.
\end{lemma}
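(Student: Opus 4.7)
The plan is to carry over the twisting construction of \cite[\S10.3]{DS1} (the unramified case) essentially verbatim, observing that all the tools used there -- the isomorphism $e_\xi$ of (\ref{eqn:twist}), the $q$-expansion formulas (\ref{eqn:qexpinv}) and its analogue for $e_\xi$, and the compatibility of double coset operators with multiplication by $e_\xi$ -- are provided in the Pappas--Rapoport setting in \S\ref{ssec:HMFp}--\S\ref{ssec:twist} of the present paper irrespective of whether $p$ ramifies in $F$. In particular, I would only need to redo the bookkeeping at primes over $p$, since the twisting happens entirely at the prime-to-$p$ places indexed by divisors of $\gm$.

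Concretely, first decompose $\xi = \xi^{(p)}\cdot\xi_p\cdot\xi_\infty$ relative to $\AA_F^\times = (\AA_{F,\f}^{(p)})^\times\cdot F_p^\times\cdot F_\infty^\times$. The hypothesis on $\xi|_{\CO_{F,p}^\times}$ together with $\xi|_{F^\times F_{\infty,+}^\times} = 1$ implies that $\xi^{(p)}(\alpha) = \alpha^{\vec{\ell}}$ for $\alpha \in \CO_{F,(p),+}^\times$, so by \S\ref{ssec:twist} (applied after enlarging $E$ and shrinking levels as needed) there is a canonical global section $e_\xi$ of $\CA_{\vec{0},\vec{\ell},E}$ at level $U_1(\gm\gn)$ satisfying $r^t_0(e_\xi) = \xi((ct)^{(\gm)})$ when $(ct)_v \in \CO_{F,v}^\times$ for all $v|\gm$, and $r^t_0(e_\xi)=0$ otherwise; the factor $\gm^2$ appears in the level because one truly needs $U_1(\gm\gn)\cap\mathrm{diag}(1,U_1(\gm))\cdot U_1(\gm\gn)$, which contains $U_1(\gm^2\gn)$. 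I would then define $f_\xi$ as the product of $e_\xi$ with the image of $f$ under the natural inclusion $S_{\vec{k},\vec{m}}(U_1(\gn),E)\hookrightarrow S_{\vec{k},\vec{m}}(U_1(\gm^2\gn),E)$, so that $f_\xi \in S_{\vec{k},\vec{\ell}+\vec{m}}(U_1(\gm^2\gn),E)$.

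The $q$-expansion formula would then follow from the product formula $r^t_m(e_\xi f) = r^t_0(e_\xi)\cdot r^t_m(f)$ (which is a direct consequence of the description of completions at cusps in \S\ref{ssec:qexp}), combined with the transformation rule (\ref{eqn:qexpinv}) to shift from $r^t_0(e_\xi) = \xi((ct)^{(\gm)})$ to the asserted factor $\xi((ctm)^{(\gm)})$; more precisely, one replaces the representative $t$ by $mt$ modulo $\CO_{F,(p),+}^\times$, which introduces the extra factor $\xi(m^{(\gm)})$ thanks to $\xi$ being trivial on $F^\times$. The non-vanishing statement is immediate from the formula. The Hecke equivariance (for $T_v$, $v\nmid p$, and $S_v$, $v\nmid \gm\gn p$) is formal once $f_\xi$ is written as $e_\xi\cdot f$, because $e_\xi$ is $\GL_2(\AA_{F,\f}^{(p)})$-equivariant up to the character $\xi\circ\det$, and the Hecke operators at such $v$ commute with multiplication by $e_\xi$ up to the predicted eigenvalue shifts $\xi(\varpi_v)$ and $\xi(\varpi_v)^2$. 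The identification $\rho_{f_\xi}\sim \chi\otimes\rho_f$ then drops out of the Chebotarev density theorem applied to the characteristic polynomials of Frobenius prescribed by Theorem~\ref{thm:galois}, using that $\chi(\Frob_v) = \xi(\varpi_v)$ for $v\nmid \gm\gn p$ by the normalization of local class field theory.

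The main obstacle is the careful level computation in the second paragraph: one must verify that the translates encoded by the Gauss-sum-like formula for $e_\xi$ really give invariance under $U_1(\gm^2\gn)$ and no smaller congruence subgroup, and that the truncation at primes dividing $\gm$ comes out of the formalism rather than having to be imposed by hand. In the unramified case this is handled in \cite[\S10.3]{DS1} by an explicit computation on $q$-expansions at all cusps at $\infty$, and the same computation carries over in the ramified setting since the cusps at $\infty$ for $U_1(\gn)$ and their completions (as recalled in \S\ref{ssec:qexp}) are insensitive to the ramification of $p$ in $F$; the only change is that the weight $\vec{k}$ is now indexed by the full set $\Sigma$ rather than by embeddings into an unramified field, and this plays no role in the prime-to-$p$ twisting argument.
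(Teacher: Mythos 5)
Your approach — multiply by the global section $e_\xi$ — does not actually produce a form at $U_1$-level, and this is the heart of the matter. For $U_1(\gn)$ we have $\det(U_1(\gn)) = \widehat{\CO}_F^\times$, so a section on which $\GL_2(\AA_{F,\f}^{(p)})$ acts through $\xi\circ\det$ is $U_1(\gn)^p$-invariant only when $\xi$ is trivial on $(\widehat{\CO}_F^{(p)})^\times$, i.e.\ when $\gm = \CO_F$. For $\xi$ of nontrivial prime-to-$p$ conductor, $e_\xi$ exists only at $U(\gm)$-type levels (as in \S\ref{ssec:twist}), and $e_\xi\cdot f$ is an element of $S_{\vec{k},\vec{\ell}+\vec{m}}(U(\gm^2\gn),E)$ — not of any $S_{\vec{k},\vec{\ell}+\vec{m}}(U_1(\cdot),E)$. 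Your assertion that $e_\xi$ is a ``canonical global section at level $U_1(\gm\gn)$'' with the stated truncated $q$-expansion is false.

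There is a second, related difficulty: even at the $U(\gm^2\gn)$-level where $e_\xi\cdot f$ does make sense, the formula of \S\ref{ssec:twist} is $r^t_m(e_\xi f) = \xi(t)\,r^t_m(f)$, and the factor $\xi(t)$ is independent of $m$. No amount of replacing $t$ by representatives via (\ref{eqn:qexpinv}) converts this into the $m$-dependent factor $\xi((ctm)^{(\gm)})$, nor does it produce the vanishing at $m$ with $(ctm)_v \not\in \CO_{F,v}^\times$ for some $v\mid\gm$. That truncation is precisely the fingerprint of a Gauss-sum construction: the paper's Lemma~\ref{lem:U1twist} (following \cite[\S10.3]{DS1}) defines $f_\xi$ as a finite linear combination of translates of $f$ by elements supported at the primes dividing $\gm$, weighted by values of a truncated local character $\xi_\gm^{-1}$; the averaging over those translates is what both achieves $U_1(\gm^2\gn)$-invariance and kills the $q$-expansion coefficients supported at $\gm$. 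Your proposal replaces the central idea — a sum over translates — with a single multiplication, so the ``main obstacle'' you identify (verifying the level and the truncation ``come out of the formalism'') is not a bookkeeping issue to be deferred: it is exactly where the missing Gauss-sum argument would have to live.
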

In transporting the argument from \cite[\S10.3]{DS1}, note that our $\xi$ is the character denoted there by $\xi'$.  We also point out two typos there: $\xi$ should be $\xi^{-1}$ in the definition of $\xi_{\gm}^{-1}$ just before (10.7), and the level $\gm\gn^2$ should $\gm^2\gn$ just before (10.8).  Furthermore the assumption that $\gm|\gn$ has implicitly been dropped at that point.  (In fact, if we maintain the assumption that $\gm|\gn$, then the level could be taken as $\gm\gn$ there, as well as the conclusions of \cite[Lemma~10.3.2]{DS1} and Lemma~\ref{lem:U1twist} above.)

\begin{remark} \label{rmk:U1twist} Note that we necessarily have $ctm \in \widehat{\CO}_F$ in the formula for the $q$-expansion coefficient, and the condition prescribing the first case is that the ideal it defines be prime to $\gm$.  With this interpretation, the lemma holds also for $f \in M_{\vec{k},\vec{m}}(U_1(\gn),E)$ (where now $f_{\xi} \in M_{\vec{k},\vec{\ell}+\vec{m}}(U_1(\gm^2\gn),E)$); more precisely, in the case $\gm = \CO_F$, replace $(ctm)^{(\gm)}$ by $ct$ to obtain a formula that applies also to $m=0$.
\end{remark}

We generalize \cite[Defn.~10.5.1]{DS1} as follows (restricted to cusp forms and adapted to our conventions):
\begin{definition} \label{def:normalized}  Suppose that $\vec{k},\vec{m} \in \ZZ^\Sigma$, with $k_\sigma \ge 1$ for all $\sigma \in \Sigma$.  We say that an element $f \in S_{\vec{k},\vec{m}}(U_1(\gn),E)$ is a {\em normalized eigenform} if the following hold:
\begin{itemize}
\item $r_{t_1}^{m_1}(f) = 1$;
\item $f$ is an eigenform for $T_v$ for all $v\nmid p$, and $S_v$ for all $v\nmid \gn p$;
\item $f_\xi \in  S_{\vec{k},-\vec{1}}(U_1(\gm^2\gn),E')$ is an eigenform for $T_\gp$ for all $\gp \in S_p$ and all characters $\xi:\A_F^\times/F^\times F_{\infty,+}^\times \to E'^\times$ such that $\xi(u_p) = \overline{u}_p^{\vec{m}+\vec{1}}$ for all $u_p \in \CO_{F,p}^\times$ (where $E'$ is any finite extension of $E$ and $\gm$ is the prime-to-$p$ conductor of~$\xi$).
\end{itemize}
\end{definition}
Note that the first condition implies that the forms $f_\xi$ in the third are non-zero. Furthermore, if $f$ is a normalized eigenform, then so is $f_{\xi'} \in S_{\vec{k},\vec{\ell}+\vec{m}}(U_1(\gm'^2\gn),E')$ for any character $\xi': \A_F^\times/F^\times F_{\infty,+}^\times \to E'^\times$  
such that $\xi'(u_p) = \overline{u}_p^{-\vec{\ell}}$ for all $u_p \in \CO_{F,p}^\times$ (where $\gm'$ is the prime-to-$p$ conductor of~$\xi'$).

\begin{proposition} \label{prop:normalized}
If $\rho:G_F \to \GL_2(\Fpbar)$ is irreducible and $S$-ordinarily geometrically modular of weight $(\vec{k},\vec{m})$ with $k_\sigma \ge 1$ for all $\sigma \in \Sigma$, then there exist an ideal $\gn$ prime to $p$ and a finite extension $E$ of $\F_p$ in $\Fpbar$
such that $\rho$ is equivalent to $\rho_f$ for some 
normalized eigenform $f \in S_{\vec{k},\vec{m}}(U_1(\gn),E)$ such that $T_{\gp} f_\xi \neq 0$ for all $\gp \in S$ and $\xi$ as in Definition~\ref{def:normalized}.
\end{proposition}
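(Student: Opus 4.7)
The strategy is to reduce to weight $(\vec{k},-\vec{1})$ by twisting, apply Lemma~\ref{lem:levelU1} to obtain a common eigenform for all required operators, enforce the normalization $r_{t_1}^{m_1}=1$, and finally untwist.

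First I would fix a continuous character $\xi_0:\A_F^\times/F^\times F_{\infty,+}^\times \to E_0^\times$ (for a suitable finite extension $E_0$ of $\F_p$) with $\xi_0(u_p)=\overline{u}_p^{\vec{m}+\vec{1}}$ for all $u_p\in\CO_{F,p}^\times$, and write $\chi_0$ for the associated Galois character. By Proposition~\ref{prop:chartwist}, $\chi_0\otimes\rho$ is geometrically modular of weight $(\vec{k},-\vec{1})$; since the twisting isomorphism~(\ref{eqn:twist}) is $\GL_2(\A_{F,\f}^{(p)})$-equivariant and is compatible with the $T_\gp$-action (as follows from a Fourier-coefficient check, $T_\gp$ being a local-at-$\gp$ operator on Hilbert modular varieties with level $\GL_2(\CO_{F,p})$), $\chi_0\otimes\rho$ is in fact $S$-ordinarily geometrically modular of weight $(\vec{k},-\vec{1})$. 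Applying Lemma~\ref{lem:levelU1} then produces an ideal $\gn_0$ prime to $p$, a finite extension $E'\supset E_0$, and a common eigenform $g'\in S_{\vec{k},-\vec{1}}(U_1(\gn_0),E')$ for $T_v$ ($v\nmid p$), $S_v$ ($v\nmid\gn_0p$) and $T_\gp$ ($\gp\in S_p$), with $\rho_{g'}\sim\chi_0\otimes\rho$ and $T_\gp g'\neq 0$ for every $\gp\in S$.

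The main technical obstacle is the normalization. By the $q$-expansion principle some $r_{m_0}^{t_0}(g')\neq 0$, but $(t_0,m_0)$ need not equal $(t_1,m_1)$. To shift the non-vanishing to the distinguished pair, I would exploit the $\GL_2(\A_{F,\f}^{(p)})$-action on the direct limit $\CM_{\vec{k},-\vec{1}}(E')$: the Hecke operators $T_v$, $S_v$ ($v\nmid p$) and $T_\gp$ ($\gp\in S_p$) all commute with this action (being defined from double cosets for the former and a local-at-$p$ construction for the latter), so the Hecke eigensystem of $g'$ generates a $\GL_2(\A_{F,\f}^{(p)})$-subrepresentation of $\CM_{\vec{k},-\vec{1}}(E')$. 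Acting on $g'$ by a diagonal element $\smat{\alpha}{0}{0}{1}$ with $\alpha \in (\A_{F,\f}^{(p)})^\times$ chosen so that $\alpha^{-1}t_0$ represents $t_1$ in the cusp identification modulo~(\ref{eqn:qexpinv}), and descending to a sufficiently fine level $U_1(\gn_1)$, yields a form $g''\in S_{\vec{k},-\vec{1}}(U_1(\gn_1),E')$ with the same Hecke eigensystem and $r_{t_1}^{m_1}(g'')\neq 0$; rescaling gives $r_{t_1}^{m_1}(g'')=1$, while $T_\gp g''\neq 0$ for all $\gp\in S$.

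Finally I would untwist by setting $f = g''_{\xi_0^{-1}} \in S_{\vec{k},\vec{m}}(U_1(\gn),E')$, with $\gn = \gm_0^2\gn_1$ and $\gm_0$ the prime-to-$p$ conductor of $\xi_0^{-1}$. By Lemma~\ref{lem:U1twist}, $\rho_f\sim\chi_0^{-1}\otimes\rho_{g''}\sim\rho$, and $f$ is an eigenform for $T_v$ ($v\nmid p$) and $S_v$ ($v\nmid\gn p$); the identity $ct_1 m_1=1$ gives $r_{t_1}^{m_1}(f)=\xi_0^{-1}(1)\,r_{t_1}^{m_1}(g'')=1$. For the remaining condition, given any $\xi$ as in Definition~\ref{def:normalized}, the character $\mu:=\xi_0^{-1}\xi$ is trivial on $\CO_{F,p}^\times$, and $f_\xi$ coincides in $S_{\vec{k},-\vec{1}}$ with the twist $g''_\mu$. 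A direct Fourier-coefficient computation combining the explicit action of $T_\gp$ in \S\ref{ssec:Hecke} with the twisting formula of Lemma~\ref{lem:U1twist} then shows that $g''_\mu$ remains a $T_\gp$-eigenform, with eigenvalue equal to that of $g''$ multiplied by a non-zero factor coming from the evaluation of $\mu$ at the $p$-parts of $\varpi_\gp$; in particular $T_\gp g''_\mu\neq 0$ for all $\gp\in S$, completing the verification.
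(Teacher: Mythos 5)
The twisting reduction, the appeal to Lemma~\ref{lem:levelU1}, and the untwisting via Lemma~\ref{lem:U1twist} all match the paper's strategy. The gap is in your normalization step, which cannot be carried out as described.

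Your idea is to translate $g'$ by a diagonal element $\smat{\alpha}{0}{0}{1}$ so as to move the non-vanishing Fourier coefficient from $(t_0,m_0)$ to the distinguished pair $(t_1,m_1)$. But right translation on $\CM_{\vec{k},-\vec{1}}$ permutes cusps only; it does not change the index $m$ inside the Fourier expansion at a given cusp. Concretely, if $\alpha^{-1}t_0$ and $t_1$ represent the same cusp of $Y_{U_1(\gn_1)}$ --- say $\alpha^{-1}t_0 = \beta t_1 u$ with $\beta \in \CO_{F,(p),+}^\times$, $u \in (\widehat{\CO}_F^{(p)})^\times$ --- then the translated form has non-vanishing coefficient $r^{t_1}_{\beta m_0}$, not $r^{t_1}_{m_1}$. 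Since $m_0$ is whatever Fourier index the $q$-expansion principle gave you, the ideal $(m_0)$ need not be principal with a totally positive generator in $m_1\CO_{F,(p),+}^\times$, so $\beta m_0 = m_1$ is in general impossible. Relation~(\ref{eqn:qexpinv}) only rescales $m$ by elements of $\CO_{F,(p),+}^\times$, which is not enough to bridge $m_0$ and $m_1$ when they generate different ideals. This is a fundamental obstruction, not a technicality one can patch by "descending to a sufficiently fine level."

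The paper's proof, by contrast, does not try to move a non-zero coefficient to $(t_1,m_1)$; instead it argues by contradiction that if $r^{t_1}_{m_1}(f)=0$ then \emph{all} $r^t_m(f)$ vanish, so $f=0$. This is proved by a downward induction on $\|ctm\|^{-1}$ using the Hecke recurrence formulas of \S\ref{ssec:Hecke}: for a prime $v$ dividing $(ctm)$ one writes $r^t_m$ in terms of coefficients with smaller $\|ctm\|^{-1}$ via the $T_v$-eigenvalue relation (and the $S_v$-relation when $v\nmid\gn$), with the case $v\,|\,p$ handled using the $T_\gp$-operator, exploiting precisely that $g'$ is an eigenform for $T_\gp$ as well. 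This is the step your argument is missing: the role of the Hecke eigenvalues is not merely to label the form but to propagate the vanishing of $r^{t_1}_{m_1}$ throughout the entire $q$-expansion. You should replace your translation step by this induction.

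One further small caution: even when $\alpha$ has trivial components at $v\in S_p$ and at $v\mid\gn_0$, the translated form lies in the $gU_1(\gn_0)g^{-1}$-invariants, and producing a $U_1(\gn_1)$-invariant from it by averaging/trace changes Fourier coefficients in ways that need not preserve non-vanishing. The paper's argument avoids any such manipulation.
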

\begin{proof} The argument is essentially the same as the proof of \cite[Prop.~10.5.2]{DS1}.  

Suppose first that $\vec{m} = - \vec{1}$.  Then by Lemma~\ref{lem:levelU1}, there exist $\gn$ and $E$ such that $\rho$ arises from an eigenform $f \in S_{\vec{k},-\vec{1}}(U_1(\gn),E)$ for $T_v$ for all $v$ and $S_v$ for all 
$v\nmid \gn p$, with $T_{\gp}f \neq 0$ for all $\gp \in S$.  Therefore it suffices to prove that 
$r_{t_1}^{m_1}(f) \neq 0$, as then Lemma~\ref{lem:U1twist} 
implies the third condition in Definition~\ref{def:normalized} holds (and we may rescale $f$ to achieve the first condition).

We prove that if $r_{t_1}^{m_1}(f) = 0$, then $r_t^m(f) = 0$ for all $t \in (\A_{F,\f}^{(p)})^\times$ and $m \in t^{-1}\widehat{\gd}^{-1} \cap F^\times_+$, contradicting that $f\neq 0$.  We proceed 
by induction on $||ctm||^{-1} = \mathrm{Disc}_{F/\QQ}^{-1}||tm||^{-1} \in \ZZ_{>0}$ (where $c = (t_1m_1)^{-1}$).

Firstly if $||ctm|| = 1$, then letting $u = (ctm)^{(p)} \in (\widehat{\CO}_F^{(p)})^\times$ and $\alpha = m^{-1}m_1 \in \CO_{F,(p),+}^\times$, we have $r_m^t(f) = r_{m_1}^{t_1}(f) = 0$ by (\ref{eqn:qexpinv}).

For the induction step, suppose that $||ctm|| < 1$ and that $r^{t'}_{m'}(f) = 0$ for all $t',m'$ such that $||tm|| < ||t'm'||$.  Since $ctm \in \widehat{\CO}_F$ and $||ctm|| < 1$, there is a prime $v$ such that $m \in v t^{-1}\widehat{\gd}^{-1}$.

Suppose first that $v \nmid p$. If $v|\gn$ or $m \not\in v^2 t^{-1}\widehat{\gd}^{-1}$, then the formula for the effect of $T_v$ on $q$-expansions gives
$$\Nm_{F/\Q}(v) r_m^t(f) = r_m^{\varpi_v^{-1}t}(T_vf ) =
 a_v r_m^{\varpi_v^{-1}t}(f) = 0$$
by the induction hypothesis, where $a_v$ is the eigenvalue of $T_v$ on $f$.
 
If $v\nmid \gn$ and $m \in v^2 t^{-1}\widehat{\gd}^{-1}$, then
we proceed similarly using instead that 
$$\Nm_{F/\Q}(v) r_m^t(f) = r_m^{\varpi_v^{-1}t}(T_vf )
 - r_m^{\varpi_v^{-2}t}(S_vf).$$

If $v|p$, then the argument is similar to the case $v\nmid p$, with the subcase where $k_{\sigma} = 1$ for all $\sigma \in \Sigma_v$ corresponding to the one where $v\nmid \gn$.

This completes the proof in the case $\vec{m} = -\vec{1}$, and the general case follows by twisting as in \cite{DS1}.  
\end{proof}

\begin{definition} \label{def:stabilized}  Suppose that $\vec{k},\vec{m} \in \ZZ^\Sigma$, with $k_\sigma \ge 1$ for all $\sigma \in \Sigma$.  We say that a normalized eigenform $f \in S_{\vec{k},\vec{m}}(U_1(\gn),E)$ is {\em stabilized} if $T_vf = 0$ for all $v|\gn$.
\end{definition}

We have the following refinement of \cite[Lemma10.6.2]{DS1}
(by the same proof):
\begin{lemma} \label{lem:stabilized} Suppose that $\vec{k},\vec{m} \in \ZZ^\Sigma$, with $k_\sigma \ge 1$ for all $\sigma \in \Sigma$, and that $\gn \subset \gm$ are non-zero ideals of $\CO_F$ prime to $p$.  
If $\rho \sim \rho_f$ for some normalized eigenform 
$f \in S_{\vec{k},\vec{m}}(U_1(\gm),E)$, then $\rho \sim \rho_{f'}$ for some normalized eigenform $f' \in S_{\vec{k},\vec{m}}(U_1(\gn),E')$ such that $a_{\gp,\xi}(f) = a_{\gp,\xi}(f')$ for all characters $\xi$ as in Definition~\ref{def:normalized}, where $E'$ is (at most) a quadratic extension of $E$.
Furthermore if $\ord_v(\gn) \ge \min(2,1+\ord_v(\gm))$ for all $v|\gn$, then we may take $f'$ as above to be a stabilized eigenform.
\end{lemma}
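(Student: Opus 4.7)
The plan is to proceed by induction on the number of prime factors (with multiplicity) of $\gn/\gm$, which reduces the first assertion to the case $\gn=v\gm$ for a single prime $v\nmid p$. At each inductive step, I would define the auxiliary ``translate''
$$V_v: S_{\vec{k},\vec{m}}(U_1(\gm),E)\longrightarrow S_{\vec{k},\vec{m}}(U_1(\gm v),E), \qquad r_m^t(V_v g)=r_m^{\varpi_v^{-1}t}(g),$$
which is induced by right-multiplication by $\smallmat{\varpi_v^{-1}}{0}{0}{1}$ at level $\gm v$. The map $V_v$ commutes strictly with $T_w,S_w$ for $w\ne v$ and with $T_\gp$ for $\gp\in S_p$, and commutes with the twisting construction of Lemma~\ref{lem:U1twist} up to the nonzero scalar $\xi(\varpi_v)^{-1}$.

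For the first (non-stabilized) assertion, when $v\nmid\gm$ the $q$-expansion formula at level $\gm v$ combined with the level-$\gm$ formula in \S\ref{ssec:Hecke} yields, on the span of $\{f,V_v f\}$, the identities $T_v f=a_v f - d_v V_v f$ and $T_v V_v f=\Nm_{F/\QQ}(v)\,f$, where $a_v,d_v$ are the eigenvalues of $T_v,S_v$ on $f$ at level $\gm$. Thus $T_v$ on this span has matrix $\smallmat{a_v}{\Nm_{F/\QQ}(v)}{-d_v}{0}$, with characteristic polynomial $X^2-a_v X+\Nm_{F/\QQ}(v)d_v$, whose roots $\alpha,\beta$ lie in an at-most-quadratic extension $E'/E$. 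Take $f_\alpha:=\alpha f-d_v V_v f$; a direct check gives $T_v f_\alpha=\alpha f_\alpha$. The $(t_1,m_1)$-coefficient equals $\alpha$, because $m_1\in\varpi_v t_1^{-1}\widehat{\gd}^{-1}$ would force $\varpi_v^{-1}\in\widehat{\CO}_F$ (using that $m_1 t_1$ generates $\widehat{\gd}^{-1}$), which is false; hence $r_{m_1}^{\varpi_v^{-1}t_1}(f)=0$. Moreover $\alpha\ne 0$, since $\alpha\beta=\Nm_{F/\QQ}(v)d_v=\det\rho_f(\Frob_v)\ne 0$ by the irreducibility of $\rho_f$. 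So $f':=\alpha^{-1}f_\alpha$ is a normalized eigenform at level $U_1(\gm v)$. When $v\mid\gm$ one takes $f'=f$.

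For the stabilized assertion, under the divisibility condition on $\ord_v(\gn)$, the construction produces instead a vector in the kernel of $T_v$ at the requisite level. When $v\mid\gm$, the matrix on $(f,V_v f)$ at level $\gm v$ becomes $\smallmat{a_v}{\Nm_{F/\QQ}(v)}{0}{0}$, with kernel spanned by $f':=\Nm_{F/\QQ}(v)f-a_v V_v f$ and $(t_1,m_1)$-coefficient $\Nm_{F/\QQ}(v)$. When $v\nmid\gm$, one passes to level $\gm v^2$ and takes
$$f':=\Nm_{F/\QQ}(v)^2 f-\Nm_{F/\QQ}(v)\,a_v\, V_v f+\Nm_{F/\QQ}(v)\,d_v\, V_v^2 f,$$
which is the composite of the two single-prime stabilizations and lies in $\ker T_v$ by a $3\times 3$ version of the preceding matrix computation; its $(t_1,m_1)$-coefficient is $\Nm_{F/\QQ}(v)^2$. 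In both cases no field extension is needed.

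The main remaining task, and the principal source of bookkeeping, is verifying that $f'$ satisfies all parts of Definition~\ref{def:normalized} (resp.~\ref{def:stabilized}) and has the same twisted $T_\gp$-eigenvalues as $f$. For Hecke operators away from $v$, commutation with $V_v$ is immediate. For the twisted forms $(f')_\xi$: the twist $e_\xi$ commutes with $V_v$ up to the nonzero scalar $\xi(\varpi_v)^{-1}$, which cancels out of any eigenvalue relation, so $(f_\alpha)_\xi$ is (up to scalar) the $\alpha$-stabilization of $f_\xi$, and hence is a $T_\gp$-eigenform with eigenvalue $a_{\gp,\xi}(f)$; the same compatibility holds for the stabilized construction. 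The hardest point is the careful tracking of the normalization scalar through the twisting to ensure the $T_\gp$-eigenvalues are preserved exactly rather than up to a scalar, which requires checking that the normalizing factors $\alpha$ and $\Nm_{F/\QQ}(v)^i$ transform under $e_\xi$ by the same scalar as $V_v$ does; this follows directly from Lemma~\ref{lem:U1twist} and the formula $\xi(\varpi_v)\cdot\xi(\varpi_v^{-1})=1$.
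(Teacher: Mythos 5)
Your plan follows exactly the standard oldform/stabilization argument that the paper is alluding to when it writes that the lemma holds ``by the same proof'' as in \cite{DS1}: introduce the degeneracy operator $V_v$, compute the matrix of $T_v$ on the span $\{f,V_v f,\dots\}$, extract the relevant eigenvector, renormalize by its $(t_1,m_1)$-coefficient, and propagate the $T_\gp$-eigenvalue data through the twisting construction. The matrix computations, the identity $r^{\varpi_v^{-1}t_1}_{m_1}(f)=0$, and the nonvanishing of $\alpha$ via $\alpha\beta=\det\rho_f(\Frob_v)\neq 0$ are all correct, so the core of the argument is sound and matches the expected route.

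Three points are worth tightening. First, your stabilization when $v\nmid\gm$ requires $\ord_v(\gn)\ge 2$, whereas the displayed condition $\ord_v(\gn)\ge\min(2,1+\ord_v(\gm))$ reduces to $\ord_v(\gn)\ge 1$ for such $v$, which is automatic; your implicit reading is in fact the mathematically correct one (at level $\gm v$ with $v\nmid\gm$, the $T_v$-eigenvalues on the old span are roots of $X^2-a_vX+\Nm_{F/\Q}(v)d_v$, neither of which is $0$ since $\Nm_{F/\Q}(v)d_v=\det\rho_f(\Frob_v)\neq 0$), and the $\min$ in the statement is almost certainly a slip for $\max$; it is worth pointing this out. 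Second, the assertion that $V_v$ commutes with the twist $f\mapsto f_\xi$ up to the scalar $\xi(\varpi_v)^{-1}$ holds only when $v$ does not divide the conductor $\gm_\xi$ of $\xi$; when $v\mid\gm_\xi$ one finds $(V_v f)_\xi=0$ from the $q$-expansion formula of Lemma~\ref{lem:U1twist}, so $(f')_\xi$ is just a scalar multiple of $f_\xi$ rather than its stabilization. The conclusion (that $(f')_\xi$ is a $T_\gp$-eigenform with eigenvalue $a_{\gp,\xi}(f)$) still holds, but you should record both cases. Third, you should say explicitly why $E'$ is at most a quadratic extension of $E$ even when $\gn/\gm$ involves several new primes: after stabilizing at one prime $v$, the Hecke eigenvalues $a_w,d_w$ for $w\neq v$ are unchanged and still lie in $E$, so each root adjoined satisfies a quadratic over $E$; since $E$ is a finite field, all such roots lie in the unique quadratic extension of $E$, so the extensions do not compound.
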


Note that if $f$ is a stabilized eigenform, then $\rho_f$ and $\gn$ determine its Hecke eigenvalues for all $T_v$ for $v\nmid p$ and $S_v$ for $v\nmid \gn p$, and this in turn determines the $q$-expansion coefficients $r_m^t(f)$ for all $t$ and $m$ such that $(ctm)_p \in \CO_{F,p}^\times$.  However this information does not necessarily determine the eigenvalue of $T_\gp$ on $f_\xi$ for $\gp \in S_p$ and characters $\xi$ as
in Definition~\ref{def:normalized}. 
Let us denote this eigenvalue by $a_{\gp,\xi}(f)$.  As in the discussion before Definition~\ref{def:ordgeommod}, these are related by the formula
$$ a_{\gp,\xi_2}(f) = \mu(\Frob_\gp) a_{\gp,\xi_1}(f)$$
as $\xi$ varies, where $\mu$ is the unramified character $G_{F_{\gp}} \to E^\times$ corresponding via local class field theory to $\xi_2\xi_1^{-1}|_{F_{\gp}^\times}$.  

If $k_\sigma > 1$ for some $\sigma \in \Sigma_{\gp}$, then Theorem~\ref{thm:ord} implies that the $a_{\gp,\xi}(f)$ can only be non-zero if the representation $\chi\otimes\rho_f$ has a non-trivial unramified subrepresentation, or equivalently, if $\rho_f$ has a non-trivial subrepresentation on which $I_{F_\gp}$ acts as $\prod_{\sigma \in \Sigma_\gp}\omega_\sigma^{-1-m_\sigma}$ (in the notation of  \S\ref{ssec:galoistwist}).  Note also that these subrepresentations (of which there are at most two) determine the possible (non-zero) systems of values of $a_{\gp,\xi}(f)$.

On the other hand, if $k_\sigma = 1$ for all $\sigma \in \Sigma_{\gp}$, then $\rho_f|^{\mathrm{ss}}_{G_{F_{\gp}}}$ is unramified and the $a_{\gp,\xi}(f)$ are determined
by Proposition~\ref{prop:ppw1}.

\begin{definition} \label{def:strong}  Suppose that $\vec{k},\vec{m} \in \ZZ^\Sigma$, with $k_\sigma \ge 1$ for all $\sigma \in \Sigma$, and that $\gp \in S_p$ is such that $k_\sigma > 1$ for some $\sigma \in \Sigma_\gp$.   We say that a stabilized eigenform $f \in S_{\vec{k},\vec{m}}(U_1(\gn),E)$ is {\em strongly $\gp$-stabilized} if $T_\gp(f_\xi) = 0$ for all $\xi$ as in Definition~\ref{def:normalized}, or equivalently if $r_m^t(f) = 0$ for all $t$ and $m$ such that $(ctm)_\gp \not\in \CO_{F,\gp}^\times$.  We say that $f$ is {\em strongly stabilized} if it is strongly $\gp$-stabilized for all $\gp \in S_p$ is such that $k_\sigma > 1$ for some $\sigma \in \Sigma_\gp$.
\end{definition}

By the discussion preceding Definition~\ref{def:strong}, if $f \in S_{\vec{k},\vec{m}}(U_1(\gn,E))$ is a stabilized eigenform and $\gp \in S_p$ is such that $k_\sigma > 1$ for some $\sigma \in \Sigma_\gp$, then it is automatically strongly $\gp$-stabilized unless $\rho_f$ has a non-trivial subrepresentation on which $I_{F_\gp}$ acts as $\prod_{\sigma \in \Sigma_\gp}\omega_\sigma^{-1-m_\sigma}$. On the other hand, if $\rho_f$ has such a subrepresentation, then it does not necessarily arise from 
a strongly $\gp$-stabilized eigenform of the same weight 
$(\vec{k},\vec{m})$.  Note however that $\Theta_\tau(f)$ is strongly $\gp$-stabilized for any $\tau \in \Sigma_{\gp,0}$.

The same argument as for \cite[Lemma~10.6.5]{DS1}, together with Proposition~\ref{prop:ppw1}, gives the following:
\begin{lemma} \label{lem:strong} Suppose that $\vec{k},\vec{m} \in \ZZ^\Sigma$, with $k_\sigma \ge 1$ for all $\sigma \in \Sigma$, and that $\gn$ is a non-zero ideal of $\CO_F$ prime to $p$.  If 
$\rho:G_F \to \GL_2(E)$ is absolutely irreducible, then 
$\rho \sim \rho_f$ for at most one strongly stabilized eigenform $f \in S_{\vec{k},\vec{m}}(U_1(\gn),E)$.
\end{lemma}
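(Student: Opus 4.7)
The plan is to prove uniqueness via the $q$-expansion Principle. Suppose $f, f' \in S_{\vec{k},\vec{m}}(U_1(\gn),E)$ are two strongly stabilized normalized eigenforms with $\rho_f \sim \rho_{f'} \sim \rho$; I want to show $f = f'$.

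The first move is to choose a continuous character $\xi:\AA_F^\times/F^\times F_{\infty,+}^\times \to E'^\times$ (over a suitable finite extension $E'/E$) with $\xi(u_p) = \overline{u}_p^{\vec{m}+\vec{1}}$ on $\CO_{F,p}^\times$ and conductor $\gm$ coprime to $\gn p$, and to pass to the twists $f_\xi, f'_\xi \in S_{\vec{k},-\vec{1}}(U_1(\gm^2\gn),E')$ provided by Lemma~\ref{lem:U1twist}.  A direct check using the formula $r^t_m(f_\xi) = \xi((ctm)^{(\gm)}) r^t_m(f)$ (vanishing when $(ctm)_v \notin \CO_{F,v}^\times$ for some $v\mid\gm$) verifies that each $f_\xi$ is still a strongly stabilized normalized eigenform at the new level, now in the weight $(\vec{k},-\vec{1})$, where $T_\gp$ is defined for every $\gp\mid p$.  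Moreover the map $f\mapsto f_\xi$ is injective, since the $T_v$-identities for $v\mid\gm$, $v\nmid\gn p$, propagate the vanishing of coefficients at pairs $(t,m)$ with $ctm$ coprime to $\gm$ to all $(t,m)$.  It thus suffices to show $f_\xi = f'_\xi$.

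I would then argue that $f_\xi$ and $f'_\xi$ share every Hecke eigenvalue: for $v\nmid\gm^2\gn p$, the $T_v$- and $S_v$-eigenvalues are determined by $\chi\otimes\rho$ via Theorem~\ref{thm:galois}; for $v\mid\gm^2\gn$ with $v\nmid p$, the stabilization hypothesis gives $T_v(f_\xi)=T_v(f'_\xi)=0$; and for $\gp\mid p$ the $T_\gp$-eigenvalues coincide, either by strong $\gp$-stabilization (if some $k_\sigma > 1$ for $\sigma\in\Sigma_\gp$) or by Proposition~\ref{prop:ppw1} (if every $k_\sigma = 1$ on $\Sigma_\gp$, in which case both equal $\tr(\rho_{f_\xi}(\gamma))$ for any lift $\gamma$ of $\Frob_\gp$).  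Next I would establish $r^t_m(f_\xi) = r^t_m(f'_\xi)$ for every valid $(t,m)$ by strong induction on $N(t,m) := \|ctm\|^{-1}\in\ZZ_{\ge 1}$, with base case $N=1$ reducing to the normalization via (\ref{eqn:qexpinv}).  For the inductive step pick a prime $v\mid ctm$: when $v\nmid p$, the identity $\Nm_{F/\Q}(v) r^t_m(f_\xi) = a_v r^{\varpi_v^{-1}t}_m(f_\xi) - d_v r^{\varpi_v^{-2}t}_m(f_\xi)$ (with the second term absent when $v\mid\gm^2\gn$, where $a_v=0$) reduces, by invertibility of $\Nm_{F/\Q}(v)$ in $E'$, to coefficients at strictly smaller $N$; when $v=\gp\mid p$ with some $k_\sigma>1$, strong $\gp$-stabilization gives direct vanishing via Definition~\ref{def:strong}.

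The main obstacle is the residual case $v=\gp\mid p$ with $k_\sigma=1$ for every $\sigma\in\Sigma_\gp$, in which strong stabilization is not available and the operator $S_{\varpi_\gp}$ appearing in the $T_\gp$-formula for $q$-expansions is not a priori defined on $S_{\vec{k},-\vec{1}}$ in this weight.  Here I would exploit the observation that the formal combination $\varpi_\gp^{\vec{k}+\vec{m}}S_{\varpi_\gp}$ with $\vec{m}=-\vec{1}$ equals $\varpi_\gp^{\vec{1}}S_\gp = \Nm_{F/\Q}(\varpi_\gp) S_\gp$, which vanishes in $E'$-coefficients since $v_p(\Nm_{F/\Q}(\varpi_\gp)) = f_\gp\ge 1$.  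Consequently the $T_\gp$-formula collapses modulo $p$ to $r^t_m(T_\gp f_\xi) = \epsilon\cdot r^{x^{-1}t}_{\varpi_\gp m}(f_\xi)$ with $\epsilon = p^{f_\gp}\Nm(\varpi_\gp)^{-1}\in E'^\times$; applied at $(xt,\varpi_\gp^{-1}m)$ and using the shared $T_\gp$-eigenvalue $a_{\gp,\xi}$, this yields $r^t_m(f_\xi) = \epsilon^{-1} a_{\gp,\xi}\cdot r^{xt}_{\varpi_\gp^{-1}m}(f_\xi)$ with $N(xt,\varpi_\gp^{-1}m) = \Nm_{F/\Q}(\gp)^{-1}N(t,m) < N(t,m)$, and identically for $f'_\xi$, so induction closes the step.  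The $q$-expansion Principle then gives $f_\xi = f'_\xi$, and the injectivity from the first reduction yields $f = f'$.
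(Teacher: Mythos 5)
Your overall plan — twisting to $\vec m = -\vec 1$ via Lemma~\ref{lem:U1twist}, inducting on $N(t,m) = \|ctm\|^{-1}$, and closing with the $q$-expansion Principle — matches the argument the paper imports from [DS1, Lemma 10.6.5] together with Proposition~\ref{prop:ppw1}. But the handling of the residual case $k_\sigma = 1$ for all $\sigma\in\Sigma_\gp$ contains a genuine error in the valuation computation. Your algebra $\varpi_\gp^{\vec k + \vec m} S_{\varpi_\gp} = \varpi_\gp^{\vec 1} S_\gp = \Nm_{F/\Q}(\varpi_\gp)\, S_\gp$ is formally correct, but $S_\gp = \varpi_\gp^{\vec k + 2\vec m}S_{\varpi_\gp}$ is \emph{not} an integral operator in this weight: with $\vec m = -\vec 1$ and $\vec k_\gp = \vec 1$ the factor $\varpi_\gp^{\vec k - \vec 2}$ has $p$-adic valuation $\sum_{\sigma\in\Sigma_\gp}(k_\sigma-2)/e_\gp = -f_\gp$, which exactly cancels $v_p(\Nm_{F/\Q}(\varpi_\gp)) = f_\gp$. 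Computing directly, $\varpi_\gp^{\vec k + \vec m} = \prod_\sigma \sigma(\varpi_\gp)^{k_\sigma - 1}$ has valuation $\sum_{\sigma\in\Sigma_\gp}(k_\sigma-1)/e_\gp = 0$, so this coefficient is a $p$-adic unit and the $S_{\varpi_\gp}$-term does \emph{not} drop out mod $p$. (Indeed the paper's own treatment in Proposition~\ref{prop:normalized} pairs the case ``$k_\sigma = 1$ for all $\sigma\in\Sigma_v$'' with the prime-to-$p$ case $v\nmid\gn$, precisely the case where both terms of the recursion are present.)

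The argument is still salvageable — the key observation you are missing is that the $S_{\varpi_\gp}$-eigenvalue of $f_\xi$ and $f'_\xi$ must coincide. As computed in the first paragraph of the proof of Theorem~\ref{thm:ord}, if $\delta$ is the $S_{\varpi_\gp}$-eigenvalue of a weight-$(\vec k, -\vec 1)$ normalized eigenform $g$, then $\det(\rho_g(\Frob_\gp)) = \delta\,\epsilon\,\varpi_\gp^{\vec k - \vec 1}$, so $\delta$ is determined by $\rho$ and the weight. With this in hand, the recursion $\epsilon\, r_m^t(g) = a_{\gp,\xi}\, r_{\varpi_\gp^{-1}m}^{xt}(g) - \varpi_\gp^{\vec k - \vec 1}\delta\, r_{\varpi_\gp^{-2}m}^{x^2 t}(g)$ has both right-hand terms at strictly smaller $N$ and with coefficients that agree between $f_\xi$ and $f'_\xi$, so the induction closes. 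You should replace the vanishing claim by this eigenvalue-agreement argument; the rest of your proof stands.
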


\subsection{Quado-Barsotti--Tate representations} \label{subsec:qBT}
In order to describe local Galois representations with repeated 
$\sigma$-labelled Hodge--Tate weights, we need a slight generalization of results of Gee, Liu and Savitt~\cite{GLS}.  (See  \cite[\S3]{hanneke_new} for an extension along these lines in the unramified case.)

Fix a finite extension $L$ of $\QQ_p$, write $L_0$ for its maximal unramified subextension, and let  $f = [L_0:\QQ_p]$, $e = [L:L_0]$.  Let $\Sigma_L$ denote the set of embeddings $\sigma: L \hookrightarrow \Qpbar$, which we index as $\sigma_{i,j}$ for $i \in \ZZ/f\ZZ$, $j = 1,\ldots,e$, with our usual convention that for each $i$, the embeddings $\sigma_{i,1},\ldots,\sigma_{i,e}$ have the same restriction to $L_0$, say $\tau_i$, and that $\tau_{i+1} = \phi\circ\tau_i$. 

Recall that \cite{GLS} gives a complete description of reductions of certain crystalline representations $\widetilde{\rho}:G_L \to \GL_2(K)$ which they call {\em pseudo-Barsotti--Tate}, meaning that for each $i$, the $\sigma_{i,j}$-labelled Hodge--Tate weights have the form $(r_{i,j},0)$, where $r_{i,e} \in [1,p]$ and $r_{i,j} = 1$ for $j < e$. We wish to allow some of the $r_{i,j}$ to equal $0$, and we will refer to such representations as {\em quado-Barsotti--Tate} (a concatenation of {\em quasi-pseudo-Barsotti--Tate}). Since the results are independent of the choice of ordering of $\sigma_{i,1},\ldots,\sigma_{i,e}$ for each $i$, we may further assume that
$$ 0 \le r_{i,1} \le r_{i,2} \le \cdots \le r_{i,e} \le p,$$
so that the quado-Barsotti--Tate condition becomes $r_{i,e-1} \le 1$ for each $i$.  For notational convenience, we set $r_{i,0} = 0$.

Suppose now that $\widetilde{\rho}$ is quado-Barsotti--Tate with $\sigma_{i,j}$-labelled Hodge--Tate weights $(r_{i,j},0)$ as above.   We define the {\em degeneracy degree} of $\widetilde{\rho}$ to be the element $\vec{\delta} \in [0,e]^f$ such that $\delta_i = \max\{\,j\,|\,r_{i,j} = 0\,\}$.  (Thus a quado-Barsotti--Tate representation is pseudo-Barsotti--Tate if and only if it has degeneracy degree $\vec{0}$.)

We then have the following extension of \cite[Cor.~2.3.10]{GLS} (as extended in \cite{wang} to include the case $p=2$) to the quado-Barsotti--Tate setting.  We adopt the same notation as in \cite{GLS} (except that our $L$ is their $K$, our $K$ is their $E$, their indexing of embeddings is the reverse of ours, and we let $\prod_{i=1}^n A_i$ denote the matrix product $A_nA_{n-1}\cdots A_1$). 
\begin{proposition} \label{prop:GLS.2.3.10} Let $\mathfrak{M}$ be the Kisin module corresponding to a lattice in a quado-Barsotti--Tate representation $\widetilde{\rho}$ as above.  There exist matrices $Z'_{i,j} \in \GL_2(\CO_K)$ for $j=0,\ldots,e-1$ such that $\mathrm{Fil}^{p,p,\ldots,p}\mathfrak{M}_i^* = \mathfrak{S}_{\CO_K,i}\alpha_{i,e-1} \oplus \mathfrak{S}_{\CO_K,i}\beta_{i,e-1}$ with
$$(\alpha_{i,e-1},\beta_{i,e-1}) = (\mathfrak{e}_i',\mathfrak{f}_i')\Lambda'_{i,e}\left(\prod_{j=1}^{e-1} Z_{i,j}'\Lambda_{i,j}'\right),$$
where $\mathfrak{e}_i'$, $\mathfrak{f}_i'$ is a basis of $\mathfrak{M}_i^*$ (as in \cite[Prop.2.3.5(2)]{GLS} if $\delta_i < e$) and
$$\Lambda_{i,j}' = \left(\begin{array}{cc} (u-\pi_{i,j})^p & 0 \\
0 & (u-\pi_{i,j})^{p-r_{i,j}} \end{array}\right).$$
\end{proposition}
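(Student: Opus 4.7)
The plan is to adapt the inductive construction from the proof of \cite[Cor.~2.3.10]{GLS} by treating the degenerate filtration steps (those indices $j$ with $r_{i,j} = 0$) separately from the non-degenerate ones. First I would observe that when $r_{i,j} = 0$, the matrix $\Lambda'_{i,j}$ is simply the scalar $(u-\pi_{i,j})^p I$, which commutes with every $Z'_{i,j}$ and only rescales the pair $(\alpha, \beta)$ uniformly. Consequently, at these steps the span $\mathfrak{S}_{\CO_K,i}\alpha \oplus \mathfrak{S}_{\CO_K,i}\beta$ is unaffected by the choice of $Z'_{i,j}$, so we may simply take $Z'_{i,j} = I$ for $j < \delta_i$.

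Next, for each $i$ with $\delta_i < e$, I would verify the existence of a basis $(\mathfrak{e}_i',\mathfrak{f}_i')$ of $\mathfrak{M}_i^*$ satisfying the analogue of \cite[Prop.~2.3.5(2)]{GLS}. The proof of loc.~cit.\ works by descending induction starting from a basis of $\mathrm{Fil}^{p,\ldots,p}\mathfrak{M}_i^*$ and using the elementary divisor theorem at each filtration jump; the degenerate steps introduce no jump, so the induction goes through unchanged provided $\delta_i < e$, giving a basis that begins encoding the structure at the first non-degenerate step $j = \delta_i + 1$. When $\delta_i = e$, the entire $\mathfrak{M}_i^*$ has trivial filtration structure at this embedding, and any basis of $\mathfrak{M}_i^*$ will do.

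The main step is then the inductive construction of the $Z'_{i,j}$ for $j \ge \delta_i$: at the initial non-degenerate step $j = \delta_i$, the induction begins with $(\alpha_{i,\delta_i},\beta_{i,\delta_i}) = (\mathfrak{e}_i', \mathfrak{f}_i')\Lambda'_{i,\delta_i+1}\prod_{j=1}^{\delta_i}\Lambda'_{i,j}$ (all earlier $Z'$ being trivial); the induction step at each subsequent $j$ with $r_{i,j} \ge 1$ proceeds exactly as in \cite[Prop.~2.3.9]{GLS} by applying \cite[Lem.~2.3.7]{GLS} to express a basis of $\mathrm{Fil}^{p,\ldots,p,p-r_{i,j+1}}\mathfrak{M}_i^*$ in terms of the basis at the previous level, producing the required $Z'_{i,j} \in \GL_2(\CO_K)$.

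The principal obstacle I anticipate is bookkeeping to confirm that the structure-theoretic input \cite[Lem.~2.3.7]{GLS} and \cite[Prop.~2.3.5]{GLS} still apply cleanly when some filtration steps are trivial; once this is verified, the rest of the argument is mechanical. Since the trivial steps commute scalar-wise with everything, they can be inserted at any point of the product without affecting the validity of the GLS recursion, so the final formula holds with the $Z'_{i,j}$ for $j < \delta_i$ (including the choice $Z'_{i,0}$ attached to the basis normalization) being unconstrained beyond lying in $\GL_2(\CO_K)$.
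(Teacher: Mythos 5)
Your proposal follows the same strategy as the paper's sketch: keep the GLS induction from \cite[Cor.~2.3.10]{GLS}, and observe that at the degenerate indices one may take $Z'_{i,j}$ to be the identity. A small indexing slip: since the $r_{i,j}$ are non-decreasing and $\delta_i = \max\{j : r_{i,j} = 0\}$, the scalar matrices $\Lambda'_{i,j} = (u - \pi_{i,j})^p I$ occur precisely for $j \le \delta_i$; so the trivial choices should be made for $j \le \delta_i$, not $j < \delta_i$, and the first non-degenerate index is $j = \delta_i + 1$, not $j = \delta_i$.

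The more substantive point is that your observation about the $Z'$-independence of the span $\gS_{\CO_K,i}\alpha \oplus \gS_{\CO_K,i}\beta$ at a scalar step only shows that if the inductive step goes through with \emph{some} $Z'_{i,j}$ then it goes through with $Z'_{i,j} = I$; it does not by itself verify that the span equals the next stage of the filtration. The verification is where $\Fil^1 D_{L,i,m} = 0$ (a consequence of $r_{i,m} = 0$) enters: with this vanishing, \cite[Lemma~2.2.2(3)]{GLS} applies directly and the third through fifth paragraphs of the GLS argument are unnecessary. Your phrase that the degenerate steps ``introduce no jump'' is gesturing at the same fact, but the connection to the Hodge filtration on $D_{L,i,m}$ should be made explicit, since it is the input to the lemma that yields the displayed equation in the final paragraph of GLS's proof with $Z'_{i,m} = I$. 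With that spelled out, your argument completes, and it is essentially the one the paper intends.
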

We omit the proof, which is essentially the same as that of \cite[Cor.2.3.10]{GLS}, except that the induction step (now downwards on $m$ due to our different conventions) becomes easier for $m \le \delta_i$, as does the base case if $\delta_i = e$.  Indeed, if $\delta_i = e$, then the base case is immediate from \cite[Lemma~2.2.2(3)]{GLS}.  Furthermore, if $m \le \delta_i$, then the arguments in the third through fifth paragraphs of the proof are unnecessary. To obtain the analogue of the displayed equation in the final paragraph, one may simply take $Z_{i,m}'$ to be the identity matrix, and apply\footnote{Note that the reference there to Lemma~2.2.2(2) should be to Lemma~2.2.2(3).}~\cite[Lemma~2.2.2(3)]{GLS} using that $\Fil^1D_{L,i,m} = 0$.

We then obtain from this the obvious generalization of \cite[Thm.~2.4.1]{GLS} with $\Lambda_{i,j} = \left(\begin{array}{cc} 1 & 0 \\
0 & (u-\pi_{i,j})^{r_{i,j}} \end{array}\right)$.  The proof of Proposition~3.1.3 of \cite{GLS} then carries over without change as well, giving the same statement, but with $ s_i = \sum_{j \in J_i} r_{i,j}$ for some subset $J_i \subset \{1,\ldots,e\}$, so that $s_i = x_i$ or $r_{i,e} + x_i$ for some $x_i \in [0,e-1-\delta_i]$, unless $r_{i,e}= 0$ (i.e., $\delta_i = e$), in which case $s_i = 0$. This in turn yields the following generalization of \cite[Thm.~3.1.4]{GLS}:
\begin{theorem} \label{thm:GLS.3.1.4} Let $T$ be a lattice in a quado-Barsotti--Tate representation as above, with $\sigma$-labelled Hodge--Tate weights $(r_\sigma,0)$ for $\sigma \in \Sigma_L$.  If $\overline{T} = T\otimes_{\CO_K} E$ is reducible,  then there is a subset $J \subset \Sigma_L$ and a basis for $\overline{T}$ with respect to which the action of $I_L$ has the form
$$\left(\begin{array}{cc} 
\prod_{\sigma \in J} \omega_\sigma^{r_\sigma} & * \\
0 & \prod_{\sigma \not\in J} \omega_\sigma^{r_\sigma} \end{array}\right),$$
where $\omega_\sigma:I_L \to E^\times$ is the fundamental character corresponding via local class field theory to the character $\CO_L^\times \to \CO_K^\times \to E^\times$ induced by $\sigma$.
\end{theorem}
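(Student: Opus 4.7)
The plan is to adapt the proof of Theorem~3.1.4 of \cite{GLS} to the quado-Barsotti--Tate setting, taking the generalization of their Proposition~3.1.3 (established just above the theorem statement) as the key input. First I would recall that the reducibility of $\overline{T}$ corresponds, via the equivalence between étale $(\varphi,\mathfrak{S}_E)$-modules and mod $p$ representations of $G_L$ (see \cite[\S2.1]{GLS}), to the existence of a $\varphi$-stable rank-one sub-object in the reduction $\overline{\mathfrak{M}} = \mathfrak{M} \otimes_{\CO_K} E$ of the Kisin module attached to $T$. Choosing a basis of $\overline{T}$ compatible with the resulting short exact sequence gives the upper-triangular form; it remains to identify the two characters appearing on the diagonal after restriction to $I_L$.

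Next I would apply the generalized Proposition~3.1.3 to extract the exponents $s_i = \sum_{j \in J_i} r_{i,j}$, for subsets $J_i \subset \{1,\ldots,e\}$, that govern the Frobenius action on the $i$th component of the rank-one sub. Via the standard translation from $\varphi$-modules to characters in characteristic $p$ (using local class field theory to match Frobenius exponents with fundamental characters, exactly as in the deduction of \cite[Thm.~3.1.4]{GLS} from \cite[Prop.~3.1.3]{GLS}, with the $p=2$ case covered by \cite{wang}), the character by which $I_L$ acts on the sub equals $\prod_{i \in \ZZ/f\ZZ} \omega_{\tau_i}^{s_i}$.

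Since $\omega_\sigma$ is defined to correspond via local class field theory to the character $\CO_L^\times \to \CO_K^\times \to E^\times$ induced by $\sigma$, which factors through the common residue field $k_L = k_{L_0}$, we have $\omega_{\sigma_{i,j}} = \omega_{\tau_i}$ for all $j$. Setting $J = \{\,\sigma_{i,j}\,|\,i \in \ZZ/f\ZZ,\ j \in J_i\,\} \subset \Sigma_L$, we then obtain
$$\prod_{i \in \ZZ/f\ZZ} \omega_{\tau_i}^{s_i}
\;=\;\prod_{\substack{i \in \ZZ/f\ZZ \\ j \in J_i}} \omega_{\tau_i}^{r_{i,j}}
\;=\;\prod_{\sigma \in J} \omega_\sigma^{r_\sigma}.$$
The character acting on the quotient is then forced by the determinant: since the $\sigma$-labelled Hodge--Tate weights sum to $r_\sigma$, we have $\det(\overline{T})|_{I_L} = \prod_{\sigma \in \Sigma_L} \omega_\sigma^{r_\sigma}$, so the quotient character is $\prod_{\sigma \notin J} \omega_\sigma^{r_\sigma}$, yielding the required matrix form.

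The main obstacle I anticipate is not in this structural argument itself, which is essentially a bookkeeping exercise once the generalized Proposition~3.1.3 is in hand, but rather in handling the degenerate indices carefully: when $\delta_i = e$ (so that $r_{i,j} = 0$ for all $j$) the generalized Proposition~3.1.3 forces $s_i = 0$ and one may simply take $J_i = \emptyset$, while when $\delta_i < e$ one must check that the freedom to adjoin or remove indices $j \le \delta_i$ from $J_i$ (which contribute $r_{i,j}=0$ to $s_i$) is compatible with obtaining a coherent global subset $J$. Both points follow from the Newton-polygon analysis already implicit in the proof of the generalized Proposition~3.1.3.
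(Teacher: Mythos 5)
Your proof is correct and follows essentially the same route as the paper, which derives Theorem~\ref{thm:GLS.3.1.4} directly from the generalization of \cite[Prop.~3.1.3]{GLS} (with $s_i = \sum_{j\in J_i} r_{i,j}$) just as GLS deduce their Thm.~3.1.4 from their Prop.~3.1.3. The only minor divergence is that you obtain the quotient character via the determinant rather than reading both diagonal entries off the sub/quotient rank-one Kisin modules (as the generalized Theorem~\ref{thm:GLS.5.1.5} also records with $\{s_i,t_i\} = \{r_i+x_i,\varepsilon_i-x_i\}$); both work, and your bookkeeping with the fact that $\omega_{\sigma_{i,j}}$ depends only on $i$ matches the paper's own observation.
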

Note that $\omega_{\sigma_{i,j}}$ depends only on $i$, and we henceforth denote it $\omega_i$, so $\omega_{i} = \omega_{i-1}^p$ has order $p^f - 1$ for all $i$.

The same arguments as in \cite{GLS} give the following generalization of their Theorem~5.1.5 (using their notation, modified as above):
\begin{theorem}\label{thm:GLS.5.1.5} Let $T$ be a $G_L$-stable lattice in a quado-Barsotti--Tate representation as above, with $\sigma_{i,j}$-labelled Hodge--Tate weights $(r_{i,j},0)$ for $\sigma_{i,j} \in \Sigma_L$.  Let $\gM$ be the Kisin module associated to $T$, and let $\overline{\gM} = \gM\otimes_{\CO_K} E$.

Suppose that $\overline{T} := T \otimes_{\CO_K} E$ is reducible, so there exist rank one Kisin modules $\overline{\gN} = \gM(s_0,\ldots,s_{f-1};a)$ and $\overline{\gP} = \gM(t_0,\ldots,t_{f-1};b)$ such that $\overline{\gM}$ is an extension of $\overline{\gN}$ by $\overline{\gP}$.  Then for each $i$ there is an integer $x_i \in [0,\varepsilon_i]$ such that $\{\,s_i,t_i\,\} = \{\,r_i + x_i, \varepsilon_i - x_i\,\}$, where $r_i = r_{i,e}$ and $\varepsilon_i = \displaystyle\sum_{j=1}^{e-1} r_{i,j} = \max\{\,0,e-1-\delta_i\,\}$.

Furthermore we can choose bases $e_i,f_i$ of $\overline{\gM}_i$ so that $\varphi$ has the form
$$\begin{array}{rcl} \varphi(e_{i+i}) & = & (b)_i u^{t_i} e_i \\
\varphi(f_{i+1}) & = & (a)_iu^{s_i} f_i + y_i e_i \end{array}$$
where
\begin{itemize}
\item $y_i \in E[[u]]$ is a polynomial with $\deg(y_i) < s_i$;
\item if $t_i < r_i$, then the nonzero terms of $y_i$ have degrees in the set $[t_i] \cup [r_i,s_i - 1]$;
\item except that when there is a nonzero map $\overline{\gN} \to \overline{\gP}$, we must also allow $y_\iota$ to have a term of degree $s_\iota + \alpha_\iota(\overline{\gN}) - \alpha_\iota(\overline{\gP})$ for any one choice of $\iota$.
\end{itemize}
\end{theorem}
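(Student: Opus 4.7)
The plan is to follow the proof of \cite[Thm.~5.1.5]{GLS} closely, substituting the quado-Barsotti--Tate generalizations already recorded in the excerpt (Proposition~\ref{prop:GLS.2.3.10}, together with the generalizations of \cite[Thm.~2.4.1]{GLS} and \cite[Prop.~3.1.3]{GLS} summarized just before the theorem). The only intrinsically new input is the numerological bookkeeping that tracks the degeneracy degree $\vec{\delta}$.

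First I would extract the constraint on the pair $(s_i,t_i)$. The generalized \cite[Prop.~3.1.3]{GLS} gives $s_i = \sum_{j\in J_i} r_{i,j}$ for some $J_i\subset\{1,\ldots,e\}$, and similarly for $t_i$. Because $r_{i,j}=0$ for $j\le \delta_i$ and $r_{i,j}=1$ for $\delta_i<j<e$, the subset sum collapses to $x_i$ if $e\notin J_i$ and to $r_i+x_i$ if $e\in J_i$, where $x_i=|J_i\cap\{\delta_i+1,\ldots,e-1\}|\in[0,\varepsilon_i]$. The determinant of $\overline{\gM}$ forces $s_i+t_i=\sum_{j=1}^e r_{i,j}=r_i+\varepsilon_i$, so exactly one of $s_i,t_i$ contains the $r_i$ summand, which after relabelling $x_i$ gives $\{s_i,t_i\}=\{r_i+x_i,\varepsilon_i-x_i\}$.

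Next I would construct the asserted basis. Pick $f_i$ generating $\overline{\gP}_i$ and lift a generator of $\overline{\gN}_i$ to some $e_i\in\overline{\gM}_i$; the extension structure together with the rank-one classification of $\overline{\gN}$ and $\overline{\gP}$ forces $\varphi(e_{i+1})=(a)_i u^{s_i} f_i+y_i e_i+(\text{a multiple of }f_i)$ and $\varphi(f_{i+1})=(b)_i u^{t_i}e_i$ in suitable coordinates (interchanging $e$ and $f$ as needed). The degree bound $\deg(y_i)<s_i$ and the refined constraint that nonzero terms lie in $[t_i]\cup[r_i,s_i-1]$ when $t_i<r_i$ are obtained by Euclidean division using the change of basis $e_i\mapsto e_i+\lambda_i f_i$, combined with the explicit form of $\mathrm{Fil}^{p,\ldots,p}\gM_i^*$ coming from Proposition~\ref{prop:GLS.2.3.10}. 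This step is essentially the verbatim analogue of the normalization in \cite[\S5.1]{GLS}, since the argument depends only on the products $\prod_{j=1}^e\Lambda'_{i,j}$ and not on whether individual $r_{i,j}$ with $j<e$ equal $0$ or $1$.

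The main obstacle is the exceptional third bullet, which applies when $\mathrm{Hom}(\overline{\gN},\overline{\gP})\neq 0$. In this case there exists a change of basis $e_i\mapsto e_i+\mu_i f_i$, with $\mu_i$ of prescribed degree, that preserves the upper-triangular shape while shifting $y_i$ by a term of a single additional degree $s_\iota+\alpha_\iota(\overline{\gN})-\alpha_\iota(\overline{\gP})$. The subtlety is to show that this residual freedom can be absorbed at a single, freely chosen index $\iota$, and not at all indices simultaneously. As in \cite{GLS}, this comes from the identification of $\mathrm{Hom}(\overline{\gN},\overline{\gP})$ with its one-dimensional action on $\mathrm{Ext}^1(\overline{\gN},\overline{\gP})$; this identification is purely a statement about rank-one Kisin modules and their morphisms, and thus is unaffected by the introduction of the degeneracy parameter, so the argument of \cite{GLS} transports unchanged.
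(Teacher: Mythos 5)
Your proposal is correct and follows the same route as the paper, which simply asserts (without further detail) that the arguments of \cite{GLS} carry over once the constants are adjusted for the degeneracy degree; the bookkeeping you work out — reducing the subset sums from the generalized \cite[Prop.~3.1.3]{GLS} via $r_{i,j}=0$ for $j\le\delta_i$, $r_{i,j}=1$ for $\delta_i<j<e$, and the constraint $s_i+t_i=\sum_{j=1}^e r_{i,j}=r_i+\varepsilon_i$ — is precisely what is left implicit there. One small slip worth fixing: in your basis construction you take $f_i$ to generate $\overline{\gP}_i$ and $e_i$ to lift a generator of $\overline{\gN}_i$, but then write $\varphi(f_{i+1})=(b)_i u^{t_i}e_i$ and $\varphi(e_{i+1})=(a)_iu^{s_i}f_i+y_ie_i+\cdots$, which are inconsistent with either assignment (with your labelling you would want $\varphi(f_{i+1})=(b)_i u^{t_i}f_i$ and $\varphi(e_{i+1})=(a)_iu^{s_i}e_i+y_if_i$); your parenthetical ``interchanging $e$ and $f$ as needed'' shows the intent is right, but the displayed formulas should be corrected.
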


Finally, we need that the proofs of Lemmas~6.1.2 and~6.1.3 of \cite{GLS} carry over\footnote{These are in fact only needed in the case that the semisimplification of $\overline{T}$ is a direct sum of two characters whose ratio is cyclotomic.}
without change for quado-Barsotti--Tate representations. Thus in the setting of Theorem~\ref{thm:GLS.5.1.5}, the exact sequence $0 \to \overline{\gP} \to \overline{\gM} \to \overline{\gN} \to 0$ extends to one of $(\varphi,\widehat{G})$-modules
$$ 0 \lra \widehat{\overline{\gP}} \lra \widehat{\overline{\gM}}
\lra \widehat{\overline{\gN}} \lra 0$$
with $E$-action such that $\overline{T} \cong \widehat{T}(\widehat{\overline{\gM}})$ and for all $x \in \overline{\gM}$, there exist $\alpha \in R$ and $y \in R\otimes_{\varphi,\gS} \overline{\gM}$ such that $\tau(x) - x = \alpha y$ and $\nu_R(\alpha) \ge \frac{p}{p-1} + \frac{p}{e}$. Furthermore this $(\varphi,\widehat{G})$-module is unique, except in the case $\vec{r} = \vec{p}$, $\vec{\delta} = \vec{t} = \vec{0}$.

To make the statements involving $(\varphi,\widehat{G})$-modules less cumbersome, we will say that a $(\varphi,\widehat{G})$-module with $E$-action is {\em typical} if it satisfies the condition that for all $x \in \overline{\gM}$, there exist $\alpha \in R$ and $y \in R\otimes_{\varphi,\gS} \overline{\gM}$ such that $\tau(x) - x = \alpha y$ and $\nu_R(\alpha) \ge \frac{p}{p-1} + \frac{p}{e}$.

\subsection{The ramified quadratic case: $p$-adic Hodge theory} \label{subsec:hodge}
Now we assume that $L$ is a ramified quadratic extension of $\QQ_p$.  We simplify some of the notation of the preceding section as follows:  We write $\Sigma = \{\sigma_1,\sigma_2\}$ for $\Sigma_L$,  $(k_1,k_2)$ for the  element $(k_{\sigma_1},k_{\sigma_2}) \in \ZZ^\Sigma$, and $\omega:I_L \to \FF_p^\times$ for the fundamental character (which was $\omega_1 = \omega_2$ in the preceding section).

For $(\vec{k},\vec{m})\in \ZZ^\Sigma \times \ZZ^\Sigma$, where $\vec{k} = (k_1,k_2)$ is of the form $(1,w)$ with $2\le w \le p+1$, we will describe precisely which representations $\rho:G_L \to \GL_2(\Fpbar)$ have crystalline lifts of weight $(\vec{k},\vec{m})$ in the sense of Definition~\ref{def:weightlift}, i.e., with $\sigma_i$-labelled Hodge--Tate weights $-(1+m_i,k_i+m_i)$ for $i=1,2$.

In order to provide the desired characterization in the case that $\rho$ is reducible, we first define a certain subspace of
$H^1(G_L,\Fpbar(\chi))$ for any character $\chi:G_L \to \Fpbar^\times$.  Note that we may write
$\chi|_{I_L} = \omega^{n-1}$ for a unique $n \in [1,p-1]$. Let $\widetilde{\chi}:G_L \to \Zpbar^\times$ be any crystalline lift of $\chi$ with $\sigma_1$-labeled weight $-1$ and $\sigma_2$-labeled weight $n$.

Recall that $H^1_f(G_L,\Qpbar(\widetilde{\chi}))$ is defined as the kernel of the natural map
$$H^1(G_L,\Qpbar(\widetilde{\chi})) \lra
 H^1(G_L,B_{\mathrm{crys}}\otimes_{\QQ_p}\Qpbar(\widetilde{\chi})), $$
and $H^1_f(G_L,\Zpbar(\widetilde{\chi}))$ as the preimage of 
$H^1_f(G_L,\Qpbar(\widetilde{\chi}))$ in
$H^1(G_L,\Zpbar(\widetilde{\chi}))$.  We then define $V_\chi$ as 
image of $H^1_f(G_L,\Zpbar(\widetilde{\chi}))$ in $H^1(G_L,\Fpbar(\chi))$.  Since $H^1_f(G_L,\Qpbar(\widetilde{\chi}))$ is one-dimensional over $\Qpbar$ (by \cite[3.3.11]{FPR}, for example), it follows that
$$H^1_f(G_L,\Zpbar(\widetilde{\chi}))/H^1(G_L,\Zpbar(\widetilde{\chi}))_{\mathrm{tor}}$$
is free of rank one over $\Zpbar$, and therefore that
$$ \dim_{\Fpbar} V_\chi = \left\{\begin{array}{ll} 1,&\mbox{if $\chi \neq 1$;}\\
2,&\mbox{if $\chi = 1$.}\end{array}\right. $$

We then have the following characterization of the classes\footnote{See \cite{BS} for a more explicit description of such spaces in terms of Kummer theory.} in $V_\chi$ in terms of $(\varphi,\widehat{G})$-modules:
\begin{proposition} \label{prop:Vchi} Suppose that $\rho: G_L \to \GL_2(E)$ is a representation of the form $\psi \otimes \left(\begin{array}{cc}\chi&c\\0&1\end{array}\right)$, where $\psi|_{I_L} = \omega$, $\chi|_{I_L} = \omega^{n-1}$ for some $n \in [1,p-1]$, and
$c \in Z^1(G_L,E(\chi))$. Then $c$ represents a class in $V_\chi$ if and only if $\rho$ arises from a typical $(\varphi,\widehat{G})$-module with $E$-action $\widehat{\overline{\gM}}$ fitting in an exact sequence 
$$ 0 \lra \widehat{\overline{\gP}} \lra \widehat{\overline{\gM}}
\lra \widehat{\overline{\gN}} \lra 0,$$
where the underlying Kisin modules $\overline{\gP}$, $\overline{\gM}$ and $\overline{\gN}$ are as in the conclusion of Theorem~\ref{thm:GLS.5.1.5} with $\overline{\gN} = \gM(n;a)$ and $\overline{\gP} = \gM(1;b)$ for some $a,b\in E^\times$. 

In particular, $V_\chi$ is independent of the choice of the character $\widetilde{\chi}$ in its definition.
\end{proposition}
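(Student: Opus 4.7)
The plan is to cycle between crystalline lifts and Kisin modules with $\widehat{G}$-action via the theory recalled in \S\ref{subsec:qBT}. Throughout, I will work with the twist by $\psi$: since $\psi|_{I_L}=\omega$, the representation $\rho$ is the twist by $\psi$ of an extension of the trivial character by $\chi$, and this twisting is compatible with (crystalline) lifting and with Kisin module constructions, since $\psi$ admits a crystalline lift $\widetilde{\psi}$ with $\sigma_1$-labelled weight $1$ and $\sigma_2$-labelled weight $0$, and the underlying Kisin module of $\widetilde{\psi}$ is a rank-one module whose reduction is of the form $\gM(1;b)$. In particular, after twisting, matters reduce to understanding the extension class $c$ itself as an extension of characters.

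For the forward direction, a class in $V_\chi$ admits a lift $\widetilde{c}\in H^1_f(G_L,\Zpbar(\widetilde{\chi}))$, corresponding to a $G_L$-stable lattice $\widetilde{T}$ in a rank-two crystalline representation $\widetilde{V}$ which is an extension of the trivial representation by $\widetilde{\chi}$. Then $\widetilde{\psi}\otimes \widetilde{T}$ is a $G_L$-stable lattice in a crystalline representation whose $\sigma_1$-labelled Hodge--Tate weights are $\{0,1\}$ and whose $\sigma_2$-labelled Hodge--Tate weights are $\{0,n\}$; after reordering the embeddings within the single place of $L$ above $p$, this is quado-Barsotti--Tate with $(r_{0,1},r_{0,2})=(1,n)$, so $\varepsilon_0=r_{0,1}=1$ and the possible pairs $\{s_0,t_0\}$ in Theorem~\ref{thm:GLS.5.1.5} are $\{n+1,0\}$ and $\{n,1\}$. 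The subrepresentation $\Fpbar(\chi\psi)$ of $\rho$ forces the assignment $s_0=n$, $t_0=1$, so the Kisin module of $\widetilde{\psi}\otimes \widetilde{T}$ reduces to an extension of $\overline{\gN}=\gM(n;a)$ by $\overline{\gP}=\gM(1;b)$ of the required form. The existence (and uniqueness outside the excluded case) of a typical $(\varphi,\widehat{G})$-structure on this Kisin module is exactly the lifting statement quoted at the end of \S\ref{subsec:qBT}, and this structure computes $\rho$.

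For the reverse direction, given a typical $(\varphi,\widehat{G})$-module $\widehat{\overline{\gM}}$ of the prescribed shape computing $\rho$, I will build a crystalline lift. The rank-one factors $\overline{\gN}=\gM(n;a)$ and $\overline{\gP}=\gM(1;b)$ are the reductions of unique rank-one Breuil--Kisin modules corresponding to crystalline characters lifting $\chi\psi$ and $\psi$, with $\sigma_1,\sigma_2$-labelled Hodge--Tate weight datum $(r_{0,1},r_{0,2})=(1,n)$ realized as $\{s_0,t_0\}=\{n,1\}$; these lift (via the typical $\widehat{G}$-action) to $G_L$-stable lattices in crystalline characters. By the deformation theory of extensions of Kisin modules and the fact that typical $(\varphi,\widehat{G})$-modules lift uniquely to $G_L$-stable lattices in semistable representations, $\widehat{\overline{\gM}}$ is the reduction of a Kisin module (with $\widehat{G}$-action) attached to a $G_L$-stable lattice $\widetilde{T}$ in a semistable representation of $G_L$ with $\sigma_1$-labelled Hodge--Tate weights $\{0,1\}$ and $\sigma_2$-labelled weights $\{0,n\}$; the typicality condition (via the $\nu_R(\alpha)\ge \frac{p}{p-1}+\frac{p}{e}$ estimate) rules out nontrivial monodromy, so $\widetilde{T}$ is in fact crystalline. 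Untwisting by $\widetilde{\psi}$ gives a $G_L$-stable lattice in a crystalline extension of the trivial representation by a crystalline lift of $\chi$ with the required $\sigma_i$-labelled weights, exhibiting $c$ as the reduction of a class in $H^1_f(G_L,\Zpbar(\widetilde{\chi}'))$ for some such lift $\widetilde{\chi}'$. The independence statement then follows because any two crystalline lifts of $\chi$ with the same labelled Hodge--Tate weights differ by a crystalline character with labelled weights $(0,0)$, hence by an unramified character of trivial reduction, under which $H^1_f$ transforms compatibly with reduction.

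The main obstacle is the reverse direction: ensuring that the typicality hypothesis on $\widehat{\overline{\gM}}$ is strong enough to force the lift to be crystalline (not merely semistable) and to pin down the Hodge--Tate weights, and handling the exceptional case $\vec{r}=\vec{p}$, $\vec{\delta}=\vec{t}=\vec{0}$ where typical lifts of the Kisin module are not unique. For the latter, I will observe that the hypotheses $\chi|_{I_L}=\omega^{n-1}$ with $n\in[1,p-1]$ and $\overline{\gP}=\gM(1;b)$ (so $t_0=1\ne 0$) place us outside the exceptional configuration.
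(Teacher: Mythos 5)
Your forward direction is broadly aligned with the paper's, but it glosses over a genuine case. You assert that the subrepresentation of $\rho$ forces $s_0=n$, $t_0=1$, so that the possibility $\{s_0,t_0\}=\{n+1,0\}$ permitted by Theorem~\ref{thm:GLS.5.1.5} never arises. It can: the paper shows this case occurs precisely when $n=p-1$ and either $\rho$ is decomposable (in which case the conclusion is immediate) or $p=2$ (in which case one derives a contradiction from Kisin's classification of connected $p$-divisible groups). These subcases need separate treatment and are not captured by your phrasing.

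The reverse direction is where the real gap lies, and it is also where you diverge from the paper. You want to lift an abstract typical $(\varphi,\widehat{G})$-module with $E$-action of the prescribed shape to a $G_L$-stable lattice in a crystalline representation, invoking ``deformation theory of extensions of Kisin modules'' and a claim that ``typical $(\varphi,\widehat{G})$-modules lift uniquely to $G_L$-stable lattices in semistable representations,'' with typicality ``ruling out nontrivial monodromy.'' None of this is supplied by the machinery recalled in \S\ref{subsec:qBT} or by \cite{GLS}: those results run the other way, starting from a crystalline lattice $T$ and producing a $(\varphi,\widehat{G})$-module together with a description of its reduction. There is no theorem in this toolkit asserting that a mod $p$ $(\varphi,\widehat{G})$-module of prescribed shape lifts to a crystalline representation with the intended labelled Hodge--Tate weights, and establishing such a statement would itself be a substantial piece of work (constructing a characteristic-zero point of the relevant deformation problem, identifying its weights, and showing crystallinity rather than mere semistability). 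The paper sidesteps this entirely by a counting argument: having shown $V_{\chi,E}\subset V_\chi'$ in the forward direction (where $V_\chi'$ is the set of extension classes arising as in the statement), it bounds $\#V_\chi'$ from above by the number of equivalence classes of Kisin-module extensions of $\gM(n;a)$ by $\gM(1;b)$ permitted by Theorem~\ref{thm:GLS.5.1.5}, each of which carries at most one typical $(\varphi,\widehat{G})$-structure by \cite[Lemma~6.1.3]{GLS}, and checks this count equals $\#V_{\chi,E}$ (namely $\#E$ generically, $\#E^2$ when $n=1$ and $a=b$). If you wish to retain your approach you would have to substantiate the lifting claim; otherwise, the counting argument is the economical route, and it also delivers the independence statement immediately rather than via the auxiliary argument in your final paragraph.
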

\begin{proof} Suppose first that $c$ represents a class in $V_\chi$.  Letting $\widetilde{\psi}:G_L \to \CO_K^\times$ be a crystalline lift of $\psi$ with $\sigma_1$ (resp.~$\sigma_2$)-labelled Hodge--Tate weight $0$ (resp.~$1$).  Enlarging $K$ if necessary so that $\widetilde{\chi}$ is valued in $\CO_K^\times$, it follows from the definition of $V_\chi$ that $\rho$ is given by the reduction of a $G_L$-stable lattice $T$ in a crystalline representation $\rho:G_L \to \GL_2(K)$ with $\sigma_1$ (resp.~$\sigma_2$)-labelled Hodge--Tate weights $(1,0)$ (resp.~$(n,0)$).
It follows (from cases already covered by \cite{GLS} and \cite{wang}) that the reduction $\overline{\gM}$ of the Kisin module $\gM$ associated to $T$ is as in the conclusion of
Theorem~\ref{thm:GLS.5.1.5} with $\{s,t\} = \{n,1\}$ or $\{n+1,0\}$. 

First suppose that $\{s,t\} = \{n+1,0\}$, in which case we may assume that $s=n+1$ and $t=0$ (as otherwise the extension of Kisin modules splits and we can exchange $s$ and $t$).  This implies that $\rho$ has a subrepresentation whose restriction to $I_L$ is $\omega^{n+1}$, which is only possible if $n=p-1$ and either $\rho$ is decomposable or $p=2$.  In the first case, the conclusion of the proposition is clear, so suppose that $p=2$.  In that case we obtain a contradiction using Kisin's classification of connected $p$-divisible groups and finite flat group schemes in \cite{K2}.   More precisely, using the notation and terminology of \cite[\S1]{K2}, we have that $T$ is the Tate module of a bi-connected $p$-divisible group, and therefore $\gM$ is formal and $\overline{\gM}$ is connected, contradicting that 
$\psi(\overline{\gP}) = \varphi^*(\overline{\gP})$.

We may therefore assume that $\{s,t\} = \{n,1\}$.  Furthermore if $s=1$ and $t=n > 1$, then the extension of Kisin modules splits, so we can exchange $s$ and $t$.  Finally applying \cite[Lemma~6.1.2]{GLS} (as extended to $p=2$ in \cite{wang}) yields the desired conclusion.

To prove the converse, let $V_\chi'$ denote the set of extension classes arising from $(\varphi, \widehat{G})$-modules as in the statement of the proposition. We have thus already shown that $V_{\chi,E} \subset V_\chi'$ (where $V_{\chi,E} = V_\chi \cap H^1(G_L,E(\chi))$), so it suffices to prove that $\#V_\chi' \le \#V_{\chi,E}$.  Note that $a$ and $b$ are determined by $\psi$ and $\chi$, and \cite[Lemma~6.1.3]{GLS} (again extended to $p=2$ by \cite{wang}) implies that each extension $\overline{\gM}$ as above is the underlying Kisin module for at most one $(\varphi,\widehat{G})$-module as in the statement of the proposition.  Therefore it suffices to note that the number of equivalence classes of extensions of $\gM(n;a)$ by $\gM(1;b)$ as in the statement of Theorem~\ref{thm:GLS.5.1.5} is $\# E$, unless $n = 1$ and $a=b$, in which case it is $\# E^2$; in either case, this coincides with $\#V_{\chi,E}$.
\end{proof}

We need to introduce some more notation to handle the case where $\rho$ is irreducible, let $M$ denote the unramified quadratic extension of $L$, denote the embeddings of its residue field into $\Fpbar$ by  $\tau$ and $\tau'$, and the corresponding characters $I_L = I_M \to \Fpbar^\times$ by $\omega_\tau$ and $\omega_{\tau'}$, so that $\omega_{\tau'} = \omega_\tau^p$ and $\omega = \omega_\tau\omega_{\tau'} = \omega_\tau^{p+1}$.  (Note that $\omega_\tau$ has order $p^2-1$, $\omega$ has order $p-1$ and $\omega^2$ is the cyclotomic character.)
\begin{proposition} \label{prop:repshape} Suppose that $\vec{k},\vec{m} \in \ZZ^\Sigma$ are as above, i.e., $k_1 = 1$ and $k_2 = w$ for some $2 \le w \le p+1$, and let $m = m_1 + m_2$.  A representation $\rho:G_L \to \GL_2(\Fpbar)$ has a crystalline lift of weight $(\vec{k},\vec{m})$ if and only if
$$\rho \sim \psi\otimes\left(\begin{array}{cc}\chi&c\\0&1\end{array}\right)$$
for some characters $\psi$ and $\chi$ such that $\psi|_{I_L} = \omega^{-1-w-m}$, $\chi|_{I_L} = \omega^{w-1}$ and $c \in V_\chi$, or 
$$\rho \sim \Ind_{G_M}^{G_L} \xi$$
for some character $\xi:G_M \to \Fpbar^\times$ such that 
$\xi|_{I_M} = \omega^{-1-w-m}\omega_\tau^{w-1}$.
\end{proposition}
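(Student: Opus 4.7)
The plan is to reduce to the quado-Barsotti--Tate setting of \S\ref{subsec:qBT} by twisting, then apply the structural results developed there. Fix a crystalline character $\widetilde\eta:G_L\to\Zpbar^\times$ with $\sigma_1$-labelled Hodge--Tate weight $1+m_1$ and $\sigma_2$-labelled weight $w+m_2$, so that $\bar\eta|_{I_L}=\omega^{1+w+m}$. Since $w-1\in[1,p]$, tensoring with $\widetilde\eta$ identifies crystalline lifts of $\rho$ of weight $(\vec k,\vec m)$ with crystalline quado-Barsotti--Tate lifts of $\rho_0:=\rho\otimes\bar\eta$ having $\sigma_1$-labelled weights $(0,0)$ and $\sigma_2$-labelled weights $(w-1,0)$. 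It therefore suffices to classify representations $\rho_0$ admitting such a lift.

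For the ``only if'' direction, let $\widetilde R$ be such a quado-BT lift of $\rho_0$. If $\rho_0$ is reducible, Theorems~\ref{thm:GLS.3.1.4} and~\ref{thm:GLS.5.1.5} give $\{s_0,t_0\}=\{w-1,0\}$; in the ordering with $s_0=0$, the constraint $\deg(y_0)<s_0=0$ forces $y_0=0$, so the corresponding extension of Kisin modules splits, and (up to swapping sub and quotient) we may arrange that $\rho_0\sim\smat{\chi'}{c'}{0}{1}$ with $\chi'|_{I_L}=\omega^{w-1}$. Proposition~\ref{prop:Vchi}, applied to the underlying $(\varphi,\widehat G)$-module of $\widetilde R$, then places $c'$ in $V_{\chi'}$, and tensoring by $\bar\eta^{-1}$ produces the first form with $\psi=\bar\eta^{-1}$, $\chi=\chi'$, $c=c'$. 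If $\rho_0$ is irreducible, it is induced from a character $\xi'$ of $G_M$, and (after possibly enlarging $K$) $\widetilde R|_{G_M}$ decomposes as a direct sum $\widetilde\xi'\oplus(\widetilde\xi')^\phi$ of crystalline characters whose labelled Hodge--Tate weights partition those of $\widetilde R$ among the two $M$-embeddings extending each $\sigma_i$. A direct computation with fundamental characters, using $\omega=\omega_\tau^{p+1}$ and $\omega_{\tau'}=\omega_\tau^p$, forces $\bar\xi'|_{I_M}$ to be $\omega_\tau^{w-1}$ up to the $\phi$-twist; untwisting by $\bar\eta|_{G_M}$ yields the required $\xi$ with $\xi|_{I_M}=\omega^{-1-w-m}\omega_\tau^{w-1}$.

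For the ``if'' direction, I construct explicit crystalline lifts in each case. In the reducible case, the $(\varphi,\widehat G)$-module characterization of $V_\chi$ in Proposition~\ref{prop:Vchi} (extended to the quado-BT weights $(0,w-1)$) shows that any $c\in V_\chi$ arises from a crystalline extension $\widetilde{\rho_0}$ of $\Zpbar$ by $\Zpbar(\widetilde\chi')$, for some crystalline lift $\widetilde\chi'$ of $\chi$ with labelled weights $(0,w-1)$, with reduction $\smat{\chi}{c}{0}{1}$; tensoring with any crystalline lift $\widetilde\psi$ of $\psi$ with labelled weights $(-1-m_1,-w-m_2)$ produces the required lift of $\rho$ of weight $(\vec k,\vec m)$. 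In the induced case, lift $\xi$ to a crystalline character $\widetilde\xi:G_M\to\Zpbar^\times$ with labelled weights $c_{1,1}=c_{1,2}=-1-m_1$ and $\{c_{2,1},c_{2,2}\}=\{-1-m_2,-w-m_2\}$; the same fundamental-character calculation confirms that its reduction on inertia equals $\xi|_{I_M}$, and since $M/L$ is unramified, $\Ind_{G_M}^{G_L}\widetilde\xi$ is crystalline with labelled Hodge--Tate weights exactly those of weight $(\vec k,\vec m)$. The principal obstacle is the irreducible-reduction case of ``only if'', since \S\ref{subsec:qBT} only develops the structural theory for reducible reductions; handling it requires base-changing to $M$, where the reduction splits, and carefully extending the quado-Barsotti--Tate bookkeeping to $M$ (with $e_M=2$, $f_M=2$) to track how labelled weights partition among the four embeddings.
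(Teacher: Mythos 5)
Your overall architecture matches the paper's: twist to reduce to a quado-Barsotti--Tate lift with $\sigma_j$-labelled weights $(r_j,0)$ with $r_1=0$, $r_2=w-1$, then split into reducible and irreducible cases, invoking Theorems~\ref{thm:GLS.3.1.4} and~\ref{thm:GLS.5.1.5} in the former and restricting to $G_M$ in the latter. The irreducible case is handled essentially as in the paper (both directions), so I won't dwell on it.

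There is, however, a genuine gap in the reducible case. For the ``only if'' direction you write that ``Proposition~\ref{prop:Vchi}, applied to the underlying $(\varphi,\widehat G)$-module of $\widetilde R$, then places $c'$ in $V_{\chi'}$.'' But Proposition~\ref{prop:Vchi} applies only to $(\varphi,\widehat G)$-modules whose underlying Kisin modules sit in an exact sequence $0\to\widehat{\gM}(1;b)\to\widehat{\overline{\gM}}\to\widehat{\gM}(n;a)\to 0$ with $n\in[1,p-1]$ --- the pseudo-BT weight profile --- whereas your $\widetilde R$ produces Kisin modules with weight parameters $\{0,w-1\}$. The paper bridges this by first tensoring with $\widehat{\gM}(1;1)$, which lands in the required form when $w<p$; for $w=p$ and $w=p+1$ the resulting weight $w$ (or $p+1$) falls outside $[1,p-1]$, and the paper has to construct explicit injective morphisms of $(\varphi,\widehat G)$-modules into ones of the required form (with $n=1$ or $n=2$) to finish. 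This is the technical heart of the reducible case, and your proof as written simply doesn't engage with it: for $w\ge p$ the citation of Proposition~\ref{prop:Vchi} is not justified.

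Your treatment of the reducible ``if'' direction has a related problem: you invoke ``Proposition~\ref{prop:Vchi} (extended to the quado-BT weights $(0,w-1)$)'' without establishing any such extension. The paper sidesteps the need for it with a short dimension-count argument: define $V''_\chi$ as the image of $H^1_f(G_L,\Zpbar(\widetilde\chi'))$ for a lift $\widetilde\chi'$ with the quado-BT labelled weights, observe $\dim V''_\chi=\dim V_\chi$, then use the already-proved forward direction to conclude $V''_\chi\subset V_\chi$ and hence $V_\chi=V''_\chi$. This is cleaner than proving a second variant of Proposition~\ref{prop:Vchi}, and it is the route you should take (but note it depends on the forward direction being correct for all $w$, which brings you back to the $w\ge p$ gap above). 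I'd suggest either carrying out the Kisin-module manipulations for $w\in\{p,p+1\}$ explicitly, or restricting your claim to $w<p$ and flagging the remaining cases.
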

\begin{proof} Firstly, by part (2) of Proposition~\ref{prop:chartwist} (for any $\gp$ and $F$ such that $F_\gp$ is isomorphic to $L$), we may replace $\rho$ by $\mu \otimes \rho$ for any character $\mu:G_L \to \Fpbar^\times$ such that $\mu|_{I_L} = \omega^{m+w+1}$ so as to assume that $\vec{m}= - \vec{k}$.  In this case, the assertion characterizes $\rho$ with  quado-Barsotti--Tate lifts with $\sigma_j$-labelled Hodge--Tate weights $(r_j,0)$, where $r_1 = 0$ and $r_2 = w-1$.

Suppose first that $\rho$ is reducible and that it has such a lift $\widetilde{\rho}:G_L \to \GL_2(K)$ for some sufficiently large finite extension $K$ of $\Q_p$. By Theorem~\ref{thm:GLS.3.1.4}, we have that 
$$\rho|_{I_L} \sim \left(\begin{array}{cc} 
\omega^{w-1} & * \\
0 & 1 \end{array}\right) \quad\mbox{or}\quad
\left(\begin{array}{cc} 
1 & * \\
0 & \omega^{w-1} \end{array}\right),$$
according to whether or not $\sigma_2 \in J$.
Furthermore, in the notation of Theorem~\ref{thm:GLS.5.1.5} (suppressing the subscript $i=0$), we have $\delta=1$, $\varepsilon=0$, $r = w-1$ and $\{s,t\} = \{w-1,0\}$.  If $s = 0$ and $t=w-1$, then the extension of Kisin modules splits, so we may assume $s=w-1$ and $t=0$.  We thus have an exact sequence
$$ 0 \lra \gM(0;b) \lra \overline{\gM} \lra \gM(w-1;a) \lra 0$$
for some $a,b \in E^\times$, extending to one of typical $(\varphi,\widehat{G})$-modules with $E$-action. 
In particular, we have
$$\rho \sim \psi\otimes\left(\begin{array}{cc}\chi&c\\0&1\end{array}\right)$$
for some characters $\psi$ and $\chi$ such that $\psi$ is unramified and $\chi|_{I_L} = \omega^{w-1}$, and it remains to show that $c$ represents a class in $V_\chi$. 

Consider the $(\varphi,\widehat{G})$-module $\widehat{\overline{\gM}'}$ obtained by tensoring $\widehat{\overline{\gM}}$ with $\widehat{\gM}(1;1)$; i.e., its underlying Kisin module is $\overline{\gM}' = \overline{\gM} \otimes_{\gS_E} \gM(1;1)$, and the $\widehat{G}$-action is obtained from those on the tensor product via the canonical isomorphism
$$\widehat{\mathcal{R}}\otimes_{\varphi,\gS}
(\overline{\gM} \otimes_{\gS_E} \gM(1;1))
\cong (\widehat{\mathcal{R}}\otimes_{\varphi,\gS}\overline{\gM})
 \otimes_{\widehat{\mathcal{R}}_E} (\widehat{\mathcal{R}}\otimes_{\varphi,\gS}\gM(1;1)).$$
Thus $\widehat{T}(\widehat{\overline{\gM}'}) \sim (\psi'\otimes\rho)$ for some character $\psi':G_L \to E^\times$ such that $\psi'|_{I_L} = \omega$, and we have an exact sequence
$$0 \lra \widehat{\gM}(1;b) \lra \widehat{\overline{\gM}'} \lra \widehat{\gM}(w;a) \lra 0$$
of typical $(\varphi,\widehat{G})$-modules with $E$-action.
In particular, if $w < p$, then $\widehat{\overline{\gM}'}$ is as in the statement of Proposition~\ref{prop:Vchi} (with $n=w$), so $c$ represents a class in $V_\chi$.  

Suppose now that $w=p$, so that $n=1$ in the definition of $V_\chi$.  In this case, we will define an injective morphism $\widehat{\overline{\gM}'} \to \widehat{\overline{\gM}''}$ for some $(\varphi,\widehat{G})$-module $\widehat{\overline{\gM}''}$ as in the statement of Proposition~\ref{prop:Vchi}.  Note that Theorem~\ref{thm:GLS.5.1.5} provides a basis $(e',f')$ for $\overline{\gM}'$ such that
$$\begin{array}{rcl} \varphi(e') & = & bue' \\
\varphi(f') & = & au^pf' + cue'\, (+\, du^{p+1}e'), \end{array}$$
for some $c$ (and $d$) in $E$, where the term in parentheses occurs only if $a=b$.  We then define $\overline{\gM}''$ to be the Kisin module with basis $(e'',f'')$ and
$$\begin{array}{rcl} \varphi(e'') & = & bue'' \\
\varphi(f'') & = & auf'' + ab^{-1}ce''\, (+\, due''). \end{array}$$
The map $\overline{\gM}' \to \overline{\gM}''$ defined by $e' \mapsto e''$ and $f' \mapsto uf'' + b^{-1}c e''$ is then compatible with $\varphi$, hence defines the desired injective morphism of Kisin modules.  Letting $\widehat{\overline{\gM}''}$ denote the unique extension to a typical $(\varphi,\widehat{G})$-module as in the statement of Proposition~\ref{prop:Vchi}, it is straightforward to check (as in the proof of \cite[Prop.~6.1.7]{GLS}) that $\widehat{\mathcal{R}}\otimes_{\varphi,\gS} \overline{\gM}'$ is stable under the action of $\tau$.  It follows that the map $\overline{\gM}' \to \overline{\gM}''$ extends to a morphism of typical $(\varphi,\widehat{G})$-modules with $E$-action, which is necessarily of the form $\widehat{\overline{\gM}'} \to \widehat{\overline{\gM}''}$ by \cite[Lemma~6.1.3]{GLS}. This implies that $\widehat{T} (\widehat{\overline{\gM}'}) \cong \widehat{T}(\widehat{\overline{\gM}''})$, and hence that $c$ represents a class in $V_\chi$.

Finally suppose that $w = p + 1$.  In this case, we define an injective morphism $\widehat{\overline{\gM}} \to \widehat{\overline{\gM}''}$ for some typical $(\varphi,\widehat{G})$-module $\widehat{\overline{\gM}''}$ as in the description of $\widehat{\overline{\gM}}$, but with $w=2$ instead of $w=p+1$.
Indeed by Theorem~\ref{thm:GLS.5.1.5}, we have a basis $(e,f)$ for $\overline{\gM}$ such that
$$\begin{array}{rcl} \varphi(e) & = & be \\
\varphi(f) & = & au^pf + ce\, (+\, du^4e), \end{array}$$
for some $c$ (and $d$) in $E$, where the term in parentheses occurs only if $a=b$ and $p=2$.  We then define $\overline{\gM}''$ to be the Kisin module with basis $(e'',f'')$ and
$$\begin{array}{rcl} \varphi(e'') & = & be'' \\
\varphi(f'') & = & auf'' + ab^{-1}ce''\, (+\, du^2e''). \end{array}$$
The map $\overline{\gM} \to \overline{\gM}''$ defined by $e \mapsto e''$ and $f \mapsto uf'' + b^{-1}c e''$ is then compatible with $\varphi$, hence defines the desired injective morphism of Kisin modules.  Arguing exactly as in the case $w=p$, we find that the morphism extends to one of typical $(\varphi,\widehat{G})$-modules with $E$-action, and hence that $\widehat{T} (\widehat{\overline{\gM}}) \cong \widehat{T}(\widehat{\overline{\gM}''})$.  It now follows from the case $w=2$ (already treated above) that $c$ represents a class in $V_\chi$.

Conversely, suppose that
$$\rho \sim \left(\begin{array}{cc}\chi&c\\0&1\end{array}\right)$$ 
with $\psi$ unramified, $\chi|_{I_L} = \omega^{w-1}$ and $c$ representing a class in $V_\chi$. Choosing an unramified lift $\widetilde{\psi}$ of $\psi$ and a crystalline lift $\widetilde{\chi}'$ of $\chi$ with $\sigma_i$-labelled Hodge--Tate weight $r_i$, we again have that $H^1_f(G_L,\Qpbar(\widetilde{\chi}'))$ is one-dimensional.  Letting $V_\chi''$ denote the image of $H^1_f(G_L,\Zpbar(\widetilde{\chi}'))$ in $H^1(G_L,\Fpbar(\chi))$, we find that $\dim_{\Fpbar}(V_\chi'') = \dim_{\Fpbar}(V_\chi)$.  Since the classes in $V_\chi''$ correspond to reductions of lattices in crystalline representations of the form
$$\widetilde{\psi} \otimes \left(\begin{array}{cc}\widetilde{\chi}'&c\\0&1\end{array}\right),$$
we have already proved that $V_\chi'' \subset V_\chi$, and it follows that $V_\chi = V''_\chi$, and hence that $\rho$ has a (reducible) crystalline lift with the prescribed labelled Hodge--Tate weights.

Suppose now that $\rho$ is irreducible, and therefore that $\rho\sim \Ind_{G_M}^{G_L}\xi$ for some character $\xi: G_M \to E^\times$. Write $\Sigma_M = \{\,\sigma_{0,1}, \sigma_{0,2}, \sigma_{1,1}, \sigma_{1,2}\,\}$ where $\sigma_{i,j}$ is the embedding whose restriction to $L$ is $\sigma_j$ and whose reduction is $\tau$ (resp.~$\tau'$) if $i=0$ (resp.~$1$).

If $\rho$ has a crystalline lift $\widetilde{\rho}:G_L \to \GL_2(\Qpbar)$ with $\sigma_j$ labelled Hodge--Tate weights $(r_j,0)$, then $\widetilde{\rho}|_{G_M}$ is a crystalline lift of $\rho|_{G_M}$ with $\sigma_{i,j}$ labelled Hodge--Tate weights $(r_j,0)$.  Since $\rho|_{G_M}$ is decomposable, Theorem~\ref{thm:GLS.3.1.4} implies that
$$\rho|_{I_M} \sim \omega^{w-1} \oplus 1 \quad\mbox{or }\quad 
\omega_\tau^{w-1} \oplus \omega_{\tau'}^{w-1}.$$
The first possibility contradicts the irreducibility of $\rho$, and the second implies that $\xi|_{I_M} = \omega_\tau^{w-1}$ (after replacing $\xi$ with its conjugate if necessary).

Conversely, if $\xi|_{I_M} = \omega_\tau^{w-1}$, then we may choose a crystalline lift $\widetilde{\xi}:G_M \to \Qpbar^\times$ whose $\sigma_{i,j}$-labelled Hodge--Tate weight is $w-1$ if $(i,j) = (0,2)$ and $0$ otherwise. Then $\Ind_{G_M}^{G_L}\widetilde{\xi}$ is a crystalline lift of $\rho$ with $\sigma_1$-labelled Hodge--Tate weights $(0,0)$ and $\sigma_2$-labelled Hodge--Tate weights $(w-1,0)$, as required.
\end{proof}

\begin{remark} In view of \cite[Lemma~5.4.2]{GLS}, the arguments involving $(\varphi,\widehat{G})$-modules are only needed in the case that $\chi$ is the cyclotomic character, which implies that either $w=3$ or $w=p=2$.  Thus it is only for $p \le 3$ that there is any overlap between these cases and those for which we need the additional arguments in the proof of the proposition comparing extensions (i.e., $w \ge p$).
\end{remark}

\begin{remark} \label{rmk:notsharp} It follows from the proposition that $\rho$ has a crystalline lift of weight $((1,2),\vec{m})$ if and only if it has one of weight $((1,p+1),\vec{m})$ (or indeed of weight $((2,1),\vec{m})$).  Furthermore it follows from Theorem~\ref{thm:cone} that the same equivalence holds with  ``has a crystalline lift'' replaced by ``is geometrically modular.'' This is further indication that the equivalence in Conjecture~\ref{conj:geomweights2} extends to weights slightly outside $\Xi_{\min}^+$.
\end{remark}

\begin{remark} \label{rmk:boundaries} Taking $w=1$ in the statement of Proposition~\ref{prop:repshape}, the corresponding condition on $\rho$ is that it be unramified.

We remark also that in the case $w=p$ of the proposition (in which case $\chi$ is unramified), we could have taken $n=p$ instead of $n=1$ in the definition of $V_\chi$, provided $p > 2$. That this gives the same space of extensions follows for example from the main result of \cite{BS} (see the proof below of Theorem~\ref{thm:elimination}).
\end{remark}

We need one further observation in another special case.  Recall that if $\chi = \chi_{\mathrm{cyc}}$ is the cyclotomic character, then $\dim_{\Fpbar} H^1(G_L,\Fpbar(\chi)) = 3$, unless $\chi$ is also trivial, in which case $\dim_{\Fpbar} H^1(G_L,\Fpbar(\chi)) = 4$.  (Note the latter occurs only if $p=2$, or if $p=3$ and $L = \Q_3(\zeta_3)$.)  In this case, letting $\widetilde{\chi}':G_L \to \Zpbar^\times$ be the cyclotomic character (or indeed any character whose restriction to inertia is cyclotomic), we have that the image of $H^1_f(G_L,\Zpbar(\widetilde{\chi}')$ in $H^1(G_L,\Fpbar(\chi))$ has codimension one (consisting of what are commonly called the {\em peu ramifi\'ees} classes).
Let us denote this subspace by $V^{\mathrm{ty}}_\chi$.

\begin{proposition} \label{prop:typical} If $\chi$ is the cyclotomic character, then $V_\chi \subset V^{\mathrm{ty}}_\chi$.
\end{proposition}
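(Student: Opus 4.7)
The content of the proposition is to show that for $\chi = \chi_{\cyc}$, every class in $V_\chi$ lifts to a crystalline extension with cyclotomic labelled Hodge--Tate weights, and in particular is \emph{peu ramifi\'ee}. My strategy is to use the $(\varphi,\widehat{G})$-module description of $V_\chi$ furnished by Proposition~\ref{prop:Vchi} to construct such a lift explicitly, following the template of the weight-shifting arguments already used in the proof of Proposition~\ref{prop:repshape}.

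First I would fix the setting for $p \ge 5$: the local class field theoretic norm map $\CO_L^\times\to\Z_p^\times$ reduces to squaring on residue fields (since $L/\Q_p$ is totally ramified of degree two), so that $\chi_{\cyc}|_{I_L}$ equals $\omega^{\pm 2}$ and is non-trivial, the index $n$ in the definition of $V_\chi$ is then the unique element of $[1,p-1]$ with $\chi|_{I_L}=\omega^{n-1}$, and $V_\chi$ is one-dimensional. The primes $p\in\{2,3\}$, in which $\chi_{\cyc}|_{I_L}$ is trivial and $\chi$ itself may be trivial on $G_L$, I would handle separately at the end by a direct Kummer-theoretic argument.

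Given a class $c\in V_\chi$ and the typical $(\varphi,\widehat{G})$-module $\widehat{\overline{\gM}}$ supplied by Proposition~\ref{prop:Vchi}, whose underlying Kisin module is an extension of $\gM(n;a)$ by $\gM(1;b)$ with explicit $\varphi$-structure provided by Theorem~\ref{thm:GLS.5.1.5}, I would construct a second typical $(\varphi,\widehat{G})$-module $\widehat{\overline{\gM}''}$ corresponding to a pseudo-Barsotti--Tate configuration with labelled Hodge--Tate weights matching the cyclotomic character, together with an injective Kisin morphism $\overline{\gM}\hookrightarrow\overline{\gM}''$ inducing an isomorphism on associated \'etale $\varphi$-modules. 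The construction should parallel the arguments for the boundary cases $w\in\{p,p+1\}$ in the proof of Proposition~\ref{prop:repshape}: define $\overline{\gM}''$ by an explicit basis $(e'',f'')$ with $\varphi$-structure of pseudo-Barsotti--Tate shape, and the morphism by $e\mapsto e''$, $f\mapsto u f''+(\mathrm{correction})\,e''$, with the correction term forced by $\varphi$-compatibility and determined by the extension cocycle. To extend this Kisin morphism to one of typical $(\varphi,\widehat{G})$-modules, I would invoke the rigidity result \cite[Lemma~6.1.3]{GLS} (and its $p=2$ extension from \cite{wang}), using typicality of $\widehat{\overline{\gM}}$ to deduce stability of $\widehat{\mathcal{R}}\otimes_{\varphi,\gS}\overline{\gM}$ under the $\tau$-action. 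Passing to Galois representations via $\widehat{T}$ would then identify $\widehat{T}(\widehat{\overline{\gM}''})$ with the reducible representation $\rho$ corresponding to $c$, furnishing the desired crystalline lift with cyclotomic labelled Hodge--Tate weights and placing $c$ in $V_\chi^{\mathrm{ty}}$.

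The principal obstacle is the explicit $\varphi$-compatible construction of the Kisin morphism $\overline{\gM}\to\overline{\gM}''$ across Kisin modules of distinct weight configurations---namely degeneracy degree $\vec\delta=(1)$ on the $V_\chi$ side and $\vec\delta=(0)$ on the pseudo-Barsotti--Tate side---which I would carry out by a single $u$-multiplication step analogous to the $e\mapsto e''$, $f\mapsto uf''+\cdots$ formulas in the proof of Proposition~\ref{prop:repshape}. For $p\in\{2,3\}$, I would argue instead via Kummer theory: classes in $V_\chi$ arise as reductions of $H^1_f$-classes for lifts whose total Hodge--Tate weight matches that of the cyclotomic character, and such reductions land in the subspace $\CO_L^\times\otimes_{\Z_p}\Fpbar$ of $L^\times/(L^\times)^p\otimes_{\Z_p}\Fpbar\cong H^1(G_L,\Fpbar)$, which is precisely $V_\chi^{\mathrm{ty}}$.
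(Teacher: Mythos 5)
Your proposal does not match the paper's proof, and the central mechanism you propose does not work.

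The paper constructs \emph{no} new Kisin module and \emph{no} morphism. The key observation is a tautology about the parametrization in Theorem~\ref{thm:GLS.5.1.5}: the typical $(\varphi,\widehat{G})$-module produced in the proof of Proposition~\ref{prop:repshape} for a class $c \in V_\chi$ has underlying Kisin module with $\{s,t\}=\{2,0\}$ and $y$ a constant (these being the quado--Barsotti--Tate parameters $r=s=2$, $\delta=1$, $t=\varepsilon=x=0$), and this \emph{same} module already falls within the pseudo--Barsotti--Tate parametrization $r=x=\varepsilon=1$, $\delta=t=0$, $s=2$ — identical $\{s,t\}$, and a constant $y$ satisfies the weaker degree constraint $\deg y\le 1$. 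Your proposed map $e\mapsto e''$, $f\mapsto uf''+\cdots$ is modeled on the boundary cases $w\in\{p,p+1\}$ in Proposition~\ref{prop:repshape}, but that device exists specifically to strip a factor of $u^p$ out of $\varphi(f)$, reducing the top weight from $p$ (or $p+1$) to $1$. Here the weight profile shifts from $(0,2)$ to $(1,1)$ with the \emph{same} exponent $s=2$ in $\varphi(f)$: for $p\ge 5$ there is no $u^p$ to cancel, and $\varphi$-compatibility of $f\mapsto uf''+\cdots$ would force $\varphi(f'')$ to carry a negative power of $u$. The required step is the trivial inclusion of parameter sets, not a morphism.

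There are two further gaps. First, even granting a typical $(\varphi,\widehat{G})$-module of pBT shape giving $\rho$, the conclusion $c\in V_\chi^{\mathrm{ty}}$ is not automatic: one must show that the space of such objects is in bijection with $V_\chi^{\mathrm{ty}}$, which the paper does via \cite[Lemma~6.1.2]{GLS} in one direction (with Wang's extension and an extra step for $p=2$) and the same counting argument as in the last paragraph of the proof of Proposition~\ref{prop:Vchi} in the other. Your proposal stops at producing the Kisin module and does not address this equivalence. Second, the Kummer-theoretic fallback for $p\in\{2,3\}$ rests on a false premise: for $p\le 3$ the exponent $n$ in the definition of $V_\chi$ equals $1$ (since $\chi_{\cyc}|_{I_L}=\omega^2$ is trivial when $\omega$ has order $\le 2$), so the lift $\widetilde{\chi}$ used to define $V_\chi$ has labelled weights $(-1,1)$ and total Hodge--Tate weight $0$, whereas $\widetilde{\chi}'$ defining $V_\chi^{\mathrm{ty}}$ has weights $(1,1)$ and total weight $2$. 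The weights do not ``match'', and the relationship between the two mod-$p$ images is exactly what needs proof, not assumption. (Also note the paper's case division is $p>2$ vs.\ $p=2$, not $p\ge 5$ vs.\ $p\le 3$: for $p=3$ the argument proceeds as for $p\ge 5$, since it works with $w=3$ and $r_2=2$ throughout, with the $w=p$ subtleties already absorbed into Proposition~\ref{prop:repshape}.)
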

\begin{proof} If $c$ represents a class in $V_\chi$, then the proof of Proposition~\ref{prop:repshape} shows that the representation $\rho:= \left(\begin{array}{cc}\chi&c\\0&1\end{array}\right)$ arises from a typical $(\varphi,\widehat{G})$-module $\widehat{\overline{\gM}}$ as in the statement of Theorem~\ref{thm:GLS.5.1.5}\footnote{In particular, with $E$-action for some sufficiently large $E$.} with $s=r=2$, 
$\delta = 1$ and $t = \varepsilon = x = 0$.
In particular, it follows immediately that $\rho$ arises from 
such a $(\varphi,\widehat{G})$-module $\widehat{\overline{\gM}}$ with $s=2$, $r=x=\varepsilon=1$ and $t = \delta = 0$.

We claim that this is equivalent to $c$ representing a class in $V_\chi^{\mathrm{ty}}$.  Indeed if $c$ represents a class in $V_\chi^{\mathrm{ty}}$, then Theorem~\ref{thm:GLS.5.1.5} and \cite[Lemma~6.1.2]{GLS} imply that $\rho$ arises from such a $(\varphi,\widehat{G})$-module if $p > 2$.  If $p=2$, then Wang's extension of these results (\cite{wang}) implies that either 
$\rho$ arises from such a $(\varphi,\widehat{G})$-module, or from one for which $\overline{\gM}$ is as in the statement of Theorem~\ref{thm:GLS.5.1.5} with $s = t = r = \varepsilon = 1$ and 
$x = \delta = 0$.  In the latter case, Proposition~\ref{prop:Vchi} implies that $c \in V_\chi$, and hence we have already shown in the preceding paragraph that $\rho$ arises from a $(\varphi,\widehat{G})$-module of the desired form.  This proves one direction of the claimed equivalence, and the other follows from a counting argument as in the last paragraph of the proof of Proposition~\ref{prop:Vchi}. 
\end{proof}

\subsection{The ramified quadratic case: Partial weight one modularity} \label{ssec:pp1}
We specialize again to the case where $F$ is a real quadratic field in which $p$ ramifies.  We will now prove cases of Conjecture~\ref{conj:geomweights2} involving partial weight one, i.e., where $\vec{k} = (1,w)$ for some $2 \le w \le p$.

We first assume $w \ge 3$, since the argument for $w = 2$ will require additional ingredients and hypotheses. We remark though that a particular case of the following theorem requires the result for $w=2$ as input (in the form of Corollary~\ref{cor:final}).

To begin with, in the modularity direction we have the following:
\begin{theorem} \label{thm:3toPmod} Let $F$ be a real quadratic field in which $p$ ramified, and let $\gp$ be the prime of $F$ over $p$.
Suppose that $\vec{k},\vec{m} \in \ZZ^\Sigma$, with $\vec{k} = (1,w)$ for some $w \in [3,p]$.  Let $\rho:G_F \to \GL_2(\Fpbar)$ be irreducible and geometrically modular of some weight.   Suppose further that $\rho|_{G_{F(\zeta_p)}}$ is irreducible, and if $p=5$, then the projective image of $\rho|_{G_{F(\zeta_5)}}$ is not isomorphic to $A_5$. If $\rho|_{G_{F_\gp}}$ has a crystalline lift of weight $(\vec{k},\vec{m})$, then $\rho$ is geometrically modular of weight $(\vec{k},\vec{m})$.
\end{theorem}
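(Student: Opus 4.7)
The plan is to follow the strategy developed in \cite{DS1} and \cite{HWPhD} for the unramified quadratic case, adapted to the ramified setting using the partial Hasse invariants and $\Theta$-operators of \cite{theta}. By Proposition~\ref{prop:chartwist} I first reduce to $\vec{m} = -\vec{1}$. In the ramified quadratic setting $\Sigma_\gp = \{\sigma_1, \sigma_2\}$, the partial Hasse invariant weights are $\vec{h}_{\sigma_1} = p\vec{e}_{\sigma_2} - \vec{e}_{\sigma_1}$ and $\vec{h}_{\sigma_2} = \vec{e}_{\sigma_1} - \vec{e}_{\sigma_2}$, while $\Theta_\tau$ (for $\tau$ the unique embedding of $F_{\gp,0}$) shifts the weight $((1,w),\vec{m})$ to $((2,w+1), \vec{m} - \vec{e}_{\sigma_2})$. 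The hypothesis $w \in [3,p]$ is what ensures that the target weight $(2, w+1)$ lies in $\ZZ_{\ge 2}^{\Sigma_\gp}$ and that the corresponding representation $D_{\vec{k}',\vec{m}',\Fpbar}$ of $\GL_2(\CO_F/p)$ is irreducible, so that the results of Gee--Liu--Savitt (Theorem~\ref{thm:GLSmain}, Corollary~\ref{cor:GLSmain}) can be applied.

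Next, using Proposition~\ref{prop:repshape}, the crystalline-lift hypothesis forces $\rho|_{G_{F_\gp}}$ to be either a reducible extension of specified inertial shape or induced from a character of $G_M$ (for $M/F_\gp$ the unramified quadratic extension). In either case I would exhibit, by tensoring a given crystalline lift with an appropriate crystalline character and re-examining the classification of Proposition~\ref{prop:repshape}, a crystalline lift of $\rho|_{G_{F_\gp}}$ of the regular weight $(\vec{k}'_\gp, \vec{m}'_\gp) := ((2,w+1), -\vec{1} - \vec{e}_{\sigma_2})$. Combining this with the Taylor--Wiles hypothesis and the fact that $\rho$ is geometrically modular of some weight, Theorem~\ref{thm:GLSmain} gives that $\rho$ is algebraically (equivalently, definitely) modular of weight $(\vec{k}',\vec{m}')$, and then Theorem~\ref{thm:defalggeom}(2) upgrades this to geometric modularity of weight $(\vec{k}',\vec{m}')$.

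The heart of the proof is the descent from weight $(\vec{k}',\vec{m}') = ((2,w+1), -\vec{1} - \vec{e}_{\sigma_2})$ back to $((1,w), -\vec{1})$. Using Proposition~\ref{prop:normalized} and Lemma~\ref{lem:stabilized} I produce a strongly stabilized eigenform $g$ of weight $(\vec{k}',\vec{m}')$ giving rise to $\rho$. The key claim is that $g = \Theta_\tau(f)$ for some (necessarily unique, by Lemma~\ref{lem:strong}) eigenform $f$ of weight $((1,w), -\vec{1})$. By Theorem~\ref{thm:kertheta}, the image of $\Theta_\tau$ is characterized by not factoring through $V_\gp$ times products $\prod_\sigma G_\sigma^{s_\sigma} H_\sigma^{r_\sigma}$; if $g$ admitted such a factorization, then the effect of $V_\gp$ on $q$-expansions given by~\eqref{eqn:Vonq}, combined with the compatibility with Hecke eigenvalues, would force $\rho$ to be a Frobenius twist of another irreducible modular Galois representation in a way incompatible with the inertial weights of $\rho|_{G_{F_\gp}}$ imposed by the crystalline-lift hypothesis. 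Once $g$ is shown to lie in the image of $\Theta_\tau$, the eigenform $f$ is recovered by ``dividing'' $q$-expansion coefficients using the formula $r_m^t(\Theta_\tau(f)) = \pi_\tau(m) r_m^t(f)$, with Theorem~\ref{thm:thetadiv} ensuring that the needed divisions make sense at Fourier coefficients with $v_\gp(m) > 0$.

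The main obstacle I anticipate is this descent step: rigorously ruling out the $V_\gp$-image case for each of the two subcases in Proposition~\ref{prop:repshape}, and carefully tracking how the Hecke eigenvalues of $g$ (in particular at $\gp$, via the ordinariness framework of Section~\ref{ssec:ord}) correspond to those of the desired $f$, so that $\rho_f$ is genuinely $\rho$ rather than a twist. The reducible and induced subcases lead to qualitatively different behavior of the local factors and will need to be handled separately.
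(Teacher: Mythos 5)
Your overall strategy (shift to a regular weight, apply the GLS/Gee--Kisin results to get modularity there, then descend back to the irregular weight using weight-shifting operators) is the right framework and matches the paper in spirit, but your descent step is genuinely flawed, and you miss a structural feature of the actual argument.

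The central problem is that you invoke Theorem~\ref{thm:kertheta} as if it characterized the \emph{image} of $\Theta_\tau$. It does not: it characterizes the \emph{kernel}. Ruling out that $g$ is killed by $\Theta_\tau$ says nothing about whether $g$ lies in the image of $\Theta_\tau$, which is what you need in order to produce $f$. Moreover, your proposal to "divide" $q$-expansion coefficients by $\pi_\tau(m)$ does not work: $\pi_\tau$ factors through $M/\gp M$, so $\pi_\tau(m) = 0$ whenever $v_\gp(m) > 0$, and even at the places where division is possible one would still need to prove that the resulting formal $q$-series is the $q$-expansion of an honest section. There is no such "inverse $\Theta$" in the paper.

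The actual proof descends differently and uses \emph{two} auxiliary regular weights, not one. Besides $\vec{k}' = (2,w+1)$ with shifted $\vec{m}'$, it also produces modularity at $\vec{k}'' = (2,w-1)$ with unchanged $\vec{m}$, extracts strongly stabilized eigenforms $f'$ and $f''$ at these two weights (using Proposition~\ref{prop:normalized}, Lemma~\ref{lem:stabilized}, and Lemma~\ref{lem:strong}), and then maps both into the single target weight $((3,w),\vec{m}')$ via $H_2 f'$ and $\Theta f''$. Uniqueness of strongly stabilized eigenforms forces these to coincide, hence $H_2 \mid \Theta f''$. Since $w \in [3,p]$ guarantees $p \nmid w-1$, Theorem~\ref{thm:thetadiv} (the "divisibility by partial Hasse invariants" criterion, not the kernel theorem) then yields $H_2 \mid f''$, and stripping off $H_2$ produces the desired $f \in S_{\vec{k},\vec{m}}$. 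The operation you are missing is "divide by a partial Hasse invariant," justified by Theorem~\ref{thm:thetadiv}, not "divide by $\Theta$."

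Two further points your sketch omits. First, establishing that $f'$ can be taken strongly stabilized requires ruling out ordinariness using the explicit shape of $\rho|_{G_{F_\gp}}$, and there is an exceptional case $w = p-1$ (where the inertial weights allow an unexpected ordinary constituent) that the paper resolves by appealing to Corollary~\ref{cor:final}, which is itself downstream of the $w=2$ case (Theorem~\ref{thm:2mod}); your outline has no mechanism to handle this. Second, you assert you would "separately handle" the reducible and induced cases of Proposition~\ref{prop:repshape}, but the paper treats them uniformly because the only thing used from that proposition is the existence of the two auxiliary crystalline lifts, which is established for both cases at once; the case distinction you anticipate is a symptom of the gap in your descent argument rather than a feature of the correct proof.
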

\begin{proof} Let $L$ denote $F_\gp$, $M$ its unramified quadratic extension, and write $\Sigma_{M_0} = \{\tau,\tau'\}$ and $\Sigma_M = \{\,\sigma_{0,1},\sigma_{0,2},\sigma_{1,1},\sigma_{1,2},\}$ as in the proof of Proposition~\ref{prop:repshape}.  By that proposition, if $\rho|_{G_L}$ has a crystalline lift of weight $(\vec{k},\vec{m})$, then
$$\rho|_{G_L} \sim \psi\otimes\left(\begin{array}{cc}\chi&c\\0&1\end{array}\right)$$
for some characters $\psi$ and $\chi$ such that $\psi|_{I_L} = \omega^{-1-w-m}$, $\chi|_{I_L} = \omega^{w-1}$ and $c \in V_\chi$, or 
$$\rho \sim \Ind_{G_M}^{G_L} \xi$$
for some character $\xi:G_M \to \Fpbar^\times$ such that 
$\xi|_{I_M} = \omega^{-1-w-m}\omega_\tau^{w-1}$.

We claim that in either case, $\rho|_{G_L}$ has crystalline lifts of weight $(\vec{k}',\vec{m}')$ and of weight $(\vec{k}'',\vec{m})$ (in the sense of Definition~\ref{def:weightlift}), where $\vec{k}' = (2,w+1)$, $\vec{k}'' = (2,w-1)$ and $\vec{m}' = \vec{m} - (0,1)$.

Indeed in the first case, it follows from the definition of $V_\chi$ that $\rho|_{G_L}$ has a crystalline lift of the form 
 $\widetilde{\psi}\otimes\left(\begin{array}{cc}\widetilde{\chi}&*\\0&1\end{array}\right)$, where $\widetilde{\chi}$ has 
$\sigma_1$ (resp.~$\sigma_2$)-labelled Hodge--Tate $-1$ (resp.~$w$) and $\widetilde{\psi}$ has 
$\sigma_1$ (resp.~$\sigma_2$)-labelled Hodge--Tate $-1-m_1$ (resp.~$-w-m_2$).  Therefore $\widetilde{\rho}|_{G_L}$ has $\sigma_1$ (resp.~$\sigma_2$)-labelled Hodge--Tate weights $(-1-m_1,-2-m_1)$ (resp.~$(-m_2,-w-m_2)$), i.e., $\sigma_i$-labelled Hodge--Tate weights $(-1-m_i',-k_i'-m_i')$.
Similarly, we see that $\rho|_{G_L}$ has a crystalline lift of the form 
 $\widetilde{\psi}'\otimes\left(\begin{array}{cc}\widetilde{\chi}'&*\\0&1\end{array}\right)$ (using also Proposition~\ref{prop:typical} if $\chi$ is cyclotomic), where $\widetilde{\chi}'$ is a crystalline lift of $\chi$ with $\sigma_1$ (resp.~$\sigma_2$)-labelled Hodge--Tate weight $1$ (resp.~$w-2$)
and $\widetilde{\psi}'$ is a crystalline lift of $\psi$ with $\sigma_1$ (resp.~$\sigma_2$)-labelled Hodge--Tate weight $-m_1-2$ (resp.~$1-w-m_2$).  Therefore $\rho|_{G_L}$ has a crystalline lift with $\sigma_i$-labelled Hodge--Tate weights $(-1-m_i,-k_i''-m_i)$.

In the second case, we may choose a crystalline lift $\widetilde{\xi}$ of $\xi$ with $\sigma_{0,1}$ (resp.~$\sigma_{0,2}$, $\sigma_{1,1}$, $\sigma_{1,2}$)-labelled Hodge--Tate weight $-2-m_1$ (resp.$-m_2$, $-1-m_1$, $-w-m_2$), so that $\Ind_{G_M}^{G_L}$ is a crystalline lift of $\rho|_{G_L}$ of weight $(\vec{k}',\vec{m}')$.  Similarly choosing $\widetilde{\xi}'$ with $\sigma_{i,j}$-labelled Hodge--Tate weights $-1-m_1$ (resp.~$-1-m_2$, $-2-m_1$, $1-w-m_2$) yields a crystalline of lift of weight $(\vec{k}'',\vec{m})$.

We may now apply Theorem~\ref{thm:GLSmain} to conclude that $\rho$ is algebraically modular of weights $(\vec{k}',\vec{m}')$ and $(\vec{k}'',\vec{m})$.  Applying Theorem~\ref{thm:defalggeom}, it follows that $\rho$ is geometrically modular of those weights.

By Proposition~\ref{prop:normalized} and Lemma~\ref{lem:stabilized}, we then have that $\rho \sim \rho_{f'} \sim \rho_{f''}$ for some stabilized eigenforms $f' \in S_{\vec{k}',\vec{m}'}(U_1(\gn),E)$ and $f'' \in S_{\vec{k}'',\vec{m}}(U_1(\gn),E)$ for some sufficiently small level $\gn$ prime to $p$ and sufficiently large finite subfield $E$ of $\Fpbar$.  

We claim furthermore that we may take $f'$ to be strongly stabilized. Indeed if $f'$ is not strongly stabilized, then applying Theorem~\ref{thm:ord} to $f'_\xi$, where $\xi$ is as in Definition~\ref{def:normalized}, implies that $\rho_{f'}|_{I_L} \sim \omega^{-2-w-m} \otimes\left(\begin{array}{cc}\omega^{w+1}&*\\0&1\end{array}\right)$.  This contradicts the explicit form above for $\rho|_{G_L}$, unless $w=p-1$ and $\rho|_{I_L} \sim \omega^{-1-m} \oplus \omega^{-2-m}$, in which case the claim follows from Corollary~\ref{cor:final} (and a further shrinking of $\gn$ using Lemma~\ref{lem:stabilized} if necessary).

Now consider the forms\footnote{We will systematically write $H_i$ for $H_{\sigma_i}$ (and similarly $G_i$ for $G_{\sigma_i}$), and omit the subscript for the unique $\Theta$.} $H_2 f'$ and $\Theta f''\in S_{\vec{k}''',\vec{m}'}(\gn,E)$, where $\vec{k}''' = (3,w)$. Since both are strongly stabilized and give rise to $\rho$, Lemma~\ref{lem:strong} implies they are the same, and hence $H_2 | \Theta f''$. Since $p \nmid (w -1)$, it follows from \cite[Thm.~A]{theta} that $f'' = H_2 f$ for some $f \in S_{\vec{k},\vec{m}}(U_1(\gn),E)$, as desired.
\end{proof}

For the argument in the other direction, we will need to assume part (2) of Conjecture~\ref{conj:defalggeom}.  On the other hand, we can remove the Taylor--Wiles hypothesis from the ``easier'' implications in Theorem~\ref{thm:GLSmain} in our setting.  In particular, we have the following:
\begin{theorem} \label{thm:elimination} Suppose that $(\vec{k},\vec{m}) \in \ZZ^{\Sigma}_{\ge 2} \times \Sigma$, with 
$k_1 = 2 $ and $2 \le k_2 \le p + 1$. 
If $\rho$ is definitely or algebraically modular of weight 
$(\vec{k},\vec{m})$, then $\rho|_{G_{F_{\gp}}}$ has a crystalline lift of weight $(\vec{k},\vec{m})$.
\end{theorem}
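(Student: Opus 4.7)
The proof follows the strategy of that of Theorem~\ref{thm:defalggeom}(2), combined with local-global compatibility at $\gp$. By Theorem~\ref{thm:defalggeom}(1), we may assume $\rho$ is algebraically modular of weight $(\vec{k},\vec{m})$; combining Proposition~\ref{prop:chartwist}(1) with its analog for algebraic modularity, we may further twist $\rho$ by a character $\chi:G_F\to\Fpbar^\times$ whose restriction to $I_{F_\gp}$ corresponds via class field theory to $\prod_{\sigma\in\Sigma_\gp}\overline{\sigma}^{1+m_\sigma}$, reducing to the case $\vec{m}=-\vec{1}$. Thus there exists an eigenform $\varphi\in M^B_{\vec{k},-\vec{1}}(U,\Fpbar)[\gm_\rho^Q]$ for an indefinite quaternion algebra $B$ over $F$ unramified at all primes over $p$ and at exactly one archimedean place, with $U_p=\CO_{B,p}^\times$.

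Following the proof of Theorem~\ref{thm:defalggeom}(2), the vanishing of $H^i(Y_U^B,\mathcal{D}_{\vec{k},-\vec{1},E})_{\gm_\rho^Q}$ for $i=0,2$ (a consequence of the irreducibility of $\rho$) allows us to lift $\varphi$ to a characteristic zero eigenform $\widetilde{\varphi}\in M^B_{\vec{k},-\vec{1}}(U,K)$. Under an embedding $K\hookrightarrow\CC$, this eigenform generates a cuspidal automorphic representation $\Pi\in\mathcal{C}_{\vec{k}}^B$, and Theorem~\ref{thm:JL} transports it to a cuspidal $\pi=\JL(\Pi)\in\mathcal{C}_{\vec{k}}$ whose local factor $\pi_\gp$ is unramified, since $U_\gp=\CO_{B,\gp}^\times$ corresponds to $\GL_2(\CO_{F,\gp})$.

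Local-global compatibility at $\gp$ (Theorem~\ref{thm:galois0} when $\vec{k}$ has uniform parity, and its extensions to mixed-parity settings via base change of $\pi$ to a CM quadratic extension of $F$ and transfer to unitary groups, as available from the works cited in the statement of that theorem) produces a Galois representation $\rho_\pi:G_F\to\GL_2(\Qpbar)$ with $\rho_\pi|_{G_{F_\gp}}$ crystalline (by unramifiedness of $\pi_\gp$) and $\overline{\rho}_\pi\sim\rho$. Twisting $\rho_\pi|_{G_{F_\gp}}$ by a suitable crystalline character of $G_{F_\gp}$ with trivial reduction (chosen to match the Hodge--Tate weights prescribed by weight $(\vec{k},-\vec{1})$, using the freedom to replace $\pi$ by $\pi\otimes(|\cdot|^s\circ\det)$ for appropriate integers $s$) yields a crystalline lift of $\rho|_{G_{F_\gp}}$ of weight $(\vec{k},-\vec{1})$; undoing the initial twist via Proposition~\ref{prop:chartwist}(2) gives the required lift of weight $(\vec{k},\vec{m})$. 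The principal technical obstacle is the mixed-parity case, which hinges on the extensions of Theorem~\ref{thm:galois0} to that setting.
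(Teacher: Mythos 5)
Your proposal has a fatal gap: it relies on the existence of Galois representations attached to characteristic-zero Hilbert modular forms of weight $\vec{k} = (2,k_2)$, but for $k_2$ odd (a case explicitly allowed by the hypothesis $2 \le k_2 \le p+1$), the form $\pi \in \mathcal{C}_{\vec{k}}$ is neither $C$-algebraic nor $L$-algebraic, and \emph{no such Galois representation exists}. You propose to resolve this ``via base change of $\pi$ to a CM quadratic extension of $F$ and transfer to unitary groups,'' but parity of the archimedean weights is preserved by base change: a non-paritious form over $F$ gives a non-paritious form over the CM extension, and the BLGGT/unitary-group machinery applies only to (regular) algebraic representations. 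The supplementary twisting step you suggest --- replacing $\pi$ by $\pi\otimes(|\cdot|^s\circ\det)$ --- shifts all labelled Hodge--Tate weights uniformly and hence cannot repair the parity obstruction either. Indeed, the whole strategic point of this paper (see the discussion preceding Theorem~\ref{thm:galois}) is precisely that one \emph{cannot} attach a Galois representation to the non-algebraic characteristic-zero lift, which is why the authors instead construct congruences to weight $\vec{2}$ forms with level at $p$.

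The paper's actual proof is structured quite differently. Rather than lifting $\varphi$ to a weight-$\vec{k}$ eigenform with trivial level at $\gp$, it observes that $D_{\vec{k},\vec{m},\Fpbar}$ is a Jordan--H\"older factor of the reduction of an irreducible $K$-representation $\theta$ of $\GL_2(\FF_p)$, and then produces a weight-$\vec{2}$ cuspidal automorphic representation $\pi\in\mathcal{C}_{\vec{2}}$ (which is always $C$-algebraic) whose local component $\pi_\gp$ has nonzero $\theta$-isotypic vectors. Local-global compatibility now yields a Galois representation $\rho_\pi$ whose restriction to $G_{F_\gp}$ is potentially crystalline or potentially semistable with $\sigma$-labelled Hodge--Tate weights $(0,-1)$ and inertial/monodromy type determined by $\theta$ --- not yet a crystalline lift of the desired weight. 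Extracting the required crystalline lift of weight $(\vec{k},\vec{m})$ from this potentially Barsotti--Tate data is the substantive content of the proof, and it invokes the identity $W^{\mathrm{cris}}(\rho|_{G_{F_\gp}}) = W^{\mathrm{explicit}}(\rho|_{G_{F_\gp}})$ from Gee--Liu--Savitt (and Wang for $p=2$), the explicit description of $W^{\mathrm{explicit}}$ from Bartlett--Steinmetz, and in one case Kisin's classification of $p$-divisible groups. None of this machinery appears in your proposal, and it cannot be bypassed: for $3 \le k_2 \le p$ one even has to argue by elimination among the two Jordan--H\"older factors of $\overline{\theta}$. In short, the proposal's parity-handling step does not exist in the literature, and without it the argument only covers $k_2$ even, while the replacement strategy the paper adopts is considerably deeper.
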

\begin{proof} By Theorem~\ref{thm:defalggeom}, we may assume\footnote{Alternatively, a similar (but slightly simpler) argument than we are about to give applies in the case of definite modularity.} that $\rho$ is algebraically modular of weight $(\vec{k},\vec{m})$.
Let $B$, $U$ and $Q$ be as in Definition~\ref{def:algmod} (with $B$ ramified at exactly one archimedean place), so that $\gm_\rho^Q$ is in the support of $M_{\vec{k},\vec{m}}^B(U,\Fpbar)= H^1(Y_U^B,\mathcal{D}_{\vec{k},\vec{m},\Fpbar})$.

Suppose that $\theta: \GL_2(\F_p) \to \Aut_K(V)$
 (for some sufficiently large $K$) 
is an absolutely irreducible representation that has $D_{\vec{k},\vec{m},\Fpbar}$ as a Jordan--H\"older factor in its reduction, i.e., in that of any $\GL_2(\FF_{\gp})$-stable lattice $T$ in $V$. 
Viewing $\theta$ as a representation of $U_\gp = \CO_{B,\gp}^\times \cong \GL_2(\CO_{F,\gp})$ via inflation, let $\mathcal{T}$ be the locally constant sheaf associated to $T$ as in \S\ref{ssec:indefQO}. (Note that we may assume $U^p$ is sufficiently small that this is defined.)
The same argument as in the proof of Lemma~\ref{lem:JHfactors} then shows that $\gm_\rho^Q$ is in the support of $H^1(Y_U^B,\mathcal{T}\otimes_{\CO}E)$, and hence in that of $H^1(Y_U^B,\mathcal{T})$.  Similarly, it follows that $\gm_\rho^Q$ is in the support of its maximal torsion-free quotient $H^1(Y_U^B,\mathcal{T})_{\mathrm{tf}}$, and hence in that of
$$H^1(Y_U^B, \mathcal{T}) \otimes_{\CO} K 
 \cong (H^1(Y_{U'}^B,K) \otimes_K V)^{\GL_2(\FF_p)},$$
where $U' = U^p U_{\gp}'$ and $U_{\gp}'$ is the kernel of the projection $U_{\gp} \to \GL_2(\FF_p)$.  Therefore there is a $\TT^Q$-eigenform 
$$f \in (M^B_{\vec{2}}(U',\Qpbar)\otimes_K V)^{\GL_2(\FF_p)}$$
such that $T_v f = \widetilde{a}_v f$ and $S_v f = \widetilde{d}_v$ for some lifts $\widetilde{a}_v$ of $\tr(\rho(\Frob_v))$ and 
$\widetilde{d}_v$ of $\Nm_{F/\QQ}(v)\det(\rho(\Frob_v))$ for all $v\not\in Q$, and hence a cuspidal automorphic representation $\pi \in \mathcal{C}_{\vec{2}}^B$ (and embedding $K \hookrightarrow \CC$) such that 
$$(\pi_\gp \otimes_K V)^{U_{\gp}} \neq 0$$
and $T_v = \widetilde{a}_v$ and $S_v = \widetilde{d}_v$ on $\pi_v^{\CO_{B,v}^\times}$ for all $v\not\in Q$.  

Let us again adopt notation from the proof of Proposition~\ref{prop:repshape}, so $L = F_{\gp}$, $M$ is its unramified quadratic extension, and $\omega$, $\omega_\tau$ and $\omega_{\tau'}$ are fundamental characters.

Applying Theorems~\ref{thm:JL} and~\ref{thm:galois0}, we now have $\rho \sim \overline{\rho}_\pi$ for some $\pi \in \mathcal{C}_{\vec{2}}$ with the property that $\rho_{\pi}|_{G_{L}}$ is potentially semistable with $\sigma_i$-labelled Hodge--Tate weights $(0,-1)$ for $i=1,2$, and $\mathrm{WD}(\rho_\pi|_{G_L})^{\mathrm{F-ss}}$ corresponds under local Langlands to a representation 
$\pi_{\gp}$ such that $(\pi_\gp \otimes_K V)^{\GL_2(\CO_L)} \neq 0$.
In particular, if $\theta \sim [\det]^{n}$ (resp.~$[\det]^{n} \otimes \mathrm{St}$), where $[\cdot]$ denotes the Teichm\"uller lift (and $\mathrm{St}$ the Steinberg representation), then 
$\psi \otimes \rho_{\pi}|_{G_L}$ is crystalline (resp.~semistable) with 
$\sigma_i$-labelled Hodge--Tate weights $(1,0)$ (for $i=1,2$), where $\psi$ is any character whose restriction to inertia is 
$\chi_{\mathrm{cyc}}[\omega]^n$.  Otherwise, we have that 
$\chi_{\mathrm{cyc}} \otimes \rho_{\pi}|_{G_L}$ is potentially Barsotti--Tate with inertial type corresponding under local Langlands to the dual of $\theta$. 

To complete the proof, suppose first that $k_2 = 2$.  In that case, we may apply the conclusion above with $\theta \sim [\det]^{m+2}$, where $m = m_1 + m_2$.  Since $\overline{\psi} \otimes \rho$ has a crystalline lift with $\sigma_i$-labelled Hodge--Tate weights $(1,0)$, and $\overline{\psi}|_{I_{F_\gp}} = \omega^{m+4}$, it follows that $\rho$ has a crystalline lift with $\sigma_i$-labelled Hodge--Tate weights $(-1-m_i,-2-m_i)$, as required.  For consistency with the discussion below, note that this means $D_{\vec{2},-\vec{3}-\vec{m}} = \det^{-m-4} \in W^{\mathrm{cris}}(\rho|_{G_{L}})$, where $W^{\mathrm{cris}}$ is as in \cite[Def.~4.1.3]{GLS}.  In general, the task at hand is to prove that
$$ D_{\vec{k},-\vec{1}-\vec{k}-\vec{m}} = 
  \det\nolimits^{-m-k_2-2} \otimes \Sym^{k_2-2}\Fpbar^2 \in W^{\mathrm{cris}}(\rho|_{G_{L}}).$$

Suppose next that $k_2 = p+1$. Taking $\theta \sim [\det]^{m+2} \otimes \mathrm{St}$ and letting $n=-m-4$, it follows that either $\det^n \in W^{\mathrm{cris}}(\rho|_{G_L})$ or that 
$\rho|_{G_L}$ has the form $\psi \otimes \mat{\overline{\chi}_{\mathrm{cyc}}}{*}{0}{1}$ with 
${\psi}|_{I_L} = \omega^n$.
In either case, we deduce that that $\det^n\Sym^{p-1}\Fpbar^2 \in W^{\mathrm{cris}}(\rho|_{G_{L}})$ from the equality 
$W^{\mathrm{cris}}(\rho|_{G_L}) = W^{\mathrm{explicit}}(\rho|_{G_{L}})$ (\cite[Thm.~6.1.8]{GLS} and \cite[Thm.~5.4]{wang})
and the description of $W^{\mathrm{explicit}}(\rho|_{G_{L}})$ in \cite{BS}.  Indeed this is already immediate from the definition of $W^{\mathrm{explicit}}(\rho|_{G_{L}})$ in the second case, so suppose that $\det^n \in W^{\mathrm{cris}}(\rho|_{G_L})$.  The only irreducible representations with $\det^n$ as a Serre weight have the form $\Ind_{G_M}^{G_L}\xi$, where $\xi|_{I_M} = \omega_\tau^2$, and one then finds that $\det^n\Sym^{p-1}\Fpbar^2$ is also a Serre weight.  Reducible representations with $\det^n$ as a Serre weight have the form $\psi\otimes\mat{\chi}{c}{0}{1}$, where either $\psi|_{I_L} = \omega^n$ and $\chi|_{I_L} = \omega^2$ (with $c$ representating a class in $V_\chi^{\mathrm{ty}}$ if $\chi = \overline{\chi}_{\mathrm{cyc}}$), or $\psi|_{I_L} = \omega^{n+1}$, $\chi$ unramified and $c$ representing a class in $V_\chi$.  In the first case, or if $p=2$, we again have $\det^n\Sym^{p-1}\Fpbar^2  \in W^{\mathrm{explicit}}(\rho|_{G_{L}})$ by definition, so suppose  
$p > 2$ and we are in the second case.  By \cite[Thm.~5.6]{BS}, the space $V_\chi$ is spanned by the subspaces
$\Psi_{\det^n,J,x}$ for $(J,x) \in \{(\Sigma_0,0),(\emptyset,1)\}$ 
defined in \cite[Def.~5.2]{BS}.  Replacing $\det^n$ by $\det^n\Sym^{p-1}$ in \cite[Def.~5.2]{BS} gives the same two possible values for $(J,x)$, but interchanges the corresponding subspaces (the exponents of $v$ in their definition still being $0$ and $1-p$), so another application of \cite[Thm.~5.6]{BS} implies that $\det^n\Sym^{p-1}\Fpbar^2  \in W^{\mathrm{explicit}}(\rho|_{G_{L}})$.

Now suppose that $3 \le k_2 \le p$.  We have two choices for $\theta$: one of dimension $p+1$ (induced from a Borel), and  one of dimension $p-1$ (cuspidal).  We denote these $\theta_1$ and $\theta_2$, and let $\theta_i^*$ denote the dual of $\theta_i$ for $i=1,2$.  It follows from the existence of the lift $\chi_{\mathrm{cyc}}\otimes \rho_\pi|_{G_L}$ and the definition of the set $W^{\mathrm{BT}}(\rho|_{G_L})$ (see \cite[Thm.~A]{GK}) and its equality with $W^{\mathrm{cris}}(\rho|_{G_L})$
(\cite[Thm.~6.1.8]{GLS}) that
$$W^{\mathrm{cris}}(\overline{\chi}_{\mathrm{cyc}}\otimes\rho|_{G_L}) \cap \mathrm{JH}(\overline{\theta}^*_i) \neq \emptyset$$ 
(where $\mathrm{JH}(\overline{\vartheta})$ denotes the set of Jordan--H\"older factors of the reduction of any stable lattice for $\vartheta$).  Since 
$\theta_i$ has central character $[\det]^{2m+k_2+2}$, we have $\theta_i^* \sim [\det]^{n-m}\otimes \theta_i$ where $n = -m-k_2-2$, and the above intersection is equivalent to
$$W^{\mathrm{cris}}(\rho|_{G_L}) \cap \mathrm{JH}([\det]^{n-m-2}\overline{\theta}_i) \neq \emptyset.$$
Since $\overline{\theta}_i$ has $D_{\vec{k},\vec{m}} = \det^{m+2}\Sym^{k_2-2}\Fpbar^2$ as a Jordan--H\"older factor, we have
$$\begin{array}{rcl}\mathrm{JH}([\det]^{n-m-2}\overline{\theta}_1)
 &= &\{\,\det\nolimits^n\Sym^{k_2-2}\Fpbar^2,\det\nolimits^{n+k_2-2}\Sym^{p + 1 - k_2}\Fpbar^2\,\}\\
\mbox{and}\quad
\mathrm{JH}([\det]^{n-m-2}\overline{\theta}_2)
 &= &\{\,\det\nolimits^n\Sym^{k_2-2}\Fpbar^2,\det\nolimits^{n+k_2-1}\Sym^{p -1  - k_2}\Fpbar^2\,\},\end{array}
$$
with the last factor absent if $k_2 = p$.  In particular, if $k_2 = p$, it follows immediately that $\det^n\Sym^{p-2}\Fpbar^2\in W^{\mathrm{cris}}(\rho|_{G_L})$.

Suppose now that $3\le k_2 \le p-1$ and
 $\det^n\Sym^{k_2-2}\Fpbar^2 \not\in W^{\mathrm{cris}}(\rho|_{G_L})$.  We must then have
$$\,\{\det\nolimits^{a}\Sym^b\Fpbar^2, \det\nolimits^{a-1}\Sym^{b+2}\Fpbar^2\,\} \subset W^{\mathrm{cris}}(\rho|_{G_L}),$$
where $a = n+k_2-1$ and $0 \le b = p-1-k_2 \le p-4$.
We will again appeal to the explicit description of the set of Serre weights for the local Galois representation.  Firstly, if $\rho|_{G_L}$ is irreducible, then the assumption that $\det\nolimits^{a}\Sym^b\Fpbar^2 \in W^{\mathrm{cris}}(\rho|_{G_L})$ implies $\rho|_{G_L}$ is of the form $\Ind_{G_M}^{G_L}\xi$ where 
$\xi|_{I_M} = \omega^a\omega_\tau^{b+2}$ or $\omega^{a+1}\omega_\tau^{b}$.  Similarly since $\det\nolimits^{a-1}\Sym^{b+2}\Fpbar^2 \in W^{\mathrm{cris}}(\rho|_{G_L})$, we have that $\rho|_{G_L}$ is of the form $\Ind_{G_M}^{G_L}\xi'$ where 
$\xi'|_{I_M} = \omega^{a-1}\omega_\tau^{b+4}$ or $\omega^{a}\omega_\tau^{b+2}$.  This is only possible if 
$$\xi|_{I_M} = \omega^a\omega_\tau^{b+2}
 =  \omega^{a+b+1}\omega_{\tau'}^{p-b-1} = \omega^n\omega_{\tau'}^{k_2},$$
which contradicts the assumption that $\det^n\Sym^{k_2-2}\Fpbar^2 \not\in W^{\mathrm{cris}}(\rho|_{G_L})$.
Similarly, if $\rho|_{G_L}$ is reducible, then an analysis of the possibilities shows that it must have the form $\psi\otimes\mat{\chi}{c}{0}{1}$ with $\psi|_{I_L} = \omega^a$, $\chi|_{I_L} = \omega^{b+2}$ and $c$ representing a class in $V_\chi$. Appealing again to \cite[Thm.~5.6]{BS}, we find that $V_\chi$ is the subspace $\Psi_{\sigma,J,x}$ in \cite[Def.~5.2]{BS} with $\sigma = \det^{a-1}\Sym^{b+2}\Fpbar^2$, $J = \Sigma_0$ and $x=0$, giving $-(b+2)$ as the exponent of $v$.  Furthermore taking $\det^n\Sym^{k_2-2}\Fpbar^2$, $J = \emptyset$ and $x = 1$ gives the same subspace, contradicting assumption that $\det^n\Sym^{k_2-2}\Fpbar^2 \not\in W^{\mathrm{cris}}(\rho|_{G_L})$.
\end{proof}

\begin{theorem} \label{thm:3toPcrys} Let $F$ be a real quadratic field in which $p$ is ramified, and let $\gp$ be the prime of $F$ over $p$.
Suppose that $\vec{k},\vec{m} \in \ZZ^\Sigma$, with $\vec{k} = (1,w)$ for some $w \in [3,p]$. 
Suppose further that Conjecture~\ref{conj:defalggeom}(2) holds.
If $\rho$ is irreducible and geometrically modular of weight $(\vec{k},\vec{m})$, then $\rho|_{G_{F_\gp}}$ has a crystalline lift of weight 
$(\vec{k},\vec{m})$.
\end{theorem}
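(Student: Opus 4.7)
The strategy reverses that of Theorem~\ref{thm:3toPmod}: bootstrap from the irregular weight $((1,w), \vec{m})$ to neighbouring regular weights via partial Hasse invariants and partial $\Theta$-operators, invoke Conjecture~\ref{conj:defalggeom}(2) to pass to algebraic modularity there, extract crystalline lifts via Theorem~\ref{thm:elimination}, and finally descend back to weight $((1,w), \vec{m})$ using the $p$-adic Hodge theory of \S\ref{subsec:hodge}.

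Concretely, let $f \in S_{(1,w), \vec{m}}(U, E)$ be an eigenform with $\rho_f \sim \rho$, and let $\tau$ denote the unique element of $\Sigma_{\gp, 0}$. Multiplication by the partial Hasse invariant $H_{\sigma_2}$, of weight $\vec{h}_{\sigma_2} = \vec{e}_{\sigma_1} - \vec{e}_{\sigma_2}$, yields geometric modularity at weight $((2, w-1), \vec{m})$; similarly $\Theta_\tau$ yields geometric modularity at weight $((2, w+1), \vec{m} - \vec{e}_{\sigma_2})$ by Proposition~\ref{prop:theta_effect}. Both of these weights lie in $\Xi_{\min}$ because $3 \le w \le p$. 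Assuming Conjecture~\ref{conj:defalggeom}(2), they upgrade to algebraic modularity, and Theorem~\ref{thm:elimination} (unconditional, since $2 \le w - 1, w + 1 \le p + 1$) then produces crystalline lifts $\widetilde{\rho}_-$ and $\widetilde{\rho}_+$ of $\rho|_{G_{F_\gp}}$ of the respective weights.

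The remaining task is to extract a crystalline lift of weight $((1, w), \vec{m})$ from $\widetilde{\rho}_-$ and $\widetilde{\rho}_+$, which by Proposition~\ref{prop:repshape} reduces to verifying that $\rho|_{G_{F_\gp}}$ has one of the two shapes prescribed there. After suitable character twists, $\widetilde{\rho}_-$ and $\widetilde{\rho}_+$ are pseudo-Barsotti--Tate with labelled weights $(1, w-2)$ and $(1, w)$ respectively. Applying Theorem~\ref{thm:GLS.3.1.4} to each produces two short lists of possible inertia types for $\rho|_{I_{F_\gp}}$; a direct calculation shows their intersection consists of precisely the pair $\{\omega^{-2-m}, \omega^{-1-w-m}\}$ (with $m = m_1 + m_2$) demanded by Proposition~\ref{prop:repshape}. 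This handles the irreducible case $\rho|_{G_{F_\gp}} \sim \Ind_{G_M}^{G_L} \xi$ by pinning down $\xi|_{I_M}$, and in the reducible case $\rho|_{G_{F_\gp}} \sim \psi \otimes \smallmat{\chi}{c}{0}{1}$ it forces the Kisin-module structure of $\widetilde{\rho}_+$ to take the form $\{s,t\} = \{w, 1\}$ in the notation of Theorem~\ref{thm:GLS.5.1.5}, so that $\widetilde{\rho}_+$ manifests as a typical $(\varphi, \widehat{G})$-module of the shape in Proposition~\ref{prop:Vchi}, with $\overline{\gN} = \gM(w; a)$ and $\overline{\gP} = \gM(1; b)$; this exhibits $c$ as an element of $V_\chi$ and, by reading off the diagonal, gives the required matching $\psi|_{I_L} = \omega^{-1-w-m}$, $\chi|_{I_L} = \omega^{w-1}$.

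The main obstacle is this final descent step, and particularly the boundary case $w = p$. For $w \in [3, p-1]$ the inertia intersection is automatically a singleton and the identification with Proposition~\ref{prop:Vchi} uses $n = w$ directly. When $w = p$, the natural definition of $V_\chi$ uses $n = 1$ (since $\chi$ is unramified), which does not match $\{s,t\} = \{p, 1\}$; here one must instead invoke the alternative description of $V_\chi$ with $n = p$ from Remark~\ref{rmk:boundaries}, which is valid since $p > 2$. In addition, the intersection of inertia lists is larger at $w = p$, so ruling out the alternative Kisin structure $\{s,t\} = \{p+1, 0\}$ (whose inertia shape does not fit Proposition~\ref{prop:repshape}) will require a finer $(\varphi, \widehat{G})$-module argument, essentially reversing the construction in the proof of Proposition~\ref{prop:repshape}; I expect this to be a routine but careful piece of bookkeeping analogous to the unramified quadratic case of \cite{DS1} and \cite{HWPhD}.
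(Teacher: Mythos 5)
Your first two paragraphs track the paper's proof closely: multiplication by $H_{\sigma_2}$ and application of $\Theta_\tau$ give geometric modularity at $((2,w-1),\vec{m})$ and $((2,w+1),\vec{m}-\vec{e}_{\sigma_2})$, Conjecture~\ref{conj:defalggeom}(2) upgrades these to algebraic modularity, and Theorem~\ref{thm:elimination} yields the two crystalline lifts. (The paper organises the descent slightly differently — it runs the explicit Serre-weight analysis via $W^{\mathrm{cris}}$ and \cite{BS} rather than directly intersecting inertia lists from Theorem~\ref{thm:GLS.3.1.4}, but this is essentially the same information, and you do supplement Theorem~\ref{thm:GLS.3.1.4} with Theorem~\ref{thm:GLS.5.1.5} and Proposition~\ref{prop:Vchi} as needed to pin down the cocycle class.) For $w \in [3,p-1]$ this works.

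The serious gap is your treatment of the boundary case $w=p$. You correctly notice that the intersection of inertia constraints is larger here and that the candidate Kisin structure $\{s,t\}=\{p+1,0\}$ must be ruled out, but you characterise this as ``a finer $(\varphi,\widehat{G})$-module argument \ldots a routine but careful piece of bookkeeping.'' That expectation is false, and no local argument can close the gap. The paper's proof explicitly isolates a family of \emph{local} representations — those of the form $\psi\otimes\smallmat{\chi}{c}{0}{1}$ with $\psi|_{I_{F_\gp}}=\omega^{-3-m}$, $\chi|_{I_{F_\gp}}=\omega^2$, $c\in V_\chi$ — that genuinely possess crystalline lifts of \emph{both} neighbouring weights $((2,p-1),\vec{m})$ and $((2,p+1),\vec{m}')$ yet have \emph{no} crystalline lift of weight $((1,p),\vec{m})$. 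The descent therefore cannot be completed by $p$-adic Hodge theory alone. The paper instead mounts a substantial \emph{global} argument to show that a $\rho$ of this exceptional local shape cannot arise from a form of weight $((1,p),\vec{m})$: it passes to a strongly stabilized eigenform, uses Theorems~\ref{thm:thetadiv} and~\ref{thm:cone} to show $\Theta(f)$ is divisible by $H_1H_2$ (producing an eigenform $f_2$ of weight $(\vec{2},\vec{m}')$), proves $\gp$-ordinary algebraic modularity via a $p$-divisible group argument and Theorem~\ref{thm:ordmods}, constructs a second stabilized eigenform $f_3$ of the same weight with $T_\gp f_{3,\xi}\neq 0$, shows $f_3-f_2$ lies in $\ker\Theta$ and invokes Theorems~\ref{thm:kertheta} and~\ref{thm:positivity}, which forces $p=3$ and then yields a contradiction by a further $\Theta$/Hasse-invariant and $q$-expansion argument. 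That global machinery — not Kisin-module bookkeeping — is the actual content of the $w=p$ case and occupies most of the paper's proof; without it, your proposal is missing the hardest part of the theorem.
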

\begin{proof} 
Suppose that $\rho$ is geometrically modular of weight $(\vec{k},\vec{m})$.  Then by Proposition~\ref{prop:theta_effect}, it is
also geometrically modular of weight $(\vec{k}',\vec{m}')$, where $\vec{k}' = (2,w+1)$ and $\vec{m}' = \vec{m} - (0,1)$ as in the proof of Theorem~\ref{thm:3toPmod}.  Furthermore $\rho$ is also geometrically modular of weight $(\vec{k}'',\vec{m})$, where $\vec{k}'' = \vec{k} + \vec{h}_2 = (2,w-1)$ .

Under our assumption of Conjecture~\ref{conj:defalggeom}(2), it follows that $\rho$ is also algebraically modular of weights $(\vec{k}',\vec{m}')$ and $(\vec{k}'',\vec{m})$.  Therefore by Theorem~\ref{thm:elimination}, $\rho|_{G_{F_{\gp}}}$ has crystalline lifts of those weights, and hence
$$\{\,\det\nolimits^{-m-w-1}\otimes\Sym^{w-3}\Fpbar^2, \det\nolimits^{-m-w-2}\otimes \Sym^{w-1}\Fpbar^2\,\} \subset W^{\mathrm{cris}}(\rho|_{G_{F_{\gp}}}).$$
The same analysis as in the last paragraph of the proof of Theorem~\ref{thm:elimination}, but with $a = -m-w-1$ and 
$b = w-3$ then shows that $\rho|_{G_{F_\gp}}$ is of the form described in Proposition~\ref{prop:repshape}, unless $w=p$ and
$$\rho|_{G_{F_{\gp}}} \sim \psi \otimes \left(\begin{array}{cc}\chi&c\\ 0 & 1\end{array}\right) $$
for some characters $\psi$ and $\chi$ such that 
$\psi|_{I_{F_{\gp}}} = \omega^{-3-m}$ and $\chi|_{I_{F_{\gp}}} = \omega^{2}$ (with $c$ defining a class in $V_{\chi}$, but we will not make use of this).  

Suppose then that this is the case, i.e., $\rho$ has the form just described and $\rho \sim \rho_f$ for some eigenform $f \in S_{(1,p),\vec{m}}(U,E)$, and we will derive a contradiction. 
By Lemma~\ref{lem:stabilized}, we may furthermore assume that $U = U_1(\gn)$ for some $\gn$ prime to $\gp$ and that $f$ is stabilized, strongly so by Theorem~\ref{thm:ord}.  Note that $k_2 = p$ is divisible by $p$, we have that $\Theta(f)$ is divisible by $H_2$ by Theorem~\ref{thm:thetadiv}.
Therefore we have $\Theta(f) = H_2 f_1$ for some $f_1\in M_{(1,p+2),\vec{m}'}(U,E)$ where $\vec{m}' = \vec{m} - (0,1)$.  Furthermore it follows from Theorem~\ref{thm:cone} that $H_1|f_1$, so $\Theta(f) = H_1H_2 f_2$ for some $f_2 \in M_{\vec{2},\vec{m}'}(U,E)$, and note that (a multiple of) $f_2$ is strongly stabilized.

In particular, $\rho$ is geometrically modular of weight $(\vec{2},\vec{m}')$, and our hypothesis therefore implies that $\rho$ is also algebraically modular of weight $(\vec{2},\vec{m}')$.  We claim that it is in fact $\gp$-ordinarily so. To that end consider a quaternion algebra $B$ and non-zero element $\varphi \in M_{\vec{2},\vec{m}'}^B(U',E)[\gm_\rho^Q]$, where $B$ is ramified at exactly one infinite place (and unramified at $\gp$) and $U'$ for some $U'$ containing $\CO_{B,\gp}^\times$. We then have that $e_{\xi}\varphi  \in M_{\vec{2},-\vec{1}}^B(U',E)[\gm_{\mu\otimes\rho}^{Q'}]$ for sufficiently large $Q'$, where $\xi(u_p) = \overline{u}_p^{m+1}$ for all $u_p\in \CO_{F,\gp}^\times$ and $\mu$ corresponds to $\xi$ via class field theory. We may then lift $e_{\xi}\varphi$ to an eigenvector $\widetilde{\varphi}_{\xi} \in M_{\vec{2},-\vec{1}}^B(U',\CO)$ for $T_v$ and $S_v$ for all $v\not\in Q'$ (enlarging $\CO$ if necessary).  We thus obtain an automorphic representation $\Pi \in \mathcal{C}^B_{\vec{2}}$ such that $\Pi_\gp$ is an unramified principal series, as are the $\Pi_v$ for all $v\not\in Q'$, with the same eigenvalues as $\widetilde{\varphi}_\xi$ for $T_v$ and $S_v$ on $\Pi_v^{\CO_{B,v}^\times}$.  We thus have $\mu\otimes\rho \sim \overline{\rho}_\pi$, where $\pi \in \mathcal{C}_{\vec{2}}$ corresponds to $\Pi$ under Theorem~\ref{thm:JL}, and $\Pi_{\gp} \cong \pi_{\gp}$ is a principal series representation of the form $I(\psi_1|\cdot|^{1/2},\psi_2|\cdot|^{1/2})$ for some unramified characters $\psi_1$, $\psi_2$. Furthermore $\rho_{\pi}|_{G_{F_{\gp}}}$ is crystalline with $\sigma$-labelled Hodge--Tate weights $(1,0)$ for both $\sigma \in \Sigma$, so $\chi_{\mathrm{cyc}}\otimes\rho_{\pi}|_{G_{F_{\gp}}}$ arises from a $p$-divisible group over $\CO_{F_{\gp}}$ with an action of $\CO$. Noting that
$e_{\gp} = 2 \le p-1$ and
$$\chi_{\cyc}\otimes\overline{\rho}_\pi |_{G_{F_{\gp}}}
 \sim \psi' \otimes \left(\begin{array}{cc}\chi&c\\ 0 & 1\end{array}\right)$$
where $\psi' = \psi\chi_{\cyc}\mu|_{G_{F_{\gp}}}$ is unramified and $\psi|_{I_{F_{\gp}}} = \omega^2$ is cyclotomic, it follows that the $p$-divisible cannot be local-local, and therefore that
$$\chi_{\cyc}\otimes {\rho}_\pi |_{G_{F_{\gp}}}
 \sim \widetilde{\psi}
' \otimes \left(\begin{array}{cc}\widetilde{\chi}&*\\ 0 & 1\end{array}\right)
$$
for some lifts $\widetilde{\psi}'$ of $\psi$ and $\widetilde{\chi}$ of $\chi$ such that $\widetilde{\psi}'$ is unramified and $\widetilde{\psi}'|_{I_{F_{\gp}}}$ is cyclotomic.  Therefore $\mathrm{WD}({\rho}_\pi |_{G_{F_{\gp}}}) \cong \widetilde{\chi}_1 \oplus \widetilde{\chi}_2|\cdot|^{-1}$ for a pair of unramified characters 
$\widetilde{\chi}_1,\widetilde{\chi}_2:W^{\mathrm{ab}}_{F_{\gp}}\cong F_{\gp}^\times \to \CO^\times$.  By local-global compatibility, these coincide with $\psi_1, \psi_2$, from which it follows that $T_{\gp}$ acts on $\Pi_{\gp}^{\CO_{B,\gp}^\times}$, and hence on $\widetilde{\varphi}_{\xi}$, by $\psi_1(\varpi_{\gp,\gp}) + p\psi_2(\varpi_{\gp,\gp}) \in \CO^\times$. This in turn implies that $e_{\xi}\varphi$ is an eigenform for $T_{\gp}$ with non-zero eigenvalue, and hence that $\rho$ is $\gp$-ordinarily algebraically modular of weight $(\vec{2},\vec{m}')$.

By Theorem~\ref{thm:ordmods}, we then have that $\rho$ is also $\gp$-ordinarily geometrically modular of weight $(\vec{2},\vec{m}')$. By Proposition~\ref{prop:normalized} and Lemma~\ref{lem:stabilized}, there exist an ideal $\gn'$ prime to $\gp$ and a stabilized eigenform $f_3 \in S_{\vec{2},\vec{m}'}(U_1(\gn'),E)$ (enlarging $E$ if necessary) such that $\rho \sim \rho_{f_3}$ and $T_{\gp} f_{3,\xi} \neq 0$ for all $\xi$ as in Definition~\ref{def:ordgeommod}. By another application of Lemma~\ref{lem:stabilized}, we may assume $\gn = \gn'$.

Now consider $f_4 = f_3 - f_2 \in S_{\vec{2},\vec{m}'}(U_1(\gn'),E)$. Since $f_2$ and $f_3$ are both stabilized, we have $r_m^t(f_4) = 0$ for all $t \in (\AA_{F,\bf{f}}^{(p)})^\times $ and $m \in t^{-1}\widehat{\gd}^{-1} \cap F_+^\times$ such that $ctm \not\in \gp\widehat{O}_F$.  On the other hand, since 
$f_2$ is strongly stabilized, we have $f_2 \neq f_3$, so $f_4 \neq 0$. It follows that $f_4 \in \ker(\Theta)$, so by Theorem~\ref{thm:kertheta}, we have
$$f_4 = H_1^{s_1} H_2^{s_2} G_1^{t_1} G_2^{t_2} V_{\gp}(f_5)$$
for some $s_1,s_2 \in \ZZ_{\ge 0}$, $t_1,t_2\in \ZZ$ and $f_5 \in M_{\mathrm{tot}}(U,E)$ (where $U = U_1(\gn)$).  Furthermore, by Theorem~\ref{thm:positivity}, we may assume $f_5 \in M_{\vec{k}''',\vec{m}''}(U,E)$ for some $\vec{k}'''\in \Xi_{\min}^+$, $\vec{m}''\in \ZZ^2$.  We thus have
$$(2,2) = s_1(-1,p) + s_2(1,-1) + (k'''_2,pk_1''')$$
with $1 \le k_2''' \le pk_1''' \le pk_2'''$.  One sees easily that the first equation implies that $p = 3$ and $\vec{k}''' = \vec{1}$, so assume this is the case.  Note also that 
$$m'' := m_1'' + m_2'' \equiv m'_1 + m'_2  = m - 1 \bmod (p-1) = 2.$$

Since $V_{\gp}$ is injective and compatible with $T_v$ for all $v \neq \gp$ and $S_v$ for all $v\nmid \gn \gp$, it follows that $f_5$ is an eigenform for all these operators, with the same eigenvalues as $f_4$.  In particular, we have $\rho_{f_5} \sim \rho$ and $T_v f_5 = 0 $ for all $v|\gn$.  The same argument as in the proof of Proposition~\ref{prop:normalized} then shows that if $r^{t_1}_{m_1}(f_5) = 0$, then in fact $r_t^m = 0$ for all 
$t \in (\AA_{F,\bf{f}}^{(p)})^\times $ and $m \in t^{-1}\widehat{\gd}^{-1} \cap F_+^\times$ such that $ctm \not\in \gp\widehat{O}_F$, which implies that $f_5 \in \ker(\Theta)$. In this case, applying Theorem~\ref{thm:kertheta} gives a contradiction, so we conclude that $r^{t_1}_{m_1}(f_5) \neq 0$.

Now consider $f_6 = \Theta(f_5) \in S_{\vec{2},\vec{m}'''}(U,E)$, where $\vec{m}''' = \vec{m}'' - (0,1)$.  Since $T_v f_6 = 0$ for all $v|\gn \gp$, some multiple of $f_6$ is a strongly stabilized eigenform.  Note also that since $m_1''' + m_2''' \equiv m \bmod 2$, there exist $u_1,u_2 \in \ZZ$ such that 
$$(m_1,m_2) = (m_1''',m_2''') + u_1(-1,3) + u_2(1,-1).$$
Therefore some multiple of $G_1^{u_1}G_2^{u_2}f_6$ is a strongly stabilized eigenform in $S_{\vec{2},\vec{m}}(U,E)$ giving rise to $\rho$, as is $H_2 f$.  Therefore $f_6 = \Theta(f_5)$ is divisible by $H_2$.  Since $k_2''' = 1$ is not divisible by $p$, Theorem~\ref{thm:thetadiv} implies that $f_5$ is divisible by $\mathrm{Ha}_2$, i.e., $f_5 = H_2 f_7$ for some $f_7 \in S_{(0,2),\vec{m}''}(U,E)$.  Finally, this contradicts Theorem~\ref{thm:positivity} and completes the proof.
\end{proof}

We now turn our attention to the case of $w=2$, where our arguments require ordinariness hypotheses.  We first treat the easier implication relating geometric modularity to existence of crystalline lifts:
\begin{theorem} \label{thm:2crys} Let $F$ be a real quadratic field in which $p$ is ramified, and let $\gp$ be the prime of $F$ over $p$. Suppose that $\vec{k},\vec{m} \in \ZZ^\Sigma$, with $\vec{k} = (1,2)$. Suppose further that Conjecture~\ref{conj:defalggeom}(2) holds.
If $\rho$ is irreducible and $\gp$-ordinarily geometrically modular of weight $(\vec{k},\vec{m})$, then $\rho|_{G_{F_\gp}}$ has a crystalline lift of weight $(\vec{k},\vec{m})$.
\end{theorem}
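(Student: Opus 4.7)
The plan is to adapt the strategy of Theorem~\ref{thm:3toPcrys} to the case $w = 2$, using the $\gp$-ordinarity hypothesis to compensate for the unavailability of the partial Hasse invariant trick: whereas for $w \ge 3$ one exploits geometric modularity in both weights $((2,w+1),\vec{m}-\vec{e}_{\sigma_2})$ and $((2,w-1),\vec{m})$ (both in $\ZZ^\Sigma_{\ge 2}\cap\Xi_{\min}$) to apply Theorem~\ref{thm:elimination}, for $w=2$ the weight $((2,1),\vec{m})$ falls outside the algebraic range and must be replaced by additional local information coming from Theorem~\ref{thm:ord}.

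First I would apply Proposition~\ref{prop:theta_effect} (the partial $\Theta$-operator) to the $\gp$-ordinary eigenform of weight $((1,2),\vec{m})$ provided by Proposition~\ref{prop:normalized} and Lemma~\ref{lem:stabilized}, to conclude that $\rho$ is geometrically modular of weight $((2,3),\vec{m}-\vec{e}_{\sigma_2})$. Since $(2,3) \in \Xi_{\min}^+ \cap \ZZ^\Sigma_{\ge 2}$, the assumed Conjecture~\ref{conj:defalggeom}(2) upgrades this to algebraic modularity, and Theorem~\ref{thm:elimination} then supplies a crystalline lift $\widetilde{\rho}_1$ of $\rho|_{G_{F_\gp}}$ of weight $((2,3),\vec{m}-\vec{e}_{\sigma_2})$. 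In parallel, Theorem~\ref{thm:ord} applied to the original $\gp$-ordinary form (using $k_{\sigma_2} = 2 > 1$) yields a reducible presentation $\rho|_{G_{F_\gp}} \sim \bigl(\begin{smallmatrix}\chi_2 & * \\ 0 & \chi_\cyc^{-1}\chi_1\end{smallmatrix}\bigr)$ with $\chi_1, \chi_2$ explicitly described in terms of $\vec{m}$ and the twist character $\xi$ of Definition~\ref{def:ordgeommod}. Matching these characters with the determinant character of $\widetilde\rho_1$ on inertia, one verifies that setting $\psi = \chi_\cyc^{-1}\chi_1$ and $\chi = \chi_2\psi^{-1}$ gives $\psi|_{I_{F_\gp}} = \omega^{-3-m}$ and $\chi|_{I_{F_\gp}} = \omega$, exactly the inertial conditions required in the reducible case of Proposition~\ref{prop:repshape} for $w = 2$.

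To conclude, by Proposition~\ref{prop:repshape} it suffices to show that the extension class $c$ realizing $\rho|_{G_{F_\gp}}$ as $\psi\otimes\bigl(\begin{smallmatrix}\chi & c \\ 0 & 1\end{smallmatrix}\bigr)$ lies in the subspace $V_\chi \subset H^1(G_{F_\gp}, \Fpbar(\chi))$. By Proposition~\ref{prop:Vchi} this is equivalent to producing a typical $(\varphi,\widehat{G})$-module $\widehat{\overline{\gM}}$ fitting in an exact sequence $0 \to \widehat{\overline{\gP}} \to \widehat{\overline{\gM}} \to \widehat{\overline{\gN}} \to 0$ with $\overline{\gN} = \gM(2;a)$ and $\overline{\gP} = \gM(1;b)$. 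My plan is to extract such a $(\varphi,\widehat{G})$-module from the Kisin module of $\widetilde{\rho}_1$: after normalizing by a twist so that the $((2,3),\vec{m}-\vec{e}_{\sigma_2})$ lift becomes pseudo-Barsotti--Tate with $(r_1,r_2) = (1,2)$, Theorem~\ref{thm:GLS.5.1.5} specifies the possible shape of its reduction $\overline{\gM}_1$ as an extension of two rank one Kisin modules whose pair of exponents lies in $\{\{2,1\},\{3,0\}\}$, and the reducibility from $\gp$-ordinarity (together with the identification of the characters $\chi_1,\chi_2$) pins down which option occurs and the direction of the extension.

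The main obstacle will be the final construction: producing an injective morphism of Kisin modules from $\overline{\gM}_1$ to a quado-Barsotti--Tate Kisin module $\overline{\gM}_2$ of the type associated to weight $((1,2),\vec{m})$, and showing that it extends to an inclusion of typical $(\varphi,\widehat{G})$-modules inducing an isomorphism on the underlying Galois representation $\rho|_{G_{F_\gp}}$. This is analogous to the morphisms built explicitly in the proof of Proposition~\ref{prop:repshape} in its boundary cases $w \in \{p, p+1\}$, which handle the ``irregular-to-regular'' comparison of Hodge--Tate weights by collapsing one factor of $u$. The difficulty here is that the target has strictly smaller Hodge--Tate gap in the $\sigma_1$-direction (namely $0$ rather than $1$), so the morphism cannot be written simply as $e \mapsto e''$, $f\mapsto uf''+\cdots$; instead one needs to exploit both the specific value $t \le 1$ of the twist-exponent forced by the reducibility and the typicality condition on $(\varphi,\widehat{G})$-modules, before applying Wang's extension \cite{wang} of \cite[Lem.~6.1.3]{GLS} to recover uniqueness and match with the extension class coming from $\rho|_{G_{F_\gp}}$.
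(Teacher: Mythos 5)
Your overall strategy matches the paper's for its first half: applying Proposition~\ref{prop:theta_effect}, the assumed Conjecture~\ref{conj:defalggeom}(2), and Theorem~\ref{thm:elimination} to produce a crystalline lift of $\rho|_{G_{F_\gp}}$ of weight $((2,3),\vec{m}-\vec{e}_{\sigma_2})$, and applying the ordinarity result to get the reducible shape of $\rho|_{G_{F_\gp}}$ with $\chi|_{I_{F_\gp}} = \omega$. You also correctly identify that the remaining task is to show the extension class $c$ lies in $V_\chi$, and to finish via Proposition~\ref{prop:repshape}.

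However, your proposed bridge step — producing an injective Kisin module morphism from the pseudo-Barsotti--Tate module $\overline{\gM}_1$ of the $(2,3)$-weight lift to a quado-Barsotti--Tate Kisin module $\overline{\gM}_2$ associated to weight $(1,2)$ — is both unnecessary and would fail. Unnecessary: Proposition~\ref{prop:Vchi} with $n = 2$ asks for a typical $(\varphi,\widehat{G})$-module whose Kisin module is an extension of $\gM(2;a)$ by $\gM(1;b)$, and $\overline{\gM}_1$ already has this form: after twisting so the PBT lift has $(r_1,r_2) = (1,2)$, Theorem~\ref{thm:GLS.5.1.5} gives $\{s,t\} = \{2,1\}$ (up to the degenerate $\{3,0\}$ case, which collapses as in the proof of Proposition~\ref{prop:Vchi}), so no morphism to a different Kisin module is needed; the transition from exponents $(1,2)$ to the $(0,1)$ data of the weight-$(1,2)$ lift is already handled inside Proposition~\ref{prop:repshape} by the dimension argument $V_\chi = V_\chi''$. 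Would fail: the explicit morphisms in the $w \in \{p, p+1\}$ cases of Proposition~\ref{prop:repshape} collapse a power of $u$ in the $\sigma_2$-direction only, keeping the $\sigma_1$-gap fixed; a morphism $\overline{\gM}_1 \to \overline{\gM}_2$ changing the $\sigma_1$-gap from $1$ to $0$ is incompatible with $\varphi$-equivariance (one cannot solve $\varphi(u^a e_2) = bu \cdot u^a e_2$ with $\varphi(e_2) = b'e_2$ for $a \ge 0$), so no such $\varphi$-equivariant inclusion exists. The paper avoids all of this with a single citation: having established that $\rho|_{G_{F_\gp}}$ is reducible of the stated shape and has a crystalline lift of weight $((2,3),\vec{m}-\vec{e}_{\sigma_2})$, it appeals directly to \cite[Thm.~5.4.1]{GLS} to conclude $c \in V_\chi$, and then cites Proposition~\ref{prop:repshape}. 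You should replace your Kisin module construction with this citation, or at minimum recognize that $\overline{\gM}_1$ already verifies the hypothesis of Proposition~\ref{prop:Vchi} without modification.
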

\begin{proof} By Theorem~\ref{thm:ord2}, we have
$$\rho \sim \psi \otimes \left(\begin{array}{cc}\chi&c \\ 0 & 1\end{array}\right)$$
for some characters $\psi$ and $\chi$ such that $\psi|_{I_{F_{\gp}}} = \omega^{-m-2}$ and $\chi|_{I_{F_\gp}} = \omega$.
Furthermore, by Proposition~\ref{prop:theta_effect}, $\rho$ is also geometrically modular of weight $(\vec{k}',\vec{m}')$, where $\vec{k}' = (2,3)$ and $\vec{m}' = \vec{m} - (0,1)$.
Our assumption of Conjecture~\ref{conj:defalggeom}(2) therefore implies that $\rho$ is algebraically modular of weight $(\vec{k}',\vec{m}')$. By Theorem~\ref{thm:elimination}, $\rho$ has a crystalline lift of weight $(\vec{k}',\vec{m}')$.
It therefore follows from \cite[Thm.~5.4.1]{GLS} that $c$ defines a class in $V_\chi$.  Finally, Proposition~\ref{prop:repshape} implies that $\rho$ has a crystalline lift of weight $(\vec{k},\vec{m})$.
\end{proof}
\begin{remark} The ordinariness hypothesis is needed for our argument since there are irreducible representations of $G_{F_{\gp}}$ with crystalline lifts of weight $((2,p),\vec{m})$ and $((2,3),\vec{m}')$ (where $\vec{m}' = \vec{m} - (0,1)$ as usual), but no crystalline lift of weight $((1,2),\vec{m})$.  This fact also underpins our reliance on such a hypothesis in the proof of Theorem~\ref{thm:2mod}.
\end{remark}

We have the following result in the direction of partial weight one modularity:
\begin{theorem} \label{thm:2mod} Let $F$ be a real quadratic field in which $p$ ramified, and let $\gp$ be the prime of $F$ over $p$.
Suppose that $\vec{k} = (1,2)$ and that $\vec{m} \in \ZZ^\Sigma$. Let $\rho:G_F \to \GL_2(\Fpbar)$ be irreducible and geometrically modular of some weight.   Suppose further that $p > 2$, $\rho|_{G_{F(\zeta_p)}}$ is irreducible, and if $p=5$, then the projective image of $\rho|_{G_{F(\zeta_5)}}$ is not isomorphic to $A_5$. If $\rho|_{G_{F_\gp}}$ is reducible and has a crystalline lift of weight $(\vec{k},\vec{m})$, then $\rho$ is geometrically modular of weight $(\vec{k},\vec{m})$.
\end{theorem}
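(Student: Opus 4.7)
The plan is to adapt the strategy of Theorem~\ref{thm:3toPmod}. For $w\ge 3$ that argument exhibits a stabilized form $f''$ of weight $(2,w-1)$, divides $\Theta_\tau f''$ by $H_{\sigma_2}$ (using Theorem~\ref{thm:thetadiv} with $p\nmid w-1$), and thereby extracts a partial weight one form. For $w=2$ the companion weight becomes $(2,1)$, which lies outside $\Xi_{\min}^+$, so by Theorem~\ref{thm:positivity} no non-zero such $f''$ exists. I would therefore replace it by a form of weight $(2,p)$, the smallest weight in $\Xi_{\min}^+$ strictly above $(1,2)$ in the Hasse partial order, obtained through the $\gp$-ordinariness afforded by the reducibility hypothesis.

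First, from Proposition~\ref{prop:repshape} and the reducibility and crystalline liftability in weight $((1,2),\vec m)$, write $\rho|_{G_{F_\gp}}\sim\psi\otimes\bigl(\begin{smallmatrix}\chi&c\\0&1\end{smallmatrix}\bigr)$ with $\psi|_{I_{F_\gp}}=\omega^{-m-3}$, $\chi|_{I_{F_\gp}}=\omega$, and $c$ defining a class in $V_\chi$. A Hodge--Tate weight calculation (choosing an alternative lift $\widetilde\chi$ with $\sigma_1$-weight $1$ and $\sigma_2$-weight $p-1$, together with a $(\varphi,\widehat G)$-module argument in the spirit of Remark~\ref{rmk:boundaries}) shows that $\rho|_{G_{F_\gp}}$ admits a crystalline lift of weight $((2,p),\vec m)$, and the inertia character $\omega^{-m-2}$ of the upper diagonal matches the ordinariness condition $\omega^{-2-(m_1+m_2)}$ of Theorem~\ref{thm:GLSord} in that weight. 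A similar, easier computation yields a crystalline lift of weight $((2,3),\vec m')$ with $\vec m'=\vec m-(0,1)$. Applying Theorem~\ref{thm:GLSord} (for the ordinary weight) and Theorem~\ref{thm:GLSmain} (for the non-ordinary one), each combined with Theorem~\ref{thm:ordmods} respectively Theorem~\ref{thm:defalggeom}, then Proposition~\ref{prop:normalized} and Lemma~\ref{lem:stabilized}, produces a $\gp$-ordinary normalized eigenform $f^*\in S_{(2,p),\vec m}(U_1(\gn),E)$ and a normalized stabilized eigenform $f_1\in S_{(2,3),\vec m'}(U_1(\gn),E)$ both giving rise to $\rho$; the latter is automatically strongly $\gp$-stabilized since Theorem~\ref{thm:ord2}'s ordinariness character $\omega^{-m}$ in weight $((2,3),\vec m')$ matches neither diagonal of $\rho|_{G_{F_\gp}}$.

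Now compare $\Theta_\tau f^*$ and $H_1 H_2^2 f_1$, both living in $S_{(3,p+1),\vec m'}(U_1(\gn),E)$, both giving rise to $\rho$, and both strongly $\gp$-stabilized (the former automatically, since $r_m^t(\Theta_\tau f^*)=\pi_\tau(m)\,r_m^t(f^*)$ vanishes on $\gp M$). By Lemma~\ref{lem:strong}, $\Theta_\tau f^*=\lambda\cdot H_1 H_2^2 f_1$ for some $\lambda\in E$. In the case $\lambda=0$, Theorem~\ref{thm:kertheta} writes $f^* = V_\gp(g)\prod_\sigma H_\sigma^{r_\sigma}G_\sigma^{s_\sigma}$; a weight bookkeeping (using Corollary~\ref{cor:positivity} to eliminate candidates outside $\Xi_{\min}^+$) forces $(r_1,r_2)=(0,0)$ and $g\in S_{(1,2),\vec n}(U_1(\gn),E)$ for some $\vec n$ with $\vec n-\vec m\in\sum_\sigma\ZZ\vec h_\sigma$, and then Proposition~\ref{prop:chars}(1) delivers geometric modularity of $\rho$ in weight $((1,2),\vec m)$. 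The main obstacle is to rule out $\lambda\ne 0$: although then $\Theta_\tau f^*$ is divisible by $H_1 H_2^2$, Theorem~\ref{thm:thetadiv} as stated relates only $H_{\sigma_2}$-divisibility of $\Theta_\tau f^*$ to that of $f^*$, and moreover the condition $p\mid k_{\sigma_2}(f^*)=p$ renders the $H_{\sigma_2}$ direction vacuous; disposing of this case requires either a finer divisibility statement (an analogue of Theorem~\ref{thm:thetadiv} for $H_{\sigma_1}$) or a more delicate analysis of the $\gp$-ordinary $q$-expansion of $f^*$ that rules out its $\Theta_\tau$-image being a non-zero multiple of $H_1 H_2^2 f_1$.
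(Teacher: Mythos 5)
Your setup — producing a $\gp$-ordinary stabilized eigenform $f^*\in S_{(2,p),\vec m}$ via Theorem~\ref{thm:GLSord} and Theorem~\ref{thm:ordmods}, and a strongly stabilized $f_1\in S_{(2,3),\vec m'}$ via Theorems~\ref{thm:GLSmain} and~\ref{thm:defalggeom} — matches the paper's. But your subsequent comparison of $\Theta_\tau f^*$ with $H_1H_2^2f_1$ is a genuinely different route, and it does not close. The gap you flag is precisely where the argument breaks: with $k_{\sigma_2}(f^*)=p$, the clause ``or $k_\sigma$ is divisible by $p$'' in Theorem~\ref{thm:thetadiv} makes the $H_{\sigma_2}$-divisibility criterion vacuous, and there is no $H_{\sigma_1}$-analogue, so divisibility of $\Theta_\tau f^*$ by $H_1H_2^2$ tells you nothing about $f^*$. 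Worse, your fallback case $\lambda=0$ cannot actually occur: $f^*$ is normalized, so $r^{t_1}_{m_1}(\Theta_\tau f^*)=\pi_\tau(m_1)\cdot r^{t_1}_{m_1}(f^*)=\pi_\tau(m_1)\neq 0$ (since $m_1$ generates $M_x$, hence $m_1\notin\gp M_x$), whence $\Theta_\tau f^*\neq 0$. You are therefore in the unmanageable case unconditionally.

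The paper avoids this by never applying $\Theta$ to the weight-$(2,p)$ form. Instead it applies $\Theta^{p-2}$ to $f'$ (your $f_1$), factors out Hasse invariants via Theorem~\ref{thm:positivity} to obtain a strongly stabilized $f'''$ with $\vec k'''\in\Xi_{\min}^+$, and uses Theorem~\ref{thm:thetadiv} on $\Theta^{p-1}f'$ to pin $\vec k'''$ down to $\{(1,2),(2,p)\}$. If $\vec k'''=(1,2)$ one is done. If $\vec k'''=(2,p)$, the paper invokes a comparison you do not make: $f$ (ordinary, not strongly stabilized) and $f'''$ (strongly $\gp$-stabilized) are both stabilized eigenforms of weight $((2,p),\vec m)$ giving rise to $\rho$, so their $q$-expansions agree at all $m$ with $(ctm)_\gp\in\CO_{F,\gp}^\times$; hence $f-f'''\in\ker\Theta$, and Theorem~\ref{thm:kertheta} extracts the weight-$(1,2)$ form. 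This $\ker\Theta$ comparison of two weight-$(2,p)$ forms differing in their $\gp$-stabilization is the key idea your approach is missing — it converts the ordinariness hypothesis into a nontrivial element of $\ker\Theta$, which is where Theorem~\ref{thm:kertheta} becomes usable, rather than trying to read off divisibility of $f^*$ itself.

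Secondarily, your claim that $f_1$ is ``automatically strongly $\gp$-stabilized'' holds only for $p>3$. When $p=3$ the inertia character $\omega^{-m-1}$ governing ordinariness in weight $((2,3),\vec m')$ coincides with $\omega^{-m-3}$ (as $\omega$ has order $p-1=2$), so $f_1$ need not be strongly stabilized; the paper handles this exceptional case with a separate argument using Theorems~\ref{thm:thetadiv} and~\ref{thm:positivity} applied to $\Theta(f)$ and $\Theta(f')$. (Your computation $\psi|_{I_{F_\gp}}=\omega^{-m-3}$ is, incidentally, the correct one; the paper's displayed $\omega^{-m-2}$ at the start of its proof of Theorem~\ref{thm:2mod} appears to be a typo, as Proposition~\ref{prop:repshape} with $w=2$ gives $\omega^{-1-w-m}=\omega^{-m-3}$.)
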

\begin{proof} If $\rho|_{G_{F_\gp}}$ is reducible and has a crystalline lift of weight $(\vec{k},\vec{m})$, then by Proposition~\ref{prop:repshape} we have
$$\rho \sim \psi \otimes \left(\begin{array}{cc}\chi&c \\ 0 & 1\end{array}\right)$$
for some characters $\psi$ and $\chi$ such that $\psi|_{I_{F_{\gp}}} = \omega^{-m-2}$, $\chi|_{I_{F_\gp}} = \omega$ and $c \in V_\chi$. In particular, $\rho$ has a crystalline lift of weight $(\vec{k}'',\vec{m})$, where $\vec{k}'' = (2,p)$, and Theorem~\ref{thm:GLSord} implies that $\rho$ is 
$\gp$-ordinarily algebraically modular of weight $(\vec{k}'',\vec{m})$.  Therefore $\rho$ is also $\gp$-ordinarily geometrically modular of weight $(\vec{k}'',\vec{m})$ by Theorem~\ref{thm:ordmods}. By Proposition~\ref{prop:normalized} and Lemma~\ref{lem:stabilized}, it follows that $\rho$ arises from a stabilized eigenform $f \in S_{\vec{k}'',\vec{m}}(U_1(\gn),E)$ for some $\gn$ not divisible by $\gp$, and moreover with $T_{\gp}f_{\xi} \neq 0$ for all $\xi$ as in Definition~\ref{def:normalized}.

Since $c \in V_\chi$, we see as in the proof of Theorem~\ref{thm:3toPmod} that $\rho$ also has a crystalline lift of weight $(\vec{k}',\vec{m}')$, where $\vec{k}' = (2,3)$ and $\vec{m}' = \vec{m} - (0,1)$.  Therefore Theorems~\ref{thm:GLSmain} and~\ref{thm:defalggeom} imply that $\rho$ is geometrically modular of weight $(\vec{k}',\vec{m}')$. By Proposition~\ref{prop:normalized} and Lemma~\ref{lem:stabilized}, $\rho$ arises from a stabilized eigenform $f' \in S_{\vec{k}',\vec{m}}(U,E)$, where $U = U(\gn')$ for some $\gn'$ not divisible by $\gp$.  Furthermore another application of Lemma~\ref{lem:stabilized} allows us to replace $\gn$ and $\gn'$ by a common multiple, and hence assume $\gn' = \gn$.

We claim that we can in fact take $f'$ to be strongly stabilized. Indeed this follows from Theorem~\ref{thm:ord}, unless $p=3$ and $\rho|_{I_{F_\gp}} \sim 1 \oplus \omega$.  In that case, note that Theorem~\ref{thm:thetadiv} implies that $H_2|\Theta(f)$, so $\rho$ arises from a strongly stabilized $g \in S_{(2,5),\vec{m}'}(U,E)$.
Similarly $H_2|\Theta(f')$, so $\rho$ arises from a strongly stabilized $g' \in S_{(2,5),\vec{m}''}(U,E)$, where $\vec{m}'' = \vec{m} - (0,2)$.  It follows that $\Theta(g) = H_1H_2^2g'$ is divisible by $H_2$.  Theorem~\ref{thm:thetadiv} then implies that $g = H_2g''$ for some $g'' \in S_{(1,6),\vec{m}'}(U,E)$, and Theorem~\ref{thm:positivity} then implies $g'' = H_1 f''$, where now $f'' \in S_{\vec{k}',\vec{m}'}(U,E)$ is (up to scalar) a strongly stabilized eigenform giving rise to $\rho$.  We can therefore replace $f'$ by $f''$ to complete the proof of the claim.

Now write $\Theta^{p-2}f' = G_1^{-1}G_2^{-1}H_1^{s_1}H_2^{s_2} f'''$, where $f''' \in S_{\vec{k}''',\vec{m}}(U,E)$ is (up to scalar) a strongly stabilized eigenform not divisible by $H_1$ or $H_2$.  By Theorem~\ref{thm:positivity}, we have
$$\vec{k}''' = (p,p+1) - a_1(-1,p) - a_2(1,-1) \in \Xi_{\min}^+$$
for some $a_1,a_2\in \ZZ_{\ge 0}$.  Furthermore $\Theta^{p-1}f' = G_1G_2H_1^2H_2^{p+1}$, so Theorem~\ref{thm:thetadiv} implies that either $a_2 \ge p+1$ or $k_2''' = p+1-pa_1+a_2$ is divisible by $p$.  Using that $\vec{k}''' \in \Xi_{\min}^+$, the first possibility implies that $a_1=2$ and $a_2 = p+1$, so $\vec{k}''' = (1,2) = \vec{k}$ and we are done.  

So suppose that $a_2 < p+1$ and $p|k_2'$, i.e., $a_2 \equiv -1\bmod p$, so $a_2 = p-1$. Again using that $\vec{k}''' \in \Xi_{\min}^+$, we find that $a_1 = 1$, so $\vec{k}''' = (2,p) = \vec{k}''$.  Since $f$ and $f'''$ are both stabilized eigenforms in $S_{\vec{k}'',\vec{m}}(U,E)$, but only $f'''$ is strongly stabilized, we have $f - f'''$ in $\ker(\Theta)$.  Applying Theorem~\ref{thm:kertheta} now implies that $f-f''' = G_1^{t_1}G_2^{t_2}V_{\gp}(g''')$ for some $t_1,t_2 \in \ZZ$ and eigenform $g'''\in S_{\vec{k},\vec{m}}(U,E)$ giving rise to $\rho$, and we are done.
\end{proof}
\begin{remark} Recall that $\rho|_{G_{F(\zeta_p)}}$ is irreducible unless $\rho$ is induced from a character of $G_L$, where $L$ is the quadratic extension of $F$ contained in $F(\zeta_p)$.  From the shape of the local Galois representation, one sees easily that (under the other assumptions of Theorem~\ref{thm:2mod}) this is only possible if $\gp$ splits in $L$.
\end{remark}

Finally, we record the following consequence used in the proof of Theorem~\ref{thm:3toPmod}:
\begin{corollary} \label{cor:final} Under the hypotheses of Theorem~\ref{thm:2mod}, there exist an ideal $\gn$ prime to $\gp$ and strongly stabilized eigenform $f\in S_{(2,p),\vec{m}}(U_1(\gn),E)$ such that $\rho \sim \rho_f$.
\end{corollary}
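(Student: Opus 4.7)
The plan is to follow the construction in the proof of Theorem~\ref{thm:2mod} up to the stage where one obtains a strongly stabilized eigenform $f''' \in S_{\vec{k}''',\vec{m}}(U_1(\gn),E)$ giving rise to $\rho$, with $\vec{k}''' \in \{(1,2),(2,p)\}$. In the case $\vec{k}''' = (2,p)$, the form $f := f'''$ is itself the asserted eigenform, and we are done.

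In the remaining case $\vec{k}''' = (1,2)$, I would take $f := H_1 H_2^2 f''' \in S_{(2,p),\vec{m}}(U_1(\gn),E)$; since $\vec{h}_1 + 2\vec{h}_2 = (-1,p) + (2,-2) = (1,p-2)$, this lies in the asserted weight space. The $\GL_2(\A_{F,\f}^{(p)})$-equivariance of multiplication by partial Hasse invariants ensures that $f$ is an eigenform for $T_v$ ($v\nmid p$) and $S_v$ ($v\nmid \gn p$) with the same eigenvalues as $f'''$, so $\rho_f \sim \rho$; the nonvanishing of the $q$-expansions of $H_1, H_2$ at every cusp yields $r^{t_1}_{m_1}(f) \neq 0$ (rescaled to $1$), $T_v f = 0$ for $v\mid \gn$, and the vanishing of $r^t_m(f)$ at all ``bad'' $(t,m)$ (those with $(ctm)_\gp \notin \CO_{F,\gp}^\times$), inherited from the strong $\gp$-stabilization of $f'''$.

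To complete the proof, I verify that each twist $f_\xi$ (as in Definition~\ref{def:normalized}) is a $T_\gp$-eigenform with eigenvalue zero. Comparing $q$-expansions identifies $(H_1 H_2^2 f''')_\xi$ with $H_1 H_2^2 \cdot f'''_\xi$ in $S_{(2,p),-\vec{1}}(U_1(\gm^2\gn),E')$. Applying the $q$-expansion formula for $T_\gp$ in weight $((1,2),-\vec{1})$ to the identity $T_\gp(f'''_\xi) = 0$, the strong stabilization forces the first term $\epsilon\, r^{x^{-1}t}_{\varpi_\gp m}(f'''_\xi)$ to vanish identically in $(t,m)$, hence the second term vanishes identically, and the $q$-expansion Principle yields $S_{\varpi_\gp}(f'''_\xi) = 0$. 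Since the scalar idele $(\varpi_\gp,\varpi_\gp)$ acts on forms of weight $(\vec{k},\vec{m})$ by $\varpi_\gp^{\vec{k}+2\vec{m}}$, a direct calculation shows that $S_{\varpi_\gp}$ commutes with multiplication by any partial Hasse invariant; hence $S_{\varpi_\gp}(H_1 H_2^2 f'''_\xi) = H_1 H_2^2 \cdot S_{\varpi_\gp}(f'''_\xi) = 0$. Combined with the analogous vanishing of the first term in the $T_\gp$-formula in weight $((2,p),-\vec{1})$ applied to $H_1 H_2^2 f'''_\xi$, we obtain $T_\gp(f_\xi) = 0$. The main technical obstacle is this commutativity of $S_{\varpi_\gp}$ with partial Hasse invariants, which is straightforward but requires careful tracking of the weight-dependent normalization factor in the definition of $S_{\varpi_\gp}$.
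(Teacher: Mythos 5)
Your plan (case split on $\vec{k}''' \in \{(2,p),(1,2)\}$, and in the latter case multiplying by $H_1H_2^2$) is sound and ends with the right answer, but the justification that $H_1H_2^2f'''$ is strongly $\gp$-stabilized contains a genuine error in the deduction $S_{\varpi_\gp}(f'''_\xi) = 0$.

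Look again at the $q$-expansion formula
$r_m^t(T_{\gp}h) = \epsilon\,\varpi_{\gp}^{\vec{m}+\vec{1}}\, r_{\varpi_{\gp}m}^{x^{-1}t}(h) + \varpi_{\gp}^{\vec{k}+\vec{m}}\, r_{\varpi_{\gp}^{-1}m}^{xt}(S_{\varpi_\gp}h)$
with $\vec{m}=-\vec{1}$. The scalar $\varpi_{\gp}^{\vec{k}-\vec{1}}$ has positive $p$-adic valuation whenever $k_\sigma>1$ for some $\sigma\in\Sigma_\gp$ (it equals $\sigma_2(\varpi_\gp)$ for $\vec{k}=(1,2)$, and $\sigma_1(\varpi_\gp)\sigma_2(\varpi_\gp)^{p-1}$ for $\vec{k}=(2,p)$), hence it is zero in $E$. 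Therefore the second term of the formula vanishes \emph{for the trivial reason that its coefficient is zero}; you cannot divide by that coefficient to conclude $r^{xt}_{\varpi_\gp^{-1}m}(S_{\varpi_\gp}f'''_\xi)=0$, still less $S_{\varpi_\gp}(f'''_\xi)=0$. In fact $S_{\varpi_\gp}$ is an invertible operator (it is a normalization of the central-element action $S_\gp$), so $S_{\varpi_\gp}(f'''_\xi)=0$ would force $f'''_\xi=0$, which is false. The subsequent discussion of commuting $S_{\varpi_\gp}$ past partial Hasse invariants — which you flag as the main technical obstacle — is therefore also unnecessary.

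The good news is that once you notice the coefficient $\varpi_\gp^{\vec{k}-\vec{1}}$ dies in $E$, the argument becomes much shorter than what you wrote: the second term in the $T_\gp$-formula for $h=(H_1H_2^2f''')_\xi$ in weight $((2,p),-\vec{1})$ is identically zero, so $T_\gp(h)=0$ holds iff the first term $\epsilon\, r^{x^{-1}t}_{\varpi_\gp m}(h)$ vanishes for all $(t,m)$, i.e. iff $r^{t'}_{m'}(h)=0$ whenever $(ct'm')_\gp\notin\CO_{F,\gp}^\times$. This is exactly the $q$-expansion characterization of strong $\gp$-stabilization, and it is visibly inherited from $f'''$ because the $q$-expansions of $H_1,H_2$ are nonzero constants on every cusp. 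Finally, I note that the paper's stated alternative proof proceeds differently: it starts only from the \emph{statement} of Theorem~\ref{thm:2mod} (not from inspection of its proof) and constructs the strongly stabilized form as a linear combination of $G_1^{-m_1}G_2^{-m_2}V_\gp(g)$ and $H_1H_2^2 g$, for $g$ a (not necessarily strongly) stabilized eigenform of weight $(1,2)$; your route is closer to the paper's remark that the corollary ``was established in the course of the proof.''
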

This was in fact established in the course of the proof of Theorem~\ref{thm:2mod}, but can also be deduced from the statement of the theorem by taking a linear combination of $G^{-m_1}G_2^{-m_2}V_{\gp}(g)$ and $H_1H_2^2g$, where $g\in S_{(1,2),\vec{m}}(U_1(\gn),E)$ is a stabilized eigenform giving rise to $\rho$.

\medskip

\noindent {\bf Acknowledgements:}  We are grateful to Robin Bartlett, George Boxer, Toby Gee, Payman Kassaei, David Loeffler, Gabriel Micolet, Hanneke Wiersema and Siqi Yang for helpful correspondence and conversations related to this work.  We also benefitted from numerous discussions at a workshop on ``Geometric Serre Weight Conjectures,'' held at King's College London in February~2024 and sponsored by a Heilbronn Institute Focused Research Grant.  We also thank the Insitute for Advanced Study, where some of the research was conducted by the first author during a visit in March--April 2024.  The final stage of this work was supported by the Engineering and Physical Sciences Research Council grant EP/Z53609X/1.

\bibliographystyle{amsalpha} 
\bibliography{DS2_ref} 

\providecommand{\bysame}{\leavevmode\hbox to3em{\hrulefill}\thinspace}
\providecommand{\MR}{\relax\ifhmode\unskip\space\fi MR }
\providecommand{\MRhref}[2]{%
  \href{http://www.ams.org/mathscinet-getitem?mr=#1}{#2}
}
\providecommand{\href}[2]{#2}
\begin{thebibliography}{BLGGT14}

\bibitem[AG05]{ag}
F.~Andreatta and E.~Z. Goren, \emph{Hilbert modular forms: mod {$p$} and
  {$p$}-adic aspects}, Mem. Amer. Math. Soc. \textbf{173} (2005), no.~819,
  vi+100.

\bibitem[AS00]{AS}
Avner Ash and Warren Sinnott, \emph{An analogue of {S}erre's conjecture for
  {G}alois representations and {H}ecke eigenclasses in the mod {$p$} cohomology
  of {${\rm GL}(n,{\bf Z})$}}, Duke Math. J. \textbf{105} (2000), no.~1, 1--24.

\bibitem[BDJ10]{BDJ}
Kevin Buzzard, Fred Diamond, and Frazer Jarvis, \emph{On {S}erre's conjecture
  for mod {$\ell$} {G}alois representations over totally real fields}, Duke
  Math. J. \textbf{155} (2010), no.~1, 105--161.

\bibitem[BG14]{BG}
Kevin Buzzard and Toby Gee, \emph{The conjectural connections between
  automorphic representations and {G}alois representations}, Automorphic forms
  and {G}alois representations. {V}ol. 1, London Math. Soc. Lecture Note Ser.,
  vol. 414, Cambridge Univ. Press, Cambridge, 2014, pp.~135--187.

\bibitem[BLGG12]{BLGG1}
Thomas Barnet-Lamb, Toby Gee, and David Geraghty, \emph{Congruences between
  {H}ilbert modular forms: constructing ordinary lifts}, Duke Math. J.
  \textbf{161} (2012), no.~8, 1521--1580.

\bibitem[BLGG13]{BLGG2}
\bysame, \emph{Congruences between {H}ilbert modular forms: constructing
  ordinary lifts, {II}}, Math. Res. Lett. \textbf{20} (2013), no.~1, 67--72.

\bibitem[BLGGT14]{BLGGT2}
Thomas Barnet-Lamb, Toby Gee, David Geraghty, and Richard Taylor,
  \emph{Local-global compatibility for {$l=p$}, {II}}, Ann. Sci. \'Ec. Norm.
  Sup\'er. (4) \textbf{47} (2014), no.~1, 165--179.

\bibitem[BM02]{BM}
Christophe Breuil and Ariane M\'ezard, \emph{Multiplicit\'es modulaires et
  repr\'esentations de {${\rm GL}_2({\bf Z}_p)$} et de {${\rm
  Gal}(\overline{\bf Q}_p/{\bf Q}_p)$} en {$l=p$}}, Duke Math. J. \textbf{115}
  (2002), no.~2, 205--310, With an appendix by Guy Henniart.

\bibitem[BP21]{BP}
George Boxer and Vincent Pilloni, \emph{{H}igher {C}oleman {T}heory}, Preprint
  (2021), \url{https://arxiv.org/abs/2110.10251}, 2021.

\bibitem[BR93]{BlasRog}
Don Blasius and Jonathan~D. Rogawski, \emph{Motives for {H}ilbert modular
  forms}, Invent. Math. \textbf{114} (1993), no.~1, 55--87.

\bibitem[BS22]{BS}
Robin Bartlett and Misja Steinmetz, \emph{{E}xplicit {S}erre weights via
  {K}ummer theory}, Preprint (2022), \url{https://arxiv.org/abs/2207.00402},
  2022.

\bibitem[Car86]{carayol}
Henri Carayol, \emph{Sur les repr\'esentations {$l$}-adiques associ\'ees aux
  formes modulaires de {H}ilbert}, Ann. Sci. \'Ecole Norm. Sup. (4) \textbf{19}
  (1986), no.~3, 409--468.

\bibitem[CG18]{CG}
Frank Calegari and David Geraghty, \emph{Modularity lifting beyond the
  {T}aylor-{W}iles method}, Invent. Math. \textbf{211} (2018), no.~1, 297--433.

\bibitem[Con11]{brian}
Brian Conrad, \emph{{L}ifting global representations with local properties},
  Preprint (2011), \url{https://math.stanford.edu/~conrad/papers/locchar.pdf},
  2011.

\bibitem[DDW24]{DDW}
Shaunak~V. Deo, Mladen Dimitrov, and Gabor Wiese, \emph{Unramifiedness of
  weight 1 {H}ilbert {H}ecke algebras}, Algebra Number Theory \textbf{18}
  (2024), no.~8, 1465--1496.

\bibitem[Dia23]{theta}
Fred Diamond, \emph{Geometric weight-shifting operators on {H}ilbert modular
  forms in characteristic {$p$}}, J. Inst. Math. Jussieu \textbf{22} (2023),
  no.~4, 1871--1930.

\bibitem[Dia24]{compact}
\bysame, \emph{Compactifications of {I}wahori-level {H}ilbert modular
  varieties}, J. Number Theory \textbf{263} (2024), 255--296.

\bibitem[Dia25]{FD:KS}
\bysame, \emph{Kodaira-{S}pencer isomorphisms and degeneracy maps on
  {I}wahori-level {H}ilbert modular varieties: the saving trace}, Forum Math.
  Sigma \textbf{13} (2025), Paper No. e50, 74.

\bibitem[DK23a]{DK}
Fred Diamond and Payman~L. Kassaei, \emph{The cone of minimal weights for mod
  {$p$} {H}ilbert modular forms}, Int. Math. Res. Not. IMRN (2023), no.~14,
  12148--12171.

\bibitem[DK23b]{cone}
\bysame, \emph{The cone of minimal weights for mod {$p$} {H}ilbert modular
  forms}, Int. Math. Res. Not. IMRN (2023), no.~14, 12148--12171.

\bibitem[DKS23]{DKS}
Fred Diamond, Payman Kassaei, and Shu Sasaki, \emph{A mod {$p$}
  {J}acquet-{L}anglands relation and {S}erre filtration via the geometry of
  {H}ilbert modular varieties: splicing and dicing}, Ast\'erisque (2023),
  no.~439, 111.

\bibitem[DM20]{DM}
Mariagiulia De~Maria, \emph{{H}ilbert modular forms modulo {$p$} of partial
  weight one and unramifiedness of {G}alois representations}, Ph.D. thesis,
  2020, PhD thesis (2020),
  \url{https://orbilu.uni.lu/bitstream/10993/45941/1/DeMaria_thesis_orbilu.pdf}.

\bibitem[DS23]{DS1}
Fred Diamond and Shu Sasaki, \emph{A {S}erre weight conjecture for geometric
  {H}ilbert modular forms in characteristic {$p$}}, J. Eur. Math. Soc. (JEMS)
  \textbf{25} (2023), no.~9, 3453--3536.

\bibitem[DT94]{DT}
Fred Diamond and Richard Taylor, \emph{Nonoptimal levels of mod {$l$} modular
  representations}, Invent. Math. \textbf{115} (1994), no.~3, 435--462.

\bibitem[Edi92]{edixhoven}
Bas Edixhoven, \emph{The weight in {S}erre's conjectures on modular forms},
  Invent. Math. \textbf{109} (1992), no.~3, 563--594.

\bibitem[ERX17]{ERX}
Matthew Emerton, Davide~A. Reduzzi, and Liang Xiao, \emph{Galois
  representations and torsion in the coherent cohomology of {H}ilbert modular
  varieties}, J. Reine Angew. Math. \textbf{726} (2017), 93--127.

\bibitem[FPR94]{FPR}
Jean-Marc Fontaine and Bernadette Perrin-Riou, \emph{Autour des conjectures de
  {B}loch et {K}ato: cohomologie galoisienne et valeurs de fonctions {$L$}},
  Motives ({S}eattle, {WA}, 1991), Proc. Sympos. Pure Math., vol. 55, Part 1,
  Amer. Math. Soc., Providence, RI, 1994, pp.~599--706.

\bibitem[Gee11a]{Gtypes}
Toby Gee, \emph{Automorphic lifts of prescribed types}, Math. Ann. \textbf{350}
  (2011), no.~1, 107--144.

\bibitem[Gee11b]{gee_inv}
\bysame, \emph{On the weights of mod {$p$} {H}ilbert modular forms}, Invent.
  Math. \textbf{184} (2011), no.~1, 1--46.

\bibitem[GHS18]{GHS}
Toby Gee, Florian Herzig, and David Savitt, \emph{General {S}erre weight
  conjectures}, J. Eur. Math. Soc. (JEMS) \textbf{20} (2018), no.~12,
  2859--2949.

\bibitem[GJ79]{GJ}
Stephen Gelbart and Herv\'e Jacquet, \emph{Forms of {${\rm GL}(2)$}\ from the
  analytic point of view}, Automorphic forms, representations and
  {$L$}-functions ({P}roc. {S}ympos. {P}ure {M}ath., {O}regon {S}tate {U}niv.,
  {C}orvallis, {O}re., 1977), {P}art 1, Proc. Sympos. Pure Math., vol. XXXIII,
  Amer. Math. Soc., Providence, RI, 1979, pp.~213--251.

\bibitem[GK19]{GK}
Wushi Goldring and Jean-Stefan Koskivirta, \emph{Strata {H}asse invariants,
  {H}ecke algebras and {G}alois representations}, Invent. Math. \textbf{217}
  (2019), no.~3, 887--984.

\bibitem[GLS15]{GLS}
Toby Gee, Tong Liu, and David Savitt, \emph{The weight part of {S}erre's
  conjecture for {$GL(2)$}}, Forum Math. Pi \textbf{3} (2015), e2, 52.

\bibitem[Gor01]{goren}
Eyal~Z. Goren, \emph{Hasse invariants for {H}ilbert modular varieties}, Israel
  J. Math. \textbf{122} (2001), 157--174.

\bibitem[GS11]{GS}
Toby Gee and David Savitt, \emph{Serre weights for mod {$p$} {H}ilbert modular
  forms: the totally ramified case}, J. Reine Angew. Math. \textbf{660} (2011),
  1--26.

\bibitem[Hid88]{hida_annals}
Haruzo Hida, \emph{On {$p$}-adic {H}ecke algebras for {${\rm GL}_2$} over
  totally real fields}, Ann. of Math. (2) \textbf{128} (1988), no.~2, 295--384.

\bibitem[Jar97]{Jarvis}
Frazer Jarvis, \emph{On {G}alois representations associated to {H}ilbert
  modular forms}, J. Reine Angew. Math. \textbf{491} (1997), 199--216.

\bibitem[Jar13]{frazer}
\bysame, \emph{On a pairing between symmetric power modules}, Glasg. Math. J.
  \textbf{55} (2013), no.~2, 309--312.

\bibitem[Kat73]{katz}
Nicholas~M. Katz, \emph{{$p$}-adic properties of modular schemes and modular
  forms}, Modular functions of one variable, {III} ({P}roc. {I}nternat.
  {S}ummer {S}chool, {U}niv. {A}ntwerp, {A}ntwerp, 1972), Lecture Notes in
  Math., vol. Vol. 350, Springer, Berlin-New York, 1973, pp.~69--190.

\bibitem[Kis09]{K2}
Mark Kisin, \emph{Modularity of 2-adic {B}arsotti-{T}ate representations},
  Invent. Math. \textbf{178} (2009), no.~3, 587--634.

\bibitem[KW09a]{KW1}
Chandrashekhar Khare and Jean-Pierre Wintenberger, \emph{Serre's modularity
  conjecture. {I}}, Invent. Math. \textbf{178} (2009), no.~3, 485--504.

\bibitem[KW09b]{KW2}
\bysame, \emph{Serre's modularity conjecture. {II}}, Invent. Math. \textbf{178}
  (2009), no.~3, 505--586.

\bibitem[LLH22]{LeLeHung}
Daniel Le and Bao Le~Hung, \emph{{S}erre weights, {G}alois deformation rings,
  and local models}, Preprint (2022), \url{https://arxiv.org/abs/2203.02045},
  2022.

\bibitem[MS21]{MS}
Jeffrey Manning and Jack Shotton, \emph{Ihara's lemma for {S}himura curves over
  totally real fields via patching}, Math. Ann. \textbf{379} (2021), no.~1-2,
  187--234.

\bibitem[Mum70]{mumford}
David Mumford, \emph{Abelian varieties}, Tata Institute of Fundamental Research
  Studies in Mathematics, vol.~5, Tata Institute of Fundamental Research,
  Bombay; by Oxford University Press, London, 1970.

\bibitem[New14]{JN}
James Newton, \emph{Serre weights and {S}himura curves}, Proc. Lond. Math. Soc.
  (3) \textbf{108} (2014), no.~6, 1471--1500.

\bibitem[New15]{JN2015}
\bysame, \emph{Towards local-global compatibility for {H}ilbert modular forms
  of low weight}, Algebra Number Theory \textbf{9} (2015), no.~4, 957--980.

\bibitem[RT83]{RogTun}
J.~D. Rogawski and J.~B. Tunnell, \emph{On {A}rtin {$L$}-functions associated
  to {H}ilbert modular forms of weight one}, Invent. Math. \textbf{74} (1983),
  no.~1, 1--42.

\bibitem[RX17]{RX}
Davide~A. Reduzzi and Liang Xiao, \emph{Partial {H}asse invariants on splitting
  models of {H}ilbert modular varieties}, Ann. Sci. \'Ec. Norm. Sup\'er. (4)
  \textbf{50} (2017), no.~3, 579--607.

\bibitem[Sai09]{Saito}
Takeshi Saito, \emph{Hilbert modular forms and {$p$}-adic {H}odge theory},
  Compos. Math. \textbf{145} (2009), no.~5, 1081--1113.

\bibitem[Sch08]{Schein}
Michael~M. Schein, \emph{Weights in {S}erre's conjecture for {H}ilbert modular
  forms: the ramified case}, Israel J. Math. \textbf{166} (2008), 369--391.

\bibitem[Ser87]{serre}
Jean-Pierre Serre, \emph{Sur les repr\'esentations modulaires de degr\'e{}
  {$2$} de {${Gal}(\overline{\bold{Q}}/\bold{Q})$}}, Duke Math. J. \textbf{54}
  (1987), no.~1, 179--230.

\bibitem[StaX]{stax}
\emph{{T}he {S}tacks {P}roject}, \url{http://stacks.math.columbia.edu}.

\bibitem[SZ25]{SZ}
Xu~Shen and Yuqiang Zheng, \emph{F-zips with additional structure on splitting
  models of shimura varieties}, Forum of Mathematics, Sigma \textbf{13} (2025),
  e142.

\bibitem[Tay89]{Taylor}
Richard Taylor, \emph{On {G}alois representations associated to {H}ilbert
  modular forms}, Invent. Math. \textbf{98} (1989), no.~2, 265--280.

\bibitem[Wan22]{wang}
Xiyuan Wang, \emph{Weight elimination in two dimensions when {$p = 2$}}, Math.
  Res. Lett. \textbf{29} (2022), no.~3, 887--901.

\bibitem[Wie21a]{HWPhD}
Hanneke Wiersema, \emph{{M}inimal weights of mod {$p$} {G}alois
  representations}, Ph.D. thesis, 2021, PhD thesis (2021),
  \url{https://kclpure.kcl.ac.uk/portal/files/165034459/2021_Wiersema_Hanneke_1787788_ethesis.pdf}.

\bibitem[Wie21b]{hanneke_short}
\bysame, \emph{Serre weights and the {B}reuil-{M}\'ezard conjecture for modular
  forms}, J. Th\'eor. Nombres Bordeaux \textbf{33} (2021), no.~1, 107--124.

\bibitem[Wie22]{Hanneke_GM}
\bysame, \emph{{G}eometric modularity for algebraic and non-algebraic weights},
  Preprint (2022), \url{https://arxiv.org/abs/2205.00946}, 2022.

\bibitem[Wie25]{hanneke_new}
\bysame, \emph{{C}rystalline liftability of irregular weights}, Preprint
  (2025), \url{https://arxiv.org/abs/2506.21637}, 2025.

\bibitem[Yan25a]{SY}
Siqi Yang, \emph{{O}n the geometric {S}erre weight conjecture for {H}ilbert
  modular forms}, Preprint (2025), \url{https://arxiv.org/abs/2501.13585},
  2025.

\bibitem[Yan25b]{SYPhD}
\bysame, \emph{{O}n the geometric {S}erre weight conjecture for {H}ilbert
  modular forms}, Ph.D. thesis, 2025, PhD thesis (2025),
  \url{https://spiral.imperial.ac.uk/server/api/core/bitstreams/ab0bec0a-01f0-4fd4-be8d-3242de18456b/content}.

\end{thebibliography}

\bigskip

\end{document}